\documentclass [10pt, final]{article}

\usepackage{amssymb}
\usepackage{amsfonts}
\usepackage{graphicx}
\usepackage{amsmath,  amsthm}
\usepackage{mathrsfs}
\usepackage{frcursive}
\usepackage{bbm}
\usepackage{hyperref}

\usepackage[normalem]{ulem}
\usepackage[margin=3cm]{geometry}

\usepackage[notref,notcite]{showkeys}        
\usepackage{enumitem}

\usepackage{xcolor}

\numberwithin{equation}{section}

\def\[{\left[}
\def\]{\right]}
\def\({\left(}
\def\){\right)}

\newcommand{\R}{\mathbb{R}}

\newtheorem{theorem}{Theorem}[section]

\newtheorem{assumption}[theorem]{Assumption}

\newtheorem{corollary}[theorem]{Corollary}

\newtheorem{lemma}[theorem]{Lemma}

\newtheorem{proposition}[theorem]{Proposition}

\theoremstyle{definition}
\newtheorem{definition}[theorem]{Definition}
\newtheorem{remark}[theorem]{Remark}

\renewcommand{\eqref}[1]{(\ref{#1})}
\newcommand\dd{\mathrm{d}}

\begin{document}
	
\begin{center}
	\hypersetup{hidelinks}
	\vspace{1cm}
	\renewcommand{\thefootnote}{\fnsymbol{footnote}}
	\begin{minipage}{0.95\textwidth}
		\centering
		%\LARGE{\bf Existence and multiplicity of principal eigenpairs for nonlocal operators with sign-changing advection in a periodic environment.}\bigskip
		%\LARGE{\bf Multiplicity of principal eigenpairs for advective nonlocal operators.}\bigskip
		\LARGE{\bf On advective nonlocal operators: multiplicity of principal eigenpairs.}\bigskip
	\end{minipage}

	\Large
	 Arnaud Ducrot$^{a}$, Quentin Griette$^{a}$, and Xing Liang$^{b}$. \medskip \\
	\bigskip

	\normalsize
	\today
	\medskip

	{\it $^a$ Universit\'e Le Havre Normandie, Normandie Universit\'e, LMAH, 76600 Le Havre, France.}\\
	\medskip

	{\it $^b$ School of Mathematical Sciences, University of Science and Technology of China, Hefei, Anhui 230026, China. 
	}
	\hypersetup{hidelinks=false}
\end{center}

	\begin{abstract}
		We study the existence and multiplicity of principal eigenvalues and eigenfunctions for a periodically heterogeneous nonlocal  dispersal model with advection. The operator we consider is resolvent-positive but not resolvent-compact; therefore, the classical Krein-Rutman theory cannot be applied directly. 
		When the advection coefficient has a constant sign, we prove the existence and uniqueness of the principal eigenvalue and the corresponding normalized eigenfunction. 
		In sharp contrast, when the advection does not have a constant sign, the problem is more involved and leads to surprising results. 
		Depending on the coefficients of the equation, the principal eigenproblem can either have a unique normalized solution or a continuum of solutions, at the boundary of which there exists a principal eigenvector with a singular measure component. 
		In the latter situation, all the constructed eigenvalues are embedded in the continuous spectrum of our operator.
		We completely characterize the eigenvalues associated with positive eigenvectors, even when the eigenvector is a Radon measure. 
		Finally, we discuss an application to a nonlinear KPP-type equation with nonlocal dispersal, which possesses a continuum of nontrivial stationary solutions, a different behavior from the classical KPP equation with local diffusion.

		\vspace{0.2in}\noindent \textbf{Key words}. Non-local operator, sign-changing advection, principal eigenvalue, continuous spectrum, resolvent-positive operator. 
		
		\vspace{0.1in}\noindent \textbf{2020 Mathematical Subject Classification}. 
		Primary:
		47H07, %Monotone and positive operators on ordered Banach spaces or other ordered topological vector spaces
		35P05, %General topics in linear spectral theory for PDEs
		45C05, %Eigenvalue problems for integral equations 
		47A75, %Eigenvalue problems for linear operators
		47G20. %Integro-differential operators
		Secondary:
		92D25. %Population dynamics (general)
		
	\end{abstract}

\section{Introduction}

The aim of this article is to investigate the existence and multiplicity of the positive $1$-periodic solutions to the equation
\begin{equation}\label{eq:main}
	a(x)\varphi'(x)+r(x)\varphi(x)+\int_{\mathbb{S}^1} K(x,y)\varphi(y)dy=\lambda \varphi(x),
\end{equation}
where $\lambda$ and $\varphi(x)>0$ are the unknowns, 
$K$ is a positive continuous kernel acting on $\mathbb{S}^1\times \mathbb{S}^1$ and  $a(x)$ and $r(x)$ are bounded one-periodic functions. A solution $(\lambda, \varphi)$ of \eqref{eq:main} with 
%$\varphi(x)\gneqq 0$ 
$\varphi(x)\geq 0$ and $\varphi(x)\not\equiv 0$
is called a \textit{principal eigenpair}, $\lambda$ is a \textit{principal eigenvalue} and $\varphi$ is a \textit{principal eigenfunction}.
Here, crucially, we do not assume that the advection coefficient $a(x)$ has a constant sign. 
\medskip

Principal eigenpairs are an important tool in the analysis of nonlinear reaction-diffusion equations, whether the diffusion is local or not. They have been used to determine the local  stability of equilibria \cite{Rawal-Shen-2012, Lam-Lou-2016}, the existence of positive stationary solutions \cite{Rawal-Shen-2012, Alfaro-Griette-2018, Griette-2019}, and principal eigenvalues are involved in the computation of the spreading speed in such equations \cite{Freidlin-Gertner-1979, Weinberger-2002, Berestycki-Hamel-Nadirashvili-2005}. In population dynamics models, they are useful to establish persistence thresholds or to compute the basic reproduction number of a population \cite{Diekmann-Heesterbeek-Metz-1990, van_den_Driessche-Watmough-2002}. 
\medskip

Reaction-diffusion equations involving local or nonlocal diffusion have attracted a lot of interest in recent years, notably through the growing interest in the propagation properties of population dynamics models \cite{Thieme-1977, Diekmann-1979, Weinberger-1982, Weinberger-2002}. 
\medskip

\noindent \textbf{Principal eigenvalues of local diffusion operators.} In the case of local diffusion operators set on bounded domains $\Omega\subset\mathbb{R}^N$ of the type $\mathcal{L}_Du=\Delta u+a(x)\cdot \nabla u + r(x) u$, the existence and uniqueness of a principal eigenpair can be established by the Krein-Rutman Theorem, see e.g. \cite{Aronson-Weinberger-1975, Berestycki-Nirenberg-Varadhan-1994, Birindelli-1995}. 
In the case of unbounded operators, the existence of a principal eigenvector can prove to be difficult. 
One can rely on different generalized notions of principal eigenvalues \cite{Berestycki-Nirenberg-Varadhan-1994,Berestycki-Rossi-2006} such as
\begin{equation*}
	\lambda_1(\mathcal{L}, \Omega):=\inf\{\lambda\in\mathbb{R}\,:\,\exists \Phi \in C^2(\Omega)\cap C^1_{loc}(\Omega), \Phi >0 \text{ and } \mathcal{L}\Phi\leq \lambda\Phi\}, 
\end{equation*}
or 
\begin{equation*}
	\lambda_1'(\mathcal{L}, \Omega):=\sup\{\lambda\in\mathbb{R}\,:\,\exists \Phi \in C^2(\Omega)\cap C^1_{loc}(\Omega)\cap W^{2,\infty}(\Omega), \Phi >0 \text{ and } \mathcal{L}\Phi\geq \lambda\Phi \text{ in } \Omega, \Phi=0 \text{ on } \partial\Omega\}, 
\end{equation*}
for the Dirichlet problem. This indicates the possibility of more intricate dynamics: it turns out that the sign of $\lambda_1(\mathcal{L}, \Omega)$ is related to the existence of positive solutions of semilinear problems, while $\lambda_1'(\mathcal{L}, \Omega)$ is related to the nonexistence of such solutions. Since then, these notions have been generalized to more complex settings, such as time-heterogeneous equations \cite{Nadin-2009}.

%[Link with maximum principle]
\medskip

\noindent \textbf{Principal eigenvalues for nonlocal operators without advection.} For equations involving nonlocal dispersal and advection, early results have been established by Coville \cite{Coville-2010,Coville-2013}, Rawal and Shen \cite{Rawal-Shen-2012}, Ding and Liang \cite{Ding-Liang-2015}. Even without advection, the existence of principal eigenfunctions in the case of nonlocal operators is not always guaranteed, and many references in the existing literature focus on sufficient conditions for the existence. For operators acting on a bounded set $\Omega\subset\mathbb{R}^N$ such as $\mathcal{L}_{NL}\Phi = \int_{\overline{\Omega}}K(x, y)\Phi(y)\dd y + r(x)\Phi(x)$, Coville \cite{Coville-2010} proved that a sufficient condition for the existence of a continuous principal eigenfunction is
\begin{equation}\label{eq:260522a}
	\frac{1}{r_\infty-r(x)}\not\in L^1(\overline{\Omega}), \text{ where }r_\infty:=\sup_{x\in \overline{\Omega}} r(x).
\end{equation}
When \eqref{eq:260522a} does not hold, he later constructed some counterexamples \cite{Coville-2013} in which a principal eigenvector can be constructed in the space of measures and showed that this principal eigenvector is not absolutely continuous with respect to the Lebesgue measure. In this situation, the first author \cite{Griette-2019} later established a sharp condition for the existence of a continuous principal eigenpair based on the spectral radius $\kappa $ of the operator
\begin{equation*}
	\mathcal{K}\Phi(x):=\int_{\overline{\Omega}} K(x, y) \dfrac{\Phi(y)}{r_\infty-r(y)}\dd y.
\end{equation*}
More precisely, when $\kappa>1$, there exists a unique continuous eigenfunction; when $\kappa=1$, there exists a unique principal eigenfunction that belongs to $L^1$; and when $\kappa<1$, there exists a family of principal eigenvectors that all possess a singularity with respect to the Lebesgue measure.  Moreover, the principal eigenpairs all share the same eigenvalue $\lambda_1$. By adapting the method presented in the current manuscript, it would actually be possible to characterize all principal eigenpairs for this problem in the sense of measures.
\medskip

\noindent \textbf{Principal eigenvalues for nonlocal operators with advection.} Recently, Coville and Hamel \cite{Coville-Hamel-2020} studied the eigenvalue problem associated with the operator $\mathcal{L}_{CH}\Phi(x):=a(x)\Phi'(x) + \int_{\Omega}K(x,y)\dd y+r(x)$, where $\Omega\subset \mathbb{R}$ is an open interval and the advection coefficient $a(x)$ does not change sign. They proved, in particular, the existence of a principal eigenpair  associated with this operator,  involving a regular eigenfunction. They also studied the relations between the constructed eigenvalue and different notions of generalized principal eigenvalue. To the extent of our knowledge, the study of Coville and Hamel is the unique reference dealing with a principal eigenpair for nonlocal operators with non-constant advection in the current literature. 

Our method also applies to this case, in that we are in a position to prove the existence and uniqueness of a principal eigenpair in the case of an advection coefficient with a constant sign. We state our result in subsection \ref{subsec:constant-sign}.

\medskip

\noindent \textbf{Freidlin-G\"{a}rtner formula.}
Apart from establishing a criterion for the stability or instability of equilibria, an interesting application of the principal eigenpair is the so-called \textit{Freidlin-G\"{a}rtner formula} \cite{Freidlin-Gertner-1979}, which links a family of principal eigenproblems to the spreading speed associated with space or time heterogeneous operators. In the case of local diffusion operators with periodically heterogeneous coefficients such as the operator $\mathcal{L}_D$ defined above, Berestycki, Hamel and Nadirashvili \cite{Berestycki-Hamel-Nadirashvili-2005} prove that the spreading speed $c^*_{KPP}$ of KPP-type problems can be obtained as
\begin{equation*}
	c^*_{KPP}=\inf_{\lambda>0} \dfrac{k(\lambda)}{\lambda}, 
\end{equation*}
where $k(\lambda)$ is the principal eigenvalue of the operator $\mathcal{L}_\lambda\Phi(x):= e^{\lambda x}\mathcal{L}_D\big[ e^{-\lambda x} \Phi(x)\big]$. This formula is presently well-established in different contexts, including the case of a non-local diffusion \cite{Weinberger-2002, Shen-Zhang-2012, Coville-Davila-Martinez-2013, Liang-Zhou-2022a, Liang-Zhou-2022b}.

\medskip

\noindent \textbf{Forced waves.} Our problem \eqref{eq:main} is particularly relevant to the analysis of the existence and stability of forced waves. Early occurrences of such models, which can be used to model the propagation and adaptation of a species under climate change, go back to Berestycki et al \cite{Berestycki-Diekmann-Nagelkerke-Zegeling-2009}, Berestycki and Fang \cite{Berestycki-Fang-2018}, Bouhours and Giletti \cite{Bouhours-Giletti-2019}, among others. Let $\tilde{a}(x)$ be an advection coefficient (having a constant sign or not), and consider the reaction-diffusion equation
\begin{equation*}
	u_t(t, x) = \tilde{a}(x)u_x(t, {x})+ \int_{\mathbb{R}}\tilde{K}(x,y)\big(u(t, y)-u(t, x)\big)\dd y + f\big(x-ct, u(t, x)\big),
\end{equation*}
where $\tilde{K}(x+1, y)=\tilde{K}(x, y)$ is a nonlocal dispersal kernel and $f(x+1, u)=f(x, u)$. By the change of variables $\xi=x-ct$, one reaches the equation
\begin{equation*}
	u_t(t, \xi) = {\big(\tilde{a}(\xi+ct)+c\big)u_{\xi}(t, \xi)}+ \int_{\mathbb{R}}\tilde{K}(\xi+ct,y+ct)\big(u(t, y)-u(t, \xi)\big)\dd y + f\big(\xi, u(t, \xi)\big).
\end{equation*}
Hence, we recover a nonlocal diffusion equation where the advection coefficient is replaced by $a_t(x)=\tilde{a}(x+ct)-c$. In the context of KPP-type nonlinearities, the existence of a pulsating wave is strongly linked with the sign of the principal eigenvalue of the linearized operator around the unstable state $u\equiv 0$ \cite{Xin-2000, Berestycki-Hamel-2002, Coville-Davila-Martinez-2013, Alfaro-Griette-2018}. Since $c$ is arbitrary, we can easily construct some models where  the study of a nonlocal diffusion with a sign-changing kernel becomes necessary. 

The difference with our model lies in the dependency on the new advection coefficient $a_t(x)$ and the new kernel $  K_t(\xi,y)=\tilde{K}(x+ct, y+ct)$.   We plan to investigate such a situation in future works. 
\medskip

\textbf{Eigenvalues of order-preserving operators.} Order-preserving operators are known to enjoy particular spectral properties, especially concerning the extremity of the spectrum (spectral bound or spectral radius). For a positive and irreducible matrix $A$, the Perron-Frobenius Theorem \cite[Theorem 4.25 p.138]{Ducrot-Griette-Liu-Magal-2022} ensures that the spectral radius $r(A)$ is a simple eigenvalue that is associated with a positive eigenvector. This theorem admits generalizations in the setting of Banach spaces; particularly when the operator is compact and preserves a cone with a non-empty interior, the Krein-Rutman Theorem (sometimes called the Jentzch-Perron Theorem as in \cite[Corollary 4.2.14 p.273]{Meyer--Nieberg-1991}; we also refer to Schaefer \cite{Schaefer-1974}) ensures that the spectral radius  is a simple eigenvalue that is isolated,  associated with a unique positive normalized  eigenvector. Birindelli \cite{Birindelli-1995} adapted the Krein-Rutman Theorem in the context of cones with empty interiors, under a  stronger condition on the operator, so that it can be applied to the resolvent of elliptic operators on $L^n(\Omega)$, where $\Omega$ is a bounded domain of $\mathbb{R}^n$. Thieme \cite{Thieme-1998a, Thieme-1998b} introduced a method close to ours in the context of integrated semigroups to study the spectral radius of integrated semigroups and  establishes some local exponential stability criteria for nonlinear equations; in the process, he proved the existence of a principal eigenpair for some class of age-structure models. 
However, to the best of our knowledge, there exist no previous results on the multiplicity of principal eigenvalues for resolvent-positive operators in the literature.
\medskip

The structure of our article is as follows. In section \ref{sec:main-results}, we state our main results and prove the existence and uniqueness of a principal eigenpair for \eqref{eq:main} when $a(x)$ does not change sign, whether it is strictly positive or has a non-degenerate zero of order two. 
In Section \ref{sec:generation}, we recall some classical results about semigroups and prove that our operator generates a positive and strongly continuous semigroup in $L^1_{per}$.
In section \ref{sec:construction}, we deal with the more involved situation of  an advection term $a(x)$ with two zeros of order 1: depending on the other parameters, we prove either the existence and uniqueness of a principal eigenpair or prove the existence of a continuum of eigenpairs, at the boundary of which there exists a particular eigenpair with a  singular measure eigenvector.
As a result, we completely describe the set of positive solutions of \eqref{eq:main} in the space of Radon measures. 
Finally, in section \ref{sec:KPP}, we present an application to a nonlinear KPP-type equation, where a continuum of stationary solutions exists, corresponding to the constructed continuum of eigenpairs for the linearized problem. This showcases the differences between the local and non-local dispersal operators with diffusion.
	
	\section{Main results}
	\label{sec:main-results}
We consider $1$-periodic functions $a:\R\to \R$ and $r:\R\to\R$, of class $C^1$ and $C^0$ respectively.
We also consider a continuous function $K:\R\times [0,1]\to\R$ and we assume that
$$
K(x+1,y)=K(x,y),\;\forall (x,y)\in [0,1]\times [0,1]\text{ and }K(x,y)>0,\;\forall (x,y)\in [0,1]^2.
$$	
We denote $L^1_{\rm per}(\R)$ the set of the functions that are measurable, $1$-periodic and integrable on $(0,1)$, and we define the linear operators
\begin{equation*}%\label{defB}
B\in \mathcal L\left(L^1_{\rm per}(\R)\right),\;B\varphi\, (x)=\int_0^1 K(x,y)\varphi(y)dy,\;\forall \varphi\in L^1_{\rm per}(\R),
\end{equation*}
and $A:D(A)\subset L^1_{\rm per}(\R)\to L^1_{\rm per}(\R)$ with
\begin{equation}\label{defA}
\begin{split}
&D(A)=\left\{\varphi\in L^1_{\rm per}(\R):\;a\varphi'\in L^1_{\rm per}(\R)\right\},\\
&A\varphi=a(x)\varphi'+r(x)\varphi,\;\forall \varphi\in D(A).
\end{split}
\end{equation} 
We also define 
$A_0:D(A_0)\subset C^0_{per}(\R)\to C^0_{per}$ with
\begin{equation}\label{defA0}
\begin{split}
    &D(A_0)=\left\{\varphi\in C^0_{per}:\;a\varphi'\in C^0_{per}\right\},\\
&A_0\varphi=a(x)\varphi'+r(x)\varphi,\;\forall \varphi\in D(A_0).
\end{split}
\end{equation} 
We will denote $L^1_{per,+}(\R)$ the positive cone of $L^1_{per}(\R)$, which consists of all non-negative functions in this space. Similarly, $C^0_{per, +}=L^1_{per,+}\cap C^0_{per} $ will denote the positive cone of $C^0_{per}$.
\medskip

	Our basic assumption reads as follows. 

\begin{assumption}\label{ASS-10}
	The functions $a(x)$ and $r(x)$ are $1$-periodic with $ a\in C^1_{per}$ and $r \in C^0_{per}$. The kernel $K(x, y)$ is a continuous function defined on $\mathbb{S}^1\times \mathbb{S}^1$, which is strictly positive: $K(x, y)>0$ for all $(x, y)\in \mathbb{S}^1\times \mathbb{S}^1$.
\end{assumption}
We start by defining the notion of principal eigenpair, which is the main subject of this article.
\begin{definition}[Principal eigenpair]
	A \textit{principal eigenpair} of the operator $A+B$ is a pair $(\lambda, \varphi)$ with $\varphi\in D(A)$, $\varphi\geq 0$ and $\varphi\not\equiv 0$, such that $\varphi$ solves \eqref{eq:main} almost everywhere. A \textit{normalized principal eigenpair} $(\lambda, \varphi)$ is a principal eigenpair that satisfies moreover $\int_{\mathbb{S}^1} \varphi(y)\dd y=1$.
\end{definition}
Our method to obtain principal eigenvalues of $A+B$ is to regularize the principal eigenproblem by using the resolvent of the unbounded operator involved in the sum $A+B$: we transform $(A+B)\varphi=\lambda\varphi$ into $(\lambda I-A)\varphi =  B\varphi$ and reach
\begin{equation*}
	\varphi=(\lambda I-A)^{-1}B\varphi. 
\end{equation*}
Hence the operator $T(\lambda)=(\lambda I-A)^{-1}B$, which is a priori defined for $\lambda\in \rho(A)$ (the resolvent set of $A$) is of particular significance in our analysis: if $T(\lambda)$ has an eigenvector $\varphi$ with the eigenvalue 1, then this eigenvector is a solution of our problem \eqref{eq:main} with the eigenvalue $\lambda$.

The existence and multiplicity of principal eigenvalues for the operator $A+B$ depend on the number of zeros of $a(x)$ and the order of these zeros (understood as the order of the first non-vanishing derivative). We start with the simplest case, which is the case when $a(x)$ has a constant sign; the existence of a principal eigenpair has been established in the literature  (see e.g. \cite{Coville-Hamel-2020}) by a different method, but we believe that this case is an illuminating example for our method.

\subsection{The advection does not vanish}
\label{subsec:constant-sign}
In this case, the argument is relatively quick, so we present it here. Suppose that $a(x)>0$ and fix two arbitrary numbers $X_0\in [0, 1)$ and $X_1\in [0, 1)$. For any $f\in L^1_{per}$, we can compute the resolvent of $A$ as follows:
\begin{align*}
	(\lambda I-A)\varphi = f&\Leftrightarrow \big(a(x)\varphi\big)' + \big(r(x)-a'(x)-\lambda\big) \varphi = -f(x) \\
	&\Leftrightarrow \left(a(x) e^{\int_{X_0}^x \frac{\sigma(z)-\lambda}{a(z)}\dd z}\varphi\right)' e^{-\int_{X_0}^x \frac{\sigma(z)-\lambda}{a(z)}\dd z} = -f(x)\\
	&\Leftrightarrow a(x)\varphi(x)  e^{\int_{X_0}^x \frac{\sigma(z)-\lambda}{a(z)}\dd z} - a(X_1)\varphi(X_1) e^{\int_{X_0}^{X_1} \frac{\sigma(z)-\lambda}{a(z)}\dd z}=-\int_{X_1}^xf(y)e^{\int_{X_0}^y \frac{\sigma(z)-\lambda}{a(z)}\dd z} \dd y, 
\end{align*}
where $\sigma(x)=r(x)-a'(x)$. Since we want $\varphi$ to be $1$-periodic, there is a unique possible value for $\varphi(X_1)$, which is 
\begin{equation}\label{eq:no0-phiX1}
	\varphi(X_1)=\dfrac{-\int_{X_1}^{X_1+1}f(y)e^{\int_{X_0}^y \frac{\sigma(z)-\lambda}{a(z)}\dd z} \dd y}{a(X_1)e^{\int_{X_0}^{X_1} \frac{\sigma(z)-\lambda}{a(z)}\dd z}\left(e^{\int_{X_1}^{X_1+1} \frac{\sigma(z)-\lambda}{a(z)}\dd z}-1\right)},
\end{equation}
provided $e^{\int_{X_1}^{X_1+1} \frac{\sigma(z)-\lambda}{a(z)}\dd z}-1\neq 0$, or equivalently $\int_{0}^{1} \frac{\sigma(z)-\lambda}{a(z)}\dd z\neq 0$. We conclude that the resolvent set of $\big(A, D(A)\big)$ is precisely 
\begin{equation*}%\label{eq:no0-res-set}
	\rho(A)=\left\{\lambda\in\mathbb{C}\,:\, \int_{0}^{1} \frac{\sigma(z)-\lambda}{a(z)}\dd z\not\in 2\pi i\mathbb{Z}\right\}, 
\end{equation*}
and the resolvent is 
\begin{equation*}%\label{eq:no0-res}
	R(\lambda; A)f(x):= (\lambda I-A)^{-1}f(x)=\dfrac{-\int_{X_1}^xf(y)e^{\int_{X_0}^y \frac{\sigma(z)-\lambda}{a(z)}\dd z} \dd y + a(X_1)\varphi(X_1)e^{\int_{X_0}^{X_1} \frac{\sigma(z)-\lambda}{a(z)}\dd z} }{a(x) e^{\int_{X_0}^{x} \frac{\sigma(z)-\lambda}{a(z)}\dd z}}, 
\end{equation*}
for any $\lambda\in\rho(A)$, where $\varphi(X_1)$ is defined by \eqref{eq:no0-phiX1}. 
We can check that 
\begin{equation}\label{eq:260512a}
	R(\lambda;A)f=\dfrac{-\int_{X_1}^x f(y)e^{\int_{X_0}^y\frac{\sigma(z)-\lambda}{a(z)}\dd z}\dd y+\frac{1}{1-e^{\int_{X_1}^{X_1+1}\frac{\sigma(z)-\lambda}{a(z)}\dd z}}\int_{X_1}^{X_1+1}f(y)e^{\int_{X_0}^{y}\frac{\sigma(z)-\lambda}{a(z)}\dd z}\dd y}{a(x)e^{\int_{X_0}^x\frac{\sigma(z)-\lambda}{a(z)}\dd z}}.
\end{equation}
Hence if 
\begin{equation}\label{eq:260512b}
	\lambda>\lambda_{crit}:=\frac{1}{\int_0^1 a^{-1}(z)\dd z}\int_0^1 \frac{\sigma(z)}{a(z)}\dd z,
\end{equation}
then we have $\lambda\in \rho(A)$ and $\frac{1}{1-e^{\int_{X_1}^{X_1+1}\frac{\sigma(z)-\lambda}{a(z)}\dd z}}> 1$, and we deduce from \eqref{eq:260512a} that $R(\lambda;A)f\geq 0$ whenever $f\geq 0$; in other words, $R(\lambda;A)$ is a positive operator. Define 
\begin{equation*}
	T(\lambda)\varphi := R(\lambda; A)B\varphi.
\end{equation*}
Whenever \eqref{eq:260512b} holds true, the operator $T(\lambda)$ is positive, irreducible, and compact, hence possesses a unique normalized principal eigenpair $(\kappa(\lambda), \varphi_\lambda)$ with $\varphi_{\lambda}>0$; moreover $\kappa(\lambda)=r\big(T(\lambda)\big)$, the spectral radius of $T(\lambda)$.
It follows from classical arguments (that can be adapted from the proof of proposition \ref{prop:eigen-0} below) that $\lambda\mapsto \kappa(\lambda)$ is continuous,  strictly decreasing, $\kappa(\lambda) \to 0$ as $\lambda\to+\infty$ and $\kappa(\lambda)\to +\infty$ as {$\lambda\to \lambda_{crit}^+$}. 
Hence there exists a unique value $\lambda=\lambda_1^*>\lambda_{crit}$ such that $\kappa(\lambda)=1$. We finally note that
\begin{equation*}
	(A+B)\varphi=\lambda\varphi \Leftrightarrow (\lambda I-A)\varphi=B\varphi \Leftrightarrow \varphi=R(\lambda; A)B\varphi=T(\lambda)\varphi, 
\end{equation*}
and we conclude that there always exists a principal eigenpair $(\lambda_1^*, \varphi_{\lambda_1^*})$,  which corresponds to the principal eigenvalue of $T(\lambda)$ satisfying $\kappa(\lambda)=1$. By adapting the proof of Lemma \ref{lem:estimates-princeig} below, we can prove that there cannot exist a principal eigenpair $(\lambda,\varphi)$ with $\lambda\leq \lambda_{crit}$. Hence $(\lambda_1^*, \varphi_{\lambda_1^*})$ is the unique principal eigenpair for the operator $A+B$. We have proved the following Theorem.
\begin{theorem}
	Let Assumption \ref{ASS-10} hold true and suppose moreover that $a(x)>0$ for all $x\in\mathbb{R}$. Then $A+B$ has a unique normalized principal eigenpair $\big(\lambda_1^*, \varphi_{\lambda_1^*}\big)$, we have $\lambda_1^*>\lambda_{crit}$, and $\lambda_1^*$ is the unique solution of the equation $\kappa(\lambda)=1$. 
\end{theorem}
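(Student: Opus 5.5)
The plan is to reduce the eigenproblem for $A+B$ to a fixed-point problem for the family $T(\lambda)=R(\lambda;A)B$, $\lambda>\lambda_{crit}$, and then study the scalar function $\kappa(\lambda)=r(T(\lambda))$.

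\textbf{Step 1: Krein--Rutman for each $T(\lambda)$.} Fix $\lambda>\lambda_{crit}$. From \eqref{eq:260512a}, $R(\lambda;A)$ is given by a kernel that is continuous and strictly positive on the periodic cell. This uses $a>0$ and the factor $\big(1-e^{\int_0^1(\sigma-\lambda)/a}\big)^{-1}>1$.

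Since $K>0$ is continuous, $B$ maps $L^1_{per}$ boundedly into $C^0_{per}$ with equicontinuous images of bounded sets, so $T(\lambda)$ is compact. Moreover $B\varphi>0$ for every $\varphi\gneqq0$, hence $T(\lambda)\varphi\gg0$, i.e. $T(\lambda)$ is strongly positive. Krein--Rutman (applied in $C^0_{per}$, then transferred to $L^1_{per}$) gives $\kappa(\lambda)=r(T(\lambda))>0$. This is a simple eigenvalue with a positive eigenvector $\varphi_\lambda$, and it is the only eigenvalue of $T(\lambda)$ admitting a nonnegative eigenvector.

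\textbf{Step 2: monotonicity and continuity of $\kappa$.} The resolvent identity $R(\lambda)-R(\mu)=(\mu-\lambda)R(\lambda)R(\mu)$, together with positivity, shows that $T(\lambda)\ge T(\mu)$ for $\lambda<\mu$. The difference $T(\lambda)-T(\mu)$ is itself strongly positive.

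Strict decrease then follows by testing against the positive adjoint eigenfunction $\psi_\mu$ of $T(\mu)^*$. Writing $\kappa(\lambda)\langle\varphi_\lambda,\psi_\mu\rangle=\langle T(\lambda)\varphi_\lambda,\psi_\mu\rangle$, one gets
\[
\kappa(\lambda)\langle\varphi_\lambda,\psi_\mu\rangle>\langle T(\mu)\varphi_\lambda,\psi_\mu\rangle=\kappa(\mu)\langle\varphi_\lambda,\psi_\mu\rangle.
\]
Continuity holds because $\lambda\mapsto T(\lambda)$ is norm-continuous and $\kappa(\lambda)$ is an isolated simple eigenvalue.

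\textbf{Step 3: limits of $\kappa$.} As $\lambda\to+\infty$, $\|R(\lambda;A)\|\to0$, which gives $\kappa(\lambda)\to0$. As $\lambda\downarrow\lambda_{crit}$, the factor $\big(1-e^{\int_0^1(\sigma-\lambda)/a}\big)^{-1}$ blows up. This yields a pointwise lower bound $T(\lambda)\mathbf 1\ge c(\lambda)\mathbf 1$ with $c(\lambda)\to\infty$, so $\kappa(\lambda)\ge c(\lambda)\to\infty$.

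The intermediate value theorem together with strict monotonicity then gives a unique $\lambda_1^*>\lambda_{crit}$ with $\kappa(\lambda_1^*)=1$. By the equivalence $(A+B)\varphi=\lambda\varphi\iff\varphi=T(\lambda)\varphi$, the pair $(\lambda_1^*,\varphi_{\lambda_1^*})$ is a principal eigenpair.

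\textbf{Step 4: uniqueness, the main obstacle.} Let $(\lambda,\varphi)$ be any principal eigenpair.

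If $\lambda>\lambda_{crit}$, then $\varphi=T(\lambda)\varphi$ with $\varphi\gneqq0$. Step 1 forces $1=\kappa(\lambda)$, hence $\lambda=\lambda_1^*$, and simplicity gives $\varphi=\varphi_{\lambda_1^*}$ after normalization.

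The hard case is $\lambda\le\lambda_{crit}$, where $R(\lambda;A)$ is no longer positive, or does not even exist. Here I would work directly with the ODE. Since $B\varphi>0$ is continuous, $\varphi$ is absolutely continuous and satisfies
\[
(a\varphi)'+(\sigma-\lambda)\varphi=-B\varphi<0 .
\]
Set $u=a\varphi>0$. Dividing by $u$ gives $(\ln u)'<-(\sigma-\lambda)/a$ wherever $u>0$. Integrating over one period, periodicity of $u$ yields $0<-\int_0^1(\sigma-\lambda)/a$, i.e. $\lambda>\lambda_{crit}$, a contradiction.

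For this argument one must first check that $\varphi>0$ everywhere. If $u(x_0)=0$ at a minimum point, the ODE gives $u'(x_0)=-B\varphi(x_0)<0$, which contradicts minimality. So $\varphi>0$, the contradiction above applies, and no principal eigenpair exists with $\lambda\le\lambda_{crit}$.
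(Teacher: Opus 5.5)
Your proposal is correct and follows the paper's route. You regularize through $T(\lambda)=R(\lambda;A)B$ for $\lambda>\lambda_{crit}$, apply Krein--Rutman/Jentzsch--Perron, and show $\kappa$ is continuous and strictly decreasing from $+\infty$ to $0$. You then exclude $\lambda\le\lambda_{crit}$ by integrating the ODE over one period, which is exactly what the paper's suggested adaptation of Lemma~\ref{lem:estimates-princeig} (via Lemma~\ref{lem:supercrit}) amounts to; your adjoint-eigenvector proof of strict monotonicity is a harmless substitute for the paper's touching argument.

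One small inaccuracy: the kernel of $R(\lambda;A)$ is not continuous, since it jumps by $1/a(x)$ across the diagonal $y=x$. This does not matter, because you only use that the kernel is bounded and bounded below by a positive constant, which holds since $\big(1-e^{\int_0^1(\sigma-\lambda)/a}\big)^{-1}>1$.
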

By the change of variable $x\mapsto -x$, we obtain a similar statement if $a(x)<0$ for all $x\in\mathbb{R}$. We omit the details.

\subsection{The advection has two zeros of order 1}
This section contains the core of our analysis. We make the following assumption. 
\begin{assumption}\label{ASS-20}
	The functions $a(x)$,  $r(x)$ and $K(x, y)$ satisfy assumption \ref{ASS-10}. There exists $0<x_0<x_1<1$ such that 
\begin{equation*}
	a(x_0)=a(x_1)=0,\, a(x)>0\text{ if } x\in (x_0, x_1)\text{ and } a(x)<0 \text{ if } x\in(x_1, 1+x_0), 
\end{equation*}
and $r$ is once differentiable  and $ a $ is twice differentiable at $x_0$ and $x_1$, i.e., $r'(x)$, $a'(x)$ and $a''(x)$ exist for $x\in \{x_0, x_1\}$. Finally, 
	\begin{equation*}
		a'(x_0)>0,\;\;a'(x_1)<0.
	\end{equation*}
\end{assumption}
\noindent We will frequently use the notations $\lambda_1^0:= r(x_0)-a'(x_0)$,  $\lambda_1^1:=r(x_1)-a'(x_1)$, $\lambda_1^m:=\max(\lambda_1^0, \lambda_1^1)$ and $\lambda_1^{ext}:=\max(r(x_1), \lambda_1^0)$.

First we need to study the resolvent of $A$. 
\begin{proposition}[The resolvent of $A$]\label{prop:resolvent}
	Let {$\lambda_1^m=\max\big(r(x_0)-a'(x_0), r(x_1)-a'(x_1)\big)$}. Then the resolvent set of $A$ contains the ray $(\lambda_1^m, +\infty)$, and $\lambda_1^m$ is the minimal number to have this property: the spectral bound of $A$, $s(A)$,  corresponds to $\lambda_1^m$. 
	\begin{equation*}
		\rho(A)\supset (\lambda_1^m, +\infty) \text{ and } s(A)=\lambda_1^m. 
	\end{equation*}
\end{proposition}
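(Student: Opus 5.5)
We will solve the resolvent equation $(\lambda I-A)\varphi=f$ explicitly on each interval between consecutive zeros of $a$, exactly as in subsection \ref{subsec:constant-sign}, and then glue the pieces at $x_0$ and $x_1$. Write the equation as $(a\varphi)'+(\sigma-\lambda)\varphi=-f$ with $\sigma=r-a'$. Near a zero $x_i$ we have $a(z)\approx a'(x_i)(z-x_i)$, so
\begin{equation*}
\frac{\sigma(z)-\lambda}{a(z)}=\frac{\lambda_1^i-\lambda}{a'(x_i)(z-x_i)}+O(1),
\end{equation*}
where the remainder is bounded because $r$ is differentiable and $a$ twice differentiable at $x_i$. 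Hence the integrating factor $E(x)=\exp\int^x\frac{\sigma-\lambda}{a}$ behaves like $|x-x_i|^{(\lambda_1^i-\lambda)/a'(x_i)}$ near $x_i$.

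\textbf{Inclusion $(\lambda_1^m,+\infty)\subset\rho(A)$.} Take $\lambda>\lambda_1^m$.
\begin{itemize}
\item Near the repelling zero $x_0$ ($a'(x_0)>0$) the exponent of $E$ is negative, so the homogeneous solution $1/(aE)\sim|x-x_0|^{-1+(\lambda-\lambda_1^0)/a'(x_0)}$ lies in $L^1$.
\item Near the attracting zero $x_1$ ($a'(x_1)<0$) the exponent of $E$ is positive, so $1/(aE)\sim|x-x_1|^{-1-(\lambda-\lambda_1^1)/|a'(x_1)|}$ is not integrable.
\end{itemize}
On $(x_0,x_1)$ and on $(x_1,1+x_0)$ we therefore define
\begin{equation*}
\varphi(x)=\frac{1}{a(x)E(x)}\int_{x}^{x_1}f(y)E(y)\,\dd y ,
\end{equation*}
integrating from the endpoint $x_1$, where $E$ vanishes. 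This is the unique choice that kills the non-integrable homogeneous mode at $x_1$, and it also fixes the free constant at $x_0$.

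Next we check that $\varphi\in L^1$ with $\|\varphi\|_{L^1}\le C\|f\|_{L^1}$. By Fubini,
\begin{equation*}
\int|\varphi|\,\dd x\le\int|f(y)|\,E(y)\int_{\text{between }y\text{ and }x_0}\frac{\dd x}{|a(x)|E(x)}\,\dd y .
\end{equation*}
Near $x_0$ the inner integral is of order $E(y)^{-1}|y-x_0|^{1}\cdot|y-x_0|^{-1}$, more precisely controlled by a power computation. Near $x_1$ one uses $\int_{\cdot}^{y}|x-x_1|^{-1-\beta}\,\dd x\lesssim|y-x_1|^{-\beta}$. In both cases the product $E(y)\int(\cdots)$ stays bounded, with constants depending only on $\lambda-\lambda_1^i>0$.

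Since $a\varphi'=f+(\lambda-r)\varphi$, we also get $a\varphi'\in L^1$, so $\varphi\in D(A)$. The formula shows $\varphi\ge0$ when $f\ge0$.

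Uniqueness follows because any element of $\ker(\lambda-A)$ equals $c/(aE)$ on each interval, which is non-integrable at $x_1$ unless $c=0$. This gives $\lambda\in\rho(A)$. The same estimates apply to complex $\lambda$ with $\mathrm{Re}\,\lambda>\lambda_1^m$.

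\textbf{Sharpness $s(A)=\lambda_1^m$.} We show that every real $\lambda<\lambda_1^m$ (and $\lambda_1^m$ itself, using closedness of the spectrum) lies in $\sigma(A)$. There are two cases.
\begin{itemize}
\item If $\lambda<\lambda_1^1$, the homogeneous solution $1/(aE)\sim|x-x_1|^{-1+(\lambda_1^1-\lambda)/|a'(x_1)|}$ is integrable near $x_1$. Cutting it off on the side toward $x_0$, it gives either a genuine eigenfunction or an approximate eigenfunction (Weyl sequence). Alternatively one can exhibit an $f$ for which no $L^1$ solution exists.
\item If $\lambda<\lambda_1^0$, then $1/(aE)$ fails to be integrable near $x_0$. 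The Volterra-type formula then forces a non-integrable singularity at $x_0$ for any $f\ge0$ supported near $x_0$ with $\int fE\ne0$. So $\lambda-A$ is not surjective.
\end{itemize}
Hence $(-\infty,\lambda_1^m)\subset\sigma(A)$, and therefore $s(A)=\lambda_1^m$.

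\textbf{Main obstacle.} I expect the hardest step to be the uniform $L^1$ bound near the zeros, that is, controlling the Hardy-type integral with the $O(1)$ corrections in the exponent. For this I would first split into a small neighborhood of each $x_i$, where $E$ is comparable to the pure power, and a compact complement, where everything is bounded.
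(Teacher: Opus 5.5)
Your proof of $(\lambda_1^m,+\infty)\subset\rho(A)$ follows the paper: the same formula integrating from $x_1$, the same Fubini and power-law estimates, and injectivity from the non-integrability of $1/(a\mathcal{E}_\lambda)$ at $x_1$. Extending it to $\mathrm{Re}\,\lambda>\lambda_1^m$ via $|\mathcal{E}_\lambda|=\mathcal{E}_{\mathrm{Re}\,\lambda}$ is a fine substitute for the paper's appeal to $s(A)\in\sigma(A)$ for positive semigroups.

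The gap is in the sharpness step: the claim $(-\infty,\lambda_1^m)\subset\sigma(A)$ is false. Take $\lambda<\min(\lambda_1^0,\lambda_1^1)$. Then $\alpha_1(\lambda)<0$, so $1/(a\mathcal{E}_\lambda)$ is integrable at $x_1$ and imposes no condition there. Meanwhile $\alpha_0(\lambda)>0$ makes it non-integrable on both sides of $x_0$. The free constant on each arc must therefore be spent at $x_0$: set $\varphi(x)=-\frac{1}{a(x)\mathcal{E}_\lambda(x)}\int_{x_0}^{x}f\mathcal{E}_\lambda\,\dd y$, integrating from $1+x_0$ on the other arc. The same Fubini computation gives $\|\varphi\|_{L^1}\le C\|f\|_{L^1}$ with trivial kernel, so $\lambda\in\rho(A)$. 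Conceptually, the characteristic flow is global in both time directions, so $A$ generates a $C_0$-group. The backward-time analogue of the norm computation in Theorem \ref{THEO1}(ii) then confines $\sigma(A)$ to the strip $\min(\lambda_1^0,\lambda_1^1)\le\mathrm{Re}\,\lambda\le\lambda_1^m$. Your second bullet (``forces a non-integrable singularity at $x_0$'') tacitly assumes the constant was already pinned at $x_1$, which holds only for $\lambda\ge\lambda_1^1$. The cut-off or Weyl sequence of your first bullet cannot exist at such resolvent points either.

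What survives is only the open interval strictly between $\lambda_1^0$ and $\lambda_1^1$. For $\lambda\in(\lambda_1^0,\lambda_1^1)$ there are genuine eigenfunctions $\mathbbm{1}_{(x_0,x_1)}/(a\mathcal{E}_\lambda)$; for $\lambda\in[\lambda_1^1,\lambda_1^0)$ the operator fails to be surjective. When $\lambda_1^0=\lambda_1^1$ this interval is empty, so closedness of $\sigma(A)$ gives nothing, and you need an argument at $\lambda=\lambda_1^m$ itself.

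\textbf{Case $\lambda_1^m=\lambda_1^1>\lambda_1^0$.} Here $\alpha_1=0$ and $\mathcal{E}_\lambda$ is bounded above and below near $x_1$. The only candidate preimage, $\frac{1}{a\mathcal{E}_\lambda}\int_x^{x_1}f\mathcal{E}_\lambda$, is comparable to the Hardy-type average $\frac{1}{x_1-x}\int_x^{x_1}f$. Hence $f(x)=\mathbbm{1}_{[X_0,x_1)}(x)\,(x_1-x)^{-1}|\ln(x_1-x)|^{-2}$ has no $L^1$ preimage.

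\textbf{Case $\lambda_1^m=\lambda_1^0\ge\lambda_1^1$.} Here $\alpha_1\ge 0$ pins the constant at $x_1$. Since $\alpha_0=0$, the solution behaves like $c/(x-x_0)$ near $x_0$, with $c>0$ proportional to $\int_{x_0}^{x_1}f\mathcal{E}_\lambda$, for any nontrivial $f\ge0$ on $(x_0,x_1)$.

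This is exactly the content of the paper's proof. There it is phrased as the blow-up of $\|R(\lambda;A)\|$ as $\lambda\downarrow\lambda_1^m$, using $f\equiv1$ in the second case and the logarithmic $f$ in the first, which forces $\lambda_1^m\in\sigma(A)$.
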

\noindent The proof of Proposition \ref{prop:resolvent} will be given in Proposition \ref{prop:resolvent-circle}, as a consequence of the analysis in subsection \ref{sec:inversion-hyperbolic}. \medskip

It turns out that we can use the regularizing character of $B$ to extend the definition of $T(\lambda)$ outside of its ``natural'' domain of existence. 
To that aim we arbitrarily fix $X_0\in (x_0, x_1)$ and $ X_0'\in (x_1, 1+x_0)$ and  define  
\begin{equation}\label{eq:E_lambdaS1}
	\mathcal{E}_\lambda(x)= \mathcal{E}_\lambda^{X_0, X_0'}(x):=
	\begin{cases}
		\exp\left(\int_{X_0}^x \dfrac{r(z)- a'(z)-\lambda}{a(z)}\dd z\right),  &\text{ if } x\in (x_0, x_1), \vspace{3pt}\\ 
		\exp\left(\int_{X_0'}^x \dfrac{r(z)-a'(z)-\lambda}{a(z)}\dd z\right),  &\text{ if } x\in (x_1,1+ x_0). 
	\end{cases}
\end{equation}
When there are no ambiguities we will use the notation $\mathcal{E}_\lambda(x)$ instead of $\mathcal{E}_\lambda^{X_0, X_0'}(x)$.
\begin{proposition}[The operator $T(\lambda)$ and the function $\kappa(\lambda)$]\label{prop:Tlambda}
	Let $\lambda_1^{ext}=\max\big(r(x_1), r(x_0)-a'(x_0)\big)$. For any $\lambda\in (\lambda_1^{ext}, +\infty)$, we define the operator
	\begin{equation*}\label{eq:Tlambda}
		T(\lambda) \varphi(x):= \dfrac{\displaystyle -\int_{x_1}^x (K\star \varphi)(y) \mathcal{E}_\lambda(y)\dd y}{\displaystyle a(x) \mathcal{E}_\lambda(x)}, \text{ for any } x\not\in \{x_0, x_1\} .
	\end{equation*} 
	Then $T(\lambda)$ maps $L^1_{per}$ into $D(A)\subset L^1_{per}$, and satisfies $(\lambda I-A)T(\lambda) = B$. Moreover, $T(\lambda)$ is nonnegative, bounded, compact and irreducible. 

	Let $\kappa(\lambda):=r\big(T(\lambda)\big)$, the spectral radius of $T$. Then $\kappa(\lambda)$ is continuous, strictly decreasing and superconvex in $\lambda$; we have
	\begin{equation*}\label{eq:limits_kappa}
		\lim_{\lambda\to (\lambda_1^{ext})^+}\kappa(\lambda)=+\infty \text{ and } \lim_{\lambda\to +\infty}\kappa(\lambda)=0.
	\end{equation*}
	Finally, we have $\ker\big(\kappa(\lambda)I-T(\lambda)\big)=\mathrm{span}(\varphi_1^*)$, where $\varphi_1^*$ is a strictly positive vector satisfying $\int\varphi_1^*=1$.
\end{proposition}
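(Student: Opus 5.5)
The plan is to treat $T(\lambda)$ as an explicit integral operator and read off everything from the local behaviour of $\mathcal{E}_\lambda$ near the two zeros of $a$. Near $x_1$ (an attracting zero, $a'(x_1)<0$) Assumption \ref{ASS-20} gives
\begin{equation*}
\mathcal{E}_\lambda(y)\asymp |y-x_1|^{\frac{\lambda-r(x_1)}{|a'(x_1)|}-1}.
\end{equation*}
Near $x_0$ (a repelling zero, $a'(x_0)>0$) it gives
\begin{equation*}
\mathcal{E}_\lambda(y)\asymp |y-x_0|^{-\frac{\lambda-\lambda_1^0}{a'(x_0)}}.
\end{equation*}

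\textbf{Step 1: regularity and boundedness of $T(\lambda)$.} For $\lambda>r(x_1)$ the integrand of $T(\lambda)\varphi$ is integrable at $x_1$, so the formula makes sense. For $\lambda>\lambda_1^0$ the divergence of $\int_{x_1}^x\mathcal{E}_\lambda$ as $x\to x_0$ is exactly compensated by $a(x)\mathcal{E}_\lambda(x)$. Using L'Hôpital's rule at both points, I would show that for $\varphi\in L^1_{per}$ the function $T(\lambda)\varphi$ extends continuously to $x_0$ and $x_1$. At $x_1$, for instance, the limit is $(B\varphi)(x_1)/(\lambda-r(x_1))$; at $x_0$ it is $(B\varphi)(x_0)/(\lambda-\lambda_1^0)$. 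Moreover $\|T(\lambda)\varphi\|_\infty\le C_\lambda\|B\varphi\|_\infty$.

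Thus $T(\lambda)=S(\lambda)\circ B$, where $S(\lambda):C^0_{per}\to C^0_{per}$ is bounded. Since $B:L^1_{per}\to C^0_{per}$ is compact by Arzelà--Ascoli (uniform continuity of $K$), $T(\lambda)$ is compact on $L^1_{per}$ and on $C^0_{per}$. Differentiating on $(x_0,x_1)$ and $(x_1,1+x_0)$ gives $a(T\varphi)'+(r-\lambda)T\varphi=-B\varphi$, hence $(\lambda I-A)T(\lambda)=B$, and $a(T\varphi)'\in L^1$.

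Positivity follows from the signs. On $(x_0,x_1)$ we have $\int_{x_1}^x<0$ and $a>0$; on $(x_1,1+x_0)$ we have $\int_{x_1}^x>0$ and $a<0$. If $\varphi\ge0$ and $\varphi\not\equiv0$, then $B\varphi>0$, so $T(\lambda)\varphi>0$ everywhere, including at $x_0$ and $x_1$ by the limits above. This strong positivity gives irreducibility. The Krein--Rutman theorem on $C^0_{per}$ (transferred to $L^1_{per}$, since every eigenvector lies in the range and hence is continuous) then yields that $\kappa(\lambda)>0$ is an algebraically simple eigenvalue with kernel spanned by a strictly positive normalized $\varphi_1^*$.

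\textbf{Step 2: monotonicity, continuity and superconvexity.} Write $T(\lambda)\varphi(x)=\int_{x_1}^{x}\cdots$ with the ratio
\begin{equation*}
\frac{\mathcal{E}_\lambda(y)}{\mathcal{E}_\lambda(x)}=\exp\Big(\int_x^y\frac{\sigma}{a}\Big)\exp\Big(-\lambda\int_x^y\frac{dz}{a}\Big),\qquad \sigma=r-a'.
\end{equation*}
For $y$ between $x$ and $x_1$, $\int_x^y dz/a>0$ on both arcs. Hence the kernel is pointwise strictly decreasing in $\lambda$ and log-affine in $\lambda$.

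Strict decrease of $\kappa$ then follows by pairing $T(\lambda')\varphi_\lambda<\kappa(\lambda)\varphi_\lambda$ (pointwise, for $\lambda'>\lambda$) with a positive eigenmeasure of the adjoint $T(\lambda')^*$. Continuity comes from norm-continuity of $\lambda\mapsto T(\lambda)$, proved by dominated convergence using the uniform local bounds of Step 1 on compact subintervals of $(\lambda_1^{ext},\infty)$, together with continuity of isolated simple eigenvalues. For superconvexity (convexity of $\log\kappa$) I would invoke Kingman's theorem: the spectral radius of a matrix with log-convex entries is log-convex. I would apply it to finite-dimensional discretizations of $T(\lambda)$ whose spectral radii converge to $\kappa(\lambda)$.

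\textbf{Step 3: limits, the main obstacle.} As $\lambda\to+\infty$, $\|T(\lambda)\|\to0$ by dominated convergence, since the ratio $\mathcal{E}_\lambda(y)/\mathcal{E}_\lambda(x)\to0$ for $y\neq x$; hence $\kappa\to0$.

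For the blow-up, let $k_0=\min K>0$. Then $T(\lambda)\varphi\ge k_0\|\varphi\|_1 h_\lambda$ with $h_\lambda:=S(\lambda)1$. Applying this to $\varphi_1^*$ and integrating gives
\begin{equation*}
\kappa(\lambda)\ge k_0\int_0^1 h_\lambda .
\end{equation*}
It then suffices to show $\int h_\lambda\to\infty$ as $\lambda\downarrow\lambda_1^{ext}$, treating two cases.
\begin{itemize}
\item If $\lambda_1^{ext}=r(x_1)$, set $s=(\lambda-r(x_1))/|a'(x_1)|\to0$. Then $\int_{x_1}^x\mathcal{E}_\lambda\sim|x-x_1|^s/s$, so $h_\lambda\gtrsim1/s$ on a fixed neighbourhood of $x_1$.
\item If $\lambda_1^{ext}=\lambda_1^0$, set $\varepsilon=(\lambda-\lambda_1^0)/a'(x_0)\to0$. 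The numerator stays bounded below while $a\mathcal{E}_\lambda\sim|x-x_0|^{1-\varepsilon}$, so $\int h_\lambda\gtrsim1/\varepsilon$.
\end{itemize}
Making these asymptotics uniform in $\lambda$, with constants independent of $\lambda$ near $\lambda_1^{ext}$ given only pointwise differentiability of $r$ and $a$ at $x_0,x_1$, is the delicate part. I would handle it by sandwiching $(\sigma-\lambda)/a$ between $\big((\sigma(x_i)-\lambda)/a'(x_i)\pm\delta\big)/(z-x_i)$ on small neighbourhoods of $x_i$.
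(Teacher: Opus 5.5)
\textbf{Gap in Step 1.} The continuity claim at $x_0$ is false on part of the parameter range, and your Krein--Rutman argument on $C^0_{per}$ depends on it. In the L'H\^opital computation the denominator satisfies $\big(a\mathcal{E}_\lambda\big)'=(a'+\sigma-\lambda)\mathcal{E}_\lambda=(r-\lambda)\mathcal{E}_\lambda$. So when the rule applies, the limit at $x_0$ is $(B\varphi)(x_0)/(\lambda-r(x_0))$, not $(B\varphi)(x_0)/(\lambda-\lambda_1^0)$.

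Moreover, the rule applies only if $a\mathcal{E}_\lambda\to\infty$. That means $1+\alpha_0=(r(x_0)-\lambda)/a'(x_0)<0$, i.e. $\lambda>r(x_0)$. Since $a'(x_0)>0$ we have $r(x_0)>\lambda_1^0$. The interval $(\lambda_1^{ext},r(x_0))$ is therefore nonempty as soon as $r(x_1)<r(x_0)$. On that interval, for $\varphi\ge0$, $\varphi\not\equiv0$, the numerator tends to the finite positive number $\int_{x_0}^{x_1}(B\varphi)\mathcal{E}_\lambda\,\dd y$, while $a\mathcal{E}_\lambda\asymp|x-x_0|^{(r(x_0)-\lambda)/a'(x_0)}\to0$. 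Hence $T(\lambda)\varphi\asymp|x-x_0|^{-(r(x_0)-\lambda)/a'(x_0)}$ is unbounded, and at $\lambda=r(x_0)$ it blows up logarithmically (Lemma~\ref{lem:loc-est}). Your own second bullet in Step 3 uses exactly this blow-up of $h_\lambda=S(\lambda)\mathbbm{1}$.

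Consequently, in this range:
\begin{itemize}
\item $S(\lambda)$ is not bounded on $C^0_{per}$, and $T(\lambda)$ does not act on $C^0_{per}$.
\item Krein--Rutman on $C^0_{per}$ and the remark that eigenvectors are continuous are unavailable. The principal eigenvector is continuous only when the eigenvalue exceeds $r(x_0)$; compare the regularity statement in Theorem~\ref{thm:eigenpairs-L1}.
\item The sup-norm bounds you planned to use for norm-continuity in Step 2 do not exist.
\end{itemize}

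\textbf{How to repair it.} Stay in $L^1_{per}$, as the paper does.
\begin{itemize}
\item Show $S(\lambda)\mathbbm{1}\in L^1_{per}$: the singularity at $x_0$ is $|x-x_0|^{-(1+\alpha_0)}$ with $1+\alpha_0<1$, or logarithmic. Then $|S(\lambda)f|\le\Vert f\Vert_{C^0}\,S(\lambda)\mathbbm{1}$ gives $S(\lambda)\in\mathcal{L}(C^0_{per},L^1_{per})$.
\item Compactness on $L^1_{per}$ then follows from compactness of $B:L^1_{per}\to C^0_{per}$.
\item Irreducibility follows from $T(\lambda)\varphi>0$ on $\mathbb{S}^1\setminus\{x_0,x_1\}$.
\item Simplicity of $\kappa(\lambda)$ and a quasi-interior positive eigenvector come from the Jentzsch--Perron theorem for compact irreducible positive operators on a Banach lattice, not from Krein--Rutman with interior points.
\item Norm-continuity in $\lambda$ follows by dominated convergence, with dominating function $S(\bar\lambda)\mathbbm{1}\in L^1$ for $\bar\lambda<\lambda$, using monotonicity in $\lambda$.
\end{itemize}
With this fix, your adjoint-eigenfunctional argument for strict decrease and your Kingman-type argument for superconvexity are legitimate alternatives to the paper's touching argument and to Kato's operator version of Kingman's theorem. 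Kato's version does spare you proving that the spectral radii of your discretizations converge to $\kappa(\lambda)$.

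\textbf{Blow-up as $\lambda\to\lambda_1^{ext}$.} A sandwich with a fixed $\pm\delta$ is too weak here. It only gives $h_\lambda\gtrsim1/(s+\delta)$ on a neighbourhood whose length is in general of order $\delta$, and that bound stays bounded as $s\to0$. What you need is the remainder form $(\sigma-\lambda)/a=\alpha_i(\lambda)/(z-x_i)+\mathcal{O}(1)$, with the $\mathcal{O}(1)$ uniform for bounded $\lambda$. This follows from differentiability of $r$ and $a'$ at $x_i$, and it is precisely your $\asymp$ statement with $\lambda$-independent constants (Lemma~\ref{lem:E_lambda}).
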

\noindent The proof of Proposition \ref{prop:Tlambda} will be given in Proposition \ref{prop:eigen-0}.\medskip

Our first result on the existence of a principal eigenpair is a corollary of Proposition \ref{prop:Tlambda}. Indeed, by the continuity and strict monotonicity of $\kappa(\lambda)$, there always exists a unique solution of the equation $\kappa(\lambda)=1$, which is associated with a positive eigenvector. 
\begin{corollary}[Existence of a principal eigenpair]
	Let  $\lambda_1^*$ be the unique solution of the equation $\kappa(\lambda)=1$, and $\varphi_1^*\in D(A)$ be the associated eigenvector of $T(\lambda_1^*)$ satisfying $\int_{\mathbb{S}^1}\varphi_1^*=1$. Then $(\lambda_1^*, \varphi_1^*)$ is a normalized principal eigenpair of $A+B$.
\end{corollary}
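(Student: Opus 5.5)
The plan is to read off everything from Proposition \ref{prop:Tlambda}, then verify that a fixed point of $T(\lambda_1^*)$ solves \eqref{eq:main}.

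\textbf{Step 1: existence and uniqueness of $\lambda_1^*$.}
By Proposition \ref{prop:Tlambda}, the map $\kappa:(\lambda_1^{ext},+\infty)\to(0,+\infty)$ is continuous and strictly decreasing. It tends to $+\infty$ at $(\lambda_1^{ext})^+$ and to $0$ at $+\infty$. The intermediate value theorem gives some $\lambda_1^*>\lambda_1^{ext}$ with $\kappa(\lambda_1^*)=1$, and strict monotonicity makes it unique.

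\textbf{Step 2: the eigenvector.}
Proposition \ref{prop:Tlambda} gives $\ker\big(I-T(\lambda_1^*)\big)=\mathrm{span}(\varphi_1^*)$. Here $\varphi_1^*$ is strictly positive and satisfies $\int\varphi_1^*=1$. Since $T(\lambda_1^*)$ maps $L^1_{per}$ into $D(A)$ and $\varphi_1^*=T(\lambda_1^*)\varphi_1^*$, we get $\varphi_1^*\in D(A)$. In particular $a(\varphi_1^*)'\in L^1_{per}$.

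\textbf{Step 3: the eigen-equation.}
Apply the identity $(\lambda I-A)T(\lambda)=B$ of Proposition \ref{prop:Tlambda} at $\lambda=\lambda_1^*$ to $\varphi_1^*$:
\begin{equation*}
(\lambda_1^* I-A)\varphi_1^*=(\lambda_1^*I-A)T(\lambda_1^*)\varphi_1^*=B\varphi_1^*.
\end{equation*}
This is $a(\varphi_1^*)'+r\varphi_1^*+B\varphi_1^*=\lambda_1^*\varphi_1^*$ almost everywhere, i.e.\ \eqref{eq:main}. Together with $\varphi_1^*\geq0$, $\varphi_1^*\not\equiv0$ and the normalization, $(\lambda_1^*,\varphi_1^*)$ is a normalized principal eigenpair.

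\textbf{Main obstacle.}
The only delicate point is the meaning of $(\lambda I-A)T(\lambda)=B$ when $\lambda<s(A)=\lambda_1^m$ is allowed. In that range $T(\lambda)$ is not $R(\lambda;A)B$, so the identity must be understood pointwise a.e., which is exactly how Proposition \ref{prop:Tlambda} states it.

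If one wanted to double-check it directly, differentiate the formula on each of the intervals $(x_0,x_1)$ and $(x_1,1+x_0)$ away from the zeros of $a$. Using $\mathcal{E}_\lambda'/\mathcal{E}_\lambda=(r-a'-\lambda)/a$, one finds
\begin{equation*}
\big(a\,T(\lambda)\varphi\big)'+(r-a'-\lambda)\,T(\lambda)\varphi=-B\varphi.
\end{equation*}
This is precisely $(\lambda I-A)T(\lambda)\varphi=B\varphi$ almost everywhere, since $\{x_0,x_1\}$ is a null set.
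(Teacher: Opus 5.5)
Your proposal is correct and matches the paper's argument: the paper also obtains $\lambda_1^*$ from the intermediate value theorem together with the strict monotonicity of $\kappa$. It then passes from $T(\lambda_1^*)\varphi_1^*=\varphi_1^*$ to $B\varphi_1^*=(\lambda_1^*I-A)\varphi_1^*$ through the right-inverse property of the extended resolvent $R_0(\lambda;A_0)$, exactly as you do. Your direct differentiation check of $(\lambda I-A)T(\lambda)=B$ on $(x_0,x_1)$ and $(x_1,1+x_0)$ verifies the point the paper takes for granted. It is valid because $\lambda>\lambda_1^{ext}\geq r(x_1)$ makes the improper integral at $x_1$ converge.
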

\begin{definition}\label{def:minimal-eigenpair}
	We call $\lambda_1^*$ the \textit{minimal principal eigenvalue} and $(\lambda_1^*, \varphi_1^*)$ the \textit{minimal principal eigenpair} of $A+B$.
\end{definition}
But there is more: in some situations, $(\lambda_1^*, \varphi_1^*)$ is the unique principal eigenvalue of $A+B$, while in some other cases there exist many more (in fact, a continuum). Actually, we can completely characterize the solutions to the principal eigenproblem depending on the relative position of $\lambda_1^*$ compared to $\lambda_1^1:=r(x_1)-a'(x_1)$.
\begin{theorem}[Existence and multiplicity of eigenpairs]\label{thm:existence}
	Let $\lambda_1^1=r(x_1)-a'(x_1)$, and $(\lambda_1^*, \varphi_1^*)$ be the minimal principal eigenpair of $A+B$. The following alternative holds. 
	\begin{enumerate}
		\item If $\lambda_1^*\geq  \lambda_1^1$, then $(\lambda_1^*, \varphi_1^*)$ is the unique normalized principal eigenpair of $A+B$.
		\item If $\lambda_1^*<\lambda_1^1$, then for each $\lambda\in (\lambda_1^*, \lambda_1^1)$,  there exists   $\overline{\gamma}(\lambda)>0$ and a continuous family of positive vectors $\varphi_1^{\lambda, \gamma}\in D(A)$, indexed by $\gamma\in[0, \overline{\gamma}(\lambda)]$, such that $(\lambda, \varphi_1^{\lambda, \gamma})$ is a normalized principal eigenpair of $A+B$. 
			The family  $\big(\varphi_1^{\lambda, \gamma}\big)$ is composed of distinct eigenvectors: for $(\lambda, \gamma)\neq (\lambda', \gamma')$ we have $\varphi_1^{\lambda, \gamma}\neq \varphi_1^{\lambda', \gamma'}$.
			Finally, we have
			\begin{equation*}
				\overline{\gamma}(\lambda)\xrightarrow[\lambda\to (\lambda_1^*)^+]{} 0 \text{ and } \overline{\gamma}(\lambda)\xrightarrow[\lambda\to (\lambda_1^1)^-]{} 0. 
			\end{equation*}
	\end{enumerate}
\end{theorem}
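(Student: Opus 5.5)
The plan is to reduce everything to the fixed-point problem $\varphi=T(\lambda)\varphi+h$, where $h$ lies in the kernel of the \emph{local} operator $\lambda I-A$ away from $\{x_0,x_1\}$. Off the zeros of $a$, any $\varphi\in D(A)$ solving \eqref{eq:main} satisfies $(\lambda I-A)\varphi=B\varphi$. Proposition \ref{prop:Tlambda} gives $(\lambda I-A)T(\lambda)\varphi=B\varphi$, so $h:=\varphi-T(\lambda)\varphi$ solves $a h'+(r-\lambda)h=0$ on $(x_0,x_1)$ and on $(x_1,1+x_0)$. Hence
\begin{equation*}
	h=\gamma_-\,h_-+\gamma_+\,h_+,\qquad h_\pm(x):=\frac{\mathbf 1_{I_\pm}(x)}{|a(x)|\,\mathcal{E}_\lambda(x)},
\end{equation*}
where $I_-=(x_0,x_1)$, $I_+=(x_1,1+x_0)$, and $\gamma_\pm\in\R$. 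A local computation near $x_1$ uses $a(x)\approx a'(x_1)(x-x_1)$ and $\mathcal{E}_\lambda(x)\approx C|x-x_1|^{(\lambda_1^1-\lambda)/a'(x_1)}$. It gives $h_\pm(x)\sim C|x-x_1|^{(r(x_1)-\lambda)/|a'(x_1)|}$, which is integrable near $x_1$ if and only if $\lambda<r(x_1)-a'(x_1)=\lambda_1^1$. I would also check that near $x_0$ these functions are integrable and that $a h_\pm'\in L^1$ whenever $\lambda>\lambda_1^{ext}$; this is where the restriction $\lambda>\lambda_1^{ext}$, already used to define $T(\lambda)$, should enter. The conclusion of this step is a characterization: $(\lambda,\varphi)$ is a principal eigenpair if and only if $\varphi=T(\lambda)\varphi+\gamma_-h_-+\gamma_+h_+$ with $\varphi\ge0$, where $\gamma_\pm=0$ is forced when $\lambda\ge\lambda_1^1$.

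\textbf{Case 1.} First, $\lambda\ge\lambda_1^*$ for any principal eigenpair; this is the analogue of Lemma \ref{lem:estimates-princeig}. The argument pairs $\varphi$ with the positive eigenvector $\psi_\lambda$ of the adjoint $T(\lambda)^*$, using $\varphi\ge T(\lambda)\varphi$ in the region where $h\ge0$, and rules out $\lambda\le\lambda_1^{ext}$ by the blow-up of $\kappa$. Now suppose $\lambda\ge\lambda_1^1$. Then $h=0$ by integrability, so $\varphi=T(\lambda)\varphi$ and $\varphi$ is a positive eigenvector of $T(\lambda)$ for the eigenvalue $1$. By the Krein--Rutman part of Proposition \ref{prop:Tlambda}, a positive eigenvector of an irreducible compact positive operator can only belong to the spectral radius, so $\kappa(\lambda)=1$ and $\lambda=\lambda_1^*$. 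Simplicity of the kernel then gives $\varphi=\varphi_1^*$. If instead $\lambda=\lambda_1^*<\lambda_1^1$, pair with $\psi_{\lambda_1^*}$: $\int\psi\,h=\int\psi(\varphi-T\varphi)=0$, and $h\ge0$, $\psi>0$ force $h=0$. This shows that $\lambda_1^*$ always carries a unique eigenvector, which is needed in both cases.

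\textbf{Case 2.} Fix $\lambda\in(\lambda_1^*,\lambda_1^1)$. Then $\kappa(\lambda)<1$, so $(I-T(\lambda))^{-1}=\sum_n T(\lambda)^n$ is a positive bounded operator. For any $\gamma_\pm\ge0$, not both zero, the vector $\varphi=(I-T(\lambda))^{-1}(\gamma_-h_-+\gamma_+h_+)$ is nonnegative and nonzero, and lies in $D(A)$ because $T(\lambda)$ maps into $D(A)$ and $h_\pm\in D(A)$. Conversely, positivity of any eigenvector forces $\gamma_\pm\ge0$: the integral part of $T(\lambda)\varphi$ is bounded near $x_1$, so the sign of $h$ dominates there. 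Impose the normalization $\int\varphi=1$. I would parametrize by $\gamma:=\gamma_-$ (say), with $\gamma_+$ then determined. Since $\int(I-T)^{-1}h_\pm>0$, the admissible set is an interval $[0,\overline{\gamma}(\lambda)]$, where
\begin{equation*}
	\overline{\gamma}(\lambda)=\Big(\int(I-T(\lambda))^{-1}h_-\Big)^{-1}.
\end{equation*}
Continuity in $\gamma$ is immediate. Distinctness follows because the coefficients $\gamma_\pm$, and the exponent of the singularity at $x_1$ (which depends on $\lambda$), can be read off from $\varphi$ near $x_1$.

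For the limits of $\overline{\gamma}$, as $\lambda\to(\lambda_1^1)^-$ we have $\int h_-\to+\infty$, and $(I-T)^{-1}\ge I$, so $\overline{\gamma}\to0$. As $\lambda\to(\lambda_1^*)^+$ we have $\kappa(\lambda)\to1$. Pairing with the adjoint eigenvector gives $\int\psi_\lambda(I-T)^{-1}h_-=(1-\kappa(\lambda))^{-1}\int\psi_\lambda h_-\to\infty$, and $\psi_\lambda$ is uniformly bounded, so $\overline{\gamma}\to0$. I expect the main obstacle to be the precise local analysis at $x_0$ and $x_1$. This means showing that $h_\pm\in D(A)$, that $T(\lambda)\varphi$ has no singular part at $x_1$ so that positivity forces $\gamma_\pm\ge0$, and that the homogeneous solutions are exhausted by $h_\pm$, including at the repelling point $x_0$, where the two one-sided branches of $1/(a\mathcal{E}_\lambda)$ must be shown not to allow additional free constants in $D(A)$.
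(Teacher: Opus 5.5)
Your proof is correct and takes a genuinely different route from the paper. The paper never inverts $I-T(\lambda)$. It encodes the kernel component through the operators $T_{\gamma_1,\gamma_2}(\lambda)=R_{\gamma_1,\gamma_2}(\lambda;A_0)B$. In your notation, their extra terms are $\gamma_1\big(\int_{X_1}^{x_1}K\star\varphi\,\mathcal{E}_\lambda\big)h_-$ and $\gamma_2\big(\int_{x_1}^{X_1'}K\star\varphi\,\mathcal{E}_\lambda\big)h_+$, and the paper solves the nonlinear condition $\kappa(\lambda;\gamma_1,\gamma_2)=1$. That requires:
\begin{itemize}
\item strict monotonicity and continuity of $\kappa$ in $(\gamma_1,\gamma_2)$;
\item blow-up as $\gamma_i\to\infty$ and as $\lambda\to(\lambda_1^1)^-$;
\item superconvexity in $\lambda$ (via Kato's theorem), to get $\overline{\gamma}(\lambda)\to0$ at $\lambda_1^1$.
\end{itemize}
Exclusion of $\lambda<\lambda_1^*$ and uniqueness at $\lambda_1^*$ are then obtained by touching-point arguments against $\varphi_1^*$, using the local asymptotics of Lemma \ref{lem:estimates-princeig}.

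Your key observation is that $\kappa(\lambda)<1$ makes the problem linear: $\varphi=(I-T(\lambda))^{-1}(\gamma_-h_-+\gamma_+h_+)$. This gives the full set of normalized eigenvectors at $\lambda$ as an explicit segment, with $\overline{\gamma}$ in closed form. Continuity in $\gamma$ is trivial, the converse characterization comes for free, and both limits follow from $(I-T)^{-1}\ge I$ and the resonance factor $(1-\kappa(\lambda))^{-1}$. Duality with the adjoint Perron vector replaces the sliding arguments. Your $\gamma_-$ is the paper's $\gamma_1$ multiplied by $\int_{X_1}^{x_1}K\star\varphi\,\mathcal{E}_\lambda$, so you describe the same segment under a different parametrization; that is fine because the statement is existential. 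What the paper's route buys is that each eigenvector appears as the Perron vector of a compact positive operator, with no adjoint needed.

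Three points need tightening.

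\emph{Excluding $\lambda\le\lambda_1^{ext}$.} ``Rules out $\lambda\le\lambda_1^{ext}$ by the blow-up of $\kappa$'' has no mechanism as written, since $T(\lambda)$ is undefined there. You can cite Lemma \ref{lem:estimates-princeig}(1). Alternatively, argue as follows for any $\lambda'>\max(\lambda,\lambda_1^{ext})$:
\begin{itemize}
\item The function $w=\varphi-T(\lambda')\varphi$ satisfies $(a\mathcal{E}_{\lambda'}w)'=(\lambda-\lambda')\mathcal{E}_{\lambda'}\varphi\le0$ on each arc.
\item At $x_1$, $a\mathcal{E}_{\lambda'}T(\lambda')\varphi\to0$, while $a\mathcal{E}_{\lambda'}\varphi$ has the sign of $a$.
\item Integrating from the sink $x_1$ therefore gives $w\ge0$.
\item Pairing with $\psi_{\lambda'}$ gives $\kappa(\lambda')\le1$ for all such $\lambda'$, hence $\lambda\ge\lambda_1^*$.
\end{itemize}
This handles every $\lambda<\lambda_1^*$ at once, and the same computation with $\lambda'=\lambda$ gives $\gamma_\pm\ge0$ directly.

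\emph{The limit at $\lambda_1^*$.} An upper bound on $\psi_\lambda$ alone does not suffice; you need $\int\psi_\lambda h_-$ bounded below relative to $\Vert\psi_\lambda\Vert_\infty$. This follows from the representation $T(\lambda)^*\psi(z)=\int K(y,z)\mu(y)\,dy$ with $\mu\ge0$. It gives $\psi_\lambda\ge(\min K/\max K)\Vert\psi_\lambda\Vert_\infty$, which is also the strict positivity you use at $\lambda=\lambda_1^*$.

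\emph{Extra constants and distinctness.} Your worry about additional free constants at $x_0$ is unfounded: $h_-$ and $h_+$ are already the two branches on either side of $x_0$. Distinctness for $\lambda\neq\lambda'$ is immediate, since a nonzero vector can be an eigenvector for only one eigenvalue.
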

\noindent Theorem \ref{thm:existence} is a direct consequence of Theorem \ref{thm:eigenpairs-L1} in subsection \ref{subsec:construction}. \medskip

The family of eigenpairs $(\lambda, \varphi_1^{\lambda, \gamma})$ can actually be characterized as the principal eigenpairs of a family of positive operators, that are constructed in the proof. Finally, we can show that no other solutions exist, except a singular eigenvalue when $\lambda_1^*<\lambda_1^1$, that we identify.
\begin{theorem}[Characterization of principal eigenpairs]\label{thm:characterization}
	Let $(\lambda, \varphi)\in \mathbb{R}\times \mathcal{M}_{per, +}$ be a solution of \eqref{eq:main} in the sense of distributions, where $\mathcal{M}_{per,+}$ is the set of nonnegative periodic Radon measures. Suppose that $\int_{\mathbb{S}^1} \varphi=1$. The following alternative holds. 
	\begin{enumerate}
		\item If $\lambda_1^*\geq \lambda_1^1$, then $\lambda=\lambda_1^*$ and $\varphi=\varphi_1^*$. In other words, $(\lambda, \varphi)$ is the principal eigenpair given by Theorem \ref{thm:existence}.
		\item If $\lambda_1^*<\lambda_1^1$, then either $\lambda\in [\lambda_1^*, \lambda_1^1)$ and in this case $\varphi=\varphi_1^{\lambda, \gamma}$ for some $\gamma\in [0, \overline{\gamma}(\lambda)]$;  or, $\lambda=\lambda_1^1$, and in that case we have $\varphi=\varphi_1^s=:\varphi_{ac}+\varphi_1\delta_{x_1}$, where $(\varphi_{ac}, \varphi_1)\in D(A)\times \mathbb{R}$ solves the problem
			\begin{equation}\label{eq:singular-eigenvect}
				a(x)\varphi_{ac}'+r(x)\varphi_{ac}+K\star \varphi_{ac} - \lambda_1^1 \varphi_{ac} = -\varphi_1 K(x, x_1), \text{ and }{\varphi_1} + \int \varphi_{ac}(y)\dd y = 1.
			\end{equation}
			Conversely, \eqref{eq:singular-eigenvect} has a unique positive solution, and $(\lambda_1^1, \varphi_1^s)$ with $\varphi_1^s=\varphi_{ac}+\varphi_1\delta_{x_1}$ is a solution of \eqref{eq:main} in the sense of distributions.
	\end{enumerate}
\end{theorem}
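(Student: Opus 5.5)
The plan is to start from an arbitrary nonnegative normalized periodic Radon measure $\varphi$ solving \eqref{eq:main} in the sense of distributions, and to decompose it according to the structure of the vector field $a$. On each open arc $(x_0,x_1)$ and $(x_1,1+x_0)$ the coefficient $a$ does not vanish. There the equation $(a\varphi)' = (a'-r+\lambda)\varphi - K\star\varphi$ has a continuous right-hand side $K\star\varphi$, since $K$ is continuous and $\varphi$ is a finite measure. Hence the restriction of $\varphi$ to each open arc is absolutely continuous, with a $C^1$ density. It follows that the singular part of $\varphi$ can only be carried by $\{x_0,x_1\}$, so we write $\varphi=\varphi_{ac}+c_0\delta_{x_0}+c_1\delta_{x_1}$ with $c_0,c_1\ge 0$.

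Testing the equation against smooth functions concentrated near $x_j$ gives conditions on the weights. The distribution $a\,\delta_{x_j}'$ equals $-a'(x_j)\delta_{x_j}$, so the Dirac contribution yields the balance $c_j\big(r(x_j)-a'(x_j)-\lambda\big)\delta_{x_j}$. This must be compatible with the behaviour of $a\varphi_{ac}$ near $x_j$. On each arc the density is given by the variation-of-constants formula
\[
\varphi(x)=\frac{C-\int^x (K\star\varphi)\mathcal{E}_\lambda}{a\,\mathcal{E}_\lambda}.
\]
Near a zero of order one, $\mathcal{E}_\lambda$ behaves like $|x-x_j|^{(r(x_j)-a'(x_j)-\lambda)/a'(x_j)}$, and this local exponent analysis is the main tool. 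Near $x_0$ (a repelling point for the flow of $a$, with $a'(x_0)>0$) I will show that local integrability together with positivity forces $c_0=0$. The density must also be given by the \emph{specific} integral normalised at $x_0$, unless $\lambda\le\lambda_1^0$, which I will rule out separately using $\lambda>\lambda_1^{ext}$. Near $x_1$ (attracting, $a'(x_1)<0$) the homogeneous solution $1/(a\mathcal{E}_\lambda)\sim|x-x_1|^{(\lambda-\lambda_1^1)/|a'(x_1)|-1}$ is integrable exactly when $\lambda>\lambda_1^1$ fails, i.e. when $\lambda<\lambda_1^1$. So for $\lambda<\lambda_1^1$ each arc carries a free additive constant times a positive integrable homogeneous solution. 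These two constants, modulo the constraint coming from the eigen-equation, are the parameter $\gamma$. For $\lambda=\lambda_1^1$ the Dirac weight $c_1$ becomes admissible, giving \eqref{eq:singular-eigenvect}. For $\lambda>\lambda_1^1$ both the homogeneous parts and the Dirac masses are forbidden, so $\varphi=T(\lambda)\varphi$, and then $\kappa(\lambda)=1$ forces $\lambda=\lambda_1^*$ by Proposition~\ref{prop:Tlambda}.

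Next I need a lower bound, $\lambda\ge\lambda_1^*$, for any measure eigenpair, along with the general case identification. Writing $\varphi=T(\lambda)\varphi+h$, with $h\ge0$ the homogeneous/singular part, positivity of $h$ and a comparison with the positive eigenvector of the adjoint of $T(\lambda)$ give $\kappa(\lambda)\le1$, hence $\lambda\ge\lambda_1^*$. I will also need $\lambda>\lambda_1^{ext}$, which should follow by integrating against a suitable local test function near $x_1$ and $x_0$, as in Lemma~\ref{lem:estimates-princeig}. When $\lambda_1^*\ge\lambda_1^1$, every $\lambda\ge\lambda_1^*$ is at least $\lambda_1^1$; the case $\lambda=\lambda_1^1=\lambda_1^*$ must then be handled. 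There $h\ge0$ with $\kappa=1$ forces $h=0$ by the strict positivity in the adjoint pairing, so $c_1=0$ and $\varphi=\varphi_1^*$. When $\lambda_1^*<\lambda_1^1$ and $\lambda\in[\lambda_1^*,\lambda_1^1)$, the equation $\varphi-T(\lambda)\varphi=h$ with $h$ in the two-dimensional cone of homogeneous solutions is solved via $(I-T(\lambda))^{-1}$ for $\kappa(\lambda)<1$. I will then match the solutions with the family $\varphi_1^{\lambda,\gamma}$ of Theorem~\ref{thm:existence}, whose construction in Theorem~\ref{thm:eigenpairs-L1} presumably uses exactly this parametrisation.

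For the converse, uniqueness of the positive solution of \eqref{eq:singular-eigenvect} comes from $\kappa(\lambda_1^1)<1$. In that case $\varphi_{ac}=\varphi_1(I-T(\lambda_1^1))^{-1}T(\lambda_1^1)[\delta_{x_1}]$-type expression, suitably interpreted via $K(\cdot,x_1)$, is positive by the Neumann series, and the normalization fixes $\varphi_1>0$. I expect the main obstacle to be the rigorous local analysis at $x_1$ in the measure setting: proving that no other singular structure (e.g. a derivative of $\delta$, or non-integrable density paired with mass) survives. The first thing I would try is to multiply by cutoff functions $\chi((x-x_1)/\varepsilon)$ and pass to the limit using the explicit exponent.
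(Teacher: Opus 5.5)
Your outline is correct. Its skeleton is the paper's proof of Theorem~\ref{thm:eigenpair-singular}: the atomic decomposition $\varphi=\varphi_{ac}+c_0\delta_{x_0}+c_1\delta_{x_1}$, the balance $c_j(\lambda_1^j-\lambda)=0$ coming from $a\delta_{x_j}'=-a'(x_j)\delta_{x_j}$, variation of constants on each arc, a case split on the position of $\lambda$ relative to $\lambda_1^1$, and $\varphi_{ac}=c_1\big(I-T(\lambda_1^1)\big)^{-1}R_0(\lambda_1^1;A_0)K(\cdot,x_1)$ at $\lambda=\lambda_1^1$.

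You depart from the paper in two places.

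(i) For $\lambda\ge\lambda_1^*$ and for rigidity at $\lambda_1^*$, the paper uses a sliding argument (Lemma~\ref{lem:supercrit} and the proof of Theorem~\ref{thm:eigenpairs-L1}). It takes the largest $k$ with $k\varphi_1^*\le\varphi$, uses the asymptotics of Lemma~\ref{lem:loc-est} to keep the contact point away from $x_0$, and evaluates the equation at that point. You use duality instead. Write $\varphi_{ac}=T(\lambda)\varphi_{ac}+h$ with $h=c_1R_0(\lambda;A_0)K(\cdot,x_1)+\varphi_b\ge0$ in $L^1$; this keeps the Dirac mass out of the pairing. A nonnegative eigenvector $\psi^*$ of the compact operator $T(\lambda)^*$ on $L^\infty$ (Krein--Rutman) then gives $(1-\kappa(\lambda))\langle\psi^*,\varphi_{ac}\rangle=\langle\psi^*,h\rangle$. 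This route is shorter, needs no pointwise asymptotics, and works as is for the supersolution $\varphi_{ac}$. You must add that $\psi^*\ge c>0$. This follows from $T(\lambda)\phi\ge(\min K)\|\phi\|_{L^1}\,g_\lambda$ for $\phi\ge0$, where $g_\lambda$ is \eqref{eq:Tlambda-explicit} with $K\star\phi$ replaced by $1$, and $g_\lambda>0$ off $\{x_0,x_1\}$. This rigidity, not $(I-T(\lambda))^{-1}$, is also what handles $\lambda=\lambda_1^*<\lambda_1^1$, since $\kappa(\lambda_1^*)=1$ there.

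(ii) For $\lambda\in(\lambda_1^*,\lambda_1^1)$ you parametrise by $\varphi=(I-T(\lambda))^{-1}(\beta_1h_1+\beta_2h_2)$ with $\beta_i\ge0$, $h_1=\mathbbm{1}_{(x_0,x_1)}/(a\mathcal{E}_\lambda)$ and $h_2=-\mathbbm{1}_{(x_1,1+x_0)}/(a\mathcal{E}_\lambda)$. This is linear, and shows at once that the $L^1$ eigenspace is two-dimensional with a segment as its normalised positive part. The paper instead indexes eigenvectors as principal eigenvectors of $T_{\gamma_1,\gamma_2}(\lambda)$ under the nonlinear constraint $\kappa(\lambda;\gamma_1,\gamma_2)=1$. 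That choice is what supplies the monotonicity and superconvexity used for $\overline{\gamma}(\lambda)\to0$. Because it is not ``exactly your parametrisation'', the matching you defer is a real, if short, step:
set $\gamma_1=\beta_1/\int_{X_1}^{x_1}K\star\varphi\,\mathcal{E}_\lambda$ and $\gamma_2=\beta_2/\int_{x_1}^{X_1'}K\star\varphi\,\mathcal{E}_\lambda$, observe $\varphi=T_{\gamma_1,\gamma_2}(\lambda)\varphi$, and use simplicity and strict monotonicity from Proposition~\ref{prop:eigen-extended} to get $\gamma_2=\gamma_2(\lambda,\gamma_1)$ and $\gamma_1\le\overline{\gamma}(\lambda)$.

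Two slips need fixing.
Near $x_1$ the homogeneous solution behaves like $|x-x_1|^{-1-\alpha_1}=|x-x_1|^{(\lambda_1^1-\lambda)/|a'(x_1)|-1}$. Your exponent has the wrong sign, although your conclusion is right.
Your sentence about $x_0$ is inverted. For $\lambda>\lambda_1^0$ the homogeneous solution is integrable at $x_0$ and nothing is normalised there. It is for $\lambda\le\lambda_1^0$ that integrability forces the numerator to vanish at $x_0$, which contradicts positivity.

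Finally, the obstacle you anticipate at $x_1$ does not arise. Finite mass makes the density $L^1$ and the singular part atomic. The variation-of-constants formula together with integrability forces $a\varphi_{ac}\to0$ at each $x_j$, so no extra atom appears in $(a\varphi_{ac})'$.
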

\noindent Theorem \ref{thm:characterization} follows from Theorem \ref{thm:eigenpair-singular} proved in subsection \ref{subsec:construction} below.

\subsection{The advection has a constant sign and a zero of order two}

In this section we consider the possibility of a single zero of order two. 
Our assumption is as follows. Up to shifting the origin, we assume that the unique zero of $a(x)$ is at $x=0$.
\begin{assumption}\label{ASS-30}
	The functions $a(x)$,  $r(x)$ and $K(x, y)$ satisfy assumption \ref{ASS-10}. We assume that $a(x)>0$ for all $x\in(0, 1)$ and that $a(0)=0$. Moreover, 
	$r$ is twice differentiable  and $ a $ is four times differentiable at $x=0$, i.e., $r'(0)$, $r''(0)$, $a'(0)$,  $a''(0)$, $a'''(0)$ and  $a^{(4)}(0)$ exist. {Finally, $x=0$ is a zero of order two of $a(x)$, meaning that $a''(0)>0$.}
\end{assumption}
Our argument is less complex than in the case of two zeros of order one, so we present it here. We compute the resolvent of $A$ as follows: 
\begin{equation*}
	(\lambda I-A)\varphi=f\Leftrightarrow a(x)\varphi'+(r(x)-\lambda)\varphi=-f(x) 
\end{equation*}
so 
\begin{equation}\label{eq:varphi-deg2}
	\varphi(x) = \dfrac{-\int_{X_1}^x f(y) \mathcal{E}_\lambda(y)\dd y + a(X_1)\varphi(X_1)\mathcal{E}_\lambda(X_1)}{a(x)\mathcal{E}_\lambda(x)}, 
\end{equation}
where   $\mathcal{E}_\lambda$ is defined by 
\begin{equation*}\label{eq:Elambda-deg2}
	\mathcal{E}_\lambda(x)= \exp\left(\int_{X_0}^x\dfrac{\sigma(x)-\lambda}{a(z)}\dd x\right), \text{ for all } x\in(0, 1), 
\end{equation*}
and $X_0, X_1\in [0, 1)$ are two constants that are chosen arbitrarily. We use a Taylor expansion of the integrand to find out the local behavior of $\mathcal{E}_\lambda(x)$ for $x$ close to $0$:
\begin{align}
	\nonumber\dfrac{\sigma(x)-\lambda}{a(x)}&=\frac{\sigma(0)-\lambda+\sigma'(0)x+o(x)}{\frac{1}{2}a''(0)x^2+\frac{1}{6} a'''(0)x^3+o(x^3)} = \frac{p^0_\lambda}{x^2}+\frac{p^1_\lambda}{x}+p^2_{\lambda}+o(1),
\end{align} 
where
\begin{gather*}
	p^0_\lambda:= \frac{2(r(0)-\lambda)}{a''(0)}, \qquad p^1_\lambda := \frac{2}{a''(0)}\left(r'(0)-{a''(0)}-\big(r(0)-\lambda\big)\frac{a'''(0)}{3a''(0)}\right), \\ 
	p^2_\lambda:=\frac{2}{a''(0)}\left[\frac{r''(0)-{a'''(0)}}{2}-(r'(0)-a'(0))\frac{a'''(0)}{3a''(0)}+\big(r(0)-\lambda\big)\left(-\frac{a^{(4)}(0)}{12a''(0)}+\frac{a'''(0)^2}{9a''(0)^2}\right)\right].
\end{gather*}
Integrating between $X_0$ and $x$, we obtain
\begin{equation*}
	\int_{X_0}^x \dfrac{\sigma(z)-\lambda}{a(z)}\dd z = -\frac{p^0_\lambda}{x}+p^1_\lambda\ln x + \mathcal{O}(1), 
\end{equation*}
and finally
\begin{equation*}\label{eq:260514a}
	\mathcal{E}_\lambda(x) = |x|^{p^1_{\lambda}} e^{-\frac{p^0_\lambda}{x} +\mathcal{O}(1)}.
\end{equation*}
We will now determine the value of $\varphi(X_1)$, when possible, to complete the computation of the resolvent $R(\lambda;A)$. We distinguish two cases. \medskip

\noindent$\bullet$ \textbf{Case 1}: $\lambda>\sigma(0)$. In this case, $\mathcal{E}_\lambda(x)$ converges to $0$ when $x\to 1^-$ (using the periodicity of the problem), so the numerator in \eqref{eq:varphi-deg2} has to converge to $0$ when $x\to 1^-$ (for otherwise $\varphi$ could not belong to $C^0_{per}$ nor even $L^1_{per}$). So 
\begin{equation*}
	a(X_1)\varphi(X_1)\mathcal{E}_{\lambda}(X_1) =  \int_{X_1}^1f(y) \mathcal{E}_\lambda(y)\dd y,
\end{equation*}
and we conclude 
\begin{equation}\label{eq:260512d}
	\varphi(x) = \dfrac{\int_{x}^1 f(y) \mathcal{E}_\lambda(y)\dd y }{a(x)\mathcal{E}_\lambda(x)}. 
\end{equation}
Let us prove that $\varphi\in L^1_{per}$. We have:
\begin{align}\label{eq:260512c}
	\int_0^1 |\varphi(x)|\dd x &\leq  \int_0^1 \dfrac{\int_x^1 |f(y)|\mathcal{E}_\lambda(y)\dd y}{a(x)\mathcal{E}_\lambda(x)} \dd x = \int_0^1|f(y)|\mathcal{E}_\lambda(y)\int_0^y \frac{1}{a(x)\mathcal{E}_\lambda(x)}\dd x\dd y.
\end{align}
Now, 
\begin{equation*}
	\mathcal{E}_\lambda(y)\int_0^y \frac{1}{a(x)\mathcal{E}_\lambda(x)}\dd x = \dfrac{\int_0^y \frac{e^{-\int_{X_0}^x \frac{\sigma(x)-\lambda}{a(z)}\dd z}}{a(y)} \dd x}{e^{-\int_{X_0}^y \frac{\sigma(z)-\lambda}{a(z)}\dd z}}, 
\end{equation*}
and since both the numerator and denominator converge to $0$ when $y\to 0$, we have by L'Hospital rule 
\begin{equation*}
	\lim_{y\to 0^+} \mathcal{E}_\lambda(y)\int_0^y \frac{1}{a(x)\mathcal{E}_\lambda(x)}\dd x  = \lim_{y\to 0^+} \dfrac{\frac{e^{-\int_{X_0}^y \frac{\sigma(z)-\lambda}{a(z)}\dd z}}{a(y)}}{-\frac{\sigma(y)-\lambda}{a(y)}e^{-\int_{X_0}^y \frac{\sigma(z)-\lambda}{a(z)}\dd z}} = \frac{1}{\lambda-\sigma(0)}.
\end{equation*}
This proves that $y\mapsto \mathcal{E}_\lambda(y)\int_0^y \frac{1}{a(x)\mathcal{E}_\lambda(x)}\dd x$ is a bounded function, hence the right-hand side in \eqref{eq:260512c} is finite, and $\varphi$ is in $L^1_{per}$ with a norm controlled by $\Vert f\Vert_{L^1_{per}}$ (up to a constant). Therefore $\lambda I-A $ admits a bounded inverse $R(\lambda; A)f =\varphi$. Moreover, it follows from \eqref{eq:260512d} that $R(\lambda; A)$, in that case, is a positive operator. 
\medskip

\noindent$\bullet$ \textbf{Case 2}: $\lambda>\sigma(0)$. In this case, $\mathcal{E}_\lambda(x)$ converges to $0$ when $x\to 0^+$, so the numerator in \eqref{eq:varphi-deg2} has to converge to $0$ when $x\to 0^+$. So 
\begin{equation*}
	a(X_1)\varphi(X_1)\mathcal{E}_{\lambda}(X_1) =  \int_{X_1}^0f(y) \mathcal{E}_\lambda(y)\dd y,
\end{equation*}
and we conclude 
\begin{equation}\label{eq:260512e}
	\varphi(x) = \dfrac{-\int_{0}^x f(y) \mathcal{E}_\lambda(y)\dd y }{a(x)\mathcal{E}_\lambda(x)}. 
\end{equation}
Similar computations as in Case 1 show that $R(\lambda; A)f=\varphi$ is a bounded operator; however, from \eqref{eq:260512e}, it is a \textit{negative} operator. 

\medskip

As before, we define the operator
\begin{equation*}
	T(\lambda) \varphi_\lambda:= (\lambda I-A)^{-1}B\varphi. 
\end{equation*}
This operator is well-defined, positive, irreducible and compact for all $\lambda>r(0)$. Hence there exists a unique normalized principal eigenpair $(\kappa(\lambda), \varphi_{\lambda})$, such that 
\begin{equation*}
	T(\lambda)\varphi_{\lambda} = \kappa(\lambda)\varphi_{\lambda}. 
\end{equation*}
The existence of a principal eigenpair is related to the existence of a value $\lambda\geq r(0)$ such that $\kappa(\lambda)=1$. We have:
\begin{equation*}
	\kappa(\lambda)=\int_{\mathbb{S}^1} T(\lambda)\varphi = \int_0^1 \dfrac{\int_x^1 K\star \varphi (y)\mathcal{E}_\lambda(y)\dd y}{a(x)\mathcal{E}_\lambda(x)} \dd x \begin{cases} 
		\leq \displaystyle \sup_{\bar x,\bar y} K(\bar x, \bar y) \int_0^1 \dfrac{\int_x^1 \mathcal{E}_\lambda(y)\dd y}{a(x)\mathcal{E}_\lambda(x)} \dd x, \\ 
		\geq \displaystyle \inf_{\bar x, \bar y}  K(\bar x, \bar y) \int_0^1 \dfrac{\int_x^1 \mathcal{E}_\lambda(y)\dd y}{a(x)\mathcal{E}_\lambda(x)} \dd x .
	\end{cases}
\end{equation*}
Hence the finiteness of $\kappa(\lambda)$ as $\lambda\to r(0)$ is strongly tied to the behavior of the integral 
\begin{equation*}
	\mathcal{I}(\lambda):=\int_0^1 \dfrac{\int_x^1 \mathcal{E}_\lambda(y)\dd y}{a(x)\mathcal{E}_\lambda(x)} \dd x, 
\end{equation*}
as $\lambda\to r(0)^+$. Let $I_\lambda(x):= \dfrac{\int_x^1 \mathcal{E}_\lambda(y)\dd y}{a(x)\mathcal{E}_\lambda(x)} $, then as $\lambda\to r(0)^+$ the function $I_\lambda(x)$ converges to 
\begin{align*}
	I_{r(0)}(x)&=\dfrac{\int_x^1 \mathcal{E}_{r(0)}(y)\dd y}{a(x)\mathcal{E}_{r(0)}(x)} = \dfrac{\int_x^1 \mathcal{E}_{r(0)}(y)\dd y}{x^{2+p^1_{\lambda}} }e^{\mathcal{O}(1)}, 
\end{align*}
locally uniformly in $x\in (0, 1]$, where $p^1:=p^1_{r(0)}=2\frac{r'(0)}{a''(0)}$. If $2+p^1\geq 1$, then $I_{r(0)}(x)$ is not integrable and $\mathcal{I}(\lambda)\to +\infty$. If $2+p^1<1$, then we have
\begin{align*}
	I_{r(0)}(x)&= \dfrac{\int_x^1 \mathcal{E}_{r(0)}(y)\dd y}{x^{2+p^1_{\lambda}} }e^{\mathcal{O}(1)}\geq \dfrac{\int_x^1 y^{p^1}-1\dd y}{x^{2+p^1_{\lambda}} }e^{\mathcal{O}(1)} = \dfrac{\frac{1}{p^1+1}(x^{p^1+1}-1)-(x-1)}{x^{2+p^1_{\lambda}} }e^{\mathcal{O}(1)} \\ 
	&= \left(\frac{1}{p^1+1}\,\frac{1}{x} -\frac{1}{x^{1+p^1}} + 1-\frac{1}{p^1+1}\right)e^{\mathcal{O}(1)}.
\end{align*}
In that case, we also find that $I_\lambda(x)\not\in L^1$ and therefore $\mathcal{I}(\lambda)\to +\infty$ as $\lambda\to r(0)^+$.  Thus, in any case we obtain $\kappa(\lambda)\to +\infty$ as $\lambda\to r(0)^+$.
\medskip

Next we  prove that $\kappa(\lambda)\to 0$ as $\lambda\to+\infty$. We establish the local behavior of $I_\lambda(x)$ in the vicinity of $x=0$. We select $\lambda>S:=\sup_{z\in[0, 1]}\sigma(z)$  and $\alpha>0$ such that $a(x)\leq \alpha x^2$ for all $x\in [0, 1]$. We have
\begin{align*}
	I_\lambda(x)&= \frac{1}{a(x)} \int_{x}^1 e^{\int_{x}^y \frac{\sigma(z)-\lambda}{a(z)}\dd z}\dd y\leq \frac{1}{a(x)}\int_x^1 e^{\int_x^y\frac{S-\lambda}{\alpha z^2}\dd z}\dd y = \frac{1}{a(x)}\int_x^1 e^{-\frac{S-\lambda}{\alpha y}+\frac{S-\lambda}{\alpha x}}\dd y  \\
	&=\frac{1}{a(x)} e^{\frac{S-\lambda}{\alpha x}}\int_x^1 e^{-\frac{S-\lambda}{\alpha y}}\dd y.
\end{align*}
We will use the formula $\int \frac{1}{\ln^2 t} = \mathrm{Li}(t)-\frac{t}{\ln t}$ to estimate the last integral, where $\mathrm{Li}(x)$ is the Eulerian logarithmic integral $\mathrm{Li}(t):= \int_2^t \frac{1}{\ln s}\dd s$ for $t>1$. We have, by the change of variables $z= e^{-\frac{S-\lambda}{\alpha y}}$, 
\begin{align*}
	\int_x^1 e^{-\frac{S-\lambda}{\alpha y}}\dd y&=\frac{S-\lambda}{\alpha}\int_{e^{-\frac{S-\lambda}{\alpha x}}}^{e^{-\frac{S-\lambda}{\alpha}}}z \frac{1}{z \ln^2(z)}\dd z = \frac{\lambda-S}{\alpha} \int_{e^{\frac{\lambda-S}{\alpha}}}^{e^{\frac{\lambda-S}{\alpha x}}} \frac{1}{ \ln^2(z)}\dd z \\ 
	&=\frac{\lambda-S}{\alpha}\left[\mathrm{Li}\left(e^{\frac{\lambda-S}{\alpha x}}\right)-\mathrm{Li}\left(e^{\frac{\lambda-S}{\alpha}}\right)-\frac{e^{\frac{\lambda-S}{\alpha x}}}{\ln e^{\frac{\lambda-S}{ \alpha x}}} + \frac{e^{\frac{\lambda-S}{\alpha}}}{\ln e^{\frac{\lambda-S}{\alpha}}}\right]\\
	&=\frac{\lambda-S}{\alpha}\left[\mathrm{Li}\left(e^{\frac{\lambda-S}{\alpha x}}\right)-\alpha x\frac{e^{\frac{\lambda-S}{\alpha x}}}{\lambda-S} -\mathrm{Li}\left(e^{\frac{\lambda-S}{\alpha}}\right)+ \alpha \frac{e^{\frac{\lambda-S}{\alpha}}}{\lambda-S}\right]. 
\end{align*}
Recalling the asymptotic expansion \cite[p.228]{Abramowitz-Stegun-1972} $\mathrm{Li}(x)=\frac{x}{\ln(x)}\left[1+\frac{1}{\ln x}+ \mathcal{O}\left(\frac{2}{\ln^2 x}\right)\right]$ we have %+\mathcal{O}\left(\frac{1}{\ln^3 x}\right)\right] $ we have
\begin{align*}
	I_\lambda(x)&\leq \frac{1}{a(x)}\left[\frac{\lambda - S}{\alpha }e^{\frac{S-\lambda}{\alpha x}} \mathrm{Li}\left(e^{-\frac{S-\lambda}{\alpha x}}\right) -  x-\frac{S-\lambda}{\alpha} e^{\frac{S-\lambda}{\alpha x}}\mathrm{Li}\left(e^{-\frac{S-\lambda}{\alpha}}\right) + e^{\frac{S-\lambda}{\alpha}\left(\frac{1}{x}-1\right)}\right] \\ 
	&=\frac{1}{a(x)}\left[ \frac{\alpha}{\lambda-S} x^2 +\mathcal{O}\left(\frac{\alpha^2}{(\lambda-S)^2} x^3\right) - e^{\frac{S-\lambda}{\alpha } \left(\frac{1}{x}-1\right)}\left(\frac{\alpha^2}{\big(S-\lambda)^2} +\mathcal{O}\left(\frac{\alpha^3}{\big(S-\lambda)^3}\right)\right)\right].
\end{align*}
Since $\frac{x^2}{a(x)}$ is a bounded function and $ e^{\frac{S-\lambda}{\alpha } \left(\frac{1}{x}-1\right)} = o(x^2)$,  we conclude that $I_\lambda(x)$ converges uniformly to $0$ on $[0,1]$. This proves that $\mathcal{I}(\lambda)\to 0$, and therefore $\kappa(\lambda)\to 0$, as $\lambda\to+\infty$.
\medskip 

The proof of non-existence of a principal eigenpair $(\lambda, \varphi_\lambda)$ with $\lambda\leq r(0)$, as well as the non-existence of a singular principal eigenpair, follow the same step as in the case of two zeros of order one, hence we omit it. We have reached the following result. 
\begin{theorem}
	Let Assumption \ref{ASS-30} hold true. Then $A+B$ has a unique normalized principal eigenpair $\big(\lambda_1^*, \varphi_{\lambda_1^*}\big)$, we have $\lambda_1^*>r(0)$, and $\lambda_1^*$ is the unique solution of the equation $\kappa(\lambda)=1$. 
\end{theorem}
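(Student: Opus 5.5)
The proof will assemble the ingredients computed just above into a monotonicity argument for $\kappa(\lambda)$, followed by a uniqueness and exclusion step.

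\textbf{Step 1: properties of $T(\lambda)$ for $\lambda>r(0)$.} Here $\sigma(0)=r(0)-a'(0)=r(0)$, since $a'(0)=0$ at a zero of order two. By Case 1, $R(\lambda;A)$ is a bounded positive operator, given explicitly by \eqref{eq:260512d}. Because $K$ is continuous and strictly positive, $B$ maps $L^1_{per}$ boundedly into $C^0_{per}$ and sends nonzero nonnegative functions to functions bounded below by a positive constant. From the formula
\begin{equation*}
T(\lambda)\varphi(x)=\frac{\int_x^1 (K\star\varphi)(y)\,\mathcal{E}_\lambda(y)\,\dd y}{a(x)\,\mathcal{E}_\lambda(x)}
\end{equation*}
we then get the following.
\begin{itemize}
\item $T(\lambda)$ is strictly positive off $x=0$, hence irreducible.
\item $T(\lambda)$ is compact. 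Indeed, $B$ is compact (its kernel is continuous), and $R(\lambda;A)$ is bounded.
\end{itemize}
The Krein--Rutman theorem therefore gives a simple eigenvalue $\kappa(\lambda)=r(T(\lambda))>0$ with a positive normalized eigenvector $\varphi_\lambda$.

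\textbf{Step 2: behaviour of $\kappa$.} For $\lambda<\mu$ we have $\mathcal{E}_\mu(y)/\mathcal{E}_\mu(x)<\mathcal{E}_\lambda(y)/\mathcal{E}_\lambda(x)$ whenever $y>x$, because the exponent $-\int_x^y \frac{\lambda}{a}$ decreases in $\lambda$. Hence $T(\mu)\le T(\lambda)$ pointwise on positive functions, with strict inequality. Pairing with the positive eigenvector of the adjoint $T(\lambda)^*$ shows that $\kappa$ is strictly decreasing.

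For continuity, I will show that $\lambda\mapsto T(\lambda)$ is norm continuous. This follows by dominated convergence, using the kernel bound supplied by the L'Hospital estimate; continuity of a simple isolated eigenvalue then gives continuity of $\kappa$.

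The limits are already available. The bounds $\inf K\cdot\mathcal{I}(\lambda)\le\kappa(\lambda)\le\sup K\cdot\mathcal{I}(\lambda)$ are obtained by integrating the eigen-equation and using $\int\varphi_\lambda=1$. Combined with the computations above, they give $\kappa\to+\infty$ as $\lambda\to r(0)^+$ and $\kappa\to0$ as $\lambda\to\infty$. So there is a unique $\lambda_1^*>r(0)$ with $\kappa(\lambda_1^*)=1$. Since $\varphi=T(\lambda)\varphi$ is equivalent to $(A+B)\varphi=\lambda\varphi$, the pair $(\lambda_1^*,\varphi_{\lambda_1^*})$ is a principal eigenpair. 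It is unique among eigenpairs with $\lambda>r(0)$: any such eigenvector is a positive fixed point of $T(\lambda)$, which forces $\kappa(\lambda)=1$ and hence $\lambda=\lambda_1^*$.

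\textbf{Step 3: exclusion of $\lambda\le r(0)$ (the main obstacle).} Let $(\lambda,\varphi)$ be a principal eigenpair with $\lambda\le r(0)$, and set $f=B\varphi$, which is strictly positive and continuous. Solving $a\varphi'+(r-\lambda)\varphi=-f$ on $(0,1)$ gives the representation \eqref{eq:varphi-deg2}. The problem is to show that no choice of the constant yields a nonnegative $L^1$ function.

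\emph{Case $\lambda<r(0)$.} Here $\mathcal{E}_\lambda\to0$ as $x\to0^+$ and $\mathcal{E}_\lambda\to\infty$ as $x\to1^-$. Integrability near $0$ forces the numerator to vanish at $0^+$, so $\varphi=-\int_0^x f\mathcal{E}_\lambda/(a\mathcal{E}_\lambda)<0$, a contradiction.

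\emph{Case $\lambda=r(0)$.} Here $\mathcal{E}_\lambda\sim |x|^{p^1}e^{O(1)}$. I will argue, as in the computation of $I_{r(0)}$, that $\int_x^1 f\mathcal{E}/(a\mathcal{E})$ behaves like $x^{-2-p^1}$ times a positive quantity. Making this non-integrable behaviour compatible with positivity is impossible unless the numerator vanishes at both ends, and that vanishing contradicts $f>0$.

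I expect this borderline case to need the most care. Should the distinction $p^1+1\gtrless0$ become delicate, a fallback is to integrate the equation against the positive adjoint eigenfunction of $T(\lambda_1^*)^*$, or to compare against the monotonicity of $\kappa$.
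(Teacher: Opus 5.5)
Steps~1 and~2 are sound and follow the paper's route. Your adjoint pairing for strict monotonicity is a valid substitute for the touching-point argument of Proposition~\ref{prop:eigen-0}. Your treatment of $\lambda<r(0)$ uses the same mechanism as the paper's Case~2 resolvent computation and Lemma~\ref{lem:supercrit}, to which the paper defers when it omits this step.

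The gap is the case $\lambda=r(0)$ with $1+p^1\le 0$. Here $p^1$ is the exponent in $\mathcal{E}_{r(0)}(x)=|x|^{p^1}e^{\mathcal{O}(1)}$ at the zero of $a$, namely $p^1=2\sigma'(0)/a''(0)$ with $\sigma'(0)=r'(0)-a''(0)$.
\begin{itemize}
\item This is not an exotic range: $1+p^1\le 0$ exactly when $2r'(0)\le a''(0)$. That includes the simplest case $r'(0)=0$, where $p^1=-2$.
\item In that range $f\mathcal{E}_{r(0)}$ is not integrable at the degenerate point. Hence $\int_x^1 f(y)\mathcal{E}_{r(0)}(y)\dd y=+\infty$ for every $x\in(0,1)$.
\item Your claim that $\int_x^1 f\mathcal{E}/(a\mathcal{E})$ behaves like $x^{-2-p^1}$ times a positive quantity therefore fails.
\item The condition that the numerator vanish at both ends cannot even be formulated, because for every choice of the constant the numerator diverges at both ends.
\end{itemize}
As written, your argument covers only $1+p^1>0$, and the two fallbacks are named but not carried out.

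There are two ways to close the gap.

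\emph{Finish the ODE argument.} On $(0,X_1)$ every solution has the form $\varphi=\big(C+\int_x^{X_1}f\mathcal{E}_{r(0)}\dd y\big)/(a\mathcal{E}_{r(0)})$, with $a\mathcal{E}_{r(0)}\asymp x^{2+p^1}$. Since $f\ge\min f>0$, the integral grows like $x^{1+p^1}$, or like $\ln(1/x)$ if $p^1=-1$. So for every $C$ you get $\varphi(x)\gtrsim x^{-1}$ (respectively $x^{-1}\ln(1/x)$) as $x\to0^+$. Hence no $L^1$ solution exists at all, whatever its sign.

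\emph{Make your $\kappa$-fallback precise.} This is cleaner and removes every case distinction. Suppose $(\lambda_0,\varphi)$ is a principal eigenpair with $\lambda_0\le r(0)$, and take any $\lambda\in(r(0),\lambda_1^*)$.
\begin{itemize}
\item You have $(\lambda I-A)\varphi=B\varphi+(\lambda-\lambda_0)\varphi\ge B\varphi$.
\item By positivity of $R(\lambda;A)$, this gives $\varphi=R(\lambda;A)(\lambda I-A)\varphi\ge T(\lambda)\varphi$.
\item Pairing with the strictly positive eigenvector of $T(\lambda)^*$ that you already use in Step~2 gives $\kappa(\lambda)\le 1$.
\item This contradicts $\kappa(\lambda)>\kappa(\lambda_1^*)=1$.
\end{itemize}
This disposes of $\lambda_0<r(0)$ and $\lambda_0=r(0)$ simultaneously, without any asymptotics of $\mathcal{E}_{r(0)}$.
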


%\subsection{How to deal with multiple zeros of arbitrary order}

\section{Properties of the semigroup}
\label{sec:generation}
\subsection{Preliminaries}

In this section we recall important properties of semigroups, including positive semigroups and some elements of their spectral theory, that will be used in the sequel.

Let $(X,\|\cdot\|)$ be a Banach space and $A:D(A)\subset X\to X$ a closed linear operator, the generator of a strongly continuous semigroup on $X$, denoted by $\{T_A(t)\}_{t\geq 0}$.\\
We define below various important spectral quantities.
\begin{definition}
We denote by $\sigma(A)$ and $\rho(A)$,the spectrum and the resolvent set of $A$, respectively.\\
We also define the spectral bound of $A$ by:
\begin{equation*}
s(A)=\sup\left\{{\rm Re}\,\lambda,\;\lambda\in\sigma (A)\right\}.
\end{equation*}
We also define the growth rate of the semigroup $\{T_A(t)\}_{t\geq 0}$ by
\begin{equation*}
\omega_0(A)=\lim_{t\to\infty}\frac{1}{t}\ln \|T_A(t)\|_{\mathcal L(X)}\in [-\infty,\infty).
\end{equation*}
\end{definition}
To go further and connect the above quantities, we introduce the essential norm and the essential growth rate as follows.
We denote by $\kappa$ the Kuratovski measure of non compactness in $X$. Next if $L\in \mathcal L(X)$ is a given bounded linear operator, its essential norm is defined by
\begin{equation*}
\|L\|_{\rm ess}=\kappa\left(B_X(0,1)\right),
\end{equation*}
where $B_X(0,1)$ denotes the unit ball in $X$ centered at the origin. 
Using this definition we define the essential growth rate of the semigroup $\{T_A(t)\}_{t\geq 0}$ as follows.	
\begin{definition}
We define the essential growth rate of the semigroup $\{T_A(t)\}_{t\geq 0}$ by
\begin{equation*}
\omega_{0,ess}(A)=\lim_{t\to\infty}\frac{1}{t}\ln \|T_A(t)\|_{\rm ess}\in [-\infty,\omega_0(A)].
\end{equation*}
\end{definition}

Next according Webb \cite{Webb-1987}, Engel and Nagel \cite{Engel-Nagel-2000}, Magal and Ruan \cite{Magal-Ruan-2018}, one has
\begin{lemma}\label{LE1}
	Suppose that  $\omega_{0,ess}(A)<\omega_0(A)$. Then $\omega_0(A)=s(A)$. Moreover, 
for each $\eta>\omega_{0,ess}(A)$, define 
$$
\Sigma_\eta:=\{\lambda\in \sigma(A):\;{\rm Re}\,(\lambda)\geq \eta\}.
$$
Then for all $\eta>\omega_{0,ess}(A)$ such that $\Sigma_\eta$ is finite or empty, then each $\lambda_0\in \Sigma_\eta$ is an isolated eigenvalue of $A$ and a pole of the resolvent $\lambda\mapsto (\lambda-A)^{-1}$ with finite algebraic multiplicity.
\end{lemma}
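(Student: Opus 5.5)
The plan is to work with the essential spectrum and Riesz projections of the generator, using the spectral-radius and essential-spectral-radius formulas for the semigroup $\{T_A(t)\}_{t\geq 0}$. I would first reduce everything to the bounded operator $T_A(t)$ for a fixed $t>0$, through the spectral mapping theorem for the point and residual spectra, $e^{t\sigma_p(A)}\subset\sigma_p(T_A(t))\subset e^{t\sigma_p(A)}\cup\{0\}$, and through the formulas $r(T_A(t))=e^{t\omega_0(A)}$ and $r_{ess}(T_A(t))=e^{t\omega_{0,ess}(A)}$. The latter follows from Nussbaum's formula $r_{ess}(L)=\lim_n\|L^n\|_{ess}^{1/n}$ combined with submultiplicativity of the Kuratowski norm, which makes $t\mapsto\ln\|T_A(t)\|_{ess}$ subadditive.

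The first step is to show that $\omega_0(A)=s(A)$. Since $\omega_{0,ess}(A)<\omega_0(A)$, we have $r_{ess}(T_A(t))<r(T_A(t))$. The spectral radius is always attained on $\sigma(T_A(t))$, so there is some $\mu\in\sigma(T_A(t))$ with $|\mu|=e^{t\omega_0(A)}>r_{ess}(T_A(t))$. Outside the essential spectral disc, the spectrum of a bounded operator consists of isolated eigenvalues of finite algebraic multiplicity (Riesz–Schauder/Browder theory). Hence $\mu$ is an eigenvalue. The spectral inclusion $e^{t\sigma_p(A)}\supset\sigma_p(T_A(t))\setminus\{0\}$ then gives some $\lambda\in\sigma_p(A)$ with $e^{t\lambda}=\mu$. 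Therefore $s(A)\geq\mathrm{Re}\,\lambda=\omega_0(A)$. The reverse inequality $s(A)\leq\omega_0(A)$ is standard, coming from the Laplace representation of the resolvent.

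The second step handles $\eta>\omega_{0,ess}(A)$ with $\Sigma_\eta$ finite. I would take $\lambda_0\in\Sigma_\eta$ and choose $t>0$. The point $e^{t\lambda_0}$ has modulus greater than $r_{ess}(T_A(t))$, so it is an isolated point of $\sigma(T_A(t))$ whose Riesz projection $P$ has finite rank. Then I would decompose $X=PX\oplus(I-P)X$. The projection $P$ commutes with the whole semigroup, since each $T_A(s)$ commutes with $T_A(t)$ and hence with its resolvent. Therefore $PX$ is a finite-dimensional invariant subspace on which the semigroup restricts to $e^{sA_P}$, with $A_P$ a matrix. The spectrum of $A_P$ is contained in $\{\lambda: e^{t\lambda}=e^{t\lambda_0}\}$, and this set meets $\Sigma_\eta$ in a finite set containing $\lambda_0$.

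The main obstacle is to separate $\lambda_0$ from the other roots $\lambda_0+2\pi ik/t$ and to transfer the isolation and pole properties to $A$ itself. For this I plan to use finiteness of $\Sigma_\eta$. Only finitely many $k$ give spectral points, so by choosing $t$ generic (avoiding the countably many $t$ for which two distinct points of $\Sigma_\eta$ have the same image under $\lambda\mapsto e^{t\lambda}$), $e^{t\lambda_0}$ corresponds to $\lambda_0$ alone. On $(I-P)X$ the restricted generator $A_{I-P}$ has no spectrum at $\lambda_0$, because $e^{t\lambda_0}\notin\sigma(T_A(t)|_{(I-P)X})$. Combining this with the spectral mapping inclusion $e^{t\sigma(A)}\subset\sigma(T_A(t))$ applied to the restriction gives $(\lambda-A)^{-1}=(\lambda-A_P)^{-1}P+(\lambda-A_{I-P})^{-1}(I-P)$. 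Near $\lambda_0$ the second term is holomorphic and the first is rational with a pole of finite order. So $\lambda_0$ is an isolated eigenvalue and a pole of the resolvent, and its algebraic multiplicity is at most $\dim PX<\infty$.
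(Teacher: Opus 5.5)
The paper does not prove this lemma; it quotes it from Webb, Engel--Nagel and Magal--Ruan. Your argument is correct and is essentially the standard proof found there: Nussbaum's formula giving $r_{ess}(T_A(t))=e^{t\omega_{0,ess}(A)}$, a finite-rank Riesz projection of $T_A(t)$ outside the essential disc, point-spectral mapping, and the splitting of $A$ into $A_P$ and $A_{I-P}$ with the spectral inclusion applied on $(I-P)X$. The generic choice of $t$, and with it the finiteness hypothesis on $\Sigma_\eta$, is not needed: $\sigma(A_P)$ is finite because $A_P$ is a matrix and $\lambda_0\notin\sigma(A_{I-P})$, which already gives isolation and the pole property.
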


We then recall some spectral properties of positive semigroups on Banach lattices, typically in $L^1-$ or $C^0-$spaces.
From now on in this section $(X,\|\cdot\|)$ denotes a Banach lattice with partial order denoted by $\leq$. We also denote by $X^+$ its positive cone, that is
$$
X^+=\left\{x\in X:\;0\leq x\right\}.
$$
Then the first result we aim at recalling reads as follows.
\begin{lemma}\label{LE2}
Let $A:D(A)\subset X\to X$ be the infinitesimal generator of a strongly continuous semigroup $\{T_A(t)\}_{t\geq 0}$ on the Banach lattice $X$. We assume that $T_A$ is positive, in the sense that $T_A(t)X^+\subset X^+$ for all $t\geq 0$, and that $s(A)>-\infty$. Then
\begin{equation*}
s(A)\in \sigma(A).
\end{equation*}
\end{lemma}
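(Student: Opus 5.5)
The claim $s(A)\in\sigma(A)$ for the generator of a positive $C_0$-semigroup on a Banach lattice is the classical result of Derndinger, Greiner and Nagel. I would prove it through the Laplace-transform representation of the resolvent together with a Pringsheim-type argument on the power series of the resolvent.

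\textbf{Step 1 (positivity of the resolvent to the right).} For $\mathrm{Re}\,\lambda>\omega_0(A)$ the resolvent is given by
\begin{equation*}
R(\lambda;A)x=\int_0^\infty e^{-\lambda t}T_A(t)x\,\dd t.
\end{equation*}
This has two consequences.
\begin{itemize}
\item For real $\lambda>\omega_0(A)$, $R(\lambda;A)$ is a positive operator.
\item For complex $\lambda$ with the same real part, $|R(\lambda;A)x|\le R(\mathrm{Re}\,\lambda;A)|x|$.
\end{itemize}
By the standard theorem (Arendt) for positive semigroups on Banach lattices, the Laplace integral in fact converges, and represents the resolvent, for all $\mathrm{Re}\,\lambda>s(A)$. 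I would either quote this or avoid it: it suffices to work with the smallest real $\mu_0$ such that $(\mu_0,\infty)\subset\rho(A)$ and $R(\mu;A)\ge0$ there. By the resolvent identity, $R(\mu;A)$ is decreasing in $\mu$ on that interval.

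\textbf{Step 2 (Pringsheim argument).} Suppose, for contradiction, that $s(A)\notin\sigma(A)$. Since $\sigma(A)$ is closed and nonempty and $s(A)>-\infty$, there is some $\mu>s(A)$ real, close to $s(A)$, such that the disc around $\mu$ of radius slightly larger than $\mu-s(A)$ is free of real spectrum near $s(A)$. Expand
\begin{equation*}
R(\nu;A)=\sum_{n\ge0}(\mu-\nu)^nR(\mu;A)^{n+1}.
\end{equation*}
The series has nonnegative coefficients $R(\mu;A)^{n+1}\ge0$. Its radius of convergence is $1/r(R(\mu;A))=\mathrm{dist}(\mu,\sigma(A))$. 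By the domination $|R(\lambda;A)x|\le R(\mathrm{Re}\lambda;A)|x|$, iterated, one gets $r(R(\lambda;A))\le r(R(\mathrm{Re}\lambda;A))$. The Pringsheim-type lemma for positive-coefficient operator series then says that the distance is attained on the real axis: $\mathrm{dist}(\mu,\sigma(A))=\mu-\nu_*$ with $\nu_*\in\sigma(A)$ real.

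\textbf{Step 3 (conclusion).} Then $\nu_*\le s(A)$, and $\mu-\nu_*\le\mu-s(A)$ because $s(A)$ is a limit of spectral points whose real part approaches it. Hence $\nu_*=s(A)\in\sigma(A)$. Concretely, if $\mu$ were a positive distance beyond $\mu-s(A)$ from the spectrum, the positive series would converge at $\nu=s(A)-\varepsilon$. Convergence there would extend positivity of the resolvent below $s(A)$, and domination would then give analyticity on a full vertical strip, contradicting the definition of $s(A)$.

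\textbf{Main obstacle.} The delicate point is justifying the domination $r(R(\lambda;A))\le r(R(\mathrm{Re}\lambda;A))$, and hence the Pringsheim step, when $\mathrm{Re}\,\lambda$ lies between $s(A)$ and $\omega_0(A)$, where the Laplace integral is not a priori absolutely convergent. I would handle it by analytic continuation of the positive Neumann series from $\mu>\omega_0(A)$ downward along the real axis. This continuation argument is exactly the content of the cited references \cite{Engel-Nagel-2000}, and in the paper I would simply invoke it.
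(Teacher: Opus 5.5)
Your sketch does not close as written. In Step 2 you expand at a real $\mu$ close to $s(A)$ and use that the coefficients $R(\mu;A)^{n+1}$ are positive. For $\mu\in(s(A),\omega_0(A)]$, nothing you have established gives $R(\mu;A)\ge 0$. Positivity of the resolvent on $(s(A),\infty)$ is part of what the theorem delivers. Your $\mu_0$ device does not help, because $\mu_0$ is only known to satisfy $\mu_0\le\omega_0(A)$, not $\mu_0<\mu$. Arendt's Laplace-transform statement for $\mathrm{Re}\,\lambda>s(A)$ is usually proved together with $s(A)\in\sigma(A)$, so invoking it is not an independent step.

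More seriously, Step 3 asserts $\mu-\nu_*\le\mu-s(A)$ ``because $s(A)$ is a limit of spectral points whose real part approaches it''. This does not follow from the definition of $s(A)$. Spectral values with real part near $s(A)$ may have large imaginary parts and lie far from the real point $\mu$, so $\mathrm{dist}(\mu,\sigma(A))$ could exceed $\mu-s(A)$. That inequality is exactly the claim $\nu_*=s(A)$, so the contradiction is circular. Your ``concretely'' sentence has the right mechanism, but it needs domination on vertical lines below $\omega_0(A)$, which you leave unresolved.

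The missing idea is that no information between $s(A)$ and $\omega_0(A)$ is needed: expand at a single real $\lambda_0>\omega_0(A)$, where the Laplace integral converges absolutely. There $R(\lambda_0;A)\ge 0$. Iterating $|R(\lambda_0+i\eta;A)x|\le R(\lambda_0;A)|x|$, using positivity of $R(\lambda_0;A)$, gives $\|R(\lambda_0+i\eta;A)^n\|\le\|R(\lambda_0;A)^n\|$. Hence $r\big(R(\lambda_0+i\eta;A)\big)\le q:=r\big(R(\lambda_0;A)\big)$ for all $\eta\in\mathbb{R}$, and $q>0$ because $\sigma(A)\neq\emptyset$.

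Since $R(\lambda_0;A)$ is positive, $q\in\sigma\big(R(\lambda_0;A)\big)$; this is where the Pringsheim argument belongs. By the spectral mapping theorem for the resolvent, $\nu_*:=\lambda_0-1/q$ is then a real point of $\sigma(A)$, so $\nu_*\le s(A)$. Conversely, $\mathrm{dist}(\lambda_0+i\eta,\sigma(A))=1/r\big(R(\lambda_0+i\eta;A)\big)\ge\lambda_0-\nu_*$ for every $\eta$. So the union of these discs, the strip $\nu_*<\mathrm{Re}\,\lambda<2\lambda_0-\nu_*$, lies in $\rho(A)$. Together with $\{\mathrm{Re}\,\lambda>\omega_0(A)\}\subset\rho(A)$, this gives $s(A)\le\nu_*$, hence $s(A)=\nu_*\in\sigma(A)$. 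Positivity of $R(\nu;A)$ for $\nu>s(A)$ then follows as a by-product from the Neumann series at $\lambda_0$.

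The paper itself does not prove this lemma. It recalls it as a classical fact and later invokes it via \cite[Theorem 3.3]{Greiner-Voigt-Wolff-1981}. Your fallback of simply citing it therefore matches the paper; it is the self-contained argument that needs the repair above.
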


\begin{proposition}\label{PROP-eig}
Let $A:D(A)\subset X\to X$ be the infinitesimal generator of a strongly continuous semigroup $\{T_A(t)\}_{t\geq 0}$ on the Banach lattice $X$ such that $T_A$ is positive and irreducible. If $s(A)\in \sigma(A)$ is a pole of the resolvent, then there exists $y\in X^+$, which is strictly positive, such that
$$
\ker \left(A-s(A)I\right)={\rm span}(y).
$$ 
\end{proposition}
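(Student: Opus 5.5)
The plan is to pass to the resolvent and apply the Krein--Rutman/Perron--Frobenius theory for positive irreducible operators. Set $s:=s(A)$. Since $s$ is a pole of $\lambda\mapsto(\lambda-A)^{-1}$, say of order $k\geq1$, the Laurent expansion around $s$ gives a leading coefficient $Q:=\lim_{\lambda\to s}(\lambda-s)^k(\lambda-A)^{-1}\neq0$. For real $\lambda>s$, the resolvent is positive: $(\lambda-A)^{-1}=\int_0^\infty e^{-\lambda t}T_A(t)\,dt$, and this Laplace representation is valid for $\lambda>\omega_0(A)$, hence on $(s,\infty)$ by analytic continuation and positivity arguments (for positive semigroups $s(A)=\omega_0$ is not needed; the Laplace integral converges for $\lambda>s(A)$ in Banach lattices, which I will quote as a standard fact). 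Taking $\lambda\downarrow s$ along the reals shows $Q\geq0$. The standard Laurent identities give $(A-s)Q=0$ and $\operatorname{Ran}Q\subset\ker(A-s)$. Since $Q\neq0$ is positive, there is $x\in X^+$ with $Qx\neq0$ and $Qx\geq0$. This produces a positive eigenvector $y_0:=Qx\in\ker(A-s)\cap X^+$.

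Next I will show that $y_0$ is strictly positive (a quasi-interior point) and that it spans the kernel. Irreducibility of $T_A$ is equivalent to $(\lambda-A)^{-1}u$ being a quasi-interior point for every $u>0$ and $\lambda>s$. Since $(\lambda-A)^{-1}y_0=(\lambda-s)^{-1}y_0$, the vector $y_0$ is quasi-interior, i.e.\ strictly positive. For the kernel, I will turn to the dual. The adjoint resolvent has the same pole at $s$, with leading coefficient $Q^*\neq0$ positive, so there is $\psi\in (X^*)^+$, $\psi\neq0$, with $A^*\psi=s\psi$ (taking $\psi=Q^*\ell$ for a suitable $\ell\geq0$). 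Irreducibility gives that $\psi$ is strictly positive, i.e.\ $\langle\psi,u\rangle>0$ for all $u>0$.

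Now let $z\in\ker(A-s)$ be real (split into real and imaginary parts in the complexification). Since $T_A(t)z=e^{st}z$ and $T_A$ is positive, we get $|z|=e^{-st}|T_A(t)z|\leq e^{-st}T_A(t)|z|$. Pairing with $\psi$ yields
\[
\langle\psi,e^{-st}T_A(t)|z|-|z|\rangle=0,
\]
and strict positivity of $\psi$ forces $T_A(t)|z|=e^{st}|z|$. The same argument applied to $z^+$ (using $z^+\le (e^{-st}T_A(t)z)^+\le e^{-st}T_A(t)z^+$) gives $z^\pm\in\ker(A-s)$. Any nonzero positive eigenvector is strictly positive by the previous step, so $z^+$ and $z^-$, being disjoint, cannot both be nonzero. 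Hence every real eigenvector has constant sign. Applying this to $z-cy_0$, with $c$ chosen so that it fails to have constant sign unless it vanishes (e.g.\ $c=\sup\{t: z-ty_0\geq0\}$, or via $\langle\psi,z-cy_0\rangle=0$), gives $z=cy_0$. Therefore $\ker(A-s)=\operatorname{span}(y_0)$.

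The main obstacle I expect is the step characterizing irreducibility through quasi-interior points of the resolvent and of the dual eigenfunctional. In $L^1$ this is concrete: strictly positive means $>0$ a.e., and the dual functional is an $L^\infty$ function bounded away from zero on the support. If needed, I will simply cite Nagel et al.\ (\emph{One-parameter semigroups of positive operators}, C-III Prop.~3.5), as the paper does with its other references.
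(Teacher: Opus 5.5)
Your argument is correct. The paper gives no proof of Proposition~\ref{PROP-eig}; it only recalls it from positive-semigroup theory. What you wrote is essentially the standard proof of that result (cf.\ Nagel et al., C-III, Prop.~3.5):
\begin{enumerate}
\item the positive leading Laurent coefficients $Q$ and $Q^*$ produce nonzero positive eigenvectors $y_0$ of $A$ and $\psi$ of $A^*$;
\item irreducibility makes $y_0$ quasi-interior and $\psi$ strictly positive;
\item testing $|z|\le e^{-st}T_A(t)|z|$ against $\psi$ forces every real eigenvector to have constant sign.
\end{enumerate}
Of your two choices for $c$, use $c=\langle\psi,z\rangle/\langle\psi,y_0\rangle$. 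Then $z-cy_0$ is a constant-sign eigenvector annihilated by the strictly positive $\psi$, hence zero, which closes the argument at once.
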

	
As a corrolary, coupling Lemma \ref{LE1}, Lemma \ref{LE2} and Proposition \ref{PROP-eig}, we have the following result.

\begin{corollary}\label{CORO-eig}
Let $A:D(A)\subset X\to X$ be the infinitesimal generator of a strongly continuous semigroup $\{T_A(t)\}_{t\geq 0}$ on the Banach lattice $X$ such that $T_A$ is positive and irreducible. 
Assume furthermore that $\omega_{0,ess}(A)<\omega_0(A)$. Then, there exists $y\in X^+$, which is strictly positive, such that
$$
Ay=s(A)y.
$$ 
\end{corollary}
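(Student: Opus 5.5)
The idea is to chain the three preceding results, checking at each step that their hypotheses are met. Throughout, $T_A$ is positive and irreducible, and $\omega_{0,ess}(A)<\omega_0(A)$.

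\emph{Step 1: $s(A)$ is finite.} Since $\omega_{0,ess}(A)\geq -\infty$ and the inequality $\omega_{0,ess}(A)<\omega_0(A)$ is strict, we get $\omega_0(A)>-\infty$. Lemma \ref{LE1} then gives $s(A)=\omega_0(A)$, so $s(A)$ is finite, and in particular $s(A)>-\infty$.

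\emph{Step 2: $s(A)$ lies in the spectrum.} Lemma \ref{LE2} applies, since $T_A$ is positive and $s(A)>-\infty$. It gives $s(A)\in\sigma(A)$.

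\emph{Step 3: $s(A)$ is a pole of the resolvent.} We invoke the second part of Lemma \ref{LE1} with some $\eta\in(\omega_{0,ess}(A),s(A)]$; this interval is nonempty by Step 1. For this we need $\Sigma_\eta$ to be finite. This is the main obstacle, because the lemma as quoted does not grant finiteness for free. We would rely on the standard fact behind Lemma \ref{LE1} (Webb; Engel--Nagel, Theorem IV.2.10 / Corollary IV.2.11): for every $\eta>\omega_{0,ess}(A)$, the set $\Sigma_\eta$ consists of finitely many isolated eigenvalues, each a pole of finite algebraic multiplicity. The argument for this fact is as follows. The spectrum of $T_A(t)$ outside the disc of radius $e^{t\omega_{0,ess}}$ is discrete with finite-rank spectral projections. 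The spectral mapping theorem for the point spectrum, combined with the boundedness $\mathrm{Re}\,\lambda\leq s(A)$, then confines $\Sigma_\eta$ to a compact set in which its points cannot accumulate. Granting finiteness, we take $\eta=s(A)$ itself, which exceeds $\omega_{0,ess}(A)$. Then $\Sigma_\eta$ contains $s(A)$ by Step 2, and Lemma \ref{LE1} says that $s(A)$ is a pole of the resolvent.

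\emph{Step 4: conclusion.} Proposition \ref{PROP-eig} now applies: $T_A$ is positive and irreducible, and $s(A)\in\sigma(A)$ is a pole. It yields a strictly positive $y\in X^+$ with $\ker(A-s(A)I)=\mathrm{span}(y)$, and in particular $Ay=s(A)y$.
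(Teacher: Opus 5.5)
Your proof is correct and takes the same route as the paper, which obtains the corollary by chaining Lemma \ref{LE1} ($s(A)=\omega_0(A)>-\infty$, and the pole property), Lemma \ref{LE2} ($s(A)\in\sigma(A)$) and Proposition \ref{PROP-eig}. You are right to address the finiteness of $\Sigma_\eta$, which is automatic for $\eta>\omega_{0,ess}(A)$ by Engel--Nagel IV.2.10. One correction to your sketch of that fact: the bound $\mathrm{Re}\,\lambda\le s(A)$ does not control imaginary parts, so finiteness does not come from compactness. It comes from the finite-rank spectral projection $P$ of $T_A(t)$ associated with $\sigma(T_A(t))\cap\{|\mu|\geq e^{t\eta}\}$: this projection reduces $A$ and gives $\Sigma_\eta\subset\sigma(A|_{PX})$, which is a finite set.
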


\subsection{Generation and asymptotic regularity}

In this section we describe some basic properties of the semigroup generated by $A+B:D(A)\subset L^1_{per}(\R)\to L^1_{per}(\R)$; 
we will assume throughout this subsection that Assumption \ref{ASS-20} holds true. In particular, the advection has two zeros of order 1.
We collect the main results of this section in the next theorems.	
	
\begin{theorem}\label{THEO1}
	Let Assumption \ref{ASS-20} hold true.
The closed linear operator $A:D(A)\subset L^1_{per}(\R)\to L^1_{per}(\R)$ is the infinitesimal generator of a strongly continuous semigroup, denoted by $\{T_A(t)\}_{t\geq 0}$, that satisfies the following properties:
\begin{enumerate}
\item[(i)] The semigroup $\{T_A(t)\}_{t\geq 0}$ is positive, in the sense that $T_A(t)L^1_{per,+}(\R)\subset L^1_{per,+}(\R)$.  
\item[(ii)] The growth rate $\omega_0(A)$ of the semigroup $\{T_A(t)\}_{t\geq 0}$ satisfies
\begin{equation*}
\omega_0(A)\leq \max_{i=0,1} \left(r(x_i)-a'(x_i)\right) = \lambda_1^m.
\end{equation*}
\end{enumerate} 
\end{theorem}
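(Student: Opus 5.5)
The semigroup will be written down explicitly through the characteristics of the vector field $a$. Let $\Phi_t(x)$ be the flow of $\dot X=a(X)$ on $\mathbb{S}^1$, well defined and global since $a\in C^1_{per}$. The points $x_0,x_1$ are equilibria, with $x_1$ attracting and $x_0$ repelling, because $a'(x_0)>0$ and $a'(x_1)<0$. For $\varphi\in L^1_{per}$ the candidate is
\begin{equation*}
T_A(t)\varphi(x)=\exp\left(\int_0^t r\big(\Phi_s(x)\big)\,\dd s\right)\varphi\big(\Phi_t(x)\big),
\end{equation*}
which formally solves $u_t=a u_x+ru$. I will verify the semigroup property directly from $\Phi_{t+s}=\Phi_t\circ\Phi_s$. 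Positivity, item (i), is then immediate from the formula, since the exponential weight is positive.

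For boundedness on $L^1_{per}$, I change variables $y=\Phi_t(x)$. The Jacobian is $\partial_x\Phi_t(x)=\exp\big(\int_0^t a'(\Phi_s(x))\,\dd s\big)$, so
\begin{equation*}
\|T_A(t)\varphi\|_{L^1}=\int_{\mathbb{S}^1}|\varphi(y)|\exp\left(\int_0^t \big(r-a'\big)\big(\Phi_{-s}(y)\big)\,\dd s\right)\dd y .
\end{equation*}
This gives $\|T_A(t)\|\le e^{t\sup(r-a')}$. Strong continuity follows by density of $C^0_{per}$ and dominated convergence. To identify the generator, I will check that $A\varphi=a\varphi'+r\varphi$ for $\varphi\in D(A)$ by differentiating along characteristics on each open interval between the zeros. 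I then compare resolvents: the Laplace transform of $T_A$ for large $\lambda$ inverts $\lambda I-A$. Injectivity of $\lambda-A$ on $D(A)$ for large $\lambda$ comes from the explicit resolvent formulas of subsection~\ref{sec:inversion-hyperbolic}, or from a dissipativity estimate in $L^1$ using $\mathrm{sign}(\varphi)$. Two closed operators, one extending the other, that share a point of their resolvent sets must coincide.

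The main step is (ii), $\omega_0(A)\le\lambda_1^m$. It suffices to show that for every $\varepsilon>0$ there is $C_\varepsilon$ with $\sup_y\int_0^t\sigma(\Phi_{-s}(y))\,\dd s\le(\lambda_1^m+\varepsilon)t+C_\varepsilon$, where $\sigma=r-a'$. Under the backward flow, every point of $(x_0,x_1)$ and of $(x_1,1+x_0)$ converges to $x_0$, while $x_1$ stays fixed. Fix a small $\delta$ with $\sigma\le\sigma(x_i)+\varepsilon\le\lambda_1^m+\varepsilon$ on the $\delta$-neighbourhoods $U_i$ of $x_i$. The hard point is uniformity in $y$ near $x_1$, where backward orbits may spend an arbitrarily long time before leaving. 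The cure is that this time is spent inside $U_1$, where the integrand is already at most $\lambda_1^m+\varepsilon$. Backward orbits only leave $U_1$ and never return to it, because $x_1$ is repelling for the backward flow. The compact set $\mathbb{S}^1\setminus(U_0\cup U_1)$ is crossed in a uniformly bounded time $\tau_\delta$, since $|a|\ge c_\delta>0$ there. Once an orbit enters $U_0$ it stays there, because $U_0$ is invariant under the backward flow for small $\delta$. Thus each backward orbit spends at most $\tau_\delta$ units of time outside $U_0\cup U_1$, which gives the bound with $C_\varepsilon=\tau_\delta\,\big(\sup\sigma-\lambda_1^m\big)^+$. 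Hence $\|T_A(t)\|\le e^{C_\varepsilon}e^{(\lambda_1^m+\varepsilon)t}$, and letting $\varepsilon\to0$ yields $\omega_0(A)\le\lambda_1^m$.
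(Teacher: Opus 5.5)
Your proposal is correct and is essentially the paper's proof. Since $\Phi_t=\pi(0,t;\cdot)$ and $\Phi_{-s}=\pi(s,0;\cdot)$, your explicit semigroup, the change of variables giving the weight $\exp\int_0^t (r-a')(\Phi_{-s}(y))\,\dd s$, and your uniform bound on the time a backward orbit spends outside $U_0\cup U_1$ coincide with the paper's formula and Lemma~\ref{LE-averaging}; your monotonicity and crossing-time argument also spells out the uniformity in $y$ that the paper only asserts. Your extra sketch identifying the generator with $(A,D(A))$, which the paper takes for granted, is sound, and using the explicit resolvent of Proposition~\ref{prop:resolvent-A-L1} there is not circular, since only the identification $s(A)=\lambda_1^m$ in that proposition relies on Theorem~\ref{THEO1}.
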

	
\begin{proof}[Proof of Theorem \ref{THEO1} (i)]	
To prove the above theorem, we introduce the characteristic curves $\pi(t,s;x)$, for $x\in\R$ and $(t,s)\in\R^2$, as the solution of the following ODE
$$
\partial_t \pi(t,s;x)=-a(\pi(t,s;x)),\;\;\pi(s,s;x)=x.
$$
Recall that $a$ is $1$-periodic, so that for any $x\in \R$, $k\in\mathbb Z$ we have 
$$
\pi(t,s;x+k)=\pi(t,s;x)+k,\;\;\forall (t,s)\in\R^2.
$$
Moreover, since $a$ is of class $C^1$, so is $\pi$ with respect to the variable $x\in\R$. 
Using these characteristic curves, we can solve the Cauchy problem
$$
\frac{du(t)}{dt}=Au(t),\;u(0)=u_0\in L^1_{per}(\R),
$$
that reads as
$$
\begin{cases}
\partial_t u(t,x)=a(x)\partial_x u(t,x)+r(x)u(t,x),\\
u(t,0)=u(t,1),\\
u(0,.)=u_0\in L^1_{per}(\R).
\end{cases}
$$
Setting $v(t,x)=u(t,\pi(t,0;x))$ we obtain
$$
\partial_t v(t,x)=r(\pi(t,0;x))v(t,x),
$$
that is
$$
v(t,x)=u(t,\pi(t,0;x))=u_0(x)\exp\left[\int_0^t r(\pi(s,0;x)ds\right].
$$
	Equivalently, by using the change of variable $y=\pi(t,0;x)$ (which is equivalent to $x=\pi(0,t;y)$), we obtain
$$
u(t,y)=u_0(\pi(0,t;y))\exp\left[\int_0^t r(\pi(s,t;y)ds\right].
$$	
As a consequence of the above analysis, one may consider the family of linear operator $\{T_A(t)\}_{t\geq 0}\subset \mathcal L\left(L^1_{per}(\R)\right)$ given by
$$
T_A(t)\varphi\,(y)=u_0(\pi(0,t;y))\exp\left[\int_0^t r(\pi(s,t;y)ds\right],\:\:\forall t\geq 0,\;\forall \varphi\in L^1_{per}(\R).
$$
Then $\{T_A(t)\}_{t\geq 0}$ is a strongly continuous and positive semigroup on $L^1_{per}(\R)$. %{(\bf Also true in $C^0_{per}$!!)}\\
\end{proof}

	Now, in order to estimate the growth rate of $T_A$, we need to understand some averaging properties of the characteristic curve and we shall need the following lemma. {Note that this Lemma can be viewed as a simple example of subadditive ergodic theorem, see for instance \cite[Proposition 2.2]{Jenkinson-2019} or \cite{Morro-Sant'Anna-Varandas-2020}.}
\begin{lemma}\label{LE-averaging}
Recalling that Assumption \ref{ASS-20} is satisfied. Then for all continuous and $1-$periodic function $\Phi:\R\to \R$ one has
$$
\limsup_{t\to\infty} \sup_{x\in [0,1]} \frac{1}{t}\int_0^t \Phi(\pi(\ell,0;x))d\ell\leq \max\{\Phi(x_0),\Phi(x_1)\}.
$$
\end{lemma}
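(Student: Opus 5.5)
The plan is to use the explicit dynamics of the characteristic flow $\partial_t\pi=-a(\pi)$ on the circle. Under Assumption \ref{ASS-20}, the zeros $x_0,x_1$ are the only equilibria. Since $a'(x_0)>0$, the point $x_0$ is repelling for the flow $\dot y=-a(y)$; since $a'(x_1)<0$, the point $x_1$ is attracting. Every trajectory starting outside $\{x_0\}$ (mod $1$) converges monotonically to $x_1+k$. Trajectories starting at $x_0$ stay there forever. The delicate part is uniformity in $x$: points very close to $x_0$ remain near $x_0$ for an arbitrarily long time before travelling to $x_1$. So I will not prove convergence of the averages pointwise, but rather a uniform statement: for each $\varepsilon>0$, every trajectory spends all but a bounded amount of time (independent of $x$) in the $\delta$-neighbourhoods $U_0\cup U_1$ of $x_0,x_1$ (mod $1$).

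\textbf{Step 1 (choice of neighbourhoods).} Fix $\varepsilon>0$ and choose $\delta>0$ so that $|\Phi(y)-\Phi(x_i)|\le\varepsilon$ for $y\in U_i=(x_i-\delta,x_i+\delta)+\mathbb{Z}$, $i=0,1$. Take $\delta$ small enough that the two neighbourhoods are disjoint.

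\textbf{Step 2 (bounded transit time).} On the compact set $C=[0,1]\setminus(U_0\cup U_1)$ we have $|a|\ge m>0$. Since $\pi(\cdot,0;x)$ is monotone in $t$ on each arc between consecutive zeros of $a$, a trajectory crosses $C$ at most once, and passes through it in time at most $\tau:=1/m$. There is one more point to check: a trajectory that has left $U_0$ can never re-enter $U_0$. This holds because the motion is monotone and directed away from $x_0$. Likewise, once a trajectory enters $U_1$ it stays there, because $U_1$ is forward-invariant: the flow points toward $x_1$ on both sides. Hence the time set $\{\ell\in[0,t]:\pi(\ell,0;x)\in C\}$ has measure at most $\tau$, uniformly in $x$ and $t$.

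\textbf{Step 3 (averaging estimate).} Split $[0,t]$ into the time spent in $U_0$, in $C$, and in $U_1$. This gives
\[
\frac1t\int_0^t\Phi(\pi(\ell,0;x))\,d\ell\le \max\{\Phi(x_0),\Phi(x_1)\}+\varepsilon+\frac{2\tau\|\Phi\|_\infty}{t},
\]
since on $U_0\cup U_1$ the integrand is at most $\max_i\Phi(x_i)+\varepsilon$, and the time in $C$ contributes at most $\tau\|\Phi\|_\infty$ (bounding crudely). Taking the supremum over $x\in[0,1]$, then $\limsup_{t\to\infty}$, and finally $\varepsilon\to0$ yields the claim.

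The main obstacle is Step 2, namely justifying that trajectories visit $C$ only once with uniformly bounded duration. This is where the structure of Assumption \ref{ASS-20} is used. Because there are exactly two simple zeros, one repelling and one attracting for $-a$, there are no recurrent excursions. Monotonicity of one-dimensional autonomous flows, together with $|a|\ge m$ on $C$, should give the bound directly.
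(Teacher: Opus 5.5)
Your argument is the same as the paper's. You choose neighbourhoods of $x_0,x_1$ on which $\Phi\le\max\{\Phi(x_0),\Phi(x_1)\}+\varepsilon$. You then show that the time each characteristic spends outside them is bounded independently of $x$ (the paper's $L$, your $\tau=1/m$, which you justify more explicitly than the paper does), and split the integral.

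There is, however, a sign error in your description of the dynamics. For $\partial_t\pi=-a(\pi)$ the linearization at $x_0$ is $\dot\eta=-a'(x_0)\eta$ with $-a'(x_0)<0$, so $x_0$ is attracting and $x_1$ is repelling. On $(x_0,x_1)$ we have $a>0$, so trajectories decrease toward $x_0$; on $(x_1,1+x_0)$ they increase toward $1+x_0$. Hence every trajectory not starting at $x_1$ (mod $1$) converges to $x_0$ (mod $1$). It is $U_0$ that is forward-invariant and $U_1$ that cannot be re-entered once left, so your three statements to the contrary are false as written.

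This does not damage the proof, because the bound is symmetric in $x_0,x_1$ and Step 2 needs no attracting/repelling information at all. A non-stationary trajectory stays in one arc $J$ of $\mathbb{S}^1\setminus\{x_0,x_1\}$ (by uniqueness for the $C^1$ field $a$) and is strictly monotone there. Therefore the set of times it spends in the single interval $C\cap J$ is an interval of length at most $|C\cap J|/m\le 1/m$, uniformly in $x$. Correct the stability claims or simply drop them and argue from monotonicity alone.
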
 

%{\bf To be proved!!}\\

Before proving this lemma, let us complete the proof of Theorem \ref{THEO1} $(ii)$.
\begin{proof}[Proof of Theorem \ref{THEO1} (ii)]
Let $u_0\in L^1_{per}(\R)$ be given.
Then for any $t>0$ one has
$$
\|T_A(t)u_0\|_{L^1(0,1)}=\int_0^1 |u_0(\pi(0,t;y))|\exp\left[\int_0^t r(\pi(s,t;y)ds\right]dy.
$$	
Next we use the change of variable $z=\pi(0,t;y)$ or equivalently $y=\pi(t,0;z)$, hence $dy=\partial_x\pi(t,0;z)dz$.
%However one may compute	the derivative in $x$ by differentiating the characteristic curves with respect to $x$ as follows: 
	For all $(t,s)\in\R$ and $x\in\R$ we have
$$	
\partial_t \partial_x\pi(t,s;x)=-a'(\pi(t,s;x))\partial_x\pi(t,s;x),\;\;\partial_x\pi(s,s;x)=1,
$$
and integrating the above ODE, it rewrites as 
$$
\partial_x\pi(t,s;x)=\exp\left[-\int_s^t a'(\pi(\ell,s;x))d\ell\right],\;\forall (t,s)\in\R,\;\forall x\in\R.
$$
As a consequence we obtain
$$
\|T_A(t)u_0\|_{L^1(0,1)}=\int_0^1 |u_0(z)|\exp\left[\int_0^t \left(r-a'\right)(\pi(\ell,0;z)d\ell\right]dz
$$	
so that for any $t>0$
$$
\|T_A(t)\|_{\mathcal L\left(L^1_{\rm per}(\R)\right)}\leq \exp\left[\sup_{z\in [0,1]}\int_0^t \left(r-a'\right)(\pi(\ell,0;z)d\ell\right].
$$	
Since the function $r-a'$ is continuous and $1$-periodic, Lemma \ref{LE-averaging} applies and yields $(ii)$, which completes the proof of Theorem \ref{THEO1}. 
\end{proof}
	
It remains to prove Lemma \ref{LE-averaging}.
%{\bf REFs and proof}

\begin{proof}[Proof of  Lemma \ref{LE-averaging}]
Without loss of generality, suppose that $\Phi(x)>0$. Denote by $\tilde \Phi:=\max\limits_{x\in [0,1]}\Phi(x)$ and $\bar \Phi:= \max\{\Phi(x_0),\Phi(x_1)\}$. Since $\Phi$ is continuous, for any $\epsilon>0$, there is some neighborhood of $I_\epsilon$of $\{x_0,x_1\}$, such that for any $x\in I_\epsilon$, $\Phi(x)<\bar \Phi+\epsilon$. For any $x\in [0,1]$, denote by $T(x,\epsilon):=\{t\geq 0| \pi(t,0;x)\not\in I_\epsilon\}$. Then $t\geq 0$ and $t\not\in T(x,\epsilon)$ means that $\pi(t,0;x)\in I_\epsilon$. Moreover, $T(x,\epsilon)$ is a bounded interval (which may be empty) and there is some $L>0$ independent of $x$ such that the length of $T(x,\epsilon)$ is less than $L$, taking the convention that the length of an empty subset of the real line is zero. 

We have:
$$
\int_0^t \Phi(\pi(\ell,0;x))d\ell = \int_{T(x,\epsilon)} \Phi(\pi(\ell,0;x))d\ell +  \int_{[0,t]\setminus T(x,\epsilon)} \Phi(\pi(\ell,0;x))d\ell \leq L \tilde \Phi+t (\bar\Phi+\epsilon).
$$
And hence $$
\limsup_{t\to\infty} \sup_{x\in [0,1]} \frac{1}{t}\int_0^t \Phi(\pi(\ell,0;x))d\ell\leq \bar \Phi+\epsilon.
$$

Finally, since $\epsilon$ is arbitrary, the lemma is proved.
\end{proof}

\begin{theorem}\label{THEO2}
The closed linear operator $A+B:D(A)\subset L^1_{per}(\R)\to L^1_{per}(\R)$ is the infinitesimal generator of a strongly continuous semigroup, denoted by $\{T_{A+B}(t)\}_{t\geq 0}$, that satisfies the following properties:
\begin{enumerate}
\item[(i)] The semigroup $\{T_{A+B}(t)\}_{t\geq 0}$ is positive and irreducible on $L^1_{per,+}(\R)$, in the sense that for all 
$\varphi\in L^1_{per,+}(\R)\setminus\{0\}$ and $\varphi^*\in L^\infty_{per,+}(\R)\setminus\{0\}$ there exists $t>0$ such that
$$
\left\langle \varphi^*,T_{A+B}(t)\varphi\right\rangle>0.
$$
\item[(ii)] The essential growth rate of the semigroup $\{T_{A=B}(t)\}_{t\geq 0}$, denoted by $\omega_{0,ess}(A+B)$ and defined by
$$
\omega_{0,ess}(A+B)=\lim_{t\to\infty} \frac{1}{t}\ln \kappa\left(T_{A+B}(t)\right),
$$ 
where we recall that $\kappa\left(\cdot\right)$ denotes he Kuratovsky measure of non-compactness,
satisfies
\begin{equation*}
\omega_{0,ess}(A+B)\leq \omega_0(A)\leq \max_{i=0,1} \left(r(x_i)-a'(x_i)\right)=\lambda_1^m.
\end{equation*}
\end{enumerate} 
\end{theorem}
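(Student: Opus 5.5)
Generation follows from bounded perturbation: $B$ is bounded on $L^1_{per}(\R)$ and $A$ generates $\{T_A(t)\}_{t\geq 0}$ by Theorem \ref{THEO1}, so $A+B$ with domain $D(A)$ generates a strongly continuous semigroup. It is given by the Dyson--Phillips series
\begin{equation*}
T_{A+B}(t)=\sum_{n\geq 0}S_n(t),\quad S_0(t)=T_A(t),\quad S_{n+1}(t)\varphi=\int_0^t T_A(t-s)BS_n(s)\varphi\,\dd s .
\end{equation*}
Since $T_A(t)\geq 0$ by Theorem \ref{THEO1}(i) and $B\geq 0$ because $K>0$, every term $S_n(t)$ is positive. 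Hence $T_{A+B}(t)\geq T_A(t)\geq 0$, and also $T_{A+B}(t)\geq S_1(t)$.

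\textbf{Irreducibility.} Take $\varphi\in L^1_{per,+}\setminus\{0\}$ and $\varphi^*\in L^\infty_{per,+}\setminus\{0\}$.
\begin{itemize}
\item[1.] Because $K\geq k_0:=\min K>0$ on the compact set $[0,1]^2$, we have $B\psi\geq k_0\int_0^1\psi>0$ pointwise for every nonzero $\psi\geq 0$.
\item[2.] The explicit formula for $T_A$ shows that $T_A(s)\varphi\neq 0$ for every $s$: it is a composition with a diffeomorphism times a positive weight. Therefore $BT_A(s)\varphi\geq c(s)>0$, with $c(s)=k_0\int_0^1 T_A(s)\varphi$, which is continuous and positive in $s$.
\item[3.] $T_A(t-s)$ maps a positive constant $c$ to $c\exp[\int r(\cdots)]\geq c\,e^{-(t-s)\|r\|_\infty}>0$.
\item[4.] Combining, $S_1(t)\varphi(x)\geq\int_0^t c(s)e^{-(t-s)\|r\|_\infty}\dd s>0$ for a.e.\ $x$, for every $t>0$.
\end{itemize}
Thus $\langle\varphi^*,T_{A+B}(t)\varphi\rangle>0$ for every $t>0$, which is stronger than the claimed irreducibility.

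\textbf{Essential growth bound.} The plan is to show that $T_{A+B}(t)-T_A(t)$ is compact for each $t>0$. Then $\|T_{A+B}(t)\|_{\rm ess}=\|T_A(t)\|_{\rm ess}\leq\|T_A(t)\|$, so $\omega_{0,ess}(A+B)\leq\omega_0(A)$, and Theorem \ref{THEO1}(ii) gives the bound by $\lambda_1^m$.

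\begin{itemize}
\item[1.] The remainder satisfies the variation of constants formula
\begin{equation*}
T_{A+B}(t)-T_A(t)=\int_0^t T_A(t-s)\,B\,T_{A+B}(s)\,\dd s ,
\end{equation*}
a strong integral. A strongly continuous integral of compact operators is compact, so it suffices to check that $s\mapsto T_A(t-s)BT_{A+B}(s)$ is norm-continuous with compact values. It is also enough to approximate the integral in operator norm by Riemann sums of compact operators.
\item[2.] $B$ is compact on $L^1_{per}$. The kernel $K$ is uniformly continuous, so $B$ maps bounded sets of $L^1$ into an equicontinuous, bounded family in $C^0$, and Arzel\`a--Ascoli applies. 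Consequently $T_A(t-s)BT_{A+B}(s)$ is compact.
\item[3.] For norm continuity, first note that $s\mapsto BT_{A+B}(s)$ is norm continuous. Indeed $B$ has a continuous kernel, and $B^*$ maps into $C^0$, where the dual semigroup acts continuously. Alternatively, one can approximate $K$ by finite sums $\sum f_i(x)g_i(y)$ with continuous factors, so that $B$ is a norm limit of finite-rank operators. Strong continuity on a finite-rank operator gives norm continuity of $s\mapsto T_A(t-s)F$, and for the right factor one uses $F\,T_{A+B}(s)$ with $g_i\in C^0$, where $s\mapsto T_{A+B}(s)^*g_i$ is continuous because $g_i\in D((A+B)^*)$ can be approximated.
\end{itemize}

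Step 3, the finite-rank approximation of $K$ and the handling of norm continuity in $s$, is where I expect the only technical care. The key point is that $T_A$ maps $C^0_{per}$ functions continuously in $t$, which is clear from its explicit characteristic representation.
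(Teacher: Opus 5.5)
The paper states Theorem~\ref{THEO2} without proof (only a remark follows it), so there is no argument of the authors to compare yours with. Your proposal is a correct proof and fills that gap.

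Generation and positivity by bounded perturbation and the Dyson--Phillips series are standard. The irreducibility argument through $S_1(t)$ is sound. Step~2 can be made quantitative with the change of variables from the proof of Theorem~\ref{THEO1}(ii), which gives $\int_0^1 T_A(s)\varphi\geq e^{-s\|r-a'\|_\infty}\int_0^1\varphi$. Reducing (ii) to compactness of $T_{A+B}(t)-T_A(t)$ and then invoking Theorem~\ref{THEO1}(ii) is the right strategy. The bound $\kappa(T_A(t)\mathcal{B})\leq 2\|T_A(t)\|$, with $\mathcal{B}$ the unit ball, only costs a constant that disappears in the growth rate.

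On the compactness step, do not rely on the blanket principle ``a strongly continuous integral of compact operators is compact''. In a non-reflexive space like $L^1$ it is not a fact you can simply cite, and you do not need it; your Step~3 is also more than necessary. What matters is the factorized form of the integrand, with the fixed compact operator $B$ in the middle:
\begin{itemize}
\item[1.] Let $M=\sup_{s\in[0,t]}\|T_{A+B}(s)\|$ and let $\mathcal{C}$ be the closure of $B(\{\|\psi\|\leq M\})$, which is compact.
\item[2.] Since $(\tau,y)\mapsto T_A(\tau)y$ is jointly continuous, the set $\mathcal{K}=\{T_A(\tau)y:\ \tau\in[0,t],\ y\in\mathcal{C}\}$ is compact.
\item[3.] Every Riemann sum of $\int_0^t T_A(t-s)BT_{A+B}(s)\varphi\,\dd s$ with $\|\varphi\|\leq 1$ lies in $t\,\mathrm{co}(\mathcal{K})$.
\item[4.] Mazur's theorem makes the closed convex hull of $\mathcal{K}$ compact, hence the operator is compact.
\end{itemize}

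If you prefer to keep the norm-continuity route, the justification for $s\mapsto BT_{A+B}(s)$ must concern the dual semigroup, not $T_A$ acting on $C^0_{per}$. The argument goes as follows:
\begin{itemize}
\item[1.] Since $a\in C^1$, one has $C^1_{per}\subset D(A^*)=D((A+B)^*)$. This follows by integration by parts, using that $a\varphi$ is absolutely continuous and vanishes at $x_0,x_1$ for $\varphi\in D(A)$.
\item[2.] Hence $C^0_{per}$ lies in the sun-dual $\overline{D(A^*)}$ (closure in $L^\infty$), on which $s\mapsto T_{A+B}(s)^*g$ is norm continuous.
\item[3.] $B^*$ maps bounded sets of $L^\infty_{per}$ into relatively compact subsets of $C^0_{per}$. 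Therefore $s\mapsto T_{A+B}(s)^*B^*=(BT_{A+B}(s))^*$ is norm continuous.
\end{itemize}
Your remark about approximating the $g_i$ by elements of $D((A+B)^*)$ is this argument, but as written it is garbled.
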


\begin{remark} It is possible to deduce from Theorem \ref{THEO2}  the existence of a spectral gap for $A+B$ in the special situation where $\lambda_1^*>\lambda_1^m $, where $\lambda_1^*$ is the minimal principal eigenvalue of $A+B$ defined in Definition  \ref{def:minimal-eigenpair}. We refer to Schaefer \cite[Proposition 5.6 p. 332]{Schaefer-1974} for a result on the uniqueness of the spectral radius as an eigenvalue in the peripheral spectrum of a strongly positive linear operator, and Webb \cite{Webb-1987} for the theory of the relationship between the growth rate and essential growth rates of semigroups.
\end{remark}

\section{Construction of principal eigenpairs}
\label{sec:construction}
\subsection{Inverting the hyperbolic part on the interval}
\label{sec:inversion-hyperbolic}

In this section  we restrict the analysis to the interval $(x_0, x_1)$. We will use the following set of assumptions. 
\begin{assumption}\label{as:interval}
	The function $a(x)$ is Lipschitz continuous  on $[x_0, x_1]$, positive on $(x_0, x_1)$, and twice differentiable at $x_0$ and $x_1$. The function $r(x)$ is continuous on $[x_0, x_1]$ and differentiable at $x_0$ and $x_1$. Moreover we assume that 
	\begin{equation*}
		a'(x_0)>0, \qquad a'(x_1)<0.
	\end{equation*}
\end{assumption}
	In this section we will use the slightly modified operator $A$ defined by 
\begin{equation}\label{defAint}
\begin{split}
	&D(A)=\left\{\varphi\in L^1\big([x_0, x_1]\big):\;a\varphi'\in L^1\big([x_0, x_1]\big)\right\},\\
&A\varphi=a(x)\varphi'+r(x)\varphi,\;\forall \varphi\in D(A).
\end{split}
\end{equation} 
and also define 
$A_0:D(A_0)\subset C^0\big([x_0, x_1]\big)\to C^0\big([x_0, x_1]\big)$ with
\begin{equation}\label{defA0int}
\begin{split}
    &D(A_0)=\left\{\varphi\in C^0\big([x_0, x_1]\big):\;a\varphi'\in C^0\big([x_0, x_1]\big)\right\},\\
&A_0\varphi=a(x)\varphi'+r(x)\varphi,\;\forall \varphi\in D(A_0).
\end{split}
\end{equation} 
In this subsection we focus on the invertibility of the hyperbolic part, namely, we are interesting in solving the equation 
\begin{equation}\label{eq:resolvent-A}
	(\lambda I-A)\varphi = f
\end{equation}
with $f\in L^1\big([x_0, x_1]\big)$ or $f\in C^0\big([x_0, x_1]\big)$ and $\lambda\in\mathbb{R}$. We introduce the function $\sigma(x):=r(x)-a'(x)$. 
If $\varphi$ is a solution of \eqref{eq:resolvent-A}, we have
\begin{gather*}
	\big(a(x)\varphi\big)' + \big(\sigma(x)-\lambda\big)\varphi = -f(x), \\ 
	\left(a(x)e^{\int_{X_0}^x \frac{\sigma(z)-\lambda}{a(z)}\dd z}\varphi\right)' e^{-\int_{X_0}^x \frac{\sigma(z)-\lambda}{a(z)}\dd z} = -f(x), \\ 
	a(x)e^{\int_{X_0}^x \frac{\sigma(z)-\lambda}{a(z)}\dd z}\varphi(x) - a(X_1)e^{\int_{X_0}^{X_1} \frac{\sigma(z)-\lambda}{a(z)}\dd z}\varphi(X_1) = -\int_{X_1}^x f(y)e^{\int_{X_0}^y \frac{\sigma(z)-\lambda}{a(z)}\dd z}\dd y,
\end{gather*}
where $X_0\in (x_0, x_1)$ and $X_1\in (x_0, x_1)$ are arbitrarily chosen.  We obtain the formula
\begin{equation}\label{eq:resolvent-A-formula-0}
    \varphi(x) =  
	\dfrac{- \int_{X_1}^x f(y)e^{\int_{X_0}^y \frac{\sigma(z)-\lambda}{a(z)}\dd z}\dd y +a(X_1)e^{\int_{X_0}^{X_1} \frac{\sigma(z)-\lambda}{a(z)}\dd z}\varphi(X_1)  }{a(x)e^{\int_{X_0}^x \frac{\sigma(z)-\lambda}{a(z)}\dd z}} , \text{ for all } x\in(x_0, x_1).
\end{equation}	
In the formula \eqref{eq:resolvent-A-formula-0}, there are two possible singularities (at $x=x_0$ and $x=x_1$) and one degree of liberty, the value of $\varphi(X_1)$.
\medskip 

We start by a lemma concerning the function 
\begin{equation}\label{eq:E_lambda}
	\mathcal{E}_\lambda^{X_0}(x):=\exp\left(\int_{X_0}^x \dfrac{\sigma(z)-\lambda}{a(z)}\dd z\right), 
\end{equation}
which is of particular importance. When the context is clear, we will frequently write $\mathcal{E}_\lambda$ instead of $\mathcal{E}_\lambda^{X_0}$, for simplicity. We also define 
\begin{equation}\label{eq:alphas}
	\alpha_0(\lambda):= \dfrac{\sigma(x_0)-\lambda}{a'(x_0)} \text{ and } \alpha_1(\lambda):= \dfrac{\sigma(x_1)-\lambda}{a'(x_1)}. 
\end{equation}
Again, we will frequently write $\alpha_0$ and $\alpha_1$, omitting the parameter $\lambda$ for simplicity, when the context is clear. 
\begin{lemma}\label{lem:E_lambda}
	Let Assumption \ref{as:interval} hold true.
	Let $X_0\in (x_0, x_1)$ be fixed and $\lambda\in\mathbb{R}$. Let  $\mathcal{E}_\lambda(x)=\mathcal{E}_\lambda^{X_0}(x)$ be the function defined in \eqref{eq:E_lambda}.  Then 
	for each $\Lambda>0$ there exists a constant $C_\Lambda>1$ such that 
	\begin{subequations}\label{eq:251204d}
		\begin{equation}\label{eq:251204da}
			\frac{1}{C_\Lambda}(x_1-x)^{\alpha_1}\leq \mathcal{E}_\lambda(x) \leq C_\Lambda (x_1-x)^{\alpha_1}, \text{ as } x\to x_1^-, 
		\end{equation}
		and 
		\begin{equation}\label{eq:251204db}
			\frac{1}{C_\Lambda}(x-x_0)^{\alpha_0}\leq \mathcal{E}_\lambda(x) \leq C_\Lambda (x-x_0)^{\alpha_0} , \text{ as } x\to x_0^+, 
		\end{equation}
	\end{subequations}
	for all  $\lambda\in[-\Lambda, \Lambda] $.
\end{lemma}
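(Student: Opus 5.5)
The plan is to expand the integrand $\frac{\sigma(z)-\lambda}{a(z)}$ near each endpoint and show that it equals $\frac{\alpha_i(\lambda)}{z-x_i}$ plus a remainder that is integrable up to $x_i$, with an integrable bound uniform in $\lambda\in[-\Lambda,\Lambda]$. I treat $x_1$ in detail; the case $x_0$ is identical.

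Near $x_1$, since $a$ is twice differentiable at $x_1$ with $a(x_1)=0$, we have
\[
a(z)=a'(x_1)(z-x_1)+\tfrac12 a''(x_1)(z-x_1)^2+o\big((z-x_1)^2\big).
\]
Moreover $\sigma=r-a'$ is continuous near $x_1$. Since $r$ is differentiable at $x_1$, we get $\sigma(z)=\sigma(x_1)+g(z)$ with $g(z)\to0$ as $z\to x_1$.

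One subtlety is that $a'$ need not be differentiable at $x_1$, since only $a''(x_1)$ exists. Even so, $a'(z)-a'(x_1)=O(|z-x_1|)$, because the existence of $a''(x_1)$ means $a'$ is differentiable at $x_1$. Combined with the differentiability of $r$ at $x_1$, this gives $|\sigma(z)-\sigma(x_1)|\le M|z-x_1|$ on a neighbourhood $[x_1-\delta,x_1)$.

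Writing $h=x_1-z>0$, we have $a(z)=-a'(x_1)h\,(1+O(h))$ with $-a'(x_1)>0$. Therefore
\[
\frac{\sigma(z)-\lambda}{a(z)}=\frac{\sigma(x_1)-\lambda}{-a'(x_1)h}\big(1+O(h)\big)+O(1)=-\frac{\alpha_1(\lambda)}{h}+R_\lambda(z),
\]
where $|R_\lambda(z)|\le C(1+|\lambda|)\le C(1+\Lambda)$ for $z\in[x_1-\delta,x_1)$. The $O(h)$ constants depend only on $a$ and $\sigma$, so this bound is uniform in $\lambda$.

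Integrating from $x_1-\delta$ to $x$ gives
\[
\int_{x_1-\delta}^x\frac{\sigma-\lambda}{a}\,\mathrm{d}z=\alpha_1\ln(x_1-x)-\alpha_1\ln\delta+O(1),
\]
since $\int -\frac{\alpha_1}{x_1-z}\,\mathrm{d}z=\alpha_1\ln(x_1-z)$. The $O(1)$ term is bounded by $C(1+\Lambda)\delta$.

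On $[X_0,x_1-\delta]$ the function $a$ is bounded below by a positive constant. Hence $\int_{X_0}^{x_1-\delta}\frac{\sigma-\lambda}{a}$ is bounded by a constant times $(1+\Lambda)$. The term $|\alpha_1\ln\delta|\le C(1+\Lambda)$ is bounded in the same way.

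Exponentiating then gives $\mathcal{E}_\lambda(x)=(x_1-x)^{\alpha_1}e^{O(1)}$ with $|O(1)|\le\ln C_\Lambda$. This is \eqref{eq:251204da}. The same computation near $x_0$, using $a(z)=a'(x_0)(z-x_0)(1+O(z-x_0))$ with $a'(x_0)>0$, gives $\frac{\sigma-\lambda}{a}=\frac{\alpha_0}{z-x_0}+O(1)$, whose integral from $X_0$ to $x$ is $\alpha_0\ln(x-x_0)+O(1)$. This is \eqref{eq:251204db}. In fact the estimates hold on all of $(x_0,x_1)$, not only asymptotically.

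The main obstacle is making the remainder bound hold uniformly in $\lambda$ and using only pointwise differentiability at $x_i$. The key point is that pointwise existence of $a''(x_i)$ and $r'(x_i)$ yields $O(|z-x_i|)$ bounds on a fixed neighbourhood for $a(z)/(a'(x_i)(z-x_i))-1$ and for $\sigma(z)-\sigma(x_i)$. These are exactly the estimates needed to make the remainder bounded, hence integrable. Without them, mere continuity would only give $o(1)/h$, which is not integrable.
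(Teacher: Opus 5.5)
Your proof is correct and follows the paper's route: use $a''(x_i)$ and $r'(x_i)$ to write $(\sigma-\lambda)/a$ as $\alpha_i/(z-x_i)$ plus a bounded remainder, then integrate and exponentiate. You also make explicit the uniformity in $\lambda\in[-\Lambda,\Lambda]$ and the fixed neighbourhood, which the paper leaves implicit. Your closing aside is wrong as stated, though: each estimate holds only on its own half-interval, $[X_0,x_1)$ for the $x_1$ estimate and $(x_0,X_0]$ for the $x_0$ estimate, not on all of $(x_0,x_1)$. Near $x_0$, $\mathcal{E}_\lambda$ behaves like $(x-x_0)^{\alpha_0}$, while $(x_1-x)^{\alpha_1}$ stays bounded away from $0$ and $\infty$.
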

We will often use the notation $\mathcal{E}_\lambda(x) = (x-x_1)^{\alpha_1}e^{\mathcal{O}(1)}$ in place of the two inequalities \eqref{eq:251204da}, and similarly $\mathcal{E}_\lambda(x) = (x-x_0)^{\alpha_0}e^{\mathcal{O}(1)}$ in place of \eqref{eq:251204db}. More generally, whenever the context is clear, the notation $e^{\mathcal{O}(1)}$ will be used in place of a positive function that is uniformly bounded above and below.
\begin{proof}[Proof of Lemma \ref{lem:E_lambda}]
We have, as $x\to x_1$:
	\begin{align*}
		\dfrac{\sigma(x)-\lambda}{a(x)} &= \dfrac{\sigma(x_1)-\lambda+\sigma'(x_1)(x-x_1)+o(x-x_1)}{a'(x_1)(x-x_1)\big[1+\frac{a''(x_1)}{2a'(x_1)}(x-x_1)+o(x-x_1)\big]} \\
		&= \dfrac{\sigma(x_1)-\lambda}{a'(x_1) (x-x_1)} + \frac{\sigma'(x_1)}{a'(x_1)}-\dfrac{\sigma(x_1)-\lambda}{a'(x_1)}\dfrac{a''(x_1)}{2a'(x_1)} + o(1) \\ 
		&=:\dfrac{\alpha_1}{x-x_1}+\mathcal{O}(1 ).
	\end{align*}
Then,
	\begin{equation*}
		e^{\int_{X_0}^x\frac{\sigma(z)-\lambda}{a(z)}\dd z} = e^{\alpha_1(\ln(x_1-x)-{\ln(x_1-X_0)})+\mathcal{O}(x-x_1)} = (x_1-x)^{\alpha_1} e^{\mathcal{O}(1)}. 
	\end{equation*}
	A similar analysis when $x\to x_0$ proves \eqref{eq:251204d}.
\end{proof}

Our first result is as follows. 
\begin{proposition}\label{prop:resolvent-A-L1}
	Let  Assumption \ref{as:interval} hold and $A$ be the operator defined in \eqref{defAint}. Recall that $\lambda_1^0=r(x_0)-a'(x_0)$, $\lambda_1^1 = r(x_1)-a'(x_1)$, and $\lambda_1^{m}=\max\big(\lambda_1^0, \lambda_1^1\big)$.
	\begin{enumerate}
		\item The resolvent set of $A$, $\rho(A)$, contains the ray $(\lambda_1^m, +\infty)$, and we have
			\begin{equation*}\label{eq:res-l>>1}
				R(\lambda; A)f=(\lambda I -A)^{-1} f (x)= \dfrac{-1}{a(x)}\displaystyle \int_{x_1}^x f(y) e^{\int_{x}^y \frac{\sigma(z)-\lambda}{a(z)}\dd z}\dd y, \text{ for any } x\not\in \{x_0, x_1\} 
			\end{equation*}
			for any $f\in L^1\big([x_0, x_1]\big)$ and $\lambda>\lambda_1^m$.
		\item When $\lambda\to \lambda_1^m$ we have 
		    \begin{equation}\label{eq:251124a}
			\lim_{\lambda\to (\lambda_1^m)^+} \Vert (\lambda I-A)^{-1}\Vert_{\mathcal{L}(L^1)} = +\infty,
		    \end{equation}
		    and the spectral bound of $A$ is $s(A)=\lambda_1^m$.
	\end{enumerate}
\end{proposition}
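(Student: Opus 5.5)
The plan is to solve \eqref{eq:resolvent-A} explicitly through formula \eqref{eq:resolvent-A-formula-0}, choose the free constant $\varphi(X_1)$ so that the solution is integrable, and control everything with the power-law asymptotics of Lemma \ref{lem:E_lambda}. Throughout, write $\alpha_0=\alpha_0(\lambda)$ and $\alpha_1=\alpha_1(\lambda)$. For $\lambda>\lambda_1^m$ we have $\alpha_0<0$ (since $a'(x_0)>0$) and $\alpha_1>0$ (since $a'(x_1)<0$). So $\mathcal{E}_\lambda$ blows up like $(x-x_0)^{\alpha_0}$ at $x_0$ and vanishes like $(x_1-x)^{\alpha_1}$ at $x_1$.

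\textbf{Step 1: uniqueness.} Every solution of \eqref{eq:resolvent-A} differs from the candidate $\varphi(x)=\frac{1}{a(x)\mathcal{E}_\lambda(x)}\int_x^{x_1}f\mathcal{E}_\lambda$ by a multiple of the homogeneous solution $c/(a\mathcal{E}_\lambda)$. Near $x_1$, Lemma \ref{lem:E_lambda} gives $1/(a\mathcal{E}_\lambda)\asymp (x_1-x)^{-1-\alpha_1}$, which is not integrable because $\alpha_1>0$. Hence $c=0$, which is the same as requiring the numerator in \eqref{eq:resolvent-A-formula-0} to vanish at $x_1$. The resulting formula is exactly the one in the statement, and it shows that $\lambda I-A$ is injective.

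\textbf{Step 2: boundedness.} By Fubini,
\begin{equation*}
\Vert\varphi\Vert_{L^1}\le \int_{x_0}^{x_1}|f(y)|\,\mathcal{E}_\lambda(y)\int_{x_0}^{y}\frac{\dd x}{a(x)\mathcal{E}_\lambda(x)}\,\dd y,
\end{equation*}
so it suffices that $G_\lambda(y):=\mathcal{E}_\lambda(y)\int_{x_0}^y\frac{\dd x}{a\mathcal{E}_\lambda}$ is bounded.
\begin{itemize}
\item Near $x_0$, the integrand behaves like $(x-x_0)^{-1-\alpha_0}$, which is integrable since $\alpha_0<0$. Then $G_\lambda(y)\asymp (y-x_0)^{\alpha_0}(y-x_0)^{-\alpha_0}/|\alpha_0|$, which is bounded.
\item Near $x_1$, $\int_{x_0}^y\frac{\dd x}{a\mathcal{E}_\lambda}\lesssim (x_1-y)^{-\alpha_1}/\alpha_1$, and multiplying by $\mathcal{E}_\lambda(y)\asymp(x_1-y)^{\alpha_1}$ gives a bounded quantity.
\item In the interior, $G_\lambda$ is continuous.
\end{itemize}
Since $a\varphi'=\lambda\varphi-r\varphi-f\in L^1$, we get $\varphi\in D(A)$, so $\lambda I-A$ is onto with bounded inverse. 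This proves part 1. Positivity of the formula is immediate.

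\textbf{Step 3: blow-up of the resolvent norm (the main obstacle).} The kernel of $R(\lambda;A)$ is positive, so its $L^1$ operator norm equals $\sup_y G_\lambda(y)$; one can also test with $f$ approximating $\delta_y$. Suppose $\lambda_1^m=\lambda_1^1$. As $\lambda\downarrow\lambda_1^1$, we have $\alpha_1\downarrow 0$, and the constants $C_\Lambda$ of Lemma \ref{lem:E_lambda} stay uniform. Fix $y_0$ close to $x_1$. Then
\begin{equation*}
G_\lambda(y)\ge C^{-3}(x_1-y)^{\alpha_1}\int_{y_0}^y (x_1-x)^{-1-\alpha_1}\dd x=\frac{1-\big(\tfrac{x_1-y}{x_1-y_0}\big)^{\alpha_1}}{C^{3}\alpha_1}.
\end{equation*}
Choose $y$ so that the ratio $\frac{x_1-y}{x_1-y_0}$ equals $e^{-1/\alpha_1}$. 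This yields $G_\lambda(y)\ge (1-e^{-1})/(C^3\alpha_1)\to\infty$. The case $\lambda_1^m=\lambda_1^0$ is symmetric: now $\alpha_0\uparrow 0$, the bound $G_\lambda(y)\asymp 1/|\alpha_0|$ near $x_0$ blows up, and there the uniformity of the constants near $x_0$ is the only delicate point. This proves \eqref{eq:251124a}.

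\textbf{Step 4: the spectral bound.} From part 1, $s(A)\le\lambda_1^m$. If $\lambda_1^m$ belonged to $\rho(A)$, the resolvent would be bounded near $\lambda_1^m$, using the general estimate $\Vert R(\lambda;A)\Vert\ge 1/\mathrm{dist}(\lambda,\sigma(A))$ read in the opposite direction, or simply continuity of the resolvent on the open set $\rho(A)$. This contradicts \eqref{eq:251124a}. Hence $\lambda_1^m\in\sigma(A)$ and $s(A)=\lambda_1^m$.
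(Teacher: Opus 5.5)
Your Steps 1--3 are sound. Step 2 packages the paper's two local Fubini estimates (near $x_1$ and near $x_0$) into a single column function $G_\lambda$. Step 3 takes a genuinely different route from the paper. The paper tests with $f\equiv 1$ near $x_0$; near $x_1$ it builds a special $f$ with $f\ln(x_1-\cdot)\notin L^1$ and passes to a limit. You instead use two things:
\begin{itemize}
\item the exact identity $\Vert R(\lambda;A)\Vert_{\mathcal{L}(L^1)}=\sup_y G_\lambda(y)$, valid for a positive kernel;
\item the $\lambda$-dependent test point $x_1-y=(x_1-y_0)e^{-1/\alpha_1}$.
\end{itemize}
This gives the quantitative lower bound $\Vert R(\lambda;A)\Vert\gtrsim 1/(\lambda-\lambda_1^m)$ in both cases. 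It also avoids the delicate sign bookkeeping in the paper's computation near $x_1$.

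\textbf{The gap is in Step 4.} The claim ``from part 1, $s(A)\le\lambda_1^m$'' does not follow.
\begin{itemize}
\item Part 1 only places the real half-line $(\lambda_1^m,+\infty)$ in $\rho(A)$.
\item But $s(A)=\sup\{\mathrm{Re}\,\lambda:\lambda\in\sigma(A)\}$ involves the complex spectrum.
\item A real ray in the resolvent set says nothing, in general, about non-real spectral points with larger real part.
\end{itemize}

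The paper closes this with positivity. $A$ generates a positive $C_0$-semigroup (Theorem \ref{THEO1}). By \cite{Greiner-Voigt-Wolff-1981}, $s(A)\in\sigma(A)$ as soon as $\sigma(A)\neq\varnothing$. Combined with $(\lambda_1^m,+\infty)\subset\rho(A)$ and $\lambda_1^m\in\sigma(A)$, this forces $s(A)=\lambda_1^m$.

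Alternatively, you can stay elementary and extend your formula to complex $\lambda$ with $\mathrm{Re}\,\lambda>\lambda_1^m$:
\begin{itemize}
\item Since $a$ and $\sigma$ are real, $|\mathcal{E}_\lambda|=\mathcal{E}_{\mathrm{Re}\,\lambda}$, hence $|R(\lambda;A)f|\le R(\mathrm{Re}\,\lambda;A)|f|$.
\item The homogeneous solutions $c/(a\mathcal{E}_\lambda)$ have modulus comparable to $|c|(x_1-x)^{-1-\mathrm{Re}\,\alpha_1}$, so they are still non-integrable at $x_1$.
\item Hence the whole half-plane $\{\mathrm{Re}\,\lambda>\lambda_1^m\}$ lies in $\rho(A)$.
\end{itemize}
One of these two arguments must be made explicit.

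A minor point: drop the appeal to $\Vert R(\lambda;A)\Vert\ge 1/\mathrm{dist}(\lambda,\sigma(A))$. That inequality is a lower bound and cannot show the resolvent stays bounded near a point of $\rho(A)$. Only your second justification does: analyticity of the resolvent (Neumann series) on the open set $\rho(A)$.
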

\begin{proof}
	Let us consider assertion 1. Let $f\in L^1\big([x_0, x_1]\big)$, then it follows from classical ODE theory that the unique solution of \eqref{eq:resolvent-A} satisfying $\varphi(X_1)=\varphi_1 $ is given by the formula \eqref{eq:resolvent-A-formula-0}, where $X_0, X_1\in(x_0, x_1)$. Let us prove that 
	\begin{equation}\label{eq:251121a}
		\varphi(X_1)=\dfrac{e^{-\int_{X_0}^{X_1} \frac{\sigma(z)-\lambda}{a(z)}\dd z}}{a(X_1)}\int_{X_1}^{x_1} f(y)e^{-\int_{X_0}^y \frac{\sigma(z)-\lambda}{a(z)}\dd z}\dd y = \dfrac{\int_{X_1}^{x_1}f(y)\mathcal{E}_\lambda(y)\dd y}{a(X_1)\mathcal{E}_\lambda(X_1)} 
	\end{equation}
	is the only possible value for $\varphi(X_1)$. Indeed, suppose by contradiction that \eqref{eq:251121a} does not hold. We have by Lemma \ref{lem:E_lambda}: 
	\begin{equation*}\label{eq:251121e}
		\mathcal{E}_\lambda(y)=e^{\int_{X_0}^y\frac{\sigma(z)-\lambda}{a(z)}\dd z}= (x_1-y)^{\alpha_1} e^{\mathcal{O}(1)}, 
	\end{equation*}
	which is bounded in the neighborhood of $x_1$ because  $\alpha_1=\dfrac{\sigma(x_1)-\lambda}{a'(x_1)}>0$ (where $\alpha_1$ has been defined in \eqref{eq:alphas}; recall that  $\lambda>\lambda_1^m\geq \lambda_1^1=\sigma(x_1)$ and $a'(x_1)<0$). In particular, writing
	\begin{equation*}
		\varphi(x) = 	\dfrac{ -\int_{X_1}^x f(y)e^{\int_{X_0}^y \frac{\sigma(z)-\lambda}{a(z)}\dd z}\dd y +a(X_1)e^{\int_{X_0}^{X_1} \frac{\sigma(z)-\lambda}{a(z)}\dd z}\varphi(X_1)  }{a(x)e^{\int_{X_0}^x \frac{\sigma(z)-\lambda}{a(z)}\dd z}} =: \dfrac{N(x)}{D(x)}, 
	\end{equation*}
	then $N(x)$ is a continuous function up to $x=x_1$ and $N(x_1)\neq 0$ because \eqref{eq:251121a} does not hold; on the other hand, $D(x)=-a'(x_1)(x_1-x)^{\alpha_1+1}e^{\mathcal{O}(1)} + o((x_1-x)^{\alpha_1+1})$ so $\frac{1}{D(x)}\not\in L^1$. From that we deduce that $\varphi\not\in L^1\big([x_0, x_1]\big)$. The contradiction proves \eqref{eq:251121a}.

	Hence we have identified the unique value of $\varphi(X_1)$ such that $\varphi\in L^1\big([x_0, x_1]\big)$. Notice that  \eqref{eq:251121a} leads to the expression
	\begin{equation}\label{eq:251121d}
		\varphi(x)=-	a(x)^{-1} \int_{x_1}^x f(y)e^{\int_x^y \frac{\sigma(z)-\lambda}{a(z)}\dd z}\dd y,
	\end{equation}
	that is independent of the choice of $X_0$.
	\medskip

	Let us prove that $\varphi(x)$,  defined by the formula \eqref{eq:251121d}, belongs to $L^1\big([x_0, x_1]\big)$. The following computation can be justified by Fubini's Theorem.  Choose $X_0<x_1$ sufficiently close to $x_1$ so that $\sigma(x)-\lambda \leq -m<0$ for $x\in(X_0, x_1)$; then
	\begin{align*}
	    \int_{X_0}^{x_1}|\varphi(x)|\dd x&= \int_{X_0}^{x_1} a(x)^{-1} \int_{x}^{x_1} |f(y)| e^{\int_x^y \frac{\sigma(z)-\lambda}{a(z)}\dd z}\dd y\dd x=\int_{X_0}^{x_1} |f(y)|\int_{X_0}^y\frac{1}{a(x)} e^{\int_x^y \frac{\sigma(z)-\lambda}{a(z)}\dd z}\dd x\dd y \\
	    &=\int_{X_0}^{x_1} |f(y)|\int_{X_0}^y-\frac{\sigma(x)-\lambda}{a(x)} e^{-\int_y^x \frac{\sigma(z)-\lambda}{a(z)}\dd z}\times \frac{-1}{\sigma(x)-\lambda}\dd x\dd y \\
	    &\leq\frac{1}{m}\int_{X_0}^{x_1}|f(y)| \left[e^{-\int_y^x \frac{\sigma(z)-\lambda}{a(z)}\dd z}\right]_{x=X_0}^{x=y} \dd y = \frac{1}{m}\int_{X_0}^{x_1} |f(y)|\left(1-e^{\int_{X_0}^y \frac{\sigma(z)-\lambda}{a(z)}\dd z}\right)\dd y \\ 
	    &=\frac{1}{m}\int_{X_0}^{x_1} |f(y)|\left(1-\dfrac{(x_1-y)^{\alpha_1}}{(x_1-X_0)^{\alpha_1}}e^{\mathcal{O}(1)}\right)\dd y  \leq \frac{1}{m}\Vert f\Vert_{L^1_{per}}, 
	\end{align*}
    where we recall that $\alpha_1=\frac{\sigma(x_1)-\lambda}{a'(x_1)}>0$. % and $C>0$ is a generic constant name that may change in the following computations. 
	Hence we conclude that ${\varphi}\in L^1$ in the neighborhood of $x_1$. Next we consider the integrability of $\varphi(x)$ in the neighborhood of $x_0$. Lemma \ref{lem:E_lambda} leads to 
    \begin{equation}\label{eq:251121f}
	    \mathcal{E}_\lambda(y)=e^{\int_{X_0}^y\frac{\sigma(z)-\lambda}{a(z)}\dd z} = (y-x_0)^{\alpha_0} e^{\mathcal{O}(1)}, 
    \end{equation}
    where $\alpha_0:=\dfrac{\sigma(x_0)-\lambda}{a'(x_0)}<0$.
	With $X_0\in (x_0, x_1)$, we have
    \begin{align}
	    \nonumber
	    \int_{x_0}^{X_0}|\varphi(x)|\dd x&\leq  \int_{x_0}^{X_0}\dfrac{\int_x^{x_1} |f(y)|e^{\int_{X_0}^y \frac{\sigma(z)-\lambda}{a(z)}\dd z}\dd y}{a(x)e^{\int_{X_0}^x \frac{\sigma(z)-\lambda}{a(z)}\dd z}}\dd x \\
	    \nonumber
	    &= \int_{x_0}^{x_1} |f(y)|e^{\int_{X_0}^y \frac{\sigma(z)-\lambda}{a(z)}\dd z} \int_{x_0}^{\min(y, X_0)} e^{\mathcal{O}(1)} (x-x_0)^{-(1+\alpha_0)}\dd x\dd y\\
	    \nonumber
	    &\leq C \int_{x_0}^{x_1}|f(y)|e^{\int_{X_0}^y \frac{\sigma(z)-\lambda}{a(z)}\dd z} \dfrac{\big(\min(y, X_0)-x_0)^{-\alpha_0}}{-\alpha_0}\dd y \\ 
	    \nonumber
	    &\leq C\left(\int_{x_0}^{X_0}|f(y)| (y-x_0)^{\alpha_0}e^{\mathcal{O}(1)}(y-x_0)^{-\alpha_0}\dd y + \Vert f\Vert_{L^1}\right) \\ 
	    \label{eq:251126b}
	    &\leq C\Vert f\Vert_{L^1}, 
    \end{align}
	where $C>0$ is a positive constant. 	Thus we have proved that $\varphi\in L^1$; $(\lambda I-A)^{-1}$ is indeed a bounded operator on $L^1$, and corresponds to the resolvent of $\lambda I-A$ for $\lambda >\lambda_1^m$.
	\medskip

	Next we consider assertion 2, the blow-up when $\lambda\to (\lambda_1^m)^+$. Choose $X_0\in (x_0, x_1)$ and suppose first that $\lambda_1^m=\lambda_1^0\geq\lambda_1^1$. Notice that the $e^{\mathcal{O}(1)}$ constant in \eqref{eq:251121f} is locally bounded with respect to $\lambda$. We have, with $f\equiv 1$:
	\begin{align}
		\nonumber
	    \int_{x_0}^{X_0} (\lambda I-A)^{-1} \mathbbm{1} \dd x&= \int_{x_0}^{X_0} \dfrac{\int_x^{x_1} e^{\int_{X_0}^y \frac{\sigma(z)-\lambda}{a(z)}\dd z}\dd y}{a(x)e^{\int_{X_0}^x \frac{\sigma(z)-\lambda}{a(z)}\dd z}}\dd x= \int_{x_0}^{x_1} e^{\int_{x_0}^y \frac{\sigma(z)-\lambda}{a(z)}\dd z} \int_{x_0}^{\min(y, X_0)}  \dfrac{e^{\mathcal{O}(1)}}{(x-x_0)^{(1+\alpha_0)}}\dd x\dd y\\
	    \nonumber
	    &\geq C\int_{x_0}^{X_0} e^{\int_{X_0}^y \frac{\sigma(z)-\lambda}{a(z)}\dd z} \left[ \frac{(x-x_0)^{-\frac{\sigma(x_0)-\lambda}{a'(x_0)}}}{-\frac{\sigma(x_0)-\lambda}{a'(x_0)}}\right]_{x=x_0}^{x=y}\dd y\\
	    \nonumber
	    &\geq C \dfrac{-a'(x_0)}{\sigma(x_0)-\lambda}\int_{x_0}^{X_0}(y-x_0)^{\frac{\sigma(x_0)-\lambda}{a'(x_0)}}e^{\mathcal{O}(1)} (y-x_0)^{-\frac{\sigma(x_0)-\lambda}{a'(x_0)}}\dd y \\ 
		\label{eq:251125c}
	    &\geq C\dfrac{-a'(x_0)}{\sigma(x_0)-\lambda} (X_0-x_0) \xrightarrow[\lambda\to \sigma(x_0)^+]{}+\infty, 
	\end{align}
	which, recalling $\lambda_1^0=\sigma(x_0)$, proves the blow-up of $\Vert (\lambda I-A)^{-1}\Vert_{\mathcal{L}(L^1)}$ as $\lambda\to (\lambda_1^m)^+$. Suppose now that $\lambda_1^m=\lambda_1^1>\lambda_1^0$.  
	%again $f\equiv 1$, we have
	%$f(x)=\frac{1-\alpha}{2^{\alpha}|x-x_1|^{\alpha}}$ so that $\Vert f\Vert_{L^1_{per}}=\int_{x_1-\frac{1}{2}}^{x_1+\frac{1}{2}}f(x)\dd x = 1$. 
	Then, with $\alpha_1=\frac{\sigma(x_1)-\lambda}{a'(x_1)}>0$,
	\begin{align*}
		\int_{X_0}^{x_1}\varphi(x)\dd x&= \int_{X_0}^{x_1} a(x)^{-1} \int_{x}^{x_1} f(y) e^{\int_x^y \frac{\sigma(z)-\lambda}{a(z)}\dd z}\dd y\dd x=\int_{X_0}^{x_1} \int_{X_0}^y\frac{1}{a(x)}f(y) e^{\int_x^y \frac{\sigma(z)-\lambda}{a(z)}\dd z}\dd x\dd y \\
		&=\int_{X_0}^{x_1}f(y) e^{\int_{X_0}^y \frac{\sigma(z)-\lambda}{a(z)}\dd z}\int_{X_0}^y\dfrac{e^{\mathcal{O}(1)}}{(x_1-x)^{\alpha_1+1}}\dd x\dd y \\
		&\geq C \int_{X_0}^{x_1} f(y)(x_1-y)^{\alpha_1} \left[-\dfrac{(x_1-x)^{-\alpha_1}}{-\alpha_1}\right]_{x=X_0}^{x=y}\dd y \\
		&=C\int_{X_0}^{x_1}f(y)\frac{1}{\alpha_1} \left( \dfrac{(x_1-y)^{\alpha_1}}{(x_1-X_0)^{\alpha_1}}-1\right)\dd y = C\int_{X_0}^{x_1}f(y)\frac{1}{\alpha_1} \left( e^{\alpha_1\ln\left(\frac{x_1-y}{x_1-X_0}\right)+o(\alpha_1)}-1\right)\dd y \\
		&=C\int_{X_0}^{x_1}f(y)\left(\ln\left(\frac{x_1-y}{x_1-X_0}\right)+o(\alpha_1)\right)\dd y\xrightarrow[\lambda\to \lambda_1^1]{\alpha_1\to 0}C\int_{X_0}^{x_1}f(y)\ln\left(\frac{x_1-y}{x_1-X_0}\right)\dd y .
	\end{align*}
	Now take $f(x)=\frac{\mathbbm{1}_{x\in[X_0, x_1)}}{(x_1-x)\big|\ln(x_1-x)\big|^2}$ so that $f\in L^1$ but $f(x)\ln(x-x_1)\not\in L^1_{per}$,  then the right-hand side  is $+\infty$.
	This shows \eqref{eq:251124a}.
	In particular,  the resolvent $R(\lambda; A)$ has a singularity at $\lambda=\lambda_1^m$; we deduce that $\lambda_1^m\in \sigma(A)$, the spectrum of $A$.
	Since  $A$ is the generator of a $C_0$-semigroup of positive operators by Theorem \ref{THEO1} and $\sigma(A)\neq \varnothing$,  it follows from \cite[Theorem 3.3]{Greiner-Voigt-Wolff-1981} that $s(A)\in\sigma(A)$ and we conclude that $s(A)=\lambda_1^m$.
\end{proof}

Next we consider the restriction of $A$ to the space $C^0\big([x_0, x_1]\big)$ and prove a similar result. 
\begin{proposition}\label{prop:resolvent-A-C0}
    Let Assumption \ref{as:interval} hold true. Call $r^m:=\max\big(r(x_0), r(x_1)\big)$ and let $\big(A_0, D(A_0)\big)$ be the operator defined by \eqref{defA0int}.
	\begin{enumerate}
	    \item The resolvent set of $A_0$, $\rho(A_0)$, contains the ray $\big(r^m, +\infty\big)$, and we have
			\begin{equation}\label{eq:A0-res-l>>1}
			    \begin{gathered}
				    R(\lambda; A_0)f(x)=(\lambda I -A_0)^{-1} f (x)= \dfrac{-1}{a(x)}\displaystyle \int_{x_1}^x f(y) e^{\int_{x}^y \frac{\sigma(z)-\lambda}{a(z)}\dd z}\dd y,\qquad  \text{ for any } x\not\in \{x_0, x_1\} \\
				(\lambda I -A_0)^{-1} f (x_0) = \dfrac{f(x_0)}{\lambda - r(x_0)},\qquad  
				(\lambda I -A_0)^{-1} f (x_1) = \dfrac{f(x_1)}{\lambda-r(x_1)},
			    \end{gathered}
			\end{equation}
			for any $f\in C^0\big([x_0, x_1]\big)$ and $\lambda>r^m$.
		\item When $\lambda\to r^m$ we have 
		    \begin{equation*}\label{eq:251124b}
			\lim_{\lambda\to (r^m)^+} \Vert (\lambda I-A_0)^{-1}\Vert_{\mathcal{L}(C^0)} = +\infty,
		    \end{equation*}
		    and the spectral bound of $A_0$ is $s(A_0)=r^m$.
	\end{enumerate}
\end{proposition}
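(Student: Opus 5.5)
The plan follows the $L^1$ case (Proposition \ref{prop:resolvent-A-L1}), adapting each step to continuous functions. The difference between the two thresholds $r^m$ and $\lambda_1^m$ comes from the endpoints. In $C^0$, a solution of $(\lambda I-A_0)\varphi=f$ must satisfy, at $x=x_i$ where $a(x_i)=0$ and $a\varphi'$ is continuous, the identity $(\lambda-r(x_i))\varphi(x_i)-(a\varphi')(x_i)=f(x_i)$. We will show $(a\varphi')(x_i)=0$, which gives the endpoint values in \eqref{eq:A0-res-l>>1}.

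\textbf{Step 1: uniqueness.} Let $\lambda>r^m$ and $f\in C^0([x_0,x_1])$. On $(x_0,x_1)$ any solution is given by \eqref{eq:resolvent-A-formula-0} with one free constant, so the homogeneous solutions are multiples of $1/(a\mathcal{E}_\lambda)\sim (x_1-x)^{-\alpha_1-1}$ near $x_1$. Since $\lambda>r(x_1)$, we have $\alpha_1+1=(r(x_1)-\lambda)/a'(x_1)>0$. So the homogeneous solution blows up at $x_1$ and is not continuous there. This forces the choice \eqref{eq:251121a}, that is, the formula $\varphi(x)=-a(x)^{-1}\int_{x_1}^x f(y)e^{\int_x^y\frac{\sigma-\lambda}{a}}\dd y$.

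\textbf{Step 2: continuity, limits and boundedness.} This is the main computation. Near $x_1$, write $\varphi(x)=\frac{\int_x^{x_1} f\,\mathcal{E}_\lambda}{a(x)\mathcal{E}_\lambda(x)}$ and compute the limit as $x\to x_1^-$ by L'Hospital's rule. The numerator tends to $0$ because $\alpha_1>-1$, so $\mathcal{E}_\lambda$ is integrable near $x_1$. Differentiating, with $(a\mathcal{E}_\lambda)'=(r-\lambda)\mathcal{E}_\lambda$, gives
\[
\frac{-f(x)\mathcal{E}_\lambda(x)}{(r(x)-\lambda)\mathcal{E}_\lambda(x)}\to\frac{f(x_1)}{\lambda-r(x_1)}.
\]
Near $x_0$ we have $\alpha_0+1=(r(x_0)-\lambda)/a'(x_0)<0$, so $a\mathcal{E}_\lambda\to+\infty$. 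The numerator is bounded, and if it diverges we can apply L'Hospital in the form $\infty/\infty$; either way the limit is $f(x_0)/(\lambda-r(x_0))$. For the uniform bound we do not use L'Hospital but the direct estimate
\[
|\varphi(x)|\le \|f\|_\infty\,\frac{\int_x^{x_1}\mathcal{E}_\lambda}{a(x)\mathcal{E}_\lambda(x)}.
\]
The last quotient is continuous on $(x_0,x_1)$ with finite limits $1/(\lambda-r(x_i))$ at the endpoints, so it is bounded, and hence $\|R(\lambda;A_0)\|\le C_\lambda$. Finally, $a\varphi'=(\lambda-r)\varphi-f$ is continuous with the right boundary values, so $\varphi\in D(A_0)$. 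This gives assertion 1. The formula is also unique at the endpoints, because the extension of $\varphi$ to $[x_0,x_1]$ is determined by continuity.

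\textbf{Step 3: blow-up and spectral bound.} Take $f\equiv 1$. Then $R(\lambda;A_0)\mathbbm{1}(x_i)=1/(\lambda-r(x_i))$, so $\|R(\lambda;A_0)\|\ge 1/(\lambda-r^m)\to+\infty$ as $\lambda\to (r^m)^+$. Hence $r^m\in\sigma(A_0)$. The semigroup given by characteristics (as in Theorem \ref{THEO1}) is positive on $C^0$, so by \cite[Theorem 3.3]{Greiner-Voigt-Wolff-1981} we get $s(A_0)\in\sigma(A_0)$ and therefore $s(A_0)=r^m$.

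The main obstacle is Step 2 near $x_0$: the numerator need not diverge, and the limits must be controlled uniformly enough to get a bound independent of $f$. Factoring out $\|f\|_\infty$ before taking limits, as above, handles this.
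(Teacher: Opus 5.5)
Your proposal is correct and follows essentially the same route as the paper. Both use L'Hospital's rule at each endpoint via $(a\mathcal{E}_\lambda)'=(r-\lambda)\mathcal{E}_\lambda$, the uniform bound obtained by factoring out $\|f\|_\infty$ and controlling the quotient $\int_x^{x_1}\mathcal{E}_\lambda/(a\mathcal{E}_\lambda)$, the blow-up read off from the endpoint values, and Greiner--Voigt--Wolff to get $s(A_0)=r^m$; your Step 1 and your check that $\varphi\in D(A_0)$ only make explicit what the paper calls a ``similar analysis.''

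One small cleanup near $x_0$: the general form of L'Hospital's rule, which the paper cites, only needs the denominator to tend to $+\infty$, so you do not need a case split on the numerator. Your clause ``the numerator is bounded'' is in fact false whenever $f(x_0)\neq 0$, because $\mathcal{E}_\lambda\sim (x-x_0)^{\alpha_0}$ with $\alpha_0<-1$ is not integrable at $x_0$.
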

\begin{proof}
	The formula \eqref{eq:A0-res-l>>1} for $(\lambda I-A_0)^{-1}f(x)$ when $x\not\in\{x_0, x_1\}$ follows from a similar analysis as in the proof of {Proposition \ref{prop:resolvent-A-L1}}. Let us check that \eqref{eq:A0-res-l>>1} is well-defined when $\lambda\in (r(x_1), \sigma(x_1))$.    
	Recall from Lemma \ref{lem:E_lambda} that $\mathcal{E}_\lambda(y)=e^{\int_{X_0}^y\frac{\sigma(z)-\lambda}{a(z)}\dd z}=(x_1-y)^{\frac{\sigma(x_1)-\lambda}{a'(x_1)}}e^{\mathcal{O}(1)}$, where the constant $\mathcal{O}(1)$ is locally bounded with respect to $\lambda$. Since $\lambda>r(x_1)$ we have that 
    \begin{equation*}
	\frac{\sigma(x_1)-\lambda}{a'(x_1)}=\frac{r(x_1)-a'(x_1)-\lambda}{a'(x_1)}= -1+\frac{r(x_1)-\lambda}{a'(x_1)}>-1,
    \end{equation*}
	hence $f(y)e^{\int_{X_0}^y\frac{\sigma(z)-\lambda}{a(z)}\dd z} = (x_1-y)^{\frac{\sigma(x_1)-\lambda}{a'(x_1)}}e^{\mathcal{O}(1)} $ is integrable for $y\in (x, x_1)$. Thus \eqref{eq:A0-res-l>>1} is well-defined for any $\lambda>r^m$ when $x\not\in \{x_0, x_1\}$ (recall that $a'(x_0)>0$ and therefore $r(x_0)>\sigma(x_0)=r(x_0)-a'(x_0)$).

    For $x< x_1$ but close to $x_1$, we have
    \begin{equation*}
	\varphi(x) = \dfrac{\int_{x}^{x_1} f(y) e^{\int_{X_0}^y\frac{\sigma(z)-\lambda}{a(z)}\dd z}\dd y}{a(x) e^{\int_{X_0}^x \frac{\sigma(z)-\lambda}{a(z)}\dd z}}=:\dfrac{N(x)}{D(x)}. 
    \end{equation*}
    Since $f(y)e^{\int_{X_0}^y\frac{\sigma(z)-\lambda}{a(z)}\dd z}$ is $L^1$ in the neighborhood of $x_1$, we have that $N(x)\to 0$ as $x\to x_1$. Similarly, $1+\frac{\sigma(x_1)-\lambda}{a'(x_1)}\geq 0$ and therefore $a(x)e^{\int_{X_0}^x\frac{\sigma(z)-\lambda}{a(z)}\dd z}=D(x)\to 0$ as $x\to x_1$. Both $N$ and $D$ are differentiable (even $C^1$) in the neighborhood of $x_1$ and 
    \begin{equation*}
	\dfrac{N'(x)}{D'(x)} =  \dfrac{-f(x) e^{\int_{X_0}^x\frac{\sigma(z)-\lambda}{a(z)}\dd z} }{\left(a'(x)+a(x)\frac{\sigma(x)-\lambda}{a(x)}\right)e^{\int_{X_0}^x\frac{\sigma(z)-\lambda}{a(z)}\dd z}}=\dfrac{-f(x)}{r(x)-\lambda} 
	\xrightarrow[x\to x_1]{}\dfrac{f(x_1)}{\lambda-r(x_1)}.
    \end{equation*}
    Hence by L'Hospital rule \cite{Taylor-1952} we conclude that 
    \begin{equation}\label{eq:251127b}
	\lim_{x\to x_1}\varphi(x_1) = \lim_{x\to x_1}\frac{N(x)}{D(x)} = \lim_{x\to x_1}\frac{N'(x)}{D'(x)} =\dfrac{f(x_1)}{\lambda-r(x_1)}.
    \end{equation}

    For $x> x_0$ but close to $x_0$, we have
    \begin{equation*}
	\varphi(x) = \dfrac{\int_{x}^{x_1} f(y) e^{\int_{X_0}^y\frac{\sigma(z)-\lambda}{a(z)}\dd z}\dd y}{a(x) e^{\int_{X_0}^x \frac{\sigma(z)-\lambda}{a(z)}\dd z}}=:\dfrac{N(x)}{D(x)}. 
    \end{equation*}
	Recall from Lemma \ref{lem:E_lambda} that $\mathcal{E}_\lambda(y)=e^{\int_{X_0}^y\frac{\sigma(z)-\lambda}{a(z)}\dd z}=(y-x_0)^{\frac{\sigma(x_0)-\lambda}{a'(x_0)}}e^{\mathcal{O}(1)}$, where the constant $\mathcal{O}(1)$ is locally bounded in $\lambda$. Since $\lambda>r(x_0)$ we have that 
    \begin{equation*}
	\frac{\sigma(x_0)-\lambda}{a'(x_0)}=\frac{r(x_0)-a'(x_0)-\lambda}{a'(x_0)}= -1+\frac{r(x_0)-\lambda}{a'(x_0)}<-1.
    \end{equation*}
    Hence $\frac{\sigma(x_0)-\lambda}{a'(x_0)}+1<0$ and therefore $D(x)\to +\infty$ as $x\to x_0$ (recall $a(x)=a'(x_0)(x-x_0)+o(x-x_0)$ has constant sign in the neighborhood of $x_0$). We have 
    \begin{equation*}
	\dfrac{N'(x)}{D'(x)} =  \dfrac{-f(x) e^{\int_{X_0}^x\frac{\sigma(z)-\lambda}{a(z)}\dd z} }{\left(a'(x)+a(x)\frac{\sigma(x)-\lambda}{a(x)}\right)e^{\int_{X_0}^x\frac{\sigma(z)-\lambda}{a(z)}\dd z}}=\dfrac{-f(x)}{r(x)-\lambda} 
	\xrightarrow[x\to x_0]{}\dfrac{f(x_0)}{\lambda-r(x_0)}, 
    \end{equation*}
    and since $D(x)\to \infty$ as $x\to x_0$, we apply L'Hospital rule and obtain 
    \begin{equation*}
	\lim_{x\to x_0}\varphi(x_0) = \lim_{x\to x_0}\frac{N(x)}{D(x)} = \lim_{x\to x_0}\frac{N'(x)}{D'(x)} =\dfrac{f(x_0)}{\lambda-r(x_0)}.
    \end{equation*}

    Finally we note that
    \begin{equation*}
	    |\varphi(x)|\leq \sup_{y\in{[x_0, x_1]}}|f(y)| \underset{g(x)}{\underbrace{\dfrac{\left|\int_x^{x_1} e^{\int_{X_0}^y\frac{\sigma(z)-\lambda}{a(z)}\dd z}\dd y\right| }{|a(x)|e^{\int_{X_0}^x\frac{\sigma(z)-\lambda}{a(z)}\dd z}}}}
    \end{equation*}
    where $g(x)$ is continuous on $(x_0, x_1)$ and  
	\begin{align}\label{eq:251126a}
	g(x) &\leq C\dfrac{ \frac{1}{1+\alpha_1}|x-x_1|^{1+\alpha_1}}{|x-x_1|^{1+\alpha_1}}\leq \frac{C}{-(1+\frac{\sigma(x_1)-\lambda}{a'(x_1)})}= \frac{C|a'(x_1)|}{\lambda-r(x_1)} , \text{ as } x\to x_1,
    \end{align}
    and 
    \begin{align*}
	g(x) &\leq C\dfrac{ \frac{1}{|1+\alpha_0|}\left||x-x_0|^{1+\alpha_0} - |x_1-x_0|^{1+\alpha_0}\right|}{|x-x_0|^{1+\alpha_0}}\leq \frac{C}{-(1+\frac{\sigma(x_0)-\lambda}{a'(x_0)})}=\frac{Ca'(x_0)}{\lambda-r(x_0)}  , \text{ as } x\to x_0,
    \end{align*}
    therefore $\Vert \varphi\Vert_{C^0}\leq C\Vert f\Vert_{C^0}$ for some constant $C$ independent of $f$. 
    This completes the proof of assertion 1.  The blow-up of the operator norm in assertion 2 follows immediately from the formula \eqref{eq:A0-res-l>>1} at $x=x_0$ and $x=x_1$ (take any $f$ with either $f(x_1)\neq 0$ or $f(x_0)\neq 0$. Thus $r^m\in\sigma(A)$, the spectrum of $A$, which is therefore non-empty; $A$ being the generator of a $C_0$-semigroup of positive operators,  it follows from \cite[Theorem 3.3]{Greiner-Voigt-Wolff-1981} that $s(A)\in\sigma(A)$ and we conclude that $s(A)=r^m$. Proposition \ref{prop:resolvent-A-C0} is proved. 
\end{proof}

In the next Proposition we show that the resolvent of $\lambda I-A$ can be extended outside of the resolvent set prescribed by {Proposition \ref{prop:resolvent-A-L1} or Proposition \ref{prop:resolvent-A-C0}} as a right-inverse of $\lambda I-A$. The quantity $\lambda_1^{ext}$ plays the role of the spectral bound but is ``extended'' in the sense that it is always below both $\lambda_1^m$ and $r^m$. Notice that the extension is not unique but forms an ordered family of operators.

\begin{proposition}[Extended resolvent family]\label{prop:resolvent-A-C0-L1}
	Let Assumption \ref{as:interval} hold. Recall $\lambda_1^{ext}=\max\big(\lambda_1^0, r(x_1)\big)$ and let $\big(A_0, D(A_0)\big)$ be the operator defined by \eqref{defA0int}. We assume that $\lambda_1^0 < \lambda_1^1$, so that the interval $(\lambda_1^{ext}, \lambda_1^m)$ is not empty.
	\begin{enumerate}
	    \item The resolvent of $A_0$ can be extended into a one-parameter family of continuous operators $ R_\gamma(\lambda; A_0):C^0\big([x_0, x_1]\big) \to L^1\big([x_0, x_1]\big)$ by the formula 
		    \begin{equation}\label{eq:formula-extres}
			    R_\gamma(\lambda; A_0) f(x):=\dfrac{\displaystyle -\int_{x_1}^x f(y)\mathcal{E}_\lambda(y)\dd y + \gamma \int_{X_1}^{x_1} f(y)\mathcal{E}_\lambda(y)\dd y}{a(x)\mathcal{E}_\lambda(x)},
		    \end{equation}
		    for any $\gamma\in\mathbb{R}$ and $\lambda\in (\lambda_1^{ext}, \lambda_1^m)$. $R_\gamma(\lambda; A_0)$ is a right-inverse of $\lambda I-A$, meaning that we have $(\lambda I-A)R_\gamma(\lambda; A_0)f=f$ for all $f\in C^0$.
	    \item The family $R_\gamma(\lambda; A_0)$ is continuous in the operator norm of $\mathcal{L}\big(C^0([x_0, x_1]), L^1([x_0, x_1])\big)$ as a function of the parameter $\lambda\in(\lambda_1^{ext}, \lambda_1^m)$ and $\gamma\in \mathbb{R}$; for each $\gamma\geq 0$, $R_\gamma(\lambda; A_0)$ is a positive operator. For any  $\lambda\in(\lambda_1^{ext}, \lambda_1^m)$, $[0, +\infty)\ni\gamma\mapsto R_\gamma(\lambda; A_0)$ is  increasing; when $\gamma= 0$, $(\lambda_1^{ext}, \lambda_1^m)\ni\lambda\mapsto R_0(\lambda; A_0)$ is  decreasing.
	    \item When $\lambda\to \lambda_1^{ext}$, we have  for any $\gamma>0$:
		    \begin{equation}\label{eq:251209a}
			    \lim_{\lambda\to (\lambda_1^{ext})^+} \Vert R_\gamma(\lambda; A_0)\Vert_{\mathcal{L}(C^0, L^1)} = +\infty,
		    \end{equation}
			thus $\lambda_1^{ext}$ is the minimal value for which $R_\gamma(\lambda; A_0)$ is defined and continuous on $\lambda\in(\lambda_1^{ext},+\infty)$. 
	    \item When $\lambda\to (\lambda_1^m)^-$, we have for any $\gamma>0$: 
		    \begin{equation}\label{eq:251209b}
			    \lim_{\lambda\to (\lambda_1^{m})^-} \Vert R_\gamma(\lambda; A_0)\Vert_{\mathcal{L}(C^0, L^1)} = +\infty.
		    \end{equation}
		On the other hand when $\gamma=0$, then $R_0(\lambda; A_0)$ is a continuation of $R(\lambda; A)$ for the operator topology of $\mathcal{L}(C^0, L^1)$,  i.e., $R_0(\lambda_1^m; A_0)$ is well-defined by \eqref{eq:formula-extres} and  
		    \begin{equation}\label{eq:251209c}
			    \lim_{\lambda\to(\lambda_1^{m})^-} \Vert R_0(\lambda; A_0)-R_0(\lambda_1^m; A)\Vert_{\mathcal{L}(C^0, L^1)} = 0 \text{ and } \lim_{\lambda\to(\lambda_1^{m})^+} \Vert R_0(\lambda_1^m; A)-R(\lambda; A)\Vert_{\mathcal{L}(C^0, L^1)} = 0.
		    \end{equation}
	    \item  Suppose $\lambda=\lambda_1^1$, and let $f\in D(A)$. If $(\lambda I-A)f=g$ for some $g\in C^0([x_0,x_1])$, then $f=R_0(\lambda; A_0)g$. In particular,  $R_0(\lambda_1^1; A_0)$ is also a left inverse of $\lambda_1^1 I - A_0$, meaning that $R_0(\lambda_1^1; A_0)(\lambda_1^1 I-A_0)f=f$ for any $f\in  D(A_0)$.
	\end{enumerate}
\end{proposition}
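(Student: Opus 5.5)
The plan is to work directly from the explicit formula \eqref{eq:formula-extres}, using Lemma \ref{lem:E_lambda} to control the two singularities. Since $\lambda_1^0<\lambda_1^1$ we have $\lambda_1^m=\lambda_1^1=\sigma(x_1)$. For $\lambda\in(\lambda_1^{ext},\lambda_1^m)$ the exponents \eqref{eq:alphas} then satisfy
\[
\alpha_0(\lambda)<0 \quad (\text{since } \lambda>\sigma(x_0)),\qquad -1<\alpha_1(\lambda)<0 \quad (\text{since } r(x_1)<\lambda<\sigma(x_1),\ a'(x_1)<0).
\]

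\textbf{Item 1: behaviour of the building blocks.} Three facts follow from Lemma \ref{lem:E_lambda}.
\begin{itemize}
\item Near $x_1$, $\mathcal{E}_\lambda(y)\sim(x_1-y)^{\alpha_1}$ is integrable, so $\int_{X_1}^{x_1}f\mathcal{E}_\lambda$ is finite for $f\in C^0$ and is bounded by $C\Vert f\Vert_{C^0}$.
\item The homogeneous solution $h_\lambda:=1/(a\mathcal{E}_\lambda)$ behaves like $(x_1-x)^{-(1+\alpha_1)}$ near $x_1$ and like $(x-x_0)^{-(1+\alpha_0)}$ near $x_0$. Both exponents exceed $-1$, so $h_\lambda\in L^1$.
\item The particular part $-h_\lambda\int_{x_1}^x f\mathcal{E}_\lambda$ is bounded near $x_1$ by $\Vert f\Vert_{C^0}\, g(x)$, with $g$ as in \eqref{eq:251126a}; here we use $1+\alpha_1>0$. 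Near $x_0$ it is integrable by the Fubini computation \eqref{eq:251126b}.
\end{itemize}
Hence $R_\gamma(\lambda;A_0)\in\mathcal{L}(C^0,L^1)$ with a bound that is uniform for $\lambda$ in compact subsets of $(\lambda_1^{ext},\lambda_1^m)$ and $\gamma$ bounded. The identity $(\lambda I-A)R_\gamma f=f$ is the computation already used to derive \eqref{eq:resolvent-A-formula-0}, since $(\lambda I-A)h_\lambda=0$ on $(x_0,x_1)$.

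\textbf{Item 2: continuity and order properties.} Continuity in operator norm follows from dominated convergence. For $\Vert f\Vert_{C^0}\le1$, the integrand is dominated uniformly in $\lambda$ near a fixed $\bar\lambda$, because the constants $C_\Lambda$ of Lemma \ref{lem:E_lambda} are locally uniform and the exponents move continuously, staying in the open ranges above. Positivity for $\gamma\ge0$ is immediate: both $\int_x^{x_1}f\mathcal{E}_\lambda$ and $\gamma\int_{X_1}^{x_1}f\mathcal{E}_\lambda$ are nonnegative for $f\ge0$, and $a\mathcal{E}_\lambda>0$. Monotonicity in $\gamma$ holds because $\partial_\gamma R_\gamma f=h_\lambda\int_{X_1}^{x_1}f\mathcal{E}_\lambda\ge0$. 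For $\gamma=0$ we write
\[
R_0 f(x)=\frac{1}{a(x)}\int_x^{x_1}f(y)\,e^{\int_x^y\frac{\sigma-\lambda}{a}}\dd y, \qquad y>x,\ a>0,
\]
so the kernel is decreasing in $\lambda$.

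\textbf{Items 3 and 4: blow-up at the ends of the interval.} As $\lambda\to(\lambda_1^{ext})^+$ there are two cases.
\begin{itemize}
\item If $\lambda_1^{ext}=r(x_1)$, then $1+\alpha_1\to0^+$. The $\gamma$-term is at least $\gamma\, c\, h_\lambda$ for $f\equiv1$, and $\Vert h_\lambda\Vert_{L^1(X_1,x_1)}\gtrsim 1/(1+\alpha_1)$ would... more precisely, its $L^1$ norm near $x_1$ behaves like $\int(x_1-x)^{-(1+\alpha_1)}\dd x$, which stays finite for each $\lambda$ but whose lower bound must be checked against the normalisation; I expect the correct test instead to be that $\int_{X_1}^{x_1}\mathcal{E}_\lambda\sim 1/(1+\alpha_1)\to\infty$, which multiplies $h_\lambda$ and gives blow-up.
\item If $\lambda_1^{ext}=\lambda_1^0$, then $\alpha_0\to0^-$ and the computation \eqref{eq:251125c} with $f\equiv1$ applies verbatim.
\end{itemize}
As $\lambda\to(\lambda_1^m)^-$ we have $\alpha_1\to0^-$, so $\Vert h_\lambda\Vert_{L^1(X_1,x_1)}\sim1/|\alpha_1|\to\infty$. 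With $f\equiv1$ the coefficient $\gamma\int_{X_1}^{x_1}\mathcal{E}_\lambda$ stays bounded below, which gives \eqref{eq:251209b}. For $\gamma=0$, the bound $|R_0f|\le\Vert f\Vert_{C^0}\,g$ holds with $g$ dominated, uniformly for $\lambda$ near $\lambda_1^m$ on both sides, by an $L^1$ function: the constant $C|a'(x_1)|/(\lambda-r(x_1))$ in \eqref{eq:251126a} stays bounded, and the $x_0$ side is handled as in \eqref{eq:251126b}. For $\lambda>\lambda_1^m$ the same formula is $R(\lambda;A)$ by Proposition \ref{prop:resolvent-A-L1}. Dominated convergence, uniform over the unit ball of $C^0$, then yields \eqref{eq:251209c}.

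\textbf{Item 5: uniqueness at $\lambda=\lambda_1^1$.} Any $f\in D(A)$ with $(\lambda I-A)f=g$ equals $R_0(\lambda_1^1;A_0)g+c\,h_{\lambda_1^1}$ on $(x_0,x_1)$, by the variation-of-constants formula \eqref{eq:resolvent-A-formula-0}. At $\lambda=\sigma(x_1)$ we have $\alpha_1=0$, so $h\sim(x_1-x)^{-1}\notin L^1$, while $R_0 g\in L^1$. Hence $c=0$ and $f=R_0(\lambda_1^1;A_0)g$.

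The main obstacle I anticipate is making the norm-continuity claims, and especially the two-sided limit at $\lambda_1^m$, rigorous. This requires a single integrable majorant that is uniform in $\lambda$ while an exponent crosses $0$, and it is where the precise $\lambda$-dependence of the constants in Lemma \ref{lem:E_lambda} must be tracked.
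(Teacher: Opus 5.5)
Your proposal is correct and takes essentially the same route as the paper: the explicit formula with the asymptotics of Lemma \ref{lem:E_lambda}, the test function $f\equiv 1$ for the blow-ups, dominated convergence for norm continuity, and the non-integrability of $1/(x_1-x)$ at $\lambda=\lambda_1^1$ for item 5. One local difference: your blow-up as $\lambda\to r(x_1)^+$ via $\int_{X_1}^{x_1}\mathcal{E}_\lambda\,\dd y\gtrsim 1/(1+\alpha_1)$ is valid once you note that $\Vert 1/(a\mathcal{E}_\lambda)\Vert_{L^1}$ stays bounded below (e.g.\ since $\mathcal{E}_\lambda(X_0)=1$); the paper instead uses $R_\gamma(\lambda;A_0)\ge R_0(\lambda;A_0)$ and $R_0(\lambda;A_0)\mathbbm{1}\approx 1/(1+\alpha_1)$ near $x_1$, which also covers $\gamma=0$. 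The uniform majorant you single out as the main obstacle comes for free from monotonicity in $\lambda$: the $f$-independent function $g_{\lambda,\lambda'}(x)=\frac{1}{a(x)}\int_x^{x_1}\big|e^{\int_x^y\frac{\sigma(z)-\lambda}{a(z)}\dd z}-e^{\int_x^y\frac{\sigma(z)-\lambda'}{a(z)}\dd z}\big|\dd y$ satisfies $g_{\lambda,\lambda'}\le R_0(\lambda;A_0)\mathbbm{1}+R_0(\lambda';A_0)\mathbbm{1}\le 2R_0(\bar\lambda;A_0)\mathbbm{1}$ for any $\bar\lambda\in(\lambda_1^{ext},\min(\lambda,\lambda'))$. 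This is the paper's majorant in item 2, and it works verbatim across $\lambda_1^m$ because for $\lambda>\lambda_1^m$ the resolvent $R(\lambda;A)$ has the same kernel, decreasing in $\lambda$.
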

\begin{remark}
	When $\lambda=\lambda_1^1$ and $\lambda_1^0<\lambda_1^1<r(x_0)$, we have that $R_0(\lambda; A_0)\big(C^0([x_0, x_1])\big)\not\subset C^0([x_0, x_1])$; therefore $R_0(\lambda; A_0)$ is not a bounded inverse of $\lambda I-A_0$, even though $R_0(\lambda; A_0)(\lambda I-A_0)f=(\lambda I-A_0)R_0(\lambda ;A_0)f=f$ for any $f\in D(A_0)$.
\end{remark}
\begin{proof}[Proof of Proposition \ref{prop:resolvent-A-C0-L1}]
	We first consider assertion 1. 
	We note that, since $\lambda>r(x_1)$, Lemma \ref{lem:E_lambda} yields $\mathcal{E}_\lambda(x)=|x-x_1|^{\alpha_1}e^{\mathcal{O}(1)}\in L^1$ with  $\alpha_1=\frac{\sigma(x_1)-\lambda}{a'(x_1)}=-1+\frac{r(x_1)-\lambda}{a'(x_1)}>-1$, in the neighborhood of $x_1$. Thus we can rewrite \eqref{eq:resolvent-A-formula-0} as
	\begin{align*}
		\varphi(x)&=R_\gamma(\lambda; A_0)f(x) = \dfrac{-\int_{x_1}^xf(y)\mathcal{E}_\lambda(y)\dd y -\int_{X_1}^{x_1} f(y)\mathcal{E}_\lambda(y)\dd y + a(X_1)\varphi(X_1)\mathcal{E}_\lambda(X_1)}{a(x)\mathcal{E}_\lambda(x)}, 
	\end{align*}
	therefore by selecting 
	\begin{equation*}
		\varphi(X_1):= \dfrac{1+\gamma}{a(X_1)\mathcal{E}_\lambda(X_1)}\int_{X_1}^{x_1}f(y)\mathcal{E}_\lambda(y)\dd y
	\end{equation*}
	we obtain the formula \eqref{eq:formula-extres}. For any $\gamma\in\mathbb{R}$ and $f\in C^0([x_0, x_1])$, it is then clear that $(\lambda I-A)R_\gamma(\lambda; A_0)f=f$.

	Let us show that $R_\gamma(\lambda; A_0)$ is a bounded operator. Let $f\in C^0\big([x_0, x_1]\big)$ and $\varphi=R_\gamma(\lambda; A_0) f$, we have
	\begin{align*}
		|\varphi(x)|&\leq \Vert f\Vert_{C^0} \underset{g(x)}{\underbrace{\dfrac{\left|\int_x^{x_1} e^{\int_{X_0}^y\frac{\sigma(z)-\lambda}{a(z)}\dd z}\dd y\right| }{|a(x)|e^{\int_{X_0}^x\frac{\sigma(z)-\lambda}{a(z)}\dd z}}}}
		+|\gamma|\int_{X_1}^{x_1}\mathcal{E}_\lambda(y)\dd y\Vert f\Vert_{C^0}\underset{h(x)}{\underbrace{ \frac{1}{a(x)\mathcal{E}_\lambda(x)}}}.
	\end{align*}
	The functions $g(x)$ and $h(x)$ are continuous on $(x_0, x_1)$. 
	By applying L'Hospital rule (as in the proof of Proposition \ref{prop:resolvent-A-C0}), we have as $x\to x_1$:
    \begin{align*}
	g(x) &\leq C\dfrac{ \frac{1}{1+\alpha_1}|x-x_1|^{1+\alpha_1}}{|x-x_1|^{1+\alpha_1}}\leq \frac{C}{-(1+\frac{\sigma(x_1)-\lambda}{a'(x_1)})}= \frac{C|a'(x_1)|}{\lambda-r(x_1)} ,
    \end{align*}
	so $g(x)$ is bounded when $x$ is close to $x_1$. When $x$ is close to $x_0$, we have
	\begin{align}
		\label{eq:251125a}
		g(x)&=\dfrac{\left|\int_x^{x_1} e^{\mathcal{O}(1)}|x-x_0|^{\alpha_0}\dd y\right| }{|a(x)|e^{\mathcal{O}(1)}|x-x_0|^{\alpha_0}}.
	\end{align}
	Recall that $\alpha_0=\frac{\sigma(x_0)-\lambda}{a'(x_0)}\leq -1$. When $\lambda>r(x_0)$ then we conclude that $g(x)$ is bounded in the neighborhood of $x_0$ as in the proof of Proposition \ref{prop:resolvent-A-C0}; thus it is in $L^1$. If $\lambda = r(x_0)$ we have $\alpha_0=-1$, and \eqref{eq:251125a} can be estimated by 
	\begin{align*}
		g(x)&\leq C |\ln(x-x_0)-\ln(x_1-x_0)|,
	\end{align*}
	which is an integrable function in the neighborhood of $x_0$. If $\lambda_1^0<\lambda < r(x_0)$ we have $\alpha_0\in(-1, 0)$, and \eqref{eq:251125a} can be estimated by 
	\begin{align*}
		g(x)&\leq C \dfrac{\big||x-x_0|^{1+\alpha_0}-|x_1-x_0|^{1+\alpha_0}\big|}{|x-x_0|^{1+\alpha_0}} = C\left|1-\frac{|x_1-x_0|^{\alpha_0+1}}{|x-x_0|^{1+\alpha_0}}\right|
	\end{align*}
	which is an integrable function in the neighborhood of $x_0$. Finally since $r(x_1)<\lambda<\sigma(x_1)$ we have $-1<\alpha_1=\frac{\sigma(x_1)-\lambda}{a'(x_1)}<0$, and since $\lambda>\sigma(x_0)$ we have $\alpha_0=\frac{\sigma(x_0)-\lambda}{a'(x_0)}<0$, therefore
	\begin{equation*}
		h(x)=|x-x_1|^{-1-\alpha_1}e^{\mathcal{O}(1)}  \text{ as } x\to x_1,\quad h(x)=|x-x_0|^{-1-\alpha_0}e^{\mathcal{O}(1)}, 
	\end{equation*}
	which proves that $h\in L^1 $ in $[x_0, x_1]$.
	Thus we have the uniform bound
	\begin{equation*}
		\Vert (\lambda I-A_0)^{-1}_R f\Vert_{L^1} = \Vert \varphi\Vert_{L^1}\leq \Vert f\Vert_{C^0}\left( \Vert g\Vert_{L^1}+ |\gamma|\int_{X_1}^{x_1}\mathcal{E}_\lambda(y)\dd y\Vert h\Vert_{L^1} \right).
	\end{equation*}
	This finishes the proof of assertion 1.
	\medskip

	Next we consider assertion 2. Let $f\in C^0\big([x_0, x_1]\big)$, $f\geq 0$, $\gamma\geq 0$ and $\lambda_1^{ext}<\lambda'\leq \lambda <\lambda_1^m$  be given. We can immediately see from \eqref{eq:formula-extres} that $R_\gamma(\lambda; A_0)f\geq 0$, hence $R_\gamma(\lambda; A_0)$ is a positive operator.  We have, for $x\in (x_0, x_1)$:
	\begin{align}
		\nonumber
		\big[R_0(\lambda'; A_0)-&R_0(\lambda; A_0)\big] f(x) = \frac{1}{a(x)}  \int_x^{x_1} f(y) \left(e^{\int_x^y \frac{\sigma(z)-\lambda'}{a(z)}\dd z} - e^{\int_x^y \frac{\sigma(z)-\lambda}{a(z)}\dd z}\right)\dd y \\ 
		%\big[R_\gamma(\lambda'; A_0)-&R_\gamma(\lambda; A_0)\big] f(x) = \frac{1}{a(x)}  \int_x^{x_1} f(y) \left(e^{\int_x^y \frac{\sigma(z)-\lambda'}{a(z)}\dd z} - e^{\int_x^y \frac{\sigma(z)-\lambda}{a(z)}\dd z}\right)\dd y \\ 
		%\nonumber
		%&\quad + \gamma  \frac{1}{a(x)}  \int_{X_1}^{x_1} f(y) \left(e^{\int_x^y \frac{\sigma(z)-\lambda'}{a(z)}\dd z} - e^{\int_x^y \frac{\sigma(z)-\lambda}{a(z)}\dd z}\right)\dd y\\
		\label{eq:251125d}
		%&=\frac{1}{a(x)}  \left(\int_x^{x_1}+\gamma\int_{X_1}^{x_1}\right) f(y) e^{\int_x^y \frac{\sigma(z)-\lambda}{a(z)}\dd z}\left(e^{\int_x^y \frac{\lambda-\lambda'}{a(z)}\dd z} -1 \right)\dd y\geq 0,
		&=\frac{1}{a(x)}  \int_x^{x_1} f(y) e^{\int_x^y \frac{\sigma(z)-\lambda}{a(z)}\dd z}\left(e^{\int_x^y \frac{\lambda-\lambda'}{a(z)}\dd z} -1 \right)\dd y\geq 0,
	\end{align}
	This shows the monotony of $\lambda\mapsto R_\gamma(\lambda; A_0)$. The monotony of $\gamma\mapsto R_\gamma(\lambda; A_0)$ is clear.  Now, coming back to \eqref{eq:251125d}, we have
	\begin{align}
		\nonumber
		\big|\big[R_\gamma(\lambda'; A_0)&-R_\gamma(\lambda; A_0)\big] f(x)\big| \leq \frac{1}{a(x)}  \int_x^{x_1} |f(y)| \left|e^{\int_x^y \frac{\sigma(z)-\lambda'}{a(z)}\dd z} - e^{\int_x^y \frac{\sigma(z)-\lambda}{a(z)}\dd z}\right|\dd y \\ 
		\nonumber
		&\quad +\frac{\gamma}{a(x)} \int_{X_1}^{x_1}|f(y)|\left|e^{\int_x^y \frac{\sigma(z)-\lambda'}{a(z)}\dd z} - e^{\int_x^y \frac{\sigma(z)-\lambda}{a(z)}\dd z}\right|\dd y \\ 
		\label{eq:251209d}
		&\leq\Vert f\Vert_{C^0}\left( \underset{g_{\lambda, \lambda'}(x)}{\underbrace{\frac{1}{|a(x)|}  \int_x^{x_1} \left|e^{\int_x^y \frac{\sigma(z)-\lambda'}{a(z)}\dd z} - e^{\int_x^y \frac{\sigma(z)-\lambda}{a(z)}\dd z}\right|\dd y}} \right. \\ 
		\nonumber
		&\quad + \gamma \left.\underset{h_{\lambda, \lambda'}(x)}{\underbrace{\frac{1}{|a(x)|}  \int_{X_1}^{x_1} \left|e^{\int_x^y \frac{\sigma(z)-\lambda'}{a(z)}\dd z} - e^{\int_x^y \frac{\sigma(z)-\lambda}{a(z)}\dd z}\right|\dd y}} \right).
	\end{align}
	Freeze $x\in(x_0, x_1)$, clearly $\left|e^{\int_x^y \frac{\sigma(z)-\lambda'}{a(z)}\dd z} - e^{\int_x^y \frac{\sigma(z)-\lambda}{a(z)}\dd z}\right|\to 0$ as $\lambda\to \lambda'$ for any $y\in (x, x_1)$; moreover by Lemma \ref{lem:E_lambda} we have
	\begin{equation*} 
		\left|e^{\int_x^y \frac{\sigma(z)-\lambda'}{a(z)}\dd z} - e^{\int_x^y \frac{\sigma(z)-\lambda}{a(z)}\dd z}\right|\leq C(x_1-y)^{-1+\varepsilon}, 
	\end{equation*}
	and the right-hand side is integrable, where $0<\varepsilon< \min(\frac{\sigma(x_1)-\lambda}{a'(x_1)}, \frac{\sigma(x_1)-\lambda'}{a'(x_1)}) +1$. Thus by Lebesgue's dominated convergence Theorem, we have
	\begin{equation*}
		\lim_{\lambda'\to\lambda} g_{\lambda, \lambda'}(x)=0 \text{ and } \lim_{\lambda'\to\lambda} h_{\lambda, \lambda'}(x)=0 
	\end{equation*}
	for any $x\in (x_0, x_1)$. Next, remark that
	\begin{equation*}
		|g_{\lambda, \lambda'}(x)| \leq \big[R_\gamma(\lambda; A_0)\mathbbm{1} + R_\gamma(\lambda'; A_0)\mathbbm{1}\big](x)\leq 2R_\gamma(\bar{\lambda}; A_0)\mathbbm{1}, 
	\end{equation*}
	with $\lambda_1^{ext}<\bar{\lambda}<\min(\lambda, \lambda')$.
	It follows from the analysis in the proof of Proposition \ref{prop:resolvent-A-C0} (and more specifically, \eqref{eq:251126a}) that $R_\gamma(\bar{\lambda}; A_0)\mathbbm{1}(x)$ is uniformly bounded in the neighborhood of $x_1$ whenever $\bar{\lambda}>r(x_1)$, and from the analysis in the proof of Proposition \ref{prop:resolvent-A-L1} (more specifically, the computation leading to \eqref{eq:251126b}) that $ R_\gamma(\bar{\lambda};A_0)\mathbbm{1}(x)  $ is integrable in the neighborhood of $x_0$ whenever $\bar{\lambda}>\lambda_1^0$.  Thus $R_\gamma(\bar{\lambda}; A_0)\mathbbm{1}(x)$ is integrable on $(x_0, x_1)$ whenever $\bar{\lambda}>\max(\lambda_1^0, r(x_1))=\lambda_1^{ext}$. We conclude by Lebesgue's dominated convergence theorem that
	\begin{equation*}
		\lim_{\lambda'\to \lambda}\int_{x_0}^{x_1}g_{\lambda, \lambda'}(x)\dd x =0. 
	\end{equation*}
	Similarly, we have
	\begin{equation*}
		h_{\lambda, \lambda'}(x) \leq \dfrac{\int_{X_1}^{x_1}\mathcal{E}_{\lambda'}(y)\dd y}{a(x)\mathcal{E}_{\lambda'}(x)}+ \dfrac{\int_{X_1}^{x_1} \mathcal{E}_\lambda(y)\dd y}{a(x)\mathcal{E}_\lambda(x)} \leq C \dfrac{1}{a(x)\mathcal{E}_{\bar{\lambda}}(x)}\in L^1(x_0, x_1)
	\end{equation*}
	thus
	\begin{equation*}
		\lim_{\lambda'\to \lambda}\int_{x_0}^{x_1}h_{\lambda, \lambda'}(x)\dd x =0. 
	\end{equation*}
	Finally
	\begin{equation*}	
		\int_{x_0}^{x_1}\big|\big[R_\gamma(\lambda'; A_0)-R_\gamma(\lambda; A_0)\big] f(x)\big| \dd x \leq \Vert f\Vert_{C^0}\left(\int_{x_0}^{x_1} g_{\lambda, \lambda'}(x)\dd x + \gamma h_{\lambda, \lambda'}(x)\dd x\right) ,
	\end{equation*}
	which proves the continuity of $\lambda\mapsto R_\gamma(\lambda; A_0)$ in the operator norm. The continuity with respect to $\gamma$ is clear.  This completes the proof of assertion 2.
\medskip

	We next turn to assertion 3.  We have proved in Proposition \ref{prop:resolvent-A-L1} that $\Vert(\lambda I-A)^{-1}\mathbbm{1}\Vert_{L^1}\to +\infty $ when $\lambda\to\lambda_1^0\geq\lambda_1^1$, see \eqref{eq:251125c}; a similar computation proves that $\Vert R_\gamma(\lambda; A_0)\mathbbm{1}\Vert_{L^1}\to +\infty $ when $\lambda\to \lambda_1^0$ whenever $\lambda_1^0 \geq r(x_1)$. We are left with the case $r(x_1)>\lambda_1^0$ and $\lambda\to r(x_1)$. We conduct a similar computation in the neighborhood of $x_1$.
	 Choose $X_0\in (x_0, x_1)$ and suppose without loss of generality that $r(x_1)<\lambda<\sigma(x_1)=\lambda_1^1$ (so $\alpha_1=\frac{\sigma(x_1)-\lambda}{a'(x_1)}\in (-1, 0)$). We have, with $f\equiv 1$:
	\begin{align}
		\nonumber
		\int_{X_0}^{x_1} R_\gamma(\lambda; A_0) \mathbbm{1} \dd x&\geq \int_{X_0}^{x_1} \dfrac{\int_x^{x_1} e^{\int_{X_0}^y \frac{\sigma(z)-\lambda}{a(z)}\dd z}\dd y}{a(x)e^{\int_{X_0}^x \frac{\sigma(z)-\lambda}{a(z)}\dd z}}\dd x= \int_{X_0}^{x_1} e^{\int_{x_0}^y \frac{\sigma(z)-\lambda}{a(z)}\dd z} \int_{X_0}^{y}  \dfrac{e^{\mathcal{O}(1)}}{(x_1-x)^{(1+\alpha_1)}}\dd x\dd y\\
	    \nonumber
	    &\geq C\int_{X_0}^{x_1} e^{\int_{X_0}^y \frac{\sigma(z)-\lambda}{a(z)}\dd z} \left[- \frac{(x_1-x)^{-\alpha_1}}{-\alpha_1}\right]_{x=X_0}^{x=y}\dd y\\
	    \nonumber
		&\geq C\frac{-1}{\alpha_1}\int_{X_0}^{x_1}(x_1-y)^{\alpha_1}e^{\mathcal{O}(1)}\big[ (x_1-X_0)^{-\alpha_1} - (x_1-y)^{-\alpha_1}\big]\dd y \\ 
		\nonumber
		&\geq C\dfrac{-1}{\alpha_1}\left( (x_1-X_0)^{-\alpha_1}\left[-\frac{(x_1-y)^{\alpha_1+1}}{\alpha_1+1}\right]_{y=X_0}^{y=x_1} - (x_1-X_0)\right) \\
		\label{eq:251126c}
		&= C\frac{a'(x_1)(x_1-X_0)}{r(x_1)-\lambda} + \mathcal{O}(1)\xrightarrow[\lambda\to r(x_1)^+]{}+\infty, 
	\end{align}
	since $\alpha_1\to -1$ as $\lambda\to r(x_1)$.
	This proves the blow-up of $\Vert R_\gamma(\lambda; A_0)\Vert_{\mathcal{L}(C^0,L^1)}$ as $\lambda\to r(x_1)^+$. This completes the proof of \eqref{eq:251209a} and assertion 3.
	\medskip

	Next we prove assertion 4, starting with \eqref{eq:251209b}. Let $\gamma>0$ be given and recall that, under our assumptions, $\lambda_1^m=\lambda_1^1=\sigma(x_1)$. Then $\alpha_1=\alpha_1(\lambda)=\frac{\sigma(x_1)-\lambda}{a'(x_1)}\to 0$ as $\lambda\to \lambda_1^m$ and thus 
	\begin{equation*}
		\int_{x_0}^{x_1}R_\gamma(\lambda; A_0)\mathbbm{1}\dd x\geq \gamma \int_{X_1}^{x_1}\mathcal{E}_\lambda(y)\dd y\int_{x_0}^{x_1}\dfrac{1}{a(x)\mathcal{E}_\lambda(x)}\dd x\geq \frac{\gamma C}{-\alpha_1(\lambda)} \frac{1}{|x_1-x_0|^{\alpha_1(\lambda)}} \xrightarrow[\lambda\to (\lambda_1^m)^-]{}+\infty.
	\end{equation*}
	This proves \eqref{eq:251209b}. Next we turn to \eqref{eq:251209c}, the continuity of $R_0(\lambda; A_0)$ in the operator norm when $\lambda\to \lambda_1^m$. Recall the definition of $g_{\lambda, \lambda'}$ in \eqref{eq:251209d} and that $\alpha_1(\lambda_1^m)=\frac{\sigma(x_1)-\lambda_1^m}{a'(x_1)}=0$ under our assumptions; call $\alpha_1^+={\alpha}_1^+(\lambda)=\max\big(\alpha_1(\lambda),0\big)$. We have:
	\begin{align*}
		g_{\lambda, \lambda_1^m}(x) &= \frac{1}{a(x)}\int_{x}^{x_1} \left|e^{\int_x^y\frac{\sigma(z)-\lambda}{a(z)}\dd z}-e^{\int_x^y\frac{\sigma(z)-\lambda_1^m}{a(z)}\dd z}\right|\dd y\leq \frac{1}{a(x)}\int_{x}^{x_1}\int_{x}^y \frac{|\lambda-\lambda_1^m|}{a(z)}\dd z e^{\int_{x}^y\frac{\sigma(z)-\max(\lambda, \lambda_1^m)}{a(z)}\dd z}\dd y \\
		&=\frac{|\lambda-\lambda_1^m|}{a(x)} \int_{x}^{x_1} \big(\ln(x_1-x)-\ln(x_1-y)\big)\left(\dfrac{x_1-y}{x_1-x}\right)^{\alpha_1^+}e^{\mathcal{O}(1)} \dd y \\ 
		&\leq C\frac{|\lambda-\lambda_1^m|}{a(x)} \left(\dfrac{1}{1+\alpha_1^+}\right)^2 (x_1-x) = \mathcal{O}\big(|\lambda-\lambda_1^m|\big)
	\end{align*}
	which is uniformly bounded when $\lambda\to \lambda_1^m$. For the behavior near $x_0$, we proceed as in the proof of assertion 2. We obtain
	\begin{equation*}
		\lim_{\lambda\to (\lambda_1^m)^-}\int_{x_0}^{x_1}g_{\lambda, \lambda_1^m}(x)\dd x =0, 
	\end{equation*}
	and as a consequence
	\begin{equation*}	
		\int_{x_0}^{x_1}\big|\big[R_0(\lambda_1^m; A_0)-R_0(\lambda; A_0)\big] f(x)\big| \dd x \leq \Vert f\Vert_{C^0}\int_{x_0}^{x_1} g_{\lambda, \lambda_1^m}(x)\dd x=\Vert f\Vert_{C^0}  \mathcal{O}\big(|\lambda-\lambda_1^m|\big), \text{ if } \lambda<\lambda_1^m. 
	\end{equation*}
	By a similar argument,  
	\begin{equation*}	
		\int_{x_0}^{x_1}\big|\big[R_0(\lambda_1^m; A_0)-R(\lambda; A)\big] f(x)\big| \dd x \leq \Vert f\Vert_{C^0}\int_{x_0}^{x_1} g_{\lambda, \lambda_1^m}(x)\dd x= \Vert f\Vert_{C^0}  \mathcal{O}\big(|\lambda-\lambda_1^m|\big), \text{ if } \lambda>\lambda_1^m. 
	\end{equation*}
	This proves the continuity as $\lambda\to\lambda_1^m$, which completes the proof of \eqref{eq:251209c}, assertion 4.
	\medskip

	Finally we prove assertion 5. Let $\lambda =\lambda_1^1$, $f\in D(A)$ and $g=(\lambda I-A) f$. We assume that $g\in C^0([x_0, x_1])$; our goal is to prove $R_0(\lambda; A_0)g=f$. Recalling that $\mathcal{E}_\lambda(x)=e^{\mathcal{O}(1)}$ as $x\to x_1$ by Lemma \ref{lem:E_lambda}, we obtain by integrating $(\lambda I-A)f=g$ that  
	\begin{equation*}
		f(x) = \dfrac{-\int_{x_1}^xg(y)\mathcal{E}_\lambda(y)\dd y+ C}{a(x)\mathcal{E}_\lambda(x)} = R_0(\lambda; A_0) g(x)+  \dfrac{C}{a(x)\mathcal{E}_\lambda(x)},
	\end{equation*}
	for some constant $C$; hence 
	\begin{equation*}
		\dfrac{C}{a(x)\mathcal{E}_\lambda(x)} = f(x)- R_0(\lambda; A_0) g(x)\in L^1. 
	\end{equation*}
	But $\dfrac{C}{a(x)\mathcal{E}_\lambda(x)}=\dfrac{C}{x_1-x}e^{\mathcal{O}(1)}$ which cannot belong to $L^1$ when $C\neq 0$. We deduce that $C=0$ and therefore $f=R_0(\lambda; A_0)g$, which proves the claim.
	\medskip

	This ends the proof of Proposition \ref{prop:resolvent-A-C0-L1}
\end{proof}

We end with a Lemma that gives the local behavior of the extended resolvent applied to a continuous strictly positive function near the two ends of the interval. 
\begin{lemma}\label{lem:loc-est}
    Let $f\in C^0([x_0, x_1]) $ satisfy $f(x)>0$ for all $x\in [x_0, x_1]$, and $\lambda\in(\lambda_1^{ext}, +\infty)$. Call $\psi:=R_0(\lambda; A_0)f$.
\begin{enumerate}
    \item With $\alpha_0=\frac{\sigma(x_0)-\lambda}{a'(x_0)}$, we have in the neighborhood of $x_0$:
	    \begin{equation}\label{eq:260128f}
	    \psi(x) =  
	    \begin{cases}
		\frac{e^{\mathcal{O}(1)}}{|x-x_0|^{1+\alpha_0}} + o\left(\frac{1}{|x-x_0|^{1+\alpha_0}}\right), & \text{ if }\lambda\in\big(\lambda_1^0, r(x_0)\big)\Leftrightarrow \alpha_0\in (-1, 0), \\ 
		|\ln(|x-x_0|)|e^{\mathcal{O}(1)} + \mathcal{O}(1),%o\left(|\ln(|x-x_0|)|\right), 
		& \text{ if } \lambda= r(x_0)\Leftrightarrow \alpha_0=-1, \\ 
		\frac{K\star \varphi(x_0)}{\lambda-r(x_0)} + o(1), & \text{ if }  \lambda> r(x_0)\Leftrightarrow\alpha_0<-1.  
	    \end{cases}
	    \end{equation}
	\item $\psi(x)=e^{\mathcal{O}(1)}$ in the neighborhood of $x_1$, and more precisely, 
	    \begin{equation}\label{eq:260128g}
	    \psi(x)\xrightarrow[x\to x_1^-]{}\dfrac{f(x_1)}{\lambda-r(x_1)},
	    \end{equation}
\end{enumerate}
\end{lemma}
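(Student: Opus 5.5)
We work directly from the formula \eqref{eq:formula-extres} with $\gamma=0$,
\begin{equation*}
\psi(x)=\dfrac{\int_x^{x_1} f(y)\mathcal{E}_\lambda(y)\dd y}{a(x)\mathcal{E}_\lambda(x)}=:\dfrac{N(x)}{D(x)},
\end{equation*}
and treat the two endpoints separately, combining Lemma \ref{lem:E_lambda} with L'Hospital's rule exactly as in the proof of Proposition \ref{prop:resolvent-A-C0}. Since $f$ is continuous and strictly positive on the compact interval, we have $0<\min f\le f\le \max f$, so $f$ can be replaced by constants up to $e^{\mathcal{O}(1)}$ factors. The $K\star\varphi(x_0)$ in \eqref{eq:260128f} should be read as $f(x_0)$.

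\textbf{Near $x_1$.} We use $\lambda>r(x_1)$, hence $\alpha_1>-1$. Then $\mathcal{E}_\lambda(y)=(x_1-y)^{\alpha_1}e^{\mathcal{O}(1)}$ is integrable near $x_1$, so $N(x)\to0$. Also $D(x)=(x_1-x)^{1+\alpha_1}e^{\mathcal{O}(1)}\to 0$, and $N,D$ are $C^1$ near $x_1$ (on the open side). The derivative ratio is $N'/D'=-f(x)/(r(x)-\lambda)\to f(x_1)/(\lambda-r(x_1))$, which is positive and finite. L'Hospital then gives \eqref{eq:260128g}, and a positive finite limit yields $\psi=e^{\mathcal{O}(1)}$ near $x_1$. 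This holds whether or not $\lambda$ exceeds $\lambda_1^1$, because only $\alpha_1>-1$ is used.

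\textbf{Near $x_0$.} Here $N(x)\to N(x_0^+)=\int_{x_0}^{x_1}f\mathcal{E}_\lambda$, which may be finite or infinite, and $D(x)=(x-x_0)^{1+\alpha_0}e^{\mathcal{O}(1)}$. The exponent satisfies $\lambda>\lambda_1^0$, hence $\alpha_0<0$, and we split according to the sign of $1+\alpha_0$.

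\emph{Case $\alpha_0\in(-1,0)$.} Then $\mathcal{E}_\lambda$ is integrable near $x_0$, so $N(x_0)$ is a finite positive number and $N(x)=N(x_0)+o(1)$. Dividing by $D$ gives $\psi(x)=N(x_0)(x-x_0)^{-1-\alpha_0}e^{\mathcal{O}(1)}+o\big((x-x_0)^{-1-\alpha_0}\big)$, as claimed.

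\emph{Case $\alpha_0=-1$.} Now $D=e^{\mathcal{O}(1)}$ is bounded above and below. Split $N(x)=\int_x^{X}+\int_X^{x_1}$ for a fixed $X$. The first piece is $\int_x^X e^{\mathcal{O}(1)}(y-x_0)^{-1}\dd y=|\ln(x-x_0)|e^{\mathcal{O}(1)}+\mathcal{O}(1)$, using the two-sided bounds of Lemma \ref{lem:E_lambda}; the second piece is $\mathcal{O}(1)$. This gives the logarithmic form.

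\emph{Case $\alpha_0<-1$.} Both $N\to+\infty$ and $D\to+\infty$, and $N'/D'=-f(x)/(r(x)-\lambda)\to f(x_0)/(\lambda-r(x_0))$. L'Hospital in the $\infty/\infty$ form gives the limit, i.e. the value plus $o(1)$.

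\textbf{Main obstacle.} The delicate point is the borderline case $\alpha_0=-1$. There the $e^{\mathcal{O}(1)}$ in Lemma \ref{lem:E_lambda} is only a bounded multiplicative factor, not asymptotically constant. One must therefore keep two-sided bounds rather than exact asymptotics, and check that the additive $\mathcal{O}(1)$ correctly absorbs the contribution away from $x_0$. I would handle this by bounding the integrand between $C^{-1}(y-x_0)^{-1}$ and $C(y-x_0)^{-1}$ on $(x_0,X)$ and integrating explicitly.
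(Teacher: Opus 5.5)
Your proof is correct and follows essentially the same route as the paper: L'Hospital's rule for the limit at $x_1$ and for the case $\alpha_0<-1$, and the two-sided bounds of Lemma \ref{lem:E_lambda} for the cases $\alpha_0\in(-1,0)$ and $\alpha_0=-1$. For $\alpha_0\in(-1,0)$ you use that $N(x)\to\int_{x_0}^{x_1}f\mathcal{E}_\lambda\in(0,\infty)$, which is a slightly cleaner shortcut than the paper's explicit integration; you are also right that $K\star\varphi(x_0)$ in the third case should read $f(x_0)$.
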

\begin{proof}
    1. We prove \eqref{eq:260128f}.
	First recall that $\alpha_0=\frac{\sigma(x_0)-\lambda}{a'(x_0)} = \frac{r(x_0)-a'(x_0)-\lambda}{a'(x_0)}=-1+\frac{r(x_0)-\lambda}{a'(x_0)}$.  If $\alpha_0\in(-1, 0) \Leftrightarrow \lambda\in \big(\lambda_1^0, r(x_0)\big)$, we have
	 \begin{align*}
		 \psi(x)&=\frac{1}{a(x)}\int_{x}^{x_1}f(y)e^{\int_{x}^y \frac{\sigma(z)-\lambda}{a(z)}\dd z}\dd y= \frac{1}{a(x)}\int_{x}^{X_0}e^{\mathcal{O}(1)} \left(\dfrac{y-x_0}{x-x_0}\right)^{\alpha_0}\dd y + C \\ 
		 &=\dfrac{e^{\mathcal{O}(1)}}{(x-x_0)} \left[\frac{1}{\alpha_0+1} \dfrac{(y-x_0)^{\alpha_0+1}}{(x-x_0)^{\alpha_0}}\right]_{y=x}^{y=X_0}+C = \dfrac{e^{\mathcal{O}(1)}}{(1+\alpha_0)(x-x_0)} \left(\dfrac{(X_0-x_0)^{\alpha_0+1}}{(x-x_0)^{\alpha_0}}- (x-x_0)\right)+C \\ 
		 &=\dfrac{e^{\mathcal{O}(1)}}{1+\alpha_0}\left(\dfrac{(X_0-x_0)^{\alpha_0+1}}{(x-x_0)^{\alpha_0+1}}-1\right)+C=\dfrac{e^{\mathcal{O}(1)}}{1+\alpha_0}\left(\left(\dfrac{X_0-x_0}{x-x_0}\right)^{\alpha_0+1}-1\right)+C.
	 \end{align*}
	 If $\alpha_0=-1 \Leftrightarrow \lambda=r(x_0)$, then by a similar computation 
	 \begin{equation*}
	     \psi(x)=e^{\mathcal{O}(1)} \left(\ln\left(\frac{X_0-x_0}{x-x_0}\right)+C\right).
	 \end{equation*}
	 Finally if $\alpha_0<-1\Leftrightarrow \lambda>r(x_0)$, we have $a(x)\mathcal{E}_\lambda(x) = |x-x_0|^{1+\alpha_0}e^{\mathcal{O}(1)}\to +\infty$ as $x\to x_0$, thus by L'Hospital rule
	 \begin{equation*}
		 \psi(x) \xrightarrow[x\to x_0]{} \dfrac{f(x_0)}{\lambda-r(x_0)}.
	 \end{equation*}
	 This completes the proof of \eqref{eq:260128f}.

	\medskip
	
	2. We prove \eqref{eq:260128g}.
	Remark that $\int_{x_1}^x f(y) \mathcal{E}_\lambda(y)\dd y\to 0$ and $a(x)\mathcal{E}_\lambda(x)=|x_1-x|^{1+\alpha_1}e^{\mathcal{O}(1)}\to 0$ as $x\to x_1$ (recall that $\lambda>\lambda_1^{ext} $ implies $\alpha_1> -1$). Then \eqref{eq:260128g} follows from L'Hospital rule.
	\medskip

    This completes the proof of Lemma \ref{lem:loc-est}. 
\end{proof}

%%%%%%%%%%%%%%%%%%%%%%%%%%%%%%%%%%%%%%%%%%%%%%%%%%%%%%%%%%%%%%%%%%%%%%%%%%%%%%%%%%%%%%%%%%%%%%%%%%%%%%%%%%%%%%%%%%%%%%%%%%%%%%%%%%%%%%%%%%%%%%%%%%%%%%%%%%%%%%%%%%%%%%%%%%%%
\subsection{Construction of principal eigenpairs}
\label{subsec:construction}
In this section we take advantage of our analysis in section \ref{sec:inversion-hyperbolic} to construct principal eigenvectors for the problem \eqref{eq:main}. {Throughout this section we will assume that Assumption \ref{ASS-20} holds true.}  We will frequently split the space $\mathbb{S}^1\backslash \{x_0, x_1\}$ into two connected subspaces, which we identify with $(x_0, x_1)$ and $(x_1, 1+x_0)$.
For $X_0\in (x_0, x_1)$ and $ X_0'\in (x_1, 1+x_0)$ we define recall the definition of $E_\lambda(x)$ by \eqref{eq:E_lambdaS1}:
\begin{equation*}
	\mathcal{E}_\lambda(x)= \mathcal{E}_\lambda^{X_0, X_0'}(x):=
	\begin{cases}
		\exp\left(\int_{X_0}^x \dfrac{\sigma(z)-\lambda}{a(z)}\dd z\right),  &\text{ if } x\in (x_0, x_1), \vspace{3pt}\\ 
		\exp\left(\int_{X_0'}^x \dfrac{\sigma(z)-\lambda}{a(z)}\dd z\right),  &\text{ if } x\in (x_1,1+ x_0), 
	\end{cases}
\end{equation*}
which is the transposition of \eqref{eq:E_lambda} in the context of a circle. When the context is clear, we will frequently write $\mathcal{E}_\lambda$ instead of $\mathcal{E}_\lambda^{X_0, X_0'}$, for simplicity. 
We first state our results on the operator on $\mathbb{S}^1$, which is a direct consequence of the results in section \ref{sec:inversion-hyperbolic}.
%%%%%%%%%%%%%%%%%%%%%%%%%%%%%%%%%%%%%%%%%%%%%%%%%%%%%%%%%%%%%%%%%%%%%%%%%%%%%%%%%%%%%%%%%%%%%%%%%%%%%%%%%%%%%%%
\begin{proposition}\label{prop:resolvent-circle}
Let $a(x)$ and $r(x)$ satisfy Assumption \ref{ASS-20}. 
	\begin{enumerate}
		\item Let $A$, $D(A)\subset L^1_{per}$ be the operator defined by \eqref{defA}. The resolvent set of $A$, $\rho(A)$, contains the ray $(\lambda_1^m, +\infty)$ and the formula
			\begin{equation}\label{eq:resolvent-L1}
				R(\lambda; A)f=(\lambda I -A)^{-1} f (x)= \dfrac{-1}{{a(x)\mathcal{E}_\lambda(x)}}\displaystyle \int_{x_1}^x f(y) \mathcal{E}_\lambda(y)\dd y
			\end{equation}
			holds true for any $f\in L^1_{per}$ and  $x\in \mathbb{S}^1\backslash\{x_0, x_1\}$. When $\lambda\to \lambda_1^m$ we have that $\Vert R(\lambda; A)\Vert_{\mathcal{L}(L^1_{per})} \to +\infty$, and the spectral bound of $A$ is $s(A)=\lambda_1^m$.
		\item Let $A_0$, $D(A_0)\subset C^0_{per}$ be the operator defined by \eqref{defA0}. The resolvent set of $A_0$, $\rho(A_0)$, contains the ray $(r^m, +\infty)$ and the formula 
			\begin{equation*}\label{eq:resolvent-C0}
			    \begin{aligned}
				    R(\lambda; A_0)f(x)=(\lambda I -A_0)^{-1} f (x)&:=
				    \begin{cases}
					    \dfrac{-1}{a(x)\mathcal{E}_\lambda(x)}\displaystyle \int_{x_1}^x f(y) \mathcal{E}_\lambda(y)\dd y,& \text{ if }  x\in \mathbb{S}^1\backslash\{x_0, x_1\} \vspace{3pt}\\
					    \dfrac{f(x_i)}{\lambda - r(x_i)}, &  \text{ if } x=x_i,\, i\in\{0, 1\},
				    \end{cases}
			    \end{aligned}
			\end{equation*}
 holds true for any $x\in \mathbb{S}^1$. When $\lambda\to r^m$ we have that $\Vert (\lambda I-A_0)^{-1}\Vert_{\mathcal{L}(C^0_{per})} \to +\infty$, and the spectral bound of $A_0$ is $s(A_0)=r^m$.
		\item Assume that $\lambda_1^0<\lambda_1^1 $ and let $\gamma_1, \gamma_2\in\mathbb{R}$ and $X_1\in (x_0, x_1)$, $X_1'\in (x_1, 1+x_0)$ be given. For $\lambda\in (\lambda_1^{ext}, \lambda_1^m)$, the two-parameter family of operators 
		    \begin{equation}\label{eq:resolvent-extended}
			    \resizebox{0.9\textwidth}{!}{$
			    R_{\gamma_1, \gamma_2}(\lambda; A_0) f(x):=\dfrac{\displaystyle -\int_{x_1}^x f(y)\mathcal{E}_\lambda(y)\dd y + \gamma_1 \int_{X_1}^{x_1} f(y)\mathcal{E}_\lambda(y)\dd y\mathbbm{1}_{(x_0, x_1)}-\gamma_2 \int^{X_1'}_{x_1} f(y)\mathcal{E}_\lambda(y)\dd y\mathbbm{1}_{(x_1, 1+x_0)} }{a(x)\mathcal{E}_\lambda(x)},
			    $}
		    \end{equation}
			is composed of bounded operators from $C^0_{per}$ into $L^1_{per}$ that are right-inverses for $(\lambda I-A)$, i.e. $(\lambda I-A)R_{\gamma_1, \gamma_2}(\lambda; A_0)f = f$ for all $f\in C^0_{per}$. $R_{\gamma_1, \gamma_2}(\lambda; A_0)$ depends continuously on $\lambda$ and $\gamma_1, \gamma_2$. If moreover $\gamma_1\geq 0$ and $\gamma_2\geq 0$ then $R_{\gamma_1, \gamma_2}(\lambda; A_0)$ is a positive operator, and the map $(\gamma_1, \gamma_2)\mapsto R_{\gamma_1, \gamma_2}(\lambda; A_0)$ is increasing for any $\lambda\in (\lambda_1^{ext}, \lambda_1^m)$. Finally, the map $\lambda\mapsto R_{0}(\lambda; A_0):=R_{0, 0}(\lambda; A_0)$ is defined for $\lambda\in(\lambda_1^{ext}, \lambda_1^m]$, decreasing,  and satisfies 
		    \begin{equation*}
			    \lim_{\lambda\to(\lambda_1^{m})^-} \Vert R_0(\lambda; A_0)-R_0(\lambda_1^m; A)\Vert_{\mathcal{L}(C^0_{per}, L^1_{per})} = 0
		    \end{equation*}
			 and 
			 \begin{equation*}
				 \lim_{\lambda\to(\lambda_1^{m})^+} \Vert R_0(\lambda_1^m; A)-R(\lambda; A)\Vert_{\mathcal{L}(C^0_{per}, L^1_{per})} = 0.
		    \end{equation*}
	    \item  Suppose $\lambda=\lambda_1^1$, and let $f\in D(A)$. If $(\lambda I-A)f=g$ for some $g\in C^0_{per}$, then $f=R_0(\lambda; A_0)g$. In particular,  $R_0(\lambda_1^1; A_0)$ is also a left inverse of $\lambda_1^1 I - A_0$, meaning that $R_0(\lambda_1^1; A_0)(\lambda_1^1 I-A_0)f=f$ for any $f\in  D(A_0)$.
	\end{enumerate}
\end{proposition}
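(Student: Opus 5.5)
The plan is to reduce everything on $\mathbb{S}^1$ to the interval results of subsection \ref{sec:inversion-hyperbolic}, using that $\mathbb{S}^1\setminus\{x_0,x_1\}$ splits into the two arcs $(x_0,x_1)$ and $(x_1,1+x_0)$. On both arcs the flow of $-a$ goes from $x_0$ to $x_1$: $x_0$ is repelling and $x_1$ attracting for the characteristics. Accordingly, I will treat $(x_0,x_1)$ directly with Propositions \ref{prop:resolvent-A-L1}, \ref{prop:resolvent-A-C0}, \ref{prop:resolvent-A-C0-L1} and Lemma \ref{lem:loc-est}. For the arc $(x_1,1+x_0)$ I will use the reflection $x\mapsto -x$, which turns $a>0$ and sends the pair $(x_1,1+x_0)$ to an interval whose left endpoint (image of $1+x_0$) has $\tilde a'>0$ and whose right endpoint (image of $x_1$) has $\tilde a'<0$. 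Assumption \ref{as:interval} is preserved, and $\sigma$ and $\lambda_1^0,\lambda_1^1$ are unchanged, since $\tilde a'(\tilde x)=a'(x)$. Pulled back, the interval formulas become exactly the stated ones with base point $X_0'$ and integration from $x_1$. The equation $(\lambda I-A)\varphi=f$ holds pointwise a.e., so on $\mathbb{S}^1$ a function is a solution iff its restrictions to each arc are; $L^1$-integrability also splits. The only genuinely global issue is whether anything can be concentrated at $\{x_0,x_1\}$, which is a null set in $L^1$.

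For item 1, fix $\lambda>\lambda_1^m$. On each arc, Proposition \ref{prop:resolvent-A-L1} shows that the unique $L^1$ solution is given by \eqref{eq:resolvent-L1}. The arc solutions are independent of the base points $X_0,X_0'$, so gluing gives a bounded inverse on $L^1_{per}$. I must also check that the glued function lies in $D(A)$ on the circle. This holds because $a\varphi'=\lambda\varphi-r\varphi-f\in L^1$ a.e., and membership in $D(A)$ as defined in \eqref{defA} involves only $a\varphi'$, which is well defined away from $x_0,x_1$ as a locally absolutely continuous derivative. Uniqueness follows because any $L^1$ solution restricts to an $L^1$ solution on each arc. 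The blow-up of the resolvent norm follows by taking $f$ supported in one arc, using the test functions of Proposition \ref{prop:resolvent-A-L1}. Then $s(A)=\lambda_1^m$ follows from positivity (Theorem \ref{THEO1}) and \cite[Theorem 3.3]{Greiner-Voigt-Wolff-1981}.

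Item 2 follows the same pattern with Proposition \ref{prop:resolvent-A-C0} on each arc. The extra point is continuity across $x_0$ and $x_1$. The limits from both sides at $x_i$ equal $f(x_i)/(\lambda-r(x_i))$ by the L'Hospital computations \eqref{eq:251127b} and their analogue at $x_0$, which depend only on $r(x_i)$. Hence the glued function is continuous and periodic, and $a\varphi'=(\lambda-r)\varphi-f$ is continuous, so the function lies in $D(A_0)$.

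Item 3 is obtained by applying Proposition \ref{prop:resolvent-A-C0-L1} on each arc, with $\gamma_1$ on $(x_0,x_1)$ and $\gamma_2$ on the reflected arc. The sign $-\gamma_2\int_{x_1}^{X_1'}$ in \eqref{eq:resolvent-extended} is exactly what the reflection produces, since orientation is reversed, and it makes the added term nonnegative for $f\ge0$ because $a<0$ there. Boundedness, continuity in $(\lambda,\gamma_1,\gamma_2)$, positivity, monotonicity, and the two limits as $\lambda\to\lambda_1^m$ all transfer arc by arc, because the $\mathcal L(C^0_{per},L^1_{per})$ norm is bounded by the sum of the two arc norms.

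Item 4 follows from assertion 5 of Proposition \ref{prop:resolvent-A-C0-L1} applied on both arcs. The constant of integration must vanish on each arc, because $1/(a\mathcal E_{\lambda_1^1})\sim C/|x-x_1|$ is non-integrable from either side of $x_1$.

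I expect the main obstacle to be bookkeeping rather than analysis. I must verify carefully that the reflection maps Assumption \ref{ASS-20} on the second arc onto Assumption \ref{as:interval} with the roles of the endpoints correct. I must also confirm that no additional compatibility condition arises at $x_0$ or $x_1$ in the $L^1$ setting, since the arcs are decoupled there.
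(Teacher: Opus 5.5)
Your proposal is correct and follows the paper's route: the paper says Proposition~\ref{prop:resolvent-circle} is a direct adaptation of Propositions~\ref{prop:resolvent-A-L1}, \ref{prop:resolvent-A-C0} and~\ref{prop:resolvent-A-C0-L1} and omits the proof, and your reflection $\tilde a(\tilde x)=-a(-\tilde x)$ on $(x_1,1+x_0)$ is a valid way to carry out that adaptation (it preserves $\sigma$, $\lambda_1^0$, $\lambda_1^1$, $r^m$, $\lambda_1^{ext}$ and yields exactly the $-\gamma_2\int_{x_1}^{X_1'}$ term). The compatibility issue you flag at $x_0,x_1$ does not arise: $(a\varphi)'=a'\varphi+a\varphi'\in L^1$ on each arc forces $a\varphi$ to have one-sided limits there, and these limits must vanish because the zeros of $a$ are simple and $\varphi\in L^1$, so the arc-wise and global readings of $D(A)$ coincide; one small slip is that the characteristics $\dot x=-a(x)$ actually run from $x_1$ toward $x_0$ (it is the flow of $+a$ that enters the resolvent formula), but your argument never uses that remark.
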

%%%%%%%%%%%%%%%%%%%%%%%%%%%%%%%%%%%%%%%%%%%%%%%%%%%%%%%%%%%%%%%%%%%%%%%%%%%%%%%%%%%%%%%%%%%%%%%%%%%%%%%%%%%%%%%
\noindent The proof of Proposition \ref{prop:resolvent-circle} is a direct adaptation of Propositions \ref{prop:resolvent-A-L1}, \ref{prop:resolvent-A-C0} and \ref{prop:resolvent-A-C0-L1}, and we omit it. 
\medskip

With this definition of $R(\lambda; A) $ and $R_0(\lambda; A)$, we define the operator $T_\lambda:L^1_{per}\to L^1_{per}$ as 
\begin{subequations} \label{eq:Tlambda-abstract}
	\begin{align}
		T(\lambda)\varphi &:= R(\lambda; A) K\star \varphi , & \text{ if }& \lambda >\lambda_1^m, \\ 
		T(\lambda)\varphi &:= R_0(\lambda; A) K\star \varphi ,&  \text{ if }& \lambda \in (\lambda_1^{ext}, \lambda_1^m]. 
	\end{align}
\end{subequations}
 Note that $T_\lambda$ is well defined because the operator $B\varphi=K\star \varphi$ maps $L^1_{per}$ into $C^0_{per}$ continuously. We have the formula: 
    \begin{equation}\label{eq:Tlambda-explicit}
	    T(\lambda) \varphi= \dfrac{\displaystyle -\int_{x_1}^x (K\star \varphi)(y) \mathcal{E}_\lambda(y)\dd y}{\displaystyle a(x)\mathcal{E}_\lambda(x)}, \text{ for any } x\not\in \{x_0, x_1\} ,
    \end{equation} 
    as an immediate consequence of \eqref{eq:resolvent-L1} and \eqref{eq:resolvent-extended}; \eqref{eq:Tlambda-explicit} holds true for any $\lambda\in (\lambda_1^{ext}, +\infty)$. 
    Following \eqref{eq:resolvent-extended}, given $\gamma_1\geq 0$ and $\gamma_2\geq 0$,  we also define 
    \begin{equation} \label{eq:Tgammalambda}
	    T_{\gamma_1, \gamma_2}(\lambda)\varphi := R_{\gamma_1, \gamma_2}(\lambda; A) K\star \varphi ,  \text{ if } \lambda\in (\lambda_1^{ext}, \lambda_1^m). 
    \end{equation}

As we will see, these operators are bounded, strongly positive and compact; hence each has  a principal eigenpair. %Comparing this eigenvalue with  $1$ will decide whether \eqref{eq:system} has a regular or concentrated solution.

%We follow  Deimling \cite{Deimling-1985} for the notions of positivity, strong positivity etc. 
The following result is concerned with the operator $T(\lambda)$.
\begin{proposition}\label{prop:eigen-0}
	{Let Assumption \ref{ASS-20} hold true and}
	 $\lambda>\lambda_1^{ext}$ be given. The operator $T(\lambda):L^1_{per}\to L^1_{per}$ defined in \eqref{eq:Tlambda-abstract} is  bounded, irreducible and compact. The spectral radius 
	\begin{equation*}\label{eq:eigen-0}
		r\big(T(\lambda)\big)=:\kappa(\lambda)>0
	\end{equation*}
	is a simple eigenvalue associated with a positive eigenvector $\varphi_1^\lambda$ ; it is the only eigenvalue associated with a nonnegative eigenvector, and the eigenvector is a quasi-interior point of $L^1_{per,+}$. 

	The map $\lambda\mapsto \kappa(\lambda)$ is continuous and strictly decreasing on $\big(\lambda_1^{ext}, +\infty\big)$.  When $\lambda\to+\infty$ we have that $\kappa(\lambda)\to 0$, and when $\lambda\to \lambda_1^{ext}$ we have $\kappa(\lambda)\to+\infty$.
\end{proposition}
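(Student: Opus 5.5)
Write $T(\lambda)=R_0(\lambda;A_0)\circ B$ (or $R(\lambda;A)\circ B$ for $\lambda>\lambda_1^m$). The proof is then a Krein--Rutman argument on $L^1_{per}$, followed by a monotonicity and asymptotics analysis of $\kappa(\lambda)$.

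\textbf{Compactness, positivity, irreducibility.} $B$ maps bounded sets of $L^1_{per}$ into bounded, equicontinuous subsets of $C^0_{per}$, since $K$ is uniformly continuous on the torus. By Arzel\`a--Ascoli, $B:L^1_{per}\to C^0_{per}$ is compact. Proposition \ref{prop:resolvent-circle} shows that $R_0(\lambda;A_0)$ and $R(\lambda;A)$ are bounded from $C^0_{per}$ to $L^1_{per}$, so $T(\lambda)$ is compact.

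For positivity and irreducibility, let $\varphi\ge0$, $\varphi\not\equiv0$. Then $K\star\varphi\ge \min K\int\varphi>0$ everywhere. Formula \eqref{eq:Tlambda-explicit} then gives $T(\lambda)\varphi(x)>0$ for every $x\notin\{x_0,x_1\}$:
\begin{itemize}
\item on $(x_0,x_1)$ we have $a>0$ and integrate from $x$ to $x_1$;
\item on $(x_1,1+x_0)$ we have $a<0$ and integrate from $x_1$ to $x$.
\end{itemize}
Hence $T(\lambda)\varphi$ is a quasi-interior point of $L^1_{per,+}$ ($\langle\varphi^*,T\varphi\rangle>0$ for every $\varphi^*\ge0$, $\varphi^*\neq0$), so $T(\lambda)$ is irreducible and in fact strongly positive in this sense. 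The Krein--Rutman / de Pagter theorem for compact irreducible positive operators on Banach lattices gives $\kappa(\lambda)=r(T(\lambda))>0$. It also gives that $\kappa(\lambda)$ is an algebraically simple eigenvalue with a quasi-interior eigenvector $\varphi_1^\lambda$, and that $\kappa(\lambda)$ is the only eigenvalue with a nonnegative eigenvector. For the last claim, pair with a strictly positive eigenvector of the adjoint $T(\lambda)^*$ at $\kappa(\lambda)$: if $T\psi=\mu\psi$ with $\psi\ge0$, this pairing gives $\mu=\kappa$.

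\textbf{Monotonicity and continuity.} Proposition \ref{prop:resolvent-circle} gives that $\lambda\mapsto R_0(\lambda;A_0)$ is decreasing and continuous in $\mathcal L(C^0_{per},L^1_{per})$ on $(\lambda_1^{ext},\lambda_1^m]$. Its extension by $R(\lambda;A)$ beyond $\lambda_1^m$ is continuous at $\lambda_1^m$, and it is decreasing there as well by the same formula \eqref{eq:251125d}. Consequently $T(\lambda)$ is norm-continuous and decreasing in $\lambda$. Continuity of $\kappa$ follows from upper semicontinuity of the spectrum together with isolation and simplicity of $\kappa(\lambda)$ for compact operators.

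Strict decrease: for $\lambda'<\lambda$, \eqref{eq:251125d} with $f=K\star\varphi_1^\lambda>0$ gives $T(\lambda')\varphi_1^\lambda-T(\lambda)\varphi_1^\lambda>0$ on $\mathbb S^1\setminus\{x_0,x_1\}$. Pair with the positive adjoint eigenvector $\psi^*_{\lambda'}$ of $T(\lambda')^*$:
\[
\kappa(\lambda')\langle\psi^*_{\lambda'},\varphi_1^\lambda\rangle=\langle\psi^*_{\lambda'},T(\lambda')\varphi_1^\lambda\rangle>\kappa(\lambda)\langle\psi^*_{\lambda'},\varphi_1^\lambda\rangle ,
\]
so $\kappa(\lambda')>\kappa(\lambda)$.

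\textbf{Limits.} These are the main obstacle. As $\lambda\to+\infty$, I would show $\|R(\lambda;A)\|_{\mathcal L(C^0,L^1)}\to0$. For $\lambda>\sup\sigma$, the kernel estimate in the proof of Proposition \ref{prop:resolvent-A-L1} gives
\[
\int|R(\lambda;A)f|\le \|f\|_{C^0}\int \frac{1}{|a(x)|}\int e^{\int_x^y\frac{\sigma-\lambda}{a}}\,dy\,dx,
\]
and the inner quantity is bounded by $1/(\lambda-\sup\sigma)$ after the same Fubini computation. Then $\kappa(\lambda)\le\|T(\lambda)\|\to0$.

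For $\lambda\to\lambda_1^{ext}$, integrate the eigen-relation against $1$. By Proposition \ref{prop:resolvent-A-C0-L1}(3), applied on the arc $(x_0,x_1)$, together with \eqref{eq:251125c} and \eqref{eq:251126c}, $\int R_0(\lambda;A_0)\mathbbm 1\to+\infty$. Since $K\star\varphi\ge\min K\int\varphi$ for $\varphi\ge0$ and $R_0$ is positive,
\[
\kappa(\lambda)\int\varphi_1^\lambda=\int T(\lambda)\varphi_1^\lambda\ge \min K\Big(\int\varphi_1^\lambda\Big)\int R_0(\lambda;A_0)\mathbbm 1 .
\]
Hence $\kappa(\lambda)\ge\min K\int R_0(\lambda;A_0)\mathbbm1\to+\infty$. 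The delicate point is to check that the blow-up computations of Proposition \ref{prop:resolvent-A-C0-L1}, done on the interval, carry over to the circle formula \eqref{eq:Tlambda-explicit} on both arcs. This covers whichever of $\lambda_1^0$ or $r(x_1)$ realizes $\lambda_1^{ext}$, including the case $\lambda_1^0\ge\lambda_1^1$, where $\lambda_1^{ext}=\lambda_1^m$ and only $R(\lambda;A)$ is involved.
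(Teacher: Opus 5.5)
Your proposal is correct. Its skeleton matches the paper's: compactness of $B:L^1_{per}\to C^0_{per}$ composed with the bounded extended resolvent, strict positivity of $T(\lambda)\varphi$ off $\{x_0,x_1\}$, Jentzsch--Perron, and the lower bound $\kappa(\lambda)\ge\min K\int R_0(\lambda;A_0)\mathbbm{1}$ near $\lambda_1^{ext}$. You depart from the paper in two places, both legitimately.

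\textbf{Strict monotonicity.} The paper uses a sliding argument. It compares $\varphi_1^\lambda$ and $\varphi_1^{\lambda'}$ through their local behaviour at $x_0$ (see \eqref{eq:251127a}), takes the largest $\gamma$ with $\gamma\varphi_1^\lambda\le\varphi_1^{\lambda'}$, and evaluates both eigen-relations at a touching point. Non-strict monotonicity is obtained beforehand from Gelfand's formula.

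You instead pair $(T(\lambda')-T(\lambda))\varphi_1^\lambda>0$ a.e.\ with a strictly positive eigenvector $\psi^*_{\lambda'}$ of $T(\lambda')^*$. This gives monotonicity and strictness in one step. It does not care whether the eigenfunctions are bounded, logarithmic or singular at $x_0$, so you never have to locate a touching point or worry about it sitting at $x_0$ or $x_1$. The paper's argument passes over that issue silently.

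The price is the dual eigenvector. Its existence comes from Krein--Rutman applied to the compact positive operator $T(\lambda')^*$ on $L^\infty_{per}$, with $r(T(\lambda')^*)=\kappa(\lambda')>0$. Its strict positivity is immediate here: for $g\ge0$, $g\neq0$, you have $T(\lambda')^*g\ge\min K\,\langle g,R\mathbbm{1}\rangle>0$, where $R\mathbbm{1}>0$ a.e.\ is the (extended) resolvent at $\lambda'$ applied to $\mathbbm{1}$. This holds because $Bh\ge\min K\int h$ for $h\ge0$.

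\textbf{Limit as $\lambda\to+\infty$.} The paper uses the Hille--Yosida bound coming from the generation result, Theorem \ref{THEO1}. Your Fubini computation gives the explicit, semigroup-free estimate $\|R(\lambda;A)\|\le(\lambda-\max\sigma)^{-1}$. Either suffices.

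Finally, your justification of the continuity of $\kappa$ (upper semicontinuity of the spectrum plus persistence of the isolated eigenvalue via Riesz projections) fills in the step the paper only calls classical.
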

%%%%%%%%%%%%%%%%%%%%%%%%%%%%%%%%%%%%%%%%%%%%%%%%%%%%%%%%%%%%%%%%%%%%%%%%%%%%%%%%%%%%%%%%%%%%%%%%%%%%%%%%%%%%%%%
\begin{proof}
    The fact that $T(\lambda)$ defined by \eqref{eq:Tlambda-abstract} is a bounded operator and formula \eqref{eq:Tlambda-explicit} are a direct consequence of Proposition \ref{prop:resolvent-A-C0-L1}. We also get the continuity of the map $\lambda\mapsto T(\lambda)$ in the operator norm for $\lambda\in(\lambda_1^{ext}, +\infty)$ and the fact that it is monotone decreasing as a function of $\lambda$.
    \medskip

	We show that $T(\lambda)$ is irreducible. As we have seen before  $\varphi:=T(\lambda)(\psi)$ is continuous on $\mathbb{S}^1\backslash\{x_0\}$ and strictly positive by \eqref{eq:Tlambda-explicit} (recall that $a(x)>0$ for $x\in(x_0, x_1)$ and $a(x)<0$ for $x\not\in [x_0, x_1]$). Thus for any {nontrivial $f\in L^\infty_+(\mathbb{S}^1)$} we have 
	\begin{equation*}
		\langle f, T(\lambda)\psi\rangle=\int_{\mathbb{S}^1} f(x)\varphi(x)\dd x >0.
	\end{equation*}
	Thanks to Proposition \ref{prop:charac-irred}, this proves that $T(\lambda)$ is irreducible. 
    \medskip

	Next we prove that $T(\lambda)$ is compact. Let $\psi_n\in L^1$ be a bounded sequence, $\varphi_n:=T(\lambda)\psi_n$. We have:
	\begin{equation*}
	    \big|K\star \psi_n(x)-K\star \psi_n(y)\big|\leq \Vert \psi_n\Vert_{L^1} \sup_{z\in \mathbb{S}^1}\big|K(x, z)-K(y, z)|, 
	\end{equation*}
	hence the oscillation rate of $\psi_n$ is uniformly bounded; it follows from the Arzelà-Ascoli Theorem that $\Vert K\star \psi_n -\bar{\psi}\Vert_{C^0_{per}}\to 0$ for some $\bar\psi\in C^0_{per}$, up to the extraction of a subsequence (we implicitly change the sequence index in the rest of the argument). Let  
	\begin{equation*} 
	    \bar \varphi(x): = 
	    \dfrac{\displaystyle -\int_{x_1}^x \bar{\psi}(y) e^{\int_{X_0}^y \frac{\sigma(z)-\lambda}{a(z)}\dd z}\dd y}{\displaystyle a(x)e^{\int_{X_0}^x\frac{\sigma(z)-\lambda}{a(z)}\dd z}}, 
	\end{equation*}
	then
	\begin{align*}
	    \Vert \varphi_n-\bar{\varphi}\Vert_{L^1} &\leq \Vert K\star \psi_n -\bar{\psi}\Vert_{C^0_{per}}\int_0^1\underset{g(x)}{\underbrace{\dfrac{\displaystyle \int_{x_1}^x  e^{\int_{X_0}^y \frac{\sigma(z)-\lambda}{a(z)}\dd z}\dd y}{\displaystyle a(x)e^{\int_{X_0}^x\frac{\sigma(z)-\lambda}{a(z)}\dd z}}}}\dd x
	\end{align*}
	Remark that $g(x)=R(\lambda; A_0)\mathbbm{1}$; therefore $g(x) \in L^1_{per}$. This proves that $\varphi_n\to \bar \varphi$, hence $T(\lambda)$ is compact. 
	\medskip

	 From the above properties, it follows that $T(\lambda)$ satisfies the assumptions of the Jentzsch-Perron Theorem \cite[Corollary 4.2.14 p.273]{Meyer--Nieberg-1991}, which shows that $r\big(T(\lambda)\big)$ is an eigenvalue of algebraic multiplicity one, and the corresponding eigenspace is spanned by a nonnegative vector $\varphi_1^{\lambda}$ that is a quasi-interior point of $L^1_{per, +}$. By \cite[Corollary 4.2.15 p.273]{Meyer--Nieberg-1991}, we know moreover that $\varphi_1^\lambda$ is the only nonnegative eigenvector of $T(\lambda)$. 
	 It is then classical that $\lambda\mapsto r\big(T(\lambda)\big)$ is continuous (since $\lambda\mapsto T(\lambda)$ is continuous in operator norm). That $\lambda\mapsto \kappa(\lambda)=r\big(T(\lambda)\big)$ is nonincreasing, is a consequence of Gelfand's formula: since $T(\lambda)$ is a bounded operator,  
	 \begin{equation*}
		 \kappa(\lambda')=\lim_{n\to+\infty} \Vert T(\lambda')^n\Vert^{\frac{1}{n}}\geq \lim_{n\to +\infty} \Vert T(\lambda)^n\Vert^{\frac{1}{n}} = {\kappa(\lambda)}.
	 \end{equation*}
	 Let us show that $\kappa(\lambda)<\kappa(\lambda') $ if $\lambda'<\lambda$. Since $K\star \varphi_1^\lambda$ is a bounded continuous function it follows from \eqref{eq:251127b} in the proof of Proposition \ref{prop:resolvent-A-C0} that $ \varphi_1^\lambda(x)=e^{\mathcal{O}(1)}$ when $x$ is close to $x_1$, and we have (recalling $\alpha_0=\frac{\sigma(x_0)-\lambda}{a'(x_0)}<0$)
	 \begin{align*}
		 \varphi_1^\lambda(x)&=\frac{1}{\kappa(\lambda)a(x)}\int_{x}^{x_1}K\star \varphi_1^\lambda(y)e^{\int_{x}^y \frac{\sigma(z)-\lambda}{a(z)}\dd z}\dd y= \frac{1}{\kappa(\lambda)a(x)}\int_{x}^{X_0}e^{\mathcal{O}(1)} \left(\dfrac{y-x_0}{x-x_0}\right)^{\alpha_0}\dd y + C \\ 
		 &=\dfrac{e^{\mathcal{O}(1)}}{(x-x_0)} \left[\frac{1}{\alpha_0+1} \dfrac{(y-x_0)^{\alpha_0+1}}{(x-x_0)^{\alpha_0}}\right]_{y=x}^{y=X_0}+C = \dfrac{e^{\mathcal{O}(1)}}{(1+\alpha_0)(x-x_0)} \left(\dfrac{(X_0-x_0)^{\alpha_0+1}}{(x-x_0)^{\alpha_0}}- (x-x_0)\right)+C \\ 
		 &=\dfrac{e^{\mathcal{O}(1)}}{1+\alpha_0}\left(\dfrac{(X_0-x_0)^{\alpha_0+1}}{(x-x_0)^{\alpha_0+1}}-1\right)+C=\dfrac{e^{\mathcal{O}(1)}}{1+\alpha_0}\left(\left(\dfrac{X_0-x_0}{x-x_0}\right)^{\alpha_0+1}-1\right)+C,
	 \end{align*}
	 when $\alpha_0\neq -1$ and $x\in (x_0,X_0)$, where $C>0$ is a positive constant; if $\alpha_0=-1$ then  
	 \begin{equation*}
		 \varphi_1^\lambda(x)=e^{\mathcal{O}(1)} \ln\left(\frac{X_0-x_0}{x-x_0}\right)+C;
	 \end{equation*}
	 if $x\in (X_0', 1+x_0)$ (with $X_0'\in(x_1, 1+x_0)$) we have
	 \begin{equation*}
		 \varphi_1^\lambda(x)=\begin{cases}
			 \dfrac{e^{\mathcal{O}(1)}}{1+\alpha_0}\left(\left(\dfrac{1+x_0-X_0'}{1+x_0-x}\right)^{\alpha_0+1}-1\right)+C & \text{ if } \alpha_0\neq -1 \\
			 \varphi_1^\lambda(x)=e^{\mathcal{O}(1)} \ln\left(\frac{1+x_0-X_0}{1+x_0-x}\right)+C &\text{ if } \alpha_0=-1.
		 \end{cases}
	 \end{equation*}
	 Summarizing, we have 
	 \begin{equation}\label{eq:251127a}
		 \varphi_1^\lambda(x) = \mathcal{O}\big(\varphi_1^{\lambda'}(x)\big) \text{ in general,  and } \varphi_1^\lambda(x) = o\big(\varphi_1^{\lambda'}(x)\big) \text{ if } \varphi_1^{\lambda'}(x_0)=+\infty,
	 \end{equation}
	 for $x$ close to $x_0$ whenever $\lambda'<\lambda$. Thus there exists $\gamma>0$ such that $\gamma\varphi_1^\lambda\leq \varphi_1^{\lambda'}$. Choose
	 \begin{equation*}
		 \gamma:=\sup\{\gamma'>0\,:\, \gamma'\varphi_1^\lambda\leq \varphi_1^{\lambda'}\}\in(0, +\infty).
	 \end{equation*}
	 By \eqref{eq:251127a} there must exist $\bar{x}\in [x_0, 1+x_0]$ such that $\gamma\varphi_1^\lambda(\bar{x})=\varphi_1^{\lambda'}(\bar{x})$. Then we have
	 \begin{align*}
		 \kappa(\lambda')\varphi_1^{\lambda'}(\bar{x})-\kappa(\lambda)\gamma \varphi_1^\lambda(\bar{x}) &= {T(\lambda') \big(\varphi_1^{\lambda'}-\gamma\varphi_1^\lambda\big)(\bar{x})} + \gamma\underset{>0}{\underbrace{\big(T(\lambda')-T(\lambda)\big)\varphi_1^\lambda(\bar{x})}} >0, 
	 \end{align*}
	 indeed 
	 \begin{equation*}
		 \big(T(\lambda')-T(\lambda)\big)\varphi_1^{\lambda}(\bar x)=\frac{-1}{a(x)}\int_{x_1}^{\bar{x}} K\star \varphi_1^\lambda(y)\left(e^{-\int_y^{\bar{x}}\frac{\sigma(z)-\lambda'}{a(z)}\dd z}-e^{-\int_y^{\bar{x}}\frac{\sigma(z)-\lambda}{a(z)}\dd z}\right)\dd y>0.
	 \end{equation*}
	 This proves that $\kappa(\lambda')>\kappa(\lambda)$.
	\medskip

	We show the limit behavior of $\kappa(\lambda)$. Since $A$ generates a strongly continuous semigroup, there exists $(M, \omega)$ such that 
	\begin{equation*}
	    \Vert (\lambda I -A)^{-1}\Vert \leq \dfrac{M}{\lambda-\omega}, \qquad ^{\forall} \lambda>\omega,
	\end{equation*}
	hence
	\begin{equation*}
	    0\leq \lim_{\lambda\to+\infty} \kappa(\lambda) \leq \lim_{\lambda\to+\infty} \Vert (\lambda I-A)^{-1}\Vert \Vert B\Vert = 0.
	\end{equation*}
	That solves the question when $\lambda\to+\infty$. When $\lambda\to \lambda_1^{ext}$,  
 we distinguish two cases.

	a) $r(x_1)\leq\lambda_1^0=\sigma(x_0)$.  For $\lambda>\lambda_1^{ext}=\lambda_1^0$, we have:
	\begin{align*}
	    \kappa(\lambda) &= \int_{\mathbb{S}^1}T(\lambda) \varphi_1 \geq \underset{\int \varphi = 1}{\inf_{\varphi\geq 0}} \int T(\lambda)\varphi  = \underset{\int \varphi = 1}{\inf_{\varphi\geq 0}}
	    \int_{\mathbb{S}^1}\dfrac{\displaystyle -\int_{x_1}^x K\star \psi(y) e^{\int_{X_0}^y \frac{\sigma(z)-\lambda}{a(z)}\dd z}\dd y}{\displaystyle a(x)e^{\int_{X_0}^x\frac{\sigma(z)-\lambda}{a(z)}\dd z}}\dd x \\
	    &\geq \left(\inf_{y,z\in\mathbb{S}^1}K(y,z)\right)\int_{\mathbb{S}^1}\dfrac{\displaystyle -\int_{x_1}^x  e^{\int_{X_0}^y \frac{\sigma(z)-\lambda}{a(z)}\dd z}\dd y}{\displaystyle a(x)e^{\int_{X_0}^x\frac{\sigma(z)-\lambda}{a(z)}\dd z}}\dd x \xrightarrow[\lambda\to \lambda_1^{0}]{}+\infty,
	\end{align*}
	by the same computation as the one leading to \eqref{eq:251125c}.

b) $r(x_1)>\lambda_1^0=\sigma(x_0)$.  For $\lambda>\lambda_1^{ext}=r(x_1)$, we have, similarly:
	\begin{align*}
	    \kappa(\lambda) &= \int_{\mathbb{S}^1}T(\lambda) \varphi_1 \geq \underset{\int \varphi = 1}{\inf_{\varphi\geq 0}} \int T(\lambda)\varphi   \geq \left(\inf_{y,z\in\mathbb{S}^1}K(y,z)\right)\int_{\mathbb{S}^1}\dfrac{\displaystyle -\int_{x_1}^x  e^{\int_{X_0}^y \frac{\sigma(z)-\lambda}{a(z)}\dd z}\dd y}{\displaystyle a(x)e^{\int_{X_0}^x\frac{\sigma(z)-\lambda}{a(z)}\dd z}}\dd x \xrightarrow[\lambda\to r(x_1)]{}+\infty.
	\end{align*}
	by the same argument as in \eqref{eq:251126c}.
	\medskip

	This  finishes the proof of Proposition \ref{prop:eigen-0}
\end{proof}
%%%%%%%%%%%%%%%%%%%%%%%%%%%%%%%%%%%%%%%%%%%%%%%%%%%%%%%%%%%%%%%%%%%%%%%%%%%%%%%%%%%%%%%%%%%%%%%%%%%%%%%%%%%%%%%

Our next result is concerned with the properties of $T_{\gamma_1, \gamma_2}(\lambda)$ when $\lambda_1^0<\lambda_1^1$. 
%%%%%%%%%%%%%%%%%%%%%%%%%%%%%%%%%%%%%%%%%%%%%%%%%%%%%%%%%%%%%%%%%%%%%%%%%%%%%%%%%%%%%%%%%%%%%%%%%%%%%%%%%%%%%%%
\begin{proposition}\label{prop:eigen-extended}
	{Let Assumption \ref{ASS-20} hold true and} suppose that $\lambda_1^0<\lambda_1^1$;  let $\lambda>\lambda_1^{ext}$ and $\gamma_1\geq 0$, $\gamma_2\geq 0$ be given. The operator $T_{\gamma_1, \gamma_2}(\lambda):L^1_{per}\to L^1_{per}$ defined in \eqref{eq:Tgammalambda} is  bounded, irreducible and compact. The spectral radius 
	\begin{equation*}\label{eq:eigen-extended}
		r\big(T_{\gamma_1, \gamma_2}(\lambda)\big)=:\kappa(\lambda; \gamma_1, \gamma_2)>0
	\end{equation*}
	is a simple eigenvalue associated with a positive eigenvector $\varphi_1^{\lambda; \gamma_1, \gamma_2}$; it is the only eigenvalue associated with a nonnegative eigenvector, and the eigenvector is a quasi-interior point of $L^1_{per,+}$. 

	For fixed $\lambda\in (\lambda_1^{ext}, \lambda_1^1)$ the map $(\gamma_1, \gamma_2)\mapsto \kappa(\lambda; \gamma_1, \gamma_2)$ is {strictly increasing with respect to $\gamma_1$ and $\gamma_2$}. 
	For fixed $(\gamma_1, \gamma_2)\geq (0, 0)$, 
	the map $\lambda\mapsto \kappa(\lambda; \gamma_1, \gamma_2)$ is continuous and superconvex on {$\big(\lambda_1^{ext},\lambda_1^1\big)$}. When  $\lambda\to \lambda_1^{ext}$ we have $\kappa(\lambda; \gamma_1, \gamma_2)\to+\infty$, and if $\gamma_1>0$ or $\gamma_2>0$, we have $\kappa(\lambda; \gamma_1, \gamma_2)\to+\infty$ as {$\lambda\to (\lambda_1^{1})^-$}. The map $[0, +\infty)^2\ni(\gamma_1, \gamma_2)\mapsto \kappa(\lambda; \gamma_1, \gamma_2)$ is continuous. 

	Finally, for any fixed $\gamma_1\geq 0$ and $\lambda\in(\lambda_1^{ext}, \lambda_1^1)$  we have $\kappa(\lambda; \gamma_1, \gamma_2)\to +\infty$ as $\gamma_2\to+\infty$, and similarly for any fixed $\gamma_2\geq 0$ we have $\kappa(\lambda; \gamma_1, \gamma_2)\to +\infty$ as $\gamma_1\to+\infty$. 
\end{proposition}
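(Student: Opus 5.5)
The plan follows the proof of Proposition \ref{prop:eigen-0} step by step, using the extra structure of the $\gamma$-terms. Write
\begin{equation*}
T_{\gamma_1,\gamma_2}(\lambda)=T(\lambda)+\gamma_1 P_1(\lambda)+\gamma_2 P_2(\lambda),
\end{equation*}
where
\begin{equation*}
P_1(\lambda)\varphi=\mathbbm{1}_{(x_0,x_1)}\dfrac{\int_{X_1}^{x_1}K\star\varphi\,\mathcal{E}_\lambda}{a\,\mathcal{E}_\lambda}
\end{equation*}
and $P_2(\lambda)$ is the analogous term on $(x_1,1+x_0)$. Both are rank-one operators of the form $\ell_i(\varphi)h_i$, with $h_i=\pm\mathbbm{1}/(a\mathcal{E}_\lambda)\geq 0$ lying in $L^1$ for $\lambda\in(\lambda_1^{ext},\lambda_1^1)$, as shown in the proof of Proposition \ref{prop:resolvent-A-C0-L1}.

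\textbf{Basic properties.} Boundedness and continuity in $(\lambda,\gamma_1,\gamma_2)$ follow from Proposition \ref{prop:resolvent-circle}(3) composed with the bounded map $B:L^1_{per}\to C^0_{per}$. Compactness follows because $T(\lambda)$ is compact by Proposition \ref{prop:eigen-0} and the $P_i$ have finite rank. Irreducibility holds because $T_{\gamma_1,\gamma_2}(\lambda)\geq T(\lambda)$ in the order of positive operators and $T(\lambda)\psi>0$ a.e. for every nontrivial $\psi\geq 0$. The Jentzsch–Perron theorem \cite[Cor.~4.2.14–15]{Meyer--Nieberg-1991} then yields the simple eigenvalue $\kappa(\lambda;\gamma_1,\gamma_2)$, a quasi-interior eigenvector, and uniqueness of the nonnegative eigenvector. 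Continuity of $\kappa$ in $(\lambda,\gamma_1,\gamma_2)$ follows from continuity in operator norm, since the spectral radius is continuous at a simple isolated eigenvalue.

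\textbf{Strict monotonicity in $\gamma_i$.} Monotonicity in the wide sense comes from Gelfand's formula. For strictness, use the duality argument: let $\varphi^*\in L^\infty_+$ be the strictly positive dual eigenvector of $T_{\gamma_1',\gamma_2}(\lambda)$, where $\gamma_1'<\gamma_1$. Pairing the eigen-equations gives
\begin{equation*}
(\kappa-\kappa')\langle\varphi^*,\varphi\rangle=(\gamma_1-\gamma_1')\,\ell_1(\varphi)\langle\varphi^*,h_1\rangle>0,
\end{equation*}
since $\ell_1(\varphi)>0$ by positivity of $K$. Alternatively, the touching-point argument of Proposition \ref{prop:eigen-0} can be used.

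\textbf{Superconvexity.} I expect superconvexity, meaning that $\log\kappa$ is convex in $\lambda$, to be the main obstacle. I would try the Kingman-type approach. The kernel of $T_{\gamma_1,\gamma_2}(\lambda)$ is a sum of terms of the form $K(y,w)\,e^{-\lambda\,\tau(x,y)}\,c(x,y)$, with $\tau(x,y)=\int_x^y dz/a(z)$ and $c\geq 0$ independent of $\lambda$. Each such entry is log-convex in $\lambda$, and sums and products of log-convex functions are log-convex. Hence the entries of $T^n$ are log-convex, so $\langle\varphi^*,T^n\psi\rangle^{1/n}$ is log-convex in $\lambda$, and a pointwise limit of log-convex functions is log-convex. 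The spectral radius is recovered as $\lim_n\|T^n\|^{1/n}$; to avoid the supremum over test functions I would instead use $\lim_n\langle\mathbbm{1},T^n\mathbbm{1}\rangle^{1/n}$, which equals $\kappa$ because the eigenvector is quasi-interior and positive. Some care is needed to make the sign conventions precise: the coefficient of $\lambda$ in the exponent must be of one sign on each region, and for the $\gamma$-terms one must write $e^{\int_x^y(\sigma-\lambda)/a}$ with the correct orientation.

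\textbf{Limits.} As $\lambda\to\lambda_1^{ext}$, we have $\kappa(\lambda;\gamma)\geq\kappa(\lambda)\to\infty$ by Proposition \ref{prop:eigen-0}. For $\lambda\to(\lambda_1^1)^-$ with, say, $\gamma_1>0$, the bound
\begin{equation*}
\kappa\geq\int T_{\gamma}\varphi_1\geq\inf K\,\gamma_1\int_{X_1}^{x_1}\mathcal{E}_\lambda\int\frac{\mathbbm{1}_{(x_0,x_1)}}{a\,\mathcal{E}_\lambda}\,dx\to\infty
\end{equation*}
holds, exactly as in the proof of \eqref{eq:251209b}; the case $\gamma_2>0$ is symmetric. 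In the same way, $\int T_{\gamma}\varphi\geq c\,\gamma_i$ for every normalized $\varphi\geq 0$, with $c>0$ at fixed $\lambda$, which gives $\kappa\to\infty$ as $\gamma_i\to\infty$.
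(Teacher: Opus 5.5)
Your proposal is correct. It matches the paper on the basic properties and on all the limits: the bounds $\kappa(\lambda;\gamma_1,\gamma_2)\ge\kappa(\lambda)$ and $\kappa\ge\int_{\mathbb{S}^1}T_{\gamma_1,\gamma_2}(\lambda)\varphi$, for the normalized eigenvector $\varphi$, are exactly the paper's. You reach superconvexity and strict monotonicity by a different route.

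\textbf{Superconvexity.} The paper proves that $\lambda\mapsto T_{\gamma_1,\gamma_2}(\lambda)\varphi(x)$ is superconvex for each $\varphi\ge 0$ in three steps:
\begin{enumerate}
\item it approximates the integral by Riemann sums of log-affine functions;
\item it checks an $L^1$ domination of these sums near $x_0$ and $x_1$;
\item it invokes Kato's generalized Kingman theorem for superconvex families of positive operators on a Banach lattice.
\end{enumerate}
You instead use the explicit kernel, H\"older's inequality to see that the kernel of $T^n$ is log-convex in $\lambda$, and the formula $\kappa=\lim_n\langle\mathbbm{1},T^n\mathbbm{1}\rangle^{1/n}$. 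Your argument is elementary and self-contained, and it shows the Riemann-sum step is dispensable, since H\"older gives log-convexity of the integral directly. The paper's argument needs no kernel for the powers $T^n$ and rests on a general theorem.

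Two small fixes are needed.
\begin{enumerate}
\item The sign worry is moot: $e^{c\lambda+d}$ is log-convex whatever the sign of $c$.
\item The bound $\liminf_n\langle\mathbbm{1},T^n\mathbbm{1}\rangle^{1/n}\ge\kappa$ does not follow from quasi-interiority. The eigenvector is in general unbounded near $x_0$ or $x_1$, so $\mathbbm{1}$ does not dominate it. Use instead $K\star\mathbbm{1}\ge\inf K\ge c\,K\star\varphi$ with $c=\inf K/\sup K$. Positivity of $R_{\gamma_1,\gamma_2}$ then gives $T\mathbbm{1}\ge c\,T\varphi=c\kappa\varphi$, hence $T^n\mathbbm{1}\ge c\kappa^n\varphi$. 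Alternatively, pair with the strictly positive dual eigenvector.
\end{enumerate}

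\textbf{Strict monotonicity and the basic properties.} The paper only asserts that strict monotonicity in $\gamma_i$ follows from strict monotonicity of $(\gamma_1,\gamma_2)\mapsto T_{\gamma_1,\gamma_2}(\lambda)$. Your identity with a strictly positive dual eigenvector is the precise argument behind that claim. Likewise, treating $\gamma_1P_1+\gamma_2P_2$ as a finite-rank perturbation of the compact operator $T(\lambda)$, and getting irreducibility from $T_{\gamma_1,\gamma_2}(\lambda)\ge T(\lambda)$, is a cleaner way to obtain what the paper leaves to ``the same steps as in Proposition \ref{prop:eigen-0}''.
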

\begin{remark}
    By \textit{superconvex} we mean that $\lambda\mapsto \ln \kappa(\lambda; \gamma_1, \gamma_2)$ is convex. 
\end{remark}
%%%%%%%%%%%%%%%%%%%%%%%%%%%%%%%%%%%%%%%%%%%%%%%%%%%%%%%%%%%%%%%%%%%%%%%%%%%%%%%%%%%%%%%%%%%%%%%%%%%%%%%%%%%%%%%
\begin{proof}
	The proofs of boundedness, positivity, irreducibility and compactness of $T_{\gamma_1, \gamma_2}(\lambda)$ follow essentially the same steps as in Proposition \ref{prop:eigen-0}; so does the existence and uniqueness of a principal eigenpair $\big(\kappa(\lambda; \gamma_1, \gamma_2), \varphi_1^{\lambda; \gamma_1, \gamma_2}\big)$, and the continuity of $\lambda\mapsto \kappa(\lambda; \gamma_1, \gamma_2)$. We omit the details for concision. The {strict} monotony of $(\gamma_1, \gamma_2)\mapsto \kappa(\lambda; \gamma_1, \gamma_2)$ is a consequence of the {strict} monotony of  $(\gamma_1, \gamma_2)\mapsto T_{\gamma_1, \gamma_2}(\lambda )$.

    Let us prove that $\lambda\mapsto\kappa(\lambda; \gamma_1, \gamma_2)$ is superconvex. Let $\varphi\in L^1_+$ be given; since $f(y):=K\star \varphi(y)$ is continuous, the integral $\int_{x_1}^x f(y)\mathcal{E}_\lambda(y)\dd y$ can be computed as an improper Riemann integral. We claim that the limit
    \begin{equation}\label{eq:260109a}
	T_1(x):=\dfrac{-\int_{x_1}^x f(y)\mathcal{E}_\lambda(y)\dd y}{a(x)\mathcal{E}_\lambda(x)}=\lim_{n\to+\infty} S_n(x) 
    \end{equation}
    where
    \begin{equation}\label{eq:260109b}
	S_n(x):= \frac{1}{a(x)\mathcal{E}_\lambda(x)}
	\begin{cases} 
	    \sum_{k=1}^{n-2} \mathbbm{1}_{ [x_k^n, x_{k+1}^n)}(x)\frac{x_1-x_0}{n}\sum_{i=k}^{n-2}f(x_{i+1}^n)\mathcal{E}_\lambda(x_{i+1}^n) & \text{ if } x\in [x_0, x_1), \\ 
	    -\sum_{k=1}^{n-2} \mathbbm{1}_{ [\bar{x}_k^n, \bar{x}_{k+1}^n)}(x)\frac{1+x_0-x_1}{n}\sum_{i=1}^{k}f(\bar{x}_i^n)\mathcal{E}_\lambda(\bar{x}_i^n) & \text{ if } x\in [x_1, x_0+1), 
	\end{cases}
    \end{equation}
	$x_k^n:=x_0+k(x_1-x_0)/n$ and $\bar{x}_n := x_1+k(x_0+1-x_1)/n$, holds in $L^1_{per}$. 
	Clearly $S_n(x)\xrightarrow[n\to+\infty]{}T_1(x)$ almost everywhere. Let us check that $S_n$ is dominated by a $L^1$ function. 
	The function $S_n(x)$ is uniformly bounded for $x\not\in\{x_0, x_1\}$; there remains to prove that $S_n(x)$ is locally in $L^1$ in a neighborhood of $x_0$ and $x_1$, respectively.
	By \eqref{eq:251204d} we have $\mathcal{E}_\lambda=(x_1-x)^{\alpha_1} e^{\mathcal{O}(1)}$ close to $x_1$, and $\alpha_1=\frac{\sigma(x_1)-\lambda}{a'(x_1)}\in (-1, 0)$; hence $\big(a(x)\mathcal{E}_\lambda(x)\big)^{-1}=(x_1-x)^{-1-\alpha_1}e^{\mathcal{O}(1)}\in L^1_{loc}$. 
	Similarly,  $f(y)\mathcal{E}_\lambda(y)=(x_1-x)^{\alpha_1}e^{\mathcal{O}(1)}\in L^1$, so that $\int_{x_1}^xf(y)\mathcal{E}_\lambda(y)\dd y\in L^\infty$. Thus $S_n(x)$ is bounded by $C\big(a(x)\mathcal{E}_\lambda(x)\big)^{-1}=(x_1-x)^{-1-\alpha_1}e^{\mathcal{O}(1)}$ for a sufficiently large $C$. In what follows the symbol $C$ will denote a large positive constant whose value may change from line to line but  that is independent of $n$. In the vicinity of $x_0$, we have for $x>x_0$:
    \begin{align*}
	S_n(x)& =\frac{\mathbbm{1}_{x\geq x_1^n}}{a(x)\mathcal{E}_\lambda(x)}\sum_{k=1}^{n-2} \mathbbm{1}_{ [x_k^n, x_{k+1}^n)}(x)\frac{x_1-x_0}{n}\sum_{i=k}^{n-2}f(x_{i+1}^n)\mathcal{E}_\lambda(x_{i+1}^n) \\ 
	&\leq C (x-x_0)^{-1-\alpha_0}\left(1+\sum_{k=1}^{n/2} \mathbbm{1}_{ [x_k^n, x_{k+1}^n)}(x)\frac{x_1-x_0}{n}\sum_{i=k}^{n/2}(x_{i+1}^n-x_0)^{\alpha_0}\right)\\
	&\leq C (x-x_0)^{-1-\alpha_0}\left(1+\int_{x_{k_0}^n}^{(x_1+x_0)/2}(y-x_0)^{\alpha_0}\dd y\right) , 
    \end{align*}
    where $k_0=k_0(x):=\left\lfloor n\frac{x-x_0}{x_1-x_0}\right\rfloor$ so that $k_0$ is the largest integer $k$ satisfying $x_{k}^n\leq x$.  Recall that  $\alpha_0=\frac{\sigma(x_0)-\lambda}{a'(x_0)}<0$; at this point we easily check that $S_n(x)$ is dominated by a $L^1$ function in the vicinity of $x_0$ if either $1+\alpha_0>0$ or $1+\alpha_0=0$. If $1+\alpha_0<0$, we continue the computation to find that
    \begin{align*}
	S_n(x)&\leq C(x-x_0)^{-1-\alpha_0}\left(1+ \frac{1}{\alpha_0+1}\left[\left(\frac{x_1+x_0}{2}-x_0\right)^{\alpha_0+1}-(x_{k_0}^n-x_0)^{\alpha_0+1}\right]\right) \\
	&\leq C\left((x-x_0)^{-1-\alpha_0} + \left(\frac{x_{k_0}^n-x_0}{x-x_0}\right)^{\alpha_0+1}\right)\leq C\left((x-x_0)^{-1-\alpha_0} + \left(\frac{x_{k_0}^n-x_0}{x_{k_0+1}^n-x_0}\right)^{\alpha_0+1}\right) \\ 
	&= C\left((x-x_0)^{-1-\alpha_0} + \left(\frac{k_0}{k_0+1}\right)^{\alpha_0+1}\right) = C\left((x-x_0)^{-1-\alpha_0}+1\right)
    \end{align*}
    which is in $L^1$. We obtain a similar estimate for $x<x_0$ close to $x_0$ by using the second line in \eqref{eq:260109b}. We conclude that $S_n$ is dominated by a $L^1$ function, and by Lebesgue's dominated convergence theorem $S_n\to T_1$ in $L^1_{per}$.

    Now $S_n(x)$ is a finite sum of superconvex functions, hence superconvex \cite[Lemma 2.2 (b)]{Kato-1982}, and \eqref{eq:260109a} proves that $\frac{-\int_{x_1}^x f(y)\mathcal{E}_\lambda(y)\dd y}{a(x)\mathcal{E}_\lambda(x)}$ is superconvex as a function of $\lambda$ \cite[Lemma 2.2 (c)]{Kato-1982}. We proceed similarly with 
    \begin{equation*}
	T_2(x):= \dfrac{\displaystyle  \gamma_1 \int_{X_1}^{x_1} f(y)\mathcal{E}_\lambda(y)\dd y\mathbbm{1}_{(x_0, x_1)}(x)}{a(x)\mathcal{E}_\lambda(x)}
	\text{ and }
	T_3(x):=\dfrac{\displaystyle -\gamma_2 \int^{X_1'}_{x_1} f(y)\mathcal{E}_\lambda(y)\dd y\mathbbm{1}_{(x_1, 1+x_0)}(x) }{a(x)\mathcal{E}_\lambda(x)}
    \end{equation*} 
    to reach the superconvexity of $T_{\gamma_1, \gamma_2}(\lambda)\varphi = T_1+T_2+T_3$. Since this is true for all $\varphi\in L^1_{per, +}$, the operator family $T_{\gamma_1, \gamma_2}(\lambda)$ is superconvex. Now, by the generalized Kingman's theorem \cite[Theorem 2.5]{Kato-1982}, we obtain that 
    \begin{equation*}
	\lambda\mapsto r\big(T_{\gamma_1, \gamma_2}(\lambda)\big) = \kappa(\lambda; \gamma_1, \gamma_2)
    \end{equation*}
    is superconvex, which is the desired conclusion. 
    \medskip

    That  $\kappa(\lambda; \gamma_1, \gamma_2)\to+\infty$ when  $\lambda\to \lambda_1^{ext}$ is a consequence of the monotony of $(\gamma_1, \gamma_2)\mapsto \kappa(\lambda; \gamma_1, \gamma_2)$ and the corresponding property in Proposition \ref{prop:eigen-0}. 
	Next, recall the definition of $\alpha_0(\lambda)=\frac{\sigma(x_0)-\lambda}{a'(x_0)}$ and $\alpha_1(\lambda)=\frac{\sigma(x_1)-\lambda}{a'(x_1)}$ from \eqref{eq:alphas}. 
	If $\gamma_1>0$, we have 
    \begin{align*}
	\Vert T_{\gamma_1, \gamma_2}(\lambda)\varphi\Vert_{L^1_{per}}&\geq \gamma_1\int_{\mathbb{S}^1}\dfrac{\int_{X_1}^{x_1} K\star \varphi(y)\mathcal{E}_\lambda(y)\dd y\mathbbm{1}_{(x_0, x_1)}(x)}{a(x)\mathcal{E}_\lambda(x)}\dd x\\
	&\geq C\int_{\mathbb{S}^1}\varphi(x)\dd x\int_{X_1}^{x_1}(x_1-x_0)^{-\alpha_1-1}\dd x \int_{X_1}^{x_1}(y-x_1)^{\alpha_1}\dd y\\
	    &\geq C \frac{-1}{\alpha_1}(X_1-x_1)^{-\alpha_1}\xrightarrow[\lambda\to \lambda_1^{1}]{} +\infty, 
    \end{align*}
	for any non-trivial $\varphi\in L^1_{per, +}$, because $\alpha_1=\alpha_1(\lambda)\to 0$ when $\lambda\to \lambda_1^{1}$. We proceed similarly when $\gamma_2>0$. We conclude that  whenever $\gamma_1>0$ or $\gamma_2>0$ we have
    \begin{equation*}
	    \kappa(\lambda; \gamma_1, \gamma_2)\geq \Vert T_{\gamma_1, \gamma_2}(\varphi_1^{\lambda; \gamma_1, \gamma_2}) \Vert_{L^1} \xrightarrow[\lambda\to {\lambda_1^{1}}]{}+\infty.
    \end{equation*}
    Finally, let $\gamma_2>0$ be fixed;  integrating the relation $T_{\gamma_1, \gamma_2}(\lambda)\varphi=\kappa(\lambda; \gamma_1, \gamma_2)\varphi$ with $\int\varphi=1$ we get
    \begin{align*}
	\kappa(\lambda; \gamma_1, \gamma_2)&=\int_{\mathbb{S}^1}T_{\gamma_1, \gamma_2}(\lambda)\varphi \geq \gamma_1 \int_{x_0}^{x_1} \frac{1}{a(x)\mathcal{E}_\lambda(x)}\dd x \int_{X_1}^{x_1}K\star\varphi(y) \mathcal{E}_\lambda(y)\dd y \\ 
	&\geq \gamma_1 \int_{x_0}^{x_1}\frac{1}{a(x)\mathcal{E}_\lambda(x)}\dd x \int_{X_1}^{x_1} \inf_{x\in\mathbb{S}^1}K(x, y)\mathcal{E}_\lambda(y)\dd y \xrightarrow[]{\gamma_1\to+\infty}+\infty.
    \end{align*}
    The above computation makes sense for $\lambda\in(\lambda_1^{ext}, \lambda_1^1)$ because  $\alpha_1<0$ and $\alpha_0<0$ so $\frac{1}{a(x)\mathcal{E}_\lambda(x)}$ is in $L^1(x_0,x_1)$;  and $\alpha_1> -1$ so $\mathcal{E}_\lambda(y)$ is in $L^1(X_1, x_1)$. A similar computation shows 	$\kappa(\lambda; \gamma_1, \gamma_2) \xrightarrow[]{\gamma_2\to+\infty}+\infty$ for fixed $\gamma_1\geq 0$.
    This concludes the proof of Proposition \ref{prop:eigen-extended}.
\end{proof}
The connection between the function $\kappa(\lambda)$ and the existence of an eigenpair (singular or not) for $A+B$ will be given in the next result. 

%%%%%%%%%%%%%%%%%%%%%%%%%%%%%%%%%%%%%%%%%%%%%%%%%%%%%%%%%%%%%%%%%%%%%%%%%%%%%%%%%%%%%%%%%%%%%%%%%%%%%%%%%%%%%%%
\begin{theorem}[Regular eigenpairs]\label{thm:eigenpairs-L1}
	{Let Assumption \ref{ASS-20} hold true and}  $\kappa(\lambda)$ be the function defined on $(\lambda_1^{ext}, +\infty)$ in Proposition \ref{prop:eigen-0}. The equation $\kappa(\lambda)=1$ has a unique solution $\lambda_1^*$.
    \begin{enumerate}
	\item Suppose that $\lambda_1^*\geq \lambda_1^1$,  then \eqref{eq:main} admits a unique normalized eigenpair $(\lambda, \varphi)$ satisfying $\varphi\in L^1_{per, +}$, which is given by $\lambda=\lambda_1^*$ and $\varphi=\varphi_1^{\lambda_1^*}$. If moreover $\lambda>\max(r(x_0), \lambda_1^1)$, then $\varphi_1^{\lambda_1^*}\in C^0_{per}$.
	\item Suppose that $\lambda_1^*<\lambda_1^1$. Then any principal eigenpair  $(\lambda, \varphi)$ of \eqref{eq:main} with $\varphi\in L^1_{per} $ satisfies $\lambda\in [\lambda_1^*, \lambda_1^1)$, and for each $\lambda\in [\lambda_1^*, \lambda_1^1)$, there exists at least one principal eigenpair $(\lambda, \varphi)$ of \eqref{eq:main}. 

	    More precisely, for $\lambda=\lambda_1^*$, there exists a unique $\varphi=\varphi_1^{\lambda_1^*}$ such that $(\lambda, \varphi)$ is a normalized principal eigenpair of \eqref{eq:main}; for any $\lambda\in (\lambda_1^*, \lambda_1^1)$, there exists a real number $\overline{\gamma}(\lambda)>0$ and a continuous one-parameter family of normalized principal eigenvectors given by $\varphi_1^{\lambda; \gamma, \gamma_2(\lambda, \gamma)}=:\varphi_1^{\lambda, \gamma}$, where $\gamma_2(\lambda, \gamma)$ is defined as the unique solution of 
	    \begin{equation*}
		\kappa\big(\lambda; \gamma, \gamma_2(\lambda, \gamma)\big)=1 \text{ for all } \gamma\in [0, \overline{\gamma}(\lambda)].
	    \end{equation*}
	    The family  $\varphi_1^{\lambda, \gamma}$ is composed of distinct eigenvectors: for $(\lambda, \gamma)\neq (\lambda', \gamma')$ we have $\varphi_1^{\lambda, \gamma}\neq \varphi_1^{\lambda', \gamma'}$.
	    Moreover, any normalized principal eigenvector $\varphi$ corresponding to the principal eigenvalue $\lambda$ satisfies $\varphi=\varphi_1^{\lambda, \gamma}$ for some $\gamma\in [0, \overline{\gamma}(\lambda)]$. Finally
	    \begin{equation}\label{eq:260126a}
		\overline{\gamma}(\lambda)\xrightarrow[\lambda\to\lambda_1^*]{}0 \text{ and } \overline{\gamma}(\lambda)\xrightarrow[\lambda\to\lambda_1^1]{}0.
	    \end{equation}
    \end{enumerate}
\end{theorem}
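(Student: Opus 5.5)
The plan is to reduce everything to a classification of the $L^1_{per}$ solutions of $(\lambda I-A)\varphi=K\star\varphi$ by solving the ODE explicitly on each arc $(x_0,x_1)$ and $(x_1,1+x_0)$. Existence and uniqueness of $\lambda_1^*$ come directly from Proposition \ref{prop:eigen-0}: $\kappa$ is continuous and strictly decreasing, with limits $+\infty$ and $0$ at the two ends.

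\textbf{Step 1: the general solution.} Let $(\lambda,\varphi)$ be a principal eigenpair with $\varphi\in L^1_{per,+}$, and set $f=K\star\varphi\in C^0_{per}$, which is strictly positive. On each arc, variation of constants gives
\[
\varphi=T(\lambda)\varphi+\frac{c_1\mathbbm{1}_{(x_0,x_1)}+c_2\mathbbm{1}_{(x_1,1+x_0)}}{a\,\mathcal{E}_\lambda}.
\]
Lemma \ref{lem:E_lambda} gives $1/(a\mathcal{E}_\lambda)=|x-x_i|^{-1-\alpha_i}e^{\mathcal{O}(1)}$ near $x_i$, and this controls when the correction is integrable.
\begin{itemize}
\item Near $x_0$, integrability of the correction forces $\alpha_0<0$, that is $\lambda>\lambda_1^0$. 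Otherwise $\varphi\notin L^1$, or we get a contradiction by integrating as in the proof of Proposition \ref{prop:resolvent-A-L1}.
\item Near $x_1$, if $\lambda\ge\lambda_1^1$ then $\alpha_1\ge0$. The correction is then not integrable, so $c_1=c_2=0$. This is exactly assertion 4 of Proposition \ref{prop:resolvent-circle}, extended from $\lambda=\lambda_1^1$ to $\lambda>\lambda_1^1$ via the explicit formula \eqref{eq:resolvent-L1}.
\item If $\lambda<\lambda_1^1$, the correction is integrable. Nonnegativity of $\varphi$ near $x_1$, where $T(\lambda)\varphi$ stays bounded by Lemma \ref{lem:loc-est}, forces the signs $c_1\ge0$ and $c_2\ge0$ with the paper's orientation.
\item Integrability of $T(\lambda)\varphi$ near $x_1$ needs $\alpha_1>-1$, that is $\lambda>r(x_1)$. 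Hence $\lambda>\lambda_1^{ext}$.
\end{itemize}
Rewriting $c_1,c_2$ through the normalisation of \eqref{eq:resolvent-extended} gives $\gamma_1,\gamma_2\ge0$ such that $\varphi=T_{\gamma_1,\gamma_2}(\lambda)\varphi$. Every eigenpair therefore has $\lambda>\lambda_1^{ext}$ and is a positive fixed point of $T(\lambda)$ (when $\lambda\ge\lambda_1^1$) or of some $T_{\gamma_1,\gamma_2}(\lambda)$ (when $\lambda<\lambda_1^1$). By the uniqueness of nonnegative eigenvectors in Propositions \ref{prop:eigen-0} and \ref{prop:eigen-extended}, this means $\kappa(\lambda)=1$, respectively $\kappa(\lambda;\gamma_1,\gamma_2)=1$, and $\varphi$ is the corresponding principal eigenvector.

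\textbf{Step 2: case 1.} Suppose $\lambda_1^*\ge\lambda_1^1$. A candidate with $\lambda<\lambda_1^1\le\lambda_1^*$ satisfies $\kappa(\lambda;\gamma_1,\gamma_2)\ge\kappa(\lambda)>\kappa(\lambda_1^*)=1$, by monotonicity in $\gamma$ and strict decrease in $\lambda$. So no such eigenpair exists. For $\lambda\ge\lambda_1^1$ we need $\kappa(\lambda)=1$, which gives $\lambda=\lambda_1^*$.

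For continuity when $\lambda>\max(r(x_0),\lambda_1^1)$: Lemma \ref{lem:loc-est} shows that $\varphi=\kappa^{-1}R_0(\lambda;A_0)f$ has finite limits at $x_0$ and $x_1$ from both sides, given by $f(x_i)/(\lambda-r(x_i))$. Since $\lambda>\lambda_1^m$, we have $R_0=R(\lambda;A_0)$, which maps $C^0_{per}$ into $C^0_{per}$ by Proposition \ref{prop:resolvent-circle}.

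\textbf{Step 3: case 2, parametrisation.} Suppose $\lambda_1^*<\lambda_1^1$. The same comparison excludes $\lambda<\lambda_1^*$. For $\lambda\ge\lambda_1^1$ we would need $\kappa(\lambda)=1$, which is impossible since $\kappa(\lambda)<1$ there. At $\lambda=\lambda_1^*$, $\kappa(\lambda;\gamma_1,\gamma_2)=1$ with strict monotonicity forces $\gamma_1=\gamma_2=0$, so the eigenpair is unique.

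For $\lambda\in(\lambda_1^*,\lambda_1^1)$ we have $\kappa(\lambda;0,0)=\kappa(\lambda)<1$. Define $\overline\gamma(\lambda)$ as the unique $\gamma$ with $\kappa(\lambda;\gamma,0)=1$; it exists by continuity, strict increase and the blow-up as $\gamma\to\infty$. For each $\gamma\in[0,\overline\gamma(\lambda)]$, the same argument in $\gamma_2$ gives a unique $\gamma_2(\lambda,\gamma)$, which is continuous by the implicit-function reasoning available from strict monotonicity and continuity. This gives the family and the characterisation.

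\textbf{Step 4: distinctness.} Distinct $\lambda$ give distinct eigenvectors, because the eigenvalue is determined by the vector. For the same $\lambda$, the coefficient $c_1$ is recovered from the asymptotics of $\varphi$ near $x_1^-$, where the term in $(x_1-x)^{-1-\alpha_1}$ blows up while $T(\lambda)\varphi$ stays bounded. This recovers $\gamma$, given that $c_1\propto\gamma\int_{X_1}^{x_1}f\mathcal{E}_\lambda$ with $f>0$.

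\textbf{Step 5: the limits \eqref{eq:260126a}.} As $\lambda\to\lambda_1^*$, we have $\kappa(\lambda;\overline\gamma,0)=1$ while $\kappa(\lambda)\to1$. I would bound $\kappa(\lambda;\gamma,0)\ge\kappa(\lambda)+c\gamma$, uniformly for $\lambda$ near $\lambda_1^*$, via the lower bound $\int T_{\gamma,0}\varphi\ge\int T\varphi+\gamma\,c$ used in Proposition \ref{prop:eigen-extended}. As $\lambda\to\lambda_1^1$, the estimate in that proof gives $\kappa(\lambda;\gamma,0)\ge C\gamma/|\alpha_1(\lambda)|$, which forces $\overline\gamma\to0$.

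The main obstacle I expect is the uniform-in-$\lambda$ lower bound in Step 5, and making the sign and integrability analysis of Step 1 rigorous near $x_0$ when $\lambda\le\lambda_1^0$.
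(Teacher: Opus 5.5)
Your argument is essentially correct, but it is organised differently from the paper's.

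\textbf{How the routes differ.} The paper excludes $\lambda\in(\lambda_1^{ext},\lambda_1^*)$ and proves uniqueness at $\lambda_1^*$ by touching-point (sliding) comparisons with $\varphi_1^*$, using the asymptotics of Lemma \ref{lem:estimates-princeig} at $x_0$. It handles $\lambda>\lambda_1^m$ with the genuine resolvent, handles $\lambda=\lambda_1^1$ by showing $\varphi_b\equiv0$, and gets $\overline\gamma\to0$ at $\lambda_1^1$ from superconvexity.

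You instead classify every $L^1$ eigenpair once and for all. It is a nonnegative fixed point of $T(\lambda)$ if $\lambda\ge\lambda_1^1$, or of some $T_{\gamma_1,\gamma_2}(\lambda)$ with $\gamma_i\ge0$ if $\lambda<\lambda_1^1$. After that, everything is a spectral-radius comparison:
\begin{itemize}
\item $\kappa(\lambda;\gamma_1,\gamma_2)\ge\kappa(\lambda)>1$ below $\lambda_1^*$;
\item strict monotonicity in $\gamma$ at $\lambda_1^*$;
\item $\kappa<1$ on $[\lambda_1^1,\infty)$ in case 2.
\end{itemize}
This removes the sliding arguments altogether. Your distinctness argument for $\lambda\neq\lambda'$ (the eigenvalue is determined by $\varphi$) is simpler than the paper's asymptotic one. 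Your bound $\kappa(\lambda;\gamma,0)\ge C\gamma/|\alpha_1(\lambda)|$ has $C$ uniform near $\lambda_1^1$, by the locally uniform constants of Lemma \ref{lem:E_lambda}. So it even gives the rate $\overline\gamma(\lambda)=\mathcal{O}(\lambda_1^1-\lambda)$, without superconvexity. You also prove the $C^0_{per}$ claim, which the paper's proof does not address.

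\textbf{Step 5 near $\lambda_1^*$.} The inequality $\kappa(\lambda;\gamma,0)\ge\kappa(\lambda)+c\gamma$ does not follow from $\int T_{\gamma,0}(\lambda)\varphi\ge\int T(\lambda)\varphi+c\gamma$. Applied to the normalized eigenvector $\varphi$ of $T_{\gamma,0}(\lambda)$, you would also need $\int T(\lambda)\varphi\ge\kappa(\lambda)$. That fails for a probability density that is not the eigenvector of $T(\lambda)$: in general you only get $\operatorname{ess\,inf}\, T(\lambda)^*\mathbbm{1}$, which can be smaller than $\kappa(\lambda)$.

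No quantitative bound is needed here. Fix $\varepsilon>0$. Strict monotonicity gives $\kappa(\lambda_1^*;\varepsilon,0)>\kappa(\lambda_1^*)=1$. Continuity in $\lambda$ keeps $\kappa(\lambda;\varepsilon,0)>1$ near $\lambda_1^*$, so $\overline\gamma(\lambda)<\varepsilon$ there. This is the paper's argument.

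\textbf{Step 1 and $\lambda\le\lambda_1^{ext}$.} The obstruction for $\lambda\le r(x_1)$ is a sign, not integrability. In that range $T(\lambda)$ is undefined. In $\varphi=(C-\int_{X_1}^x f\mathcal{E}_\lambda)/(a\mathcal{E}_\lambda)$ the integral tends to $+\infty$ as $x\to x_1^-$, so $\varphi<0$ near $x_1$, even though $\varphi$ stays integrable there.

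Similarly, for $\lambda\le\lambda_1^0$ it is not the correction alone that fails to be integrable. Integrability of $\varphi$ at $x_0$ forces $C=\int_{X_1}^{x_0}f\mathcal{E}_\lambda<0$. This contradicts $C=a(X_1)\mathcal{E}_\lambda(X_1)\varphi(X_1)\ge0$. Both are exactly the arguments of Lemma \ref{lem:supercrit}, which you can simply invoke.
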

%%%%%%%%%%%%%%%%%%%%%%%%%%%%%%%%%%%%%%%%%%%%%%%%%%%%%%%%%%%%%%%%%%%%%%%%%%%%%%%%%%%%%%%%%%%%%%%%%%%%%%%%%%%%%%%
We start with some a priori estimates on the ``supersolutions'' of the principal eigenvalue problem. 
\begin{lemma}\label{lem:supercrit}
	{Suppose that Assumption \ref{ASS-20} holds true. Let} $(\lambda, \varphi)\in\mathbb{R}\times L^1_{per}$ with $\varphi\geq 0$ a.e. and $\varphi\in W^{1, 1}_{loc}\big(\mathbb{S}^1\backslash\{x_0, x_1\}\big)$ solve the differential inequality 
    \begin{equation}\label{eq:260128h}
	a(x)\varphi'+r(x)\varphi-\lambda \varphi + K\star \varphi \leq 0,
    \end{equation}
    then $\lambda> \lambda_1^{ext}$.
	If moreover $\varphi(x_1)>0$ and  there exists a solution $(\lambda_1^*, \varphi_1^*)$ to \eqref{eq:main} with $\varphi_1^*\geq 0$ and   
	\begin{equation}\label{eq:260129a}
	    {\varphi_1^*}(x)= \dfrac{-\int_{x_1}^xK\star {\varphi_1^*}(y) \mathcal{E}_{\lambda_1^*}(y)\dd y}{a(x)\mathcal{E}_{\lambda_1^*}(x)}, 
    \end{equation}
	then $\lambda\geq \lambda_1^*$. In the latter case, if $\lambda=\lambda_1^*$, then $\varphi\equiv k\varphi_1^*$ for some $k>0$.
\end{lemma}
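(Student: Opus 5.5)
The plan is to rewrite \eqref{eq:260128h} as a monotonicity statement for the weighted quantity $a\mathcal{E}_\lambda\varphi$ on each of the two arcs. We may assume $\varphi\not\equiv 0$; otherwise the statement is empty. Since $(a\varphi)'+(\sigma-\lambda)\varphi=a\varphi'+(r-\lambda)\varphi$, the inequality \eqref{eq:260128h} gives, in $W^{1,1}_{loc}(\mathbb{S}^1\backslash\{x_0,x_1\})$,
\begin{equation*}
\big(a\mathcal{E}_\lambda\varphi\big)'=\mathcal{E}_\lambda\big(a\varphi'+(r-\lambda)\varphi\big)\leq -\mathcal{E}_\lambda\, K\star\varphi\leq -\Big(\inf K\Big)\Big(\int_{\mathbb{S}^1}\varphi\Big)\mathcal{E}_\lambda<0 .
\end{equation*}

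On $(x_0,x_1)$ we have $a>0$, so $a\mathcal{E}_\lambda\varphi\geq 0$ is decreasing. It therefore has a limit $\ell\geq 0$ as $x\to x_1^-$. Integrating from $x$ to $x_1$ gives
\begin{equation*}
a(x)\mathcal{E}_\lambda(x)\varphi(x)\geq \ell+\int_x^{x_1}K\star\varphi(y)\,\mathcal{E}_\lambda(y)\dd y\geq \int_x^{x_1}K\star\varphi(y)\,\mathcal{E}_\lambda(y)\dd y .
\end{equation*}
On $(x_1,1+x_0)$ we have $a<0$, so $a\mathcal{E}_\lambda\varphi\leq 0$ is decreasing from $x_1^+$. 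The same argument yields the symmetric bound. Together these give the pointwise inequality $\varphi\geq T(\lambda)\varphi$, with $T(\lambda)$ given by the explicit formula \eqref{eq:Tlambda-explicit}.

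\textbf{First claim: $\lambda>\lambda_1^{ext}$.} We read off the exponents from Lemma \ref{lem:E_lambda}, using $K\star\varphi\geq c>0$. Suppose $\lambda\leq r(x_1)$. Then $\alpha_1\leq -1$, so $\mathcal{E}_\lambda\notin L^1$ near $x_1$. The right-hand side above is then $+\infty$ for $x<x_1$, which is absurd. Suppose instead $\lambda\leq\lambda_1^0$. Then $\alpha_0\geq 0$, and for $x$ near $x_0$ the lower bound behaves like $c\,(x-x_0)^{-1-\alpha_0}$. This is not integrable, contradicting $\varphi\in L^1_{per}$. The borderline cases $\alpha_1=-1$ and $\alpha_0=0$ still produce the logarithmic divergence, so the inequality is strict.

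\textbf{Second claim: $\lambda\geq\lambda_1^*$.} Since $\lambda>\lambda_1^{ext}$, the operator $T(\lambda)$ is compact, positive and irreducible by Proposition \ref{prop:eigen-0}. By the Jentzsch–Perron theory \cite[Corollary 4.2.14]{Meyer--Nieberg-1991}, its adjoint on $L^\infty_{per}$ has an eigenvector $\psi^*$ with eigenvalue $\kappa(\lambda)$ that is strictly positive a.e. Pairing with $\varphi\geq T(\lambda)\varphi$ gives
\begin{equation*}
\langle\psi^*,\varphi\rangle\geq\langle\psi^*,T(\lambda)\varphi\rangle=\kappa(\lambda)\langle\psi^*,\varphi\rangle .
\end{equation*}
Since $0<\langle\psi^*,\varphi\rangle<\infty$, this forces $\kappa(\lambda)\leq 1=\kappa(\lambda_1^*)$. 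Strict monotonicity of $\kappa$ then gives $\lambda\geq\lambda_1^*$. If $\lambda=\lambda_1^*$, then $\langle\psi^*,\varphi-T(\lambda)\varphi\rangle=0$ with a nonnegative integrand, so $\varphi=T(\lambda_1^*)\varphi$ a.e. Uniqueness of the nonnegative eigenvector (Proposition \ref{prop:eigen-0}), together with \eqref{eq:260129a} identifying $\varphi_1^*$ with that eigenvector, gives $\varphi=k\varphi_1^*$.

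\textbf{Main obstacle.} The delicate step is justifying the existence and strict positivity of the dual eigenvector $\psi^*$. If that is not available in the required form, the fallback is the sliding argument from the proof of Proposition \ref{prop:eigen-0}. One sets $\gamma=\sup\{\gamma'\,:\,\gamma'\varphi_1^*\leq\varphi\}$ and obtains a contact point where $T(\lambda')-T(\lambda)$ is strictly positive. This is where the hypothesis $\varphi(x_1)>0$ enters: it ensures $\gamma>0$ through the comparison near $x_1$, where $\varphi_1^*=e^{\mathcal{O}(1)}$. Near $x_0$ one uses the lower bound $\varphi\gtrsim(x-x_0)^{-1-\alpha_0}$ already derived above, matching the asymptotics \eqref{eq:260128f} of $\varphi_1^*$.
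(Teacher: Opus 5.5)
Your argument is correct. For the comparison with $\lambda_1^*$ it takes a genuinely different route from the paper.

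\textbf{First claim.} The content is the same as in the paper. The paper integrates $(a\mathcal{E}_\lambda\varphi)'\leq-\mathcal{E}_\lambda K\star\varphi$ from an interior anchor $X_1$. It uses the resulting lower bound for non-integrability at $x_0$ when $\alpha_0\geq 0$, and the upper bound on $(X_1,x_1)$ to force $\varphi<0$ near $x_1$ when $\alpha_1\leq-1$. You instead use monotonicity to integrate all the way to $x_1$ and obtain the single global inequality $\varphi\geq T(\lambda)\varphi$. That one inequality handles both obstructions and is then reused in the second claim.

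\textbf{Second claim.} The paper uses a sliding (touching) argument. Assuming $\lambda<\lambda_1^*$, it compares asymptotics via Lemma \ref{lem:loc-est} to get $\varphi_1^*=o(\varphi)$ near $x_0$, shows $\inf\varphi>0$, takes the largest $k$ with $k\varphi_1^*\leq\varphi$, and evaluates the differential inequality at a contact point. Your Collatz--Wielandt-type pairing of $\varphi-T(\lambda)\varphi\geq 0$ with the positive eigenfunctional of $T(\lambda)'$ avoids all of this:
\begin{itemize}
\item no local comparison at $x_0$ or $x_1$;
\item no use of $\inf\varphi>0$, nor of the hypothesis $\varphi(x_1)>0$, which your proof shows is superfluous;
\item no pointwise differentiability at a contact point, for a function satisfying \eqref{eq:260128h} only almost everywhere;
\item a clean equality case. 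The sliding argument needs a separate reason for a contact point to exist when $\lambda=\lambda_1^*$, since $\varphi_1^*=o(\varphi)$ near $x_0$ is only available for $\lambda<\lambda_1^*$.
\end{itemize}

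\textbf{The dual eigenvector.} The price of your route is the dual Perron--Frobenius input, but this is not a real obstacle. $T(\lambda)'$ is compact and positive on $L^\infty_{per}$ with spectral radius $\kappa(\lambda)>0$. Krein--Rutman then gives $\psi^*\geq 0$, $\psi^*\neq 0$, with $T(\lambda)'\psi^*=\kappa(\lambda)\psi^*$. Moreover $\psi^*>0$ a.e., because $T(\lambda)$ maps every nontrivial $g\geq 0$ to a function that is positive on $\mathbb{S}^1\backslash\{x_0,x_1\}$. If $\psi^*$ vanished on a set $S$ of positive measure, you would get $0=\kappa(\lambda)\langle\psi^*,\mathbbm{1}_S\rangle=\langle\psi^*,T(\lambda)\mathbbm{1}_S\rangle>0$, a contradiction.

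Two points deserve a line each in a write-up.
\begin{itemize}
\item The identity $\kappa(\lambda_1^*)=1$ is not a hypothesis of the lemma, and you need it. Apply your first claim to $\varphi_1^*$ to get $\lambda_1^*>\lambda_1^{ext}$. Then \eqref{eq:260129a} says $\varphi_1^*=T(\lambda_1^*)\varphi_1^*$, with $T(\lambda_1^*)$ the operator of Proposition \ref{prop:eigen-0}. Now pair with the dual eigenvector at $\lambda_1^*$, or invoke the uniqueness of the eigenvalue carrying a nonnegative eigenvector.
\item For $\varphi\equiv 0$ the lemma is false rather than empty. Nontriviality of $\varphi$ is therefore an implicit hypothesis, needed for $K\star\varphi\geq c>0$ in your argument as well as in the paper's.
\end{itemize}
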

\begin{remark}
	The existence of a principal eigenpair $(\lambda_1^*, \varphi_1^*)$ with $\varphi_1^*$ solving \eqref{eq:260129a} will follow from Theorem \ref{thm:eigenpairs-L1}. Since the proof of Theorem \ref{thm:eigenpairs-L1} is not complete yet, we still need to include the existence as an assumption of Lemma \ref{lem:supercrit} at this point.
\end{remark}
\begin{proof}[Proof of Lemma \ref{lem:supercrit}]
    We get from \eqref{eq:260128h} that
	\begin{equation}\label{eq:260129c}
		\big(a(x)\mathcal{E}_\lambda(x)\varphi(x)\big)'=\left(a(x)\varphi'+r(x)\varphi-\lambda \varphi\right)\mathcal{E}_\lambda(x) \leq -K\star \varphi(x)\mathcal{E}_\lambda(x),
    \end{equation}
    and therefore for $x_0<x<X_1<x_1$, 
    \begin{equation*}
	    a(x)\mathcal{E}_\lambda(x)\varphi(x)-a(X_1)\mathcal{E}_\lambda(X_1)\varphi(X_1) \geq -\int_{X_1}^{x} K\star \varphi(y)\mathcal{E}_\lambda(y)\dd y.
    \end{equation*}
    Finally
    \begin{equation}\label{eq:260129b}
	    \varphi(x)\geq \dfrac{-\int_{X_1}^{x} K\star \varphi(y) {\mathcal{E}_\lambda(y)}\dd y+a(X_1)\mathcal{E}_\lambda(X_1)\varphi(X_1)}{a(x)\mathcal{E}_\lambda(x)}=:\psi(x).
    \end{equation}
	On the other hand, for $x_0<X_1<x<x_1$, we integrate \eqref{eq:260129c} between $ X_1$ and $x$  to find that 
    \begin{equation}\label{eq:260129d}
	    \varphi(x)\leq \dfrac{-\int_{X_1}^{x} K\star \varphi(y){\mathcal{E}_\lambda(y)}\dd y+a(X_1)\mathcal{E}_\lambda(X_1)\varphi(X_1)}{a(x)\mathcal{E}_\lambda(x)}.
    \end{equation}

    Suppose by contradiction that $\lambda\leq \lambda_1^0=\sigma(x_0)$.
	By Lemma \ref{lem:E_lambda}, we have $\mathcal{E}_\lambda(x)=(x-x_0)^{\alpha_0}e^{\mathcal{O}(1)}$ with $\alpha_0=\frac{\sigma(x_0)-\lambda}{a'(x_0)}\geq 0$ when $x$ approaches $x_0$; therefore  the numerator of $\psi(x)$ in  \eqref{eq:260129b} approaches a limit as $x\to x_0^+$ and 
    \begin{equation*}
	    \psi(x)=\dfrac{-\int_{X_1}^{x_0}K\star \varphi(y)\mathcal{E}_\lambda(y)\dd y+a(X_1)\mathcal{E}_\lambda(X_1)\varphi(X_1)+o(1)}{{(x-x_0)^{1+\alpha_0}}e^{\mathcal{O}(1)}},
    \end{equation*}
	and since $\varphi(x)\in L^1$ we must impose  $0\leq a(X_1)\mathcal{E}_\lambda(X_1)\varphi(X_1)\leq \int_{X_1}^{x_0}K\star\varphi(y)\mathcal{E}_\lambda(y)\dd y<0$, which is obviously impossible.  We conclude that $\lambda>\lambda_1^0$.

	Next, suppose by contradiction that 
	$\lambda\leq r(x_1)$. 	Recall that   $\alpha_1= \frac{\sigma(x_1)-\lambda}{a'(x_1)}$, then $\alpha_1\leq -1$ ; moreover we have $\mathcal{E}_\lambda(y)=|x_1-y|^{\alpha_1} e^{\mathcal{O}(1)}$ (from Lemma \ref{lem:E_lambda}) when $y$ is in the neighborhood of $x_1$. Therefore:
	\begin{align*}
	    -\int_{X_1}^xK\star \varphi(y) \mathcal{E}_\lambda(y)\dd y&=
		\begin{cases}
			\frac{(x_1-x)^{\alpha_1+1}-(x_1-X_0)^{1+\alpha_1}}{\alpha_1+1}e^{\mathcal{O}(1)}, &   \text{ if } \alpha_1<-1, \\
			\big(\ln(x_1-x)-\ln(x_1-X_0)\big)e^{\mathcal{O}(1)}, & \text{ if } \alpha_1=-1, 
		\end{cases}\\
		&\xrightarrow[x\to x_1^-]{}-\infty,
	\end{align*}
	and we infer from \eqref{eq:260129d} that  $\varphi(x)<0$ in the neighborhood of $x_1$: a contradiction. We conclude that $\lambda>r(x_1)$. \medskip
	\medskip 

	Thus we have established that $\lambda>\lambda_1^{ext}=\max\big(\lambda_1^0, r(x_1)\big)$. In the rest of the proof we assume that $\varphi(x_1)>0$ and that there exists a normalized principal eigenpair $(\lambda_1^*, \varphi_1^*)$.
	Let us now assume by contradiction that  $\lambda_1^{ext}<\lambda< \lambda_1^*$. Then $\psi(x)\geq R_0(\lambda; A_0)K\star \varphi(x)$ when $x$ is close to $x_0$, and $\varphi_1^*(x)\leq R_0(\lambda; A_0)K\star \varphi(x)$.
	It follows from \eqref{eq:260128f}, $\lambda<\lambda_1^*$  and  $\alpha_0(\lambda)>\alpha_0(\lambda_1^*)$, that  $\varphi_1^*(x) = o\big(\psi(x)\big)$ as $x\to x_0$.  Then it follows from \eqref{eq:260128g} that $\varphi_1^*=\mathcal{O}(1)$ in the neighborhood of $x_1$; let us show that the infimum of $\varphi$ is strictly positive.

	Let $x_*$ realize the minimum of $\varphi$: $\varphi(x_*)=\inf \varphi$, and suppose by contradiction that $\varphi(x_*)=0$. Since $x_*\neq x_1$, then  there exists a sequence $x_n\to x_* $ such that  $a(x_n)\varphi'(x_n)\geq 0$. We have, for $n$ large enough, 
	\begin{align*}
		0&<\left(r(x_n)-\lambda + \int_{\mathbb{S}^1}K(x_n, y) \dd y\right)\varphi(x_n) +\int_{\mathbb{S}^1}K(x_n, y)\big( \varphi(y)-\varphi(x_n)\big)\dd y\\
		&\leq  a(x_n)\varphi'(x_n)+\big(r(x_n)-\lambda\big)\varphi(x_n)+K\star \varphi(x_n)\leq 0,
	\end{align*}
	which is a contradiction. We conclude that $\inf \varphi >0$.

	Thus there exists $k_0>0$ with $k_0\varphi_1^*(x)\leq \varphi(x)$ and the number
	\begin{equation*}
	    k:=\sup \{k'>0\,:\,k''\varphi_1^*(x)\leq \varphi(x) \text{ for all }x\in \mathbb{S}^1\backslash\{x_0\} \text{ and } 0<k''\leq k'\}
	\end{equation*}
	is well-defined and finite.  Since $\varphi_1^*(x) = o\big(\varphi(x)\big)$ in the neighborhood of $x_0$ and $\varphi_1^*$ and $\varphi$ are both continuous on $\mathbb{S}^1\backslash\{x_0\}$, there  exists $x_*\in\mathbb{S}^1\backslash\{x_0\}$ such that $k\varphi_1^*(x_*)=\varphi(x_*)$, while by definition $k\varphi_1^*(x)\leq \varphi(x)$ for all $x\neq x_0$. Thus $x_*$ is a minimum of $\varphi-k\varphi_1^*$ and
	\begin{align*}
		0&\leq K\star(\varphi-k\varphi_1^*)(x) {\leq} -a(x_*)\big(\varphi-k\varphi_1^*\big)_x(x_*)-r(x_*)\big(\varphi(x_*)-k\varphi(x_*)\big)
	    + \lambda \varphi(x_*)-\lambda_1^*k\varphi_1^*(x_*)  \\
	    &= (\lambda-\lambda_1^*)\varphi(x_*),
	\end{align*}
	which proves that $\lambda\geq \lambda_1^*$. Moreover, if $\lambda=\lambda_1^*$, we see that $K\star (\varphi-k\varphi_1^*)=0$, which implies that $\varphi\equiv k\varphi_1^*$. This concludes the proof of Lemma \ref{lem:supercrit}. 
\end{proof}
%%%%%%%%%%%%%%%%%%%%%%%%%%%%%%%%%%%%%%%%%%%%%%%%%%%%%%%%%%%%%%%%%%%%%%%%%%%%%%%%%%%%%%%%%%%%%%%%%%%%%%%%%%%%%%%
We continue with some a priori estimates that are valid for any principal eigenpair. 
\begin{lemma}\label{lem:estimates-princeig}
	{Suppose that Assumption \ref{ASS-20} holds true.} 
    Let $(\lambda, \varphi)$ be a solution of \eqref{eq:main} with $\varphi\in L^1_{per}$ and $\varphi(x)\geq 0$. Let $X_1\in (x_0, x_1)$ and $X_1'\in (x_1, 1+x_0)$ be given. The following assertions hold true.
    \begin{enumerate}
	\item $\lambda>\lambda_1^{ext}$,
	\item $\varphi(x)$ can be written as $\varphi(x)=\varphi_a(x)+\varphi_b(x)$, where  
	    \begin{subequations}\label{eq:251204a}
	\begin{gather}
	    \label{eq:251204aa}
	    \varphi_a(x)= \dfrac{-\int_{x_1}^xK\star \varphi(y) \mathcal{E}_\lambda(y)\dd y}{a(x)\mathcal{E}_\lambda(x)}, \\ 
	    \label{eq:251204ab}
		    \varphi_b(x)=\dfrac{a(X_*)\varphi(X_*)\mathcal{E}_\lambda(X_*) - \int_{X_*}^{x_1}K\star \varphi(y)\mathcal{E}_\lambda(y)\dd y}{a(x)\mathcal{E}_\lambda(x)}, 
	\end{gather}
		and $ X_*=X_1 $ when $ x\in (x_0, x_1) $ and $X_*=X_1'$ when $x\in (x_1, 1+x_0)$.
		Both $\varphi_a$ and $\varphi_b$ are nonnegative $L^1_{per}$ functions.
	    \end{subequations}
	\item Let $\alpha_0=\frac{\sigma(x_0)-\lambda}{a'(x_0)}$, then we have in the neighborhood of $x_0$:
	    \begin{equation}\label{eq:260110a}
	    \varphi_a(x) =  
	    \begin{cases}
		\frac{e^{\mathcal{O}(1)}}{|x-x_0|^{1+\alpha_0}} + o\left(\frac{1}{|x-x_0|^{1+\alpha_0}}\right), & \text{ if }\lambda\in\big(\lambda_1^0, r(x_0)\big)\Leftrightarrow \alpha_0\in (-1, 0), \\ 
		|\ln(|x-x_0|)|e^{\mathcal{O}(1)} + \mathcal{O}(1),%o\left(|\ln(|x-x_0|)|\right), 
		& \text{ if } \lambda= r(x_0)\Leftrightarrow \alpha_0=-1, \\ 
		\frac{K\star \varphi(x_0)}{\lambda-r(x_0)} + o(1), & \text{ if }  \lambda> r(x_0)\Leftrightarrow\alpha_0<-1, \\ 
	    \end{cases}
	    \end{equation}
	\item Let $\alpha_1=\frac{\sigma(x_1)-\lambda}{a'(x_1)}$, then $\varphi_a(x)=e^{\mathcal{O}(1)}$ in the neighborhood of $x_1$. More precisely, 
	    \begin{equation}\label{eq:260110b}
	    \varphi_a(x)\xrightarrow[x\to x_1]{}\dfrac{K\star\varphi(x_1)}{\lambda-r(x_1)}.
	    \end{equation}

	\end{enumerate}
\end{lemma}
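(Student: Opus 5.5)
Assertion 1 follows from Lemma \ref{lem:supercrit}: a nonnegative solution of \eqref{eq:main} satisfies \eqref{eq:260128h} with equality. We also need $\varphi\in W^{1,1}_{loc}(\mathbb{S}^1\backslash\{x_0,x_1\})$. This holds because $a\varphi'\in L^1_{per}$ and $a$ is bounded away from zero on compact subsets of $\mathbb{S}^1\backslash\{x_0,x_1\}$. Hence $\lambda>\lambda_1^{ext}$.

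For assertion 2, I would integrate the equation exactly, as in the proof of Lemma \ref{lem:supercrit}. Since $\big(a\mathcal{E}_\lambda\varphi\big)'=-K\star\varphi\,\mathcal{E}_\lambda$ on each arc, we get for $x\in(x_0,x_1)$
\[
a(x)\mathcal{E}_\lambda(x)\varphi(x)=a(X_1)\mathcal{E}_\lambda(X_1)\varphi(X_1)-\int_{X_1}^x K\star\varphi(y)\mathcal{E}_\lambda(y)\dd y ,
\]
and the analogous formula with $X_1'$ on $(x_1,1+x_0)$. Since $\lambda>r(x_1)$, we have $\alpha_1>-1$, so $\mathcal{E}_\lambda$ is integrable near $x_1$ by Lemma \ref{lem:E_lambda}. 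The integral over $[X_1,x]$ can therefore be split through $x_1$, giving $-\int_{x_1}^x-\int_{X_1}^{x_1}$, which yields exactly $\varphi=\varphi_a+\varphi_b$.

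Nonnegativity of $\varphi_a$ is read off from the signs. On $(x_0,x_1)$ we have $x<x_1$ and $a>0$. On $(x_1,1+x_0)$ we have $x>x_1$ and $a<0$. In both cases the quotient is nonnegative because $K\star\varphi\geq0$. Integrability of $\varphi_a$ follows because $\varphi_a=R_0(\lambda;A_0)K\star\varphi$, which lies in $L^1_{per}$ by Proposition \ref{prop:resolvent-circle}(3) and Proposition \ref{prop:eigen-0}, since $K\star\varphi\in C^0_{per}$. Then $\varphi_b=\varphi-\varphi_a\in L^1_{per}$.

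For $\varphi_b\geq0$: on each arc $\varphi_b=c/(a\mathcal{E}_\lambda)$ with $c$ a constant, so its sign is that of $c\cdot\mathrm{sign}(a)$. Suppose, say on $(x_0,x_1)$, that $c<0$. Then $\varphi_b\sim c\,(x_1-x)^{-1-\alpha_1}e^{\mathcal{O}(1)}$ near $x_1$, which is negative. Meanwhile $\varphi_a$ stays bounded near $x_1$ by \eqref{eq:260128g}. This would force $\varphi<0$ close to $x_1$ whenever $-1-\alpha_1\geq0$. If instead $\alpha_1<-1$... but that case is excluded because $\alpha_1>-1$. So I need a sharper argument.

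My approach is to compare near $x_0$ instead. Since $\alpha_0<0$, the behavior $1/(a\mathcal{E}_\lambda)\sim(x-x_0)^{-1-\alpha_0}$ is compared with the size of $\varphi_a$ from \eqref{eq:260128f}. If $\alpha_0\geq-1$, then $\varphi_a$ has the same order $(x-x_0)^{-1-\alpha_0}$ times a bounded factor, or only a logarithm, so this comparison is delicate. The cleanest route is to evaluate at $x\to x_1$: if $\alpha_1\in(-1,0)$, then $(x_1-x)^{-1-\alpha_1}\to\infty$ and dominates the bounded $\varphi_a$, contradicting $\varphi\geq0$. If $\alpha_1\geq0$, i.e.\ $\lambda\geq\lambda_1^1$, then $1/(a\mathcal{E}_\lambda)\notin L^1$ near $x_1$, which forces $c=0$ since $\varphi_b\in L^1$. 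The case $\alpha_1=-1$ does not occur.

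Hence $c\geq0$ on each arc, with the sign convention on the second arc handled by $a<0$ there. I expect this sign-bookkeeping for $\varphi_b$ to be the main obstacle.

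Assertions 3 and 4 are then immediate from Lemma \ref{lem:loc-est} applied with $f=K\star\varphi$, which is continuous and strictly positive because $K>0$ and $\varphi\not\equiv0$. The same L'Hospital computation applies on the arc $(x_1,1+x_0)$, approaching $x_1$ from the right and $x_0$ from the left, so the estimates hold in full neighborhoods.
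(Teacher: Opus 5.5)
Your proof is correct and follows the paper's route: Lemma \ref{lem:supercrit} for assertion 1, exact integration of $(a\mathcal{E}_\lambda\varphi)'=-K\star\varphi\,\mathcal{E}_\lambda$ split at $x_1$ for the decomposition, and Lemma \ref{lem:loc-est} for assertions 3--4. For $\varphi_b\geq 0$ the paper argues a bit more directly: the constant $c=a(x)\mathcal{E}_\lambda(x)\varphi(x)-\int_x^{x_1}K\star\varphi\,\mathcal{E}_\lambda\,\dd y$ equals $\lim_{x\to x_1^-}a\mathcal{E}_\lambda\varphi\geq 0$, which is the same mechanism as your comparison of a bounded $\varphi_a$ with an exploding $1/(a\mathcal{E}_\lambda)$.

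Your intermediate worry was only a sign slip. Since $\alpha_1>-1$, the exponent $-1-\alpha_1$ is negative, so $1/(a\mathcal{E}_\lambda)\to+\infty$ at $x_1$ for every admissible $\lambda$. You can therefore delete the detour near $x_0$ and the separate case $\alpha_1\geq 0$.
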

%%%%%%%%%%%%%%%%%%%%%%%%%%%%%%%%%%%%%%%%%%%%%%%%%%%%%%%%%%%%%%%%%%%%%%%%%%%%%%%%%%%%%%%%%%%%%%%%%%%%%%%%%%%%%%%
\begin{proof}
	Let $(\lambda, \varphi)\in\mathbb{R}\times D(A)$ be any solution of \eqref{eq:main} with $\varphi(x)\geq 0$. Then it follows from elementary computations that $\varphi(x)$ satisfies \eqref{eq:resolvent-A-formula-0} for $x\in \mathbb{S}^1\backslash \{x_0, x_1\}$, where $X_0$ (resp. $X_0'$) and $X_1$ (resp. $X_1'$) are arbitrary elements of $(x_0, x_1)$ (resp. $(x_1, x+x_0)$). Thus we have, for $x\not\in \{x_0, x_1\}$:
	\begin{equation}
	    \varphi(x)= \dfrac{-\int_{X_*}^xK\star \varphi(y) \mathcal{E}_\lambda(y)\dd y+a(X_*)\mathcal{E}_\lambda(X_*)\varphi(X_*)}{a(x)\mathcal{E}_\lambda(x)} , \text{ with } X_*=
	    \begin{cases} 
		X_1& \text{  if } x\in (x_0, x_1), \\
		X_1' & \text{ if }x\in (x_1, 1+x_0).
	    \end{cases}
	\end{equation}

	1.
	This follows directly from Lemma \ref{lem:supercrit}, since any solution of \eqref{eq:main} also satisfies \eqref{eq:260128h}
\medskip

2.
	Since $\lambda>\lambda_1^{ext}\geq r(x_1)$ we have  $\alpha_1=-1+\frac{r(x_1)-\lambda}{a'(x_1)}>-1$  and thus $\mathcal{E}_\lambda(x)=|x_1-x|^{\alpha_1}e^{\mathcal{O}(1)}\in L^1$ when $x$ is close to $x_1$;   recalling \eqref{eq:251204a} we have for $x\in (x_0, x_1)$:
	\begin{align*}
	    \varphi(x)& = \dfrac{-\int_{x_1}^x K\star \varphi(y)\mathcal{E}_\lambda(y)\dd y+a(X_1)\varphi(X_1)\mathcal{E}_\lambda(X_1) - \int_{X_1}^{x_1}K\star \varphi(y)\mathcal{E}_\lambda(y)\dd y}{a(x)\mathcal{E}_\lambda(x)}.
	\end{align*}
	By a similar argument we find for $x\in(x_1, 1+x_0)$:
	\begin{align*}
	    \varphi(x)& = \dfrac{-\int_{x_1}^x K\star \varphi(y)\mathcal{E}_\lambda(y)\dd y+a(X_1')\varphi(X_1')\mathcal{E}_\lambda(X_1') - \int_{X_1'}^{x_1}K\star \varphi(y)\mathcal{E}_\lambda(y)\dd y}{a(x)\mathcal{E}_\lambda(x)}.
	\end{align*}
	Thus the decomposition $\varphi(x)=\varphi_a(x)+\varphi_b(x)$ satisfying \eqref{eq:251204a} holds true. Clearly $\varphi_a(x)\geq 0$ for all $x\not\in\{x_0, x_1\}$.  Next, multiplying the equality $\varphi(x)=\varphi_a(x)+\varphi_b(x)$ by $a(x)\mathcal{E}_\lambda(x)$, we obtain for all $x\in(x_0, x_1)$:
	\begin{align}
	    \nonumber
	    \varphi(x)a(x)\mathcal{E}_\lambda(x) &= -\int_{x_1}^x K\star \varphi(y)\mathcal{E}_\lambda(y) \dd y + a(X_1)\varphi(X_1)\mathcal{E}_\lambda(X_1)-\int_{X_1}^x K\star \varphi(y)\mathcal{E}_\lambda(y)\dd y \\ 
	    \label{eq:260127a}
	    &=a(X_1)\varphi(X_1)\mathcal{E}_\lambda(X_1) -  \int_{X_1}^x K\star \varphi(y)\mathcal{E}_\lambda(y)\dd y.
	\end{align}
	As remarked above,  $K\star\varphi(y)\mathcal{E}_\lambda(y)$ is locally in $L^1$ for $y$ close to $x_1$ and thus $\int_{X_1}^xK\star\varphi\mathcal{E}_\lambda\to \int_{X_1}^{x_1}K\star \varphi\mathcal{E}_\lambda$ as $x\to x_1^-$; hence \eqref{eq:260127a} yields 
	\begin{equation*}
	    a(X_1)\varphi(X_1)\mathcal{E}_\lambda(X_1)-\int_{X_1}^{x_1}K\star \varphi(y) \mathcal{E}_\lambda(y)\dd y\geq 0. 
	\end{equation*}
	This proves that $\varphi_b(x)\geq 0$ for $x\in (x_0, x_1)$. By using a similar argument based on the identity
	\begin{equation*}
	    \varphi(x)a(x)\mathcal{E}_\lambda(x) = a(X_1')\varphi(X_1')\mathcal{E}_\lambda(X_1')-\int_{X_1'}^x K\star \varphi(y)\mathcal{E}_\lambda(y)\dd y 
	\end{equation*}
	and recalling that $a(x)<0$ on $(x_1, 1+x_0)$, we conclude that $\varphi_b(x)\geq 0$ on $(x_1, 1+x_0)$ as well.

	The fact that $\varphi_b\in L^1_{per} $ will follow from assertions 3 and 4 and the fact that $\varphi_b=\varphi-\varphi_a$. 
	\medskip

	3. and 4. We remark that $\varphi_a=R_0(\lambda; A_0)K\star \varphi$ and $K\star \varphi$ is a strictly positive, continuous function; then \eqref{eq:260110a} and \eqref{eq:260110b} follow from \eqref{eq:260128f} and \eqref{eq:260128g} in Lemma \ref{lem:loc-est}. 
	\medskip

    This completes the proof of Lemma \ref{lem:estimates-princeig}. 
\end{proof}

%%%%%%%%%%%%%%%%%%%%%%%%%%%%%%%%%%%%%%%%%%%%%%%%%%%%%%%%%%%%%%%%%%%%%%%%%%%%%%%%%%%%%%%%%%%%%%%%%%%%%%%%%%%%%%%
\begin{proof}[Proof of Theorem \ref{thm:eigenpairs-L1}]
    We first prove the existence of a normalized principal eigenpair $\big(\lambda_1^*, \varphi_1^*(x)\big)$.
	Since $\kappa(\lambda_1^{ext})=+\infty$ and $\kappa(+\infty)=0$ by Proposition \ref{prop:eigen-0}, it is clear from the intermediate value theorem that there exists $\lambda_1^*\in(\lambda_1^{ext}, +\infty)$ such that $\kappa(\lambda_1^*)=1$. Then there is $\varphi_1^*\in D(A)$, $\varphi^*>0$ and $\int \varphi_1^*=1$,  such that 
	\begin{equation*}
		(\lambda_1^* I-A_0)^{-1}_RB\varphi_1^*=\varphi_1^* \quad \Rightarrow\quad  B\varphi_1^*=(\lambda_1^*I-A)\varphi_1^*\quad \Rightarrow \quad (A+B)\varphi_1^*=\lambda_1^*\varphi_1^*.
	\end{equation*}
	Thus $(\lambda_1^*, \varphi_1^*)$ is a solution of \eqref{eq:main}. Moreover, since $\lambda\mapsto\kappa(\lambda)$ is strictly decreasing, $\lambda_1^*$ is the unique value $\lambda\in(\lambda_1^{ext}, +\infty)$ such that $\kappa(\lambda)=1$; in the rest of the proof, we will denote this value by $\lambda_1^*$ and the corresponding normalized eigenvector by $\varphi_1^*$.

	Next we prove the existence of a family of principal eigenpairs when $\lambda_1^*<\lambda_1^1$. Fix $\lambda\in (\lambda_1^*, \lambda_1^1)$. Then $\kappa(\lambda; 0,0)=\kappa(\lambda)<\kappa(\lambda_1^*)=1$. Since $\gamma\mapsto  \kappa(\lambda; \gamma, 0)$ is strictly increasing and $\kappa(\lambda; \gamma, 0)\to+\infty$ as $\gamma\to +\infty$ by Proposition \ref{prop:eigen-extended},  there exists a unique $\overline{\gamma}(\lambda)$ such that 
	\begin{equation*}
	    \kappa(\lambda;  \overline{\gamma}(\lambda), 0)=1.
	\end{equation*}
	For any fixed $\gamma_1\in[0, \overline{\gamma}(\lambda))$ we have $\kappa(\lambda; \gamma_1, 0)<\kappa(\lambda; \overline{\gamma}(\lambda), 0)=1$, and therefore there exists a unique $\gamma_2=\gamma_2(\lambda, \gamma_1)$ satisfying 
	\begin{equation*}
	    \kappa(\lambda;  \gamma_1, \gamma_2(\lambda, \gamma_1))=1.
	\end{equation*}
	Then it follows from the right-inverse property of $R_{\gamma_1, \gamma_2}(\lambda; A_0)$ (Proposition \ref{prop:resolvent-circle}) that $(\lambda, \varphi_1^{\gamma_1, \gamma_2}(x))$ solves \eqref{eq:main} whenever $\lambda\in(\lambda_1^*, \lambda_1^1)$, $\gamma_1\in [0, \overline{\gamma}(\lambda)]$ and $\gamma_2=\gamma_2(\lambda, \gamma_1)$. The existence of a two-parameter family of normalized principal eigenpairs is done. Next we turn to the characterization of the principal eigenpair, i.e. we prove that any $(\lambda, \varphi)$ with $\varphi\geq 0$ necessarily satisfies $\lambda=\lambda_1^*$ and $\varphi=\varphi_1^*$, or $\lambda\in (\lambda_1^*, \lambda_1^1)$ and $\varphi=\varphi_1^{\gamma_1, \gamma_2(\lambda, \gamma_1)}$, for some $\gamma_1\in[0, \overline{\gamma}(\lambda)]$.

	\medskip

	Let $(\lambda, \varphi)$ be a normalized principal eigenpair of \eqref{eq:main}, that is to say, a solution of \eqref{eq:main} with $\varphi\geq 0$ and $\int\varphi=1$. 
	 In Lemma \ref{lem:estimates-princeig} we have proved that $\lambda\leq \lambda_1^{ext}$ is impossible. We will consider the cases $\lambda_1^{ext}<\lambda<\lambda_1^*$ and $\lambda>\max(\lambda_1^*, \lambda_1^1)$, and in each case we will derive a contradiction. Then, we will consider the case $\lambda=\lambda_1^*$ and prove that $\varphi = \varphi_1^*$; and finally, assuming that $\lambda_1^*<\lambda_1^1$ (which may only happen if $\lambda_1^0<\lambda_1^1$), we will identify $\varphi$ when $\lambda_1^*<\lambda<\lambda_1^1$ and derive a contradiction if $\lambda=\lambda_1^1$.
	\medskip

	$\bullet $ Let us first assume by contradiction that  $\lambda_1^{ext}<\lambda< \lambda_1^*$. Then $\varphi(x)$ satisfies \eqref{eq:251204a} and therefore 
	\begin{equation*}
	    \varphi(x)\geq \varphi_a(x):= \dfrac{-\int_{x_1}^xK\star \varphi(y) e^{\int_{X_0}^y\frac{\sigma(z)-\lambda}{a(z)}\dd z}\dd y  }{a(x)e^{\int_{X_0}^x\frac{\sigma(z)-\lambda}{a(z)}\dd z}},  \text{ for all } x\in(x_0, x_1).
	\end{equation*}
	It follows from \eqref{eq:260110b} that $\varphi(x)\geq e^{\mathcal{O}(1)}$ when $x\to x_1$, and similarly $\varphi_1^*(x)=e^{\mathcal{O}(1)}$ as $x\to x_1$. Then it follows from \eqref{eq:260110a}, $\lambda<\lambda_1^*$  and  $\alpha_0(\lambda)>\alpha_0(\lambda_1^*)$, that  $\varphi_1^*(x) = o\big(\varphi(x)\big)$ as $x\to x_0$. 

	Thus there exists $k_0>0$ with $k_0\varphi_1^*(x)\leq \varphi(x)$ and the number
	\begin{equation*}
	    k:=\sup \{k'>0\,:\,k''\varphi_1^*(x)\leq \varphi(x) \text{ for all }x\in \mathbb{S}^1\backslash\{x_0\} \text{ and } 0<k''\leq k'\}
	\end{equation*}
	is well-defined and finite.  Since $\varphi_1^*(x) = o\big(\varphi(x)\big)$ and $\varphi_1^*$ and $\varphi$ are both continuous on $\mathbb{S}^1\backslash\{x_0\}$, there exists $x_*\in\mathbb{S}^1\backslash\{x_0\}$ such that $k\varphi_1^*(x_*)=\varphi(x_*)$, while by definition $k\varphi_1^*(x)\leq \varphi(x)$ for all $x\neq x_0$. Thus $x_*$ is a minimum of $\varphi-k\varphi_1^*$ and
	\begin{align*}
	    0&\leq K\star(\varphi-k\varphi_1^*)(x) = -a(x_*)\big(\varphi-k\varphi_1^*\big)_x(x_*)-r(x_*)\big(\varphi(x_*)-k\varphi(x_*)\big)
	    + \lambda \varphi(x_*)-\lambda_1^*k\varphi_1^*(x_*)  \\
	    &= (\lambda-\lambda_1^*)\varphi(x_*)<0,
	\end{align*}
	a contradiction. We conclude that there cannot exist a solution $ (\lambda, \varphi)$ of \eqref{eq:main} with $\varphi(x)\geq 0$ and  $\lambda_1^{ext}<\lambda< \lambda_1^*$. 
	\medskip

	$\bullet $ Next  we suppose by contradiction that $\lambda> \max(\lambda_1^*, \lambda_1^1)$. Since $\lambda_1^*>\lambda_1^{ext}>\lambda_1^0$, we infer $\lambda>\lambda_1^m$, hence by Proposition \ref{prop:resolvent-circle} we deduce that $\lambda I-A$ is an invertible operator from $D(A)\to L^1_{per}$ with inverse $R(\lambda; A)$. Rewriting \eqref{eq:main} as 
	\begin{equation*}
	    B \varphi = \lambda\varphi - A\varphi, 
	\end{equation*}
	and applying $R(\lambda; A)$ on both sides, we get 
	\begin{equation*}
	    T(\lambda)\varphi = R(\lambda; A)B \varphi = \varphi.
	\end{equation*}
	Since $\varphi\geq 0$ and $\int \varphi(x)\dd x=1$ we deduce from the uniqueness of the principal eigenpair of $T(\lambda)$ (proved in Proposition \ref{prop:eigen-0}) that  $\kappa(\lambda)=1$. Thus $\lambda=\lambda_1^*$, and the strict inequality in the assumption cannot hold.  The contradiction proves that  $\lambda\leq  \max(\lambda_1^*, \lambda_1^1)$. 
	\medskip

	$\bullet $ Next we consider the case $\lambda= \lambda_1^*$ and suppose by contradiction that $\varphi(x)\not\equiv k\varphi_1^*(x)$ for any $k>0$. Then it follows from Lemma \ref{lem:estimates-princeig} that  $\varphi(x)=\varphi_a+\varphi_b$ where $\varphi_a$ satisfies \eqref{eq:260110a} and \eqref{eq:260110b}, and $\varphi_b\geq 0$; on the other hand, $\varphi_1^*$ satisfies \eqref{eq:260110a} and \eqref{eq:260110b}.
 Thus the number 
	\begin{equation*}
	    k:=\sup \{k'>0\,:\,k''\varphi_1^*(x)\leq \varphi(x) \text{ for all }x\in \mathbb{S}^1\backslash\{x_0\} \text{ and } 0<k''\leq k'\}
	\end{equation*}
	is well-defined and finite. 
Suppose that there exists a point $x_*\in\mathbb{S}^1\backslash\{x_0\}$ such that $k\varphi_1^*(x_*)=\varphi(x_*)$, while by definition $k\varphi_1^*(x)\leq \varphi(x)$ for all $x\neq x_0$. Thus $x_*$ is a minimum of $\varphi-k\varphi_1^*$ and, recalling $k \varphi_1^*\not\equiv \varphi$, we have
	\begin{align*}
	    0&< K\star(\varphi-k\varphi_1^*)(x) = -a(x_*)\big(\varphi-k\varphi_1^*\big)_x(x_*)-r(x_*)\big(\varphi(x_*)-k\varphi(x_*)\big)
	    + \lambda \varphi(x_*)-\lambda_1^*k\varphi_1^*(x_*)  \\
	    &= (\lambda-\lambda_1^*)\varphi(x_*)=0,
	\end{align*}
	a contradiction. We conclude that $k\varphi_1^*(x)<\varphi(x)$ for any $x\in \mathbb{S}^1\backslash\{x_0\}$.

	Next we remark that  
	\begin{equation*}
	    \frac{\varphi(x)}{k\varphi_1^*(x)} = \dfrac{\varphi_a(x)+\varphi_b(x)}{k\varphi_1^*(x)}\geq \dfrac{\int_x^{x_1}K\star \varphi(y) e^{\int_{X_0}^y\frac{\sigma(z)-\lambda}{a(z)}\dd z}\dd y }{\int_x^{x_1}K\star (k\varphi_1^*)(y) e^{\int_{X_0}^y\frac{\sigma(z)-\lambda}{a(z)}\dd z}\dd y } =: \frac{N(x)}{D(x)}.
	\end{equation*}
	Then $\dfrac{N(x)}{D(x)}$ is a continuous function on $\mathbb{S}^1\backslash\{x_0\}$ and we have proved that $\dfrac{N(x)}{D(x)}>1$ for all $x\in \mathbb{S}^1\backslash \{x_0\}$. Moreover 
	\begin{gather*}
		\lim_{x\to x_0^+}\frac{N(x)}{D(x)}= \dfrac{\int_{x_0}^{x_1}K\star \varphi(y) e^{\int_{X_0}^y\frac{\sigma(z)-\lambda}{a(z)}\dd z}\dd y }{\int_{x_0}^{x_1}K\star (k\varphi_1^*)(y) e^{\int_{X_0}^y\frac{\sigma(z)-\lambda}{a(z)}\dd z}\dd y }>1 ,\\ 
\lim_{x\to 1+x_0^-}\frac{N(x)}{D(x)}= \dfrac{\int_{x_1}^{1+x_0}K\star \varphi(y) e^{\int_{X_0}^y\frac{\sigma(z)-\lambda}{a(z)}\dd z}\dd y }{\int_{x_1}^{1+x_0}K\star (k\varphi_1^*)(y) e^{\int_{X_0}^y\frac{\sigma(z)-\lambda}{a(z)}\dd z}\dd y }>1. 
	\end{gather*}
	Thus 
	\begin{equation*}
	    \inf_{x\in\mathbb{S}^1\backslash\{x_0\}}\frac{\varphi(x)}{k\varphi_1^*(x)}\geq\inf_{x\in\mathbb{S}^1\backslash\{x_0\}}\frac{N(x)}{D(x)}>1, 
	\end{equation*}
	which contradicts the maximality of $k$. The contradiction proves that our assumption ($\varphi(x)\not\equiv k\varphi_1^*(x)$ for any $k>0$) cannot hold true. Therefore the unique possibility is $\varphi(x)\equiv k\varphi_1^*(x)$ for some $k>0$, and $\int \varphi(x)\dd x=1$ yields $k=1$.
	\medskip

	$\bullet$ Next we assume that $\lambda_1^*<\lambda_1^1$ and consider the case $\lambda= \lambda_1^1$. Recalling the decomposition $\varphi=\varphi_a+\varphi_b$ from Lemma \ref{lem:estimates-princeig} and \eqref{eq:251204a},  suppose by contradiction that $\beta :=a(X_1)\varphi(X_1)\mathcal{E}_\lambda(X_1)-\int_{X_1}^{x_1}K\star \varphi(y)\mathcal{E}_\lambda(y)\dd y>0$. Then $\varphi(x)\geq \varphi_b(x)$ with 
	\begin{equation*}
	    \varphi_b(x) = \dfrac{\beta}{a(x)\mathcal{E}_\lambda(x)} = \dfrac{e^{\mathcal{O}(1)}}{x_1-x} \text{ as } x\to x_1^-, 
	\end{equation*}
	because $\alpha_1=\frac{\sigma(x_1)-\lambda}{a'(x_1)}=0$; 
	this contradicts the fact that $\varphi\in L^1$. Hence $\beta=0$. Reasoning similarly with $\beta'=a(X_1')\varphi(X_1')\mathcal{E}_\lambda(X_1')-\int_{X_1'}^{x_1}K\star \varphi(y)\mathcal{E}_\lambda(y)\dd y$, we conclude that $\beta'=0$ and finally $\varphi_b(x)\equiv 0$. Thus 
	\begin{equation*}
	    \varphi(x)=\varphi_a(x) = \dfrac{-\int_{x_1}^x K\star \varphi(y)\mathcal{E}_\lambda(y)\dd y}{a(x)\mathcal{E}_\lambda(x)} = R_0(\lambda; A)B\varphi = T(\lambda)\varphi, 
	\end{equation*}
	from which we deduce $\kappa(\lambda)=\kappa(\lambda_1^1)=1$; this is in contradiction with $1=\kappa(\lambda_1^*)>\kappa(\lambda_1^1)$. We conclude that $\lambda\in [\lambda_1^*, \lambda_1^1) $ whenever $\lambda_1^*<\lambda_1^1$.
	\medskip

	$\bullet$ We now assume that $\lambda_1^*<\lambda_1^1$ and consider the case $\lambda_1^*<\lambda< \lambda_1^1$.
	Call 
	\begin{equation*}
	    \gamma_1:=\dfrac{a(X_1)\varphi(X_1)\mathcal{E}_\lambda(X_1)-\int_{X_1}^{x_1}K\star\varphi(y)\mathcal{E}_\lambda(y)\dd y}{\int_{X_1}^{x_1}K\star \varphi(y)\mathcal{E}_\lambda(y)\dd y}
	\end{equation*}
	and 
	\begin{equation*}
	    -\gamma_2:=\dfrac{a(X_1')\varphi(X_1')\mathcal{E}_\lambda(X_1')-\int_{X_1'}^{x_1}K\star\varphi(y)\mathcal{E}_\lambda(y)\dd y}{\int_{x_1}^{X_1'}K\star \varphi(y)\mathcal{E}_\lambda(y)\dd y}, 
	\end{equation*}
	then it follows from \eqref{eq:251204a} and \eqref{eq:resolvent-extended} that 
	\begin{equation*}
	    \varphi = R_{\gamma_1, \gamma_2}(\lambda; A_0)K\star \varphi = R_{\gamma_1, \gamma_2}(\lambda; A_0) B\varphi = T_{\gamma_1, \gamma_2}(\lambda)\varphi.
	\end{equation*}
	We deduce from the uniqueness of the normalized principal eigenpair of $T_{\gamma_1, \gamma_2}(\lambda)$ proved in  Proposition \ref{prop:eigen-extended}, that 
	\begin{equation*}
	    \kappa(\lambda; \gamma_1, \gamma_2)=1.
	\end{equation*}
	This proves that $\gamma_2=\gamma_2(\lambda, \gamma_1)$, where $\gamma_2(\cdot, \cdot)$ is the function defined in the statement of Theorem \ref{thm:eigenpairs-L1}. Thus we have identified $(\lambda, \varphi)$ as a principal eigenpair of an operator in the family $\{T_{\gamma, \gamma_2(\lambda, \gamma)}(\lambda)\}_{\gamma\in [0, \overline{\gamma}(\lambda)]}$.
	\medskip

	Now let us prove that the family $\varphi^{\lambda, \gamma}$ is composed of distinct eigenvectors. If $\lambda\neq \lambda'$, then by  Lemma  \ref{lem:estimates-princeig} we can write $\varphi^{\lambda, \gamma} = \varphi_a^{\lambda, \gamma}+\varphi_b^{\lambda, \gamma} $ and  $\varphi^{\lambda', \gamma'} = \varphi_a^{\lambda', \gamma'}+\varphi_b^{\lambda', \gamma'} $; we have $\varphi_a^{\lambda, \gamma} = \mathcal{O}(1)$ and $\varphi_a^{\lambda', \gamma'}=\mathcal{O}(1)$, and 
	\begin{equation*}
		\varphi_b^{\lambda, \gamma} (x) = (x-x_1)^{-(1+\alpha_1)} e^{\mathcal{O}(1)} \text{ and }\varphi_b^{\lambda', \gamma'} (x) = (x-x_1)^{-(1+\alpha_1')} e^{\mathcal{O}(1)},  
	\end{equation*}
	at least on one side of $x_1$ (i.e. either when $x\to x_1^+$ or $x\to x_1^-$), with $\alpha_1=\frac{\sigma(x_1)-\lambda }{a'(x_1)}$ and  $\alpha_1'=\frac{\sigma(x_1)-\lambda' }{a'(x_1)}$. Therefore $\varphi^{\lambda, \gamma}$ and $\varphi^{\lambda', \gamma'}$ have different behavior in the vicinity of $x_1$, and they cannot be equal. Suppose now that $\lambda=\lambda'$ and $\gamma\neq \gamma'$ and assume by contradiction that $\varphi^{\lambda, \gamma}=\varphi^{\lambda, \gamma'}=:\varphi$. Then $T_{\gamma, \gamma_2(\lambda, \gamma)}(\lambda)\varphi =\varphi =T_{\gamma', \gamma_2(\lambda, \gamma')}(\lambda)\varphi$ hence by \eqref{eq:resolvent-extended} we obtain 
	\begin{equation*}
	    (\gamma-\gamma') \int_{X_1}^{x_1}K\star\varphi(y)\mathcal{E}_\lambda(y)\dd y\mathbbm{1}_{(x_0, x_1)} =\big(\gamma_2(\lambda, \gamma)-\gamma_2(\lambda, \gamma')\big) \int_{x_1}^{X_1'}K\star\varphi(y)\mathcal{E}_\lambda(y)\dd y\mathbbm{1}_{(x_1, 1+x_0)}, 
	\end{equation*}
	which is a contradiction. We have proved that the family $\varphi^{\lambda, \gamma}$ is composed of distinct eigenvectors.\medskip

	Finally we prove \eqref{eq:260126a}. Fix any $\varepsilon>0$, then $\kappa(\lambda_1^*; \varepsilon, 0)>1$, and by the continuity of $\lambda\mapsto \kappa(\lambda; \varepsilon, 0)$ (which is a convex function of a single real variable) the strict inequality holds true in a neighborhood of $\lambda_1^*$; from which we deduce
	\begin{equation*}
	    \limsup_{\lambda\to(\lambda_1^*)^+}\overline{\gamma}(\lambda) \leq \varepsilon. 
	\end{equation*}
	Since the latter inequality holds for any $\varepsilon>0$, we have proved 
	\begin{equation*}
	    \lim_{\lambda\to(\lambda_1^*)^+}\overline{\gamma}(\lambda)=0. 
	\end{equation*}
	Now let us show that $\lim_{ \lambda\to(\lambda_1^1)^-}\overline{\gamma}(\lambda) =0$. For $\varepsilon>0$ sufficiently small, there exists $\lambda \in (\lambda_1^*, \lambda_1^1)$ such that $\kappa(\lambda; \varepsilon, 0)<1$. Since the map $\lambda \mapsto \kappa(\lambda; \varepsilon,0)$ is superconvex, the 
	set of solutions of the inequality $\kappa(\lambda; \varepsilon, 0)\leq 1$ is $[\lambda_\varepsilon^-, \lambda_\varepsilon^+]$ for some $\lambda_\varepsilon^-\leq \lambda_\varepsilon^+$;
	moreover since $\lim_{\lambda\to (\lambda_1^1)^-}\kappa(\lambda; \varepsilon, 0)=+\infty$, we have $\lambda_\varepsilon^+<\lambda_1^1$. Thus $\kappa(\lambda; \varepsilon, 0)>1 $ for $\lambda\in (\lambda_\varepsilon^+, \lambda_1^1)$, which proves
	\begin{equation*}
	    \limsup_{\lambda\to(\lambda_1^1)^-}\overline{\gamma}(\lambda)\leq \varepsilon.
	\end{equation*}
	Since $\varepsilon$ is arbitrarily small, the result is proved.

	The continuity of $\gamma_2(\lambda, \gamma_1)$, of $\bar{\gamma}(\lambda)$ and the continuous dependence of the normalized eigenvector with respect to $(\lambda, \gamma_1, \gamma_2)$ can be obtained by the uniqueness of the corresponding eigenproblem and a compactness argument. We omit the details.
	The proof of Theorem \ref{thm:eigenpairs-L1} is complete.
\end{proof}

\begin{theorem}[Singular eigenpair]\label{thm:eigenpair-singular}
	{Let Assumption \ref{ASS-20} hold true and} $\lambda_1^*$ be the unique solution of $\kappa(\lambda)=1$ as defined in Theorem \ref{thm:eigenpairs-L1}.
    \begin{enumerate}
	    \item  If {$\lambda_1^*<\lambda_1^{1}$}, there exists a unique singular normalized principal eigenpair $(\lambda_1^1, \varphi_1^1)$ with $\varphi_1^1:=\varphi_{ac}^1+\varphi_s^1\delta_{x_1}$, $\varphi^1_{ac}\in L^1_{per, +}$ and $\varphi^1_s\in \mathbb{R}^+\backslash\{0\}$.
	\item Suppose that $(\lambda,\varphi)$ with  $\varphi\in\mathcal{M}_+(\mathbb{S}^1)$ is a normalized solution of 
	    \eqref{eq:main} in the sense of distributions. Then either $\varphi\in L^1_{per}$ and in that case $\varphi\in D(A)$ and solves \eqref{eq:main} almost everywhere; or $\varphi=\varphi_1^1$.
	\item Suppose that $\lambda_1^*<\lambda_1^1$ and fix any sequence  $\lambda_n\to \lambda_1^1$ and $\gamma_n\in\big[0, \overline{\gamma}(\lambda_n)\big]$; then
	    \begin{equation*}
		\varphi_1^{\lambda_n, \gamma_n} \xrightarrow[n\to +\infty]{weak-*}\varphi_1^1 
	    \end{equation*}
	    in the sense of measures.
    \end{enumerate}
\end{theorem}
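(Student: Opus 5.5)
The plan is to treat the three assertions in turn, always using the splitting $\varphi=\varphi_{ac}+\mu_s$ of a nonnegative periodic measure into its absolutely continuous part and its singular part.

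\textbf{Assertion 2 (classification of measure solutions).} Let $\varphi\in\mathcal{M}_+(\mathbb{S}^1)$ solve \eqref{eq:main} in $\mathcal{D}'$. Since $K$ is continuous, $B\varphi=K\star\varphi$ is a continuous function. The equation reads $(a\varphi)'=(\lambda-\sigma)\varphi-K\star\varphi$ in $\mathcal{D}'$.

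\begin{enumerate}
\item On any compact interval $J\subset\mathbb{S}^1\setminus\{x_0,x_1\}$, $a$ is bounded away from zero. Hence $a\varphi$ has a derivative that is a measure, so $a\varphi$ is BV there.
\item Multiply by the integrating factor $\mathcal{E}_\lambda$ to get $(a\mathcal{E}_\lambda\varphi)'=-\mathcal{E}_\lambda K\star\varphi$, which is a continuous function on $J$. Therefore $a\mathcal{E}_\lambda\varphi$ is $C^1$ on $J$, and $\varphi$ is a $W^{1,1}_{loc}$ function away from $\{x_0,x_1\}$.
\item Consequently $\mu_s=c_0\delta_{x_0}+c_1\delta_{x_1}$ with $c_0,c_1\ge 0$.
\end{enumerate}

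To identify $c_0,c_1$, test the equation against $\chi\in C_c^\infty$ supported near $x_i$. The Dirac part contributes $c_i\big(-(a\chi)'(x_i)+(r(x_i)-\lambda)\chi(x_i)\big)$. Using $a(x_i)=0$, this equals $c_i\big(\sigma(x_i)-\lambda\big)\chi(x_i)$. The absolutely continuous part near $x_i$ is given by formula \eqref{eq:resolvent-A-formula-0} with constants on each side of $x_i$, and its boundary terms at $x_i$ must match this.

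\emph{At $x_0$.} Lemma \ref{lem:supercrit}, applied to $\varphi_{ac}$ (which is still a supersolution, since $K\star\mu_s\ge 0$ is dropped), gives $\lambda>\lambda_1^{ext}\ge\lambda_1^0$. The boundary flux $a\varphi_{ac}$ at $x_0$ is computed from Lemma \ref{lem:estimates-princeig}: $a\varphi_a\to0$, and $a\varphi_b=\beta/\mathcal{E}_\lambda\to 0$ because $\alpha_0<0$. So the balance at $x_0$ forces $c_0(\sigma(x_0)-\lambda)=0$, hence $c_0=0$.

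\emph{At $x_1$.} The same flux computation gives $c_1(\sigma(x_1)-\lambda)=-\lim\big(a\varphi_{ac}\big)$, where the limit is the jump of $a\mathcal{E}_\lambda\varphi_{ac}$, i.e.\ proportional to the constants $\beta,\beta'$ of $\varphi_b$ on the two sides. Integrability of $\varphi_{ac}$ near $x_1$ then splits into cases:
\begin{itemize}
\item If $\lambda<\lambda_1^1$, the flux vanishes since $\alpha_1<0$, so $c_1(\sigma(x_1)-\lambda)=0$ and $c_1=0$.
\item If $\lambda>\lambda_1^1$, the flux again vanishes because $\varphi_b$ must be $0$ for integrability, so $c_1=0$.
\item Only $\lambda=\lambda_1^1$ allows $c_1>0$.
\end{itemize}

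In the case $\lambda=\lambda_1^1$, integrability forces $\beta=\beta'=0$, exactly as in the proof of Theorem \ref{thm:eigenpairs-L1}. Then $\varphi_{ac}=R_0(\lambda_1^1;A_0)\big(K\star\varphi_{ac}+c_1K(\cdot,x_1)\big)$, which is \eqref{eq:singular-eigenvect}. When $c_1=0$, the function $\varphi\in L^1$ satisfies the equation a.e.\ with $a\varphi'\in L^1$, i.e.\ $\varphi\in D(A)$.

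\textbf{Assertion 1 (existence and uniqueness of the singular pair).} The singular problem is solved through a Fredholm inversion:
\[
\varphi_{ac}=c_1\big(I-T(\lambda_1^1)\big)^{-1}R_0(\lambda_1^1;A_0)K(\cdot,x_1).
\]
Since $\lambda_1^*<\lambda_1^1$, Proposition \ref{prop:eigen-0} gives $\kappa(\lambda_1^1)<1$. By Proposition \ref{prop:eigen-0}, $T(\lambda_1^1)$ is compact, positive and irreducible, so the Neumann series converges and the inverse is positive. This yields a unique nonnegative $\varphi_{ac}$ for each $c_1$, and normalization fixes $c_1>0$. Uniqueness also follows from this formula together with the reduction in Assertion 2 (and assertion 5 of Proposition \ref{prop:resolvent-circle}). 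Finally, check directly that $\varphi_{ac}+c_1\delta_{x_1}$ solves \eqref{eq:main} distributionally: the Dirac term contributes $c_1(\sigma(x_1)-\lambda_1^1)\delta=0$ plus $c_1K(\cdot,x_1)$, which is exactly the forcing.

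\textbf{Assertion 3 (weak-$*$ convergence).} The measures $\varphi_1^{\lambda_n,\gamma_n}$ all have mass $1$, so they are weak-$*$ precompact. Passing to the limit in the distributional equation is legitimate: the equation is linear, $K\star\varphi_n\to K\star\varphi$ uniformly, and $a\chi'$ is continuous. Any limit is therefore a normalized measure solution with eigenvalue $\lambda_1^1$. By Assertion 2 it is either an $L^1$ eigenpair, which Theorem \ref{thm:eigenpairs-L1} excludes since $\lambda\in[\lambda_1^*,\lambda_1^1)$ there, or $\varphi_1^1$. Uniqueness of the limit then gives convergence of the whole sequence.

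\textbf{Main obstacle.} The hardest step is the flux-balance computation at $x_1$ in Assertion 2. It requires a careful distributional test of the equation near a zero of $a$, showing that the Dirac coefficient is tied to the $\varphi_b$ constants, and ruling out all other possibilities using the integrability exponents $-(1+\alpha_1)$.
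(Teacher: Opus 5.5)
Your route is essentially the paper's. You decompose $\varphi=\varphi_{ac}+c_0\delta_{x_0}+c_1\delta_{x_1}$ and apply Lemma~\ref{lem:supercrit} to the supersolution $\varphi_{ac}$ to get $\lambda>\lambda_1^{ext}$, hence $c_0=0$. You then read off $c_1(\sigma(x_1)-\lambda)=0$, use integrability at $\lambda=\lambda_1^1$ to obtain $\varphi_{ac}=R_0(\lambda_1^1;A_0)K\star\varphi$, and invert $I-T(\lambda_1^1)$ using $\kappa(\lambda_1^1)<1$. Assertion 3 follows from weak-$*$ compactness plus uniqueness of the limit.

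Your explicit flux check at the zeros is a fair addition, since the paper silently assumes the distributional derivative $(a\varphi_{ac})'$ has no atoms at $x_0,x_1$. It needs no case analysis on $\lambda$, though. On each side of $x_i$ one has $(a\varphi_{ac})'=(\lambda-\sigma)\varphi_{ac}-K\star\varphi\in L^1$, so $a\varphi_{ac}$ has one-sided limits at $x_i$. These limits must vanish because $a$ has a simple zero at $x_i$ and $\varphi_{ac}\in L^1$. Note also that the relevant jump is that of $a\varphi_{ac}$, not of $a\mathcal{E}_\lambda\varphi_{ac}$.

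There is one genuine missing step, in Assertion 2. You show that $c_1>0$ forces $\lambda=\lambda_1^1$, and you then identify $\varphi$ with $\varphi_1^1$ through the uniqueness of Assertion 1. That inversion requires $\kappa(\lambda_1^1)<1$, i.e.\ $\lambda_1^*<\lambda_1^1$.

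When $\lambda_1^*\geq\lambda_1^1$, the measure $\varphi_1^1$ does not exist, and the statement then claims that every normalized measure solution lies in $L^1_{per}$. Your argument never rules out a solution with $\lambda=\lambda_1^1$ and $c_1>0$ in this regime. For $\lambda_1^*=\lambda_1^1$, the operator $I-T(\lambda_1^1)$ is not even invertible.

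The paper closes this case with Lemma~\ref{lem:supercrit}:
\begin{enumerate}
\item Since $\varphi_{ac}=R_0(\lambda_1^1;A_0)K\star\varphi$, Lemma~\ref{lem:loc-est} gives $\varphi_{ac}(x_1)=K\star\varphi(x_1)/(\lambda_1^1-r(x_1))>0$.
\item The second part of Lemma~\ref{lem:supercrit} therefore applies to the supersolution $\varphi_{ac}$ and yields $\lambda_1^1\geq\lambda_1^*$.
\item Equality forces $\varphi_{ac}=k\varphi_1^*$. Then $\varphi_{ac}$ solves the homogeneous equation, so $c_1K(\cdot,x_1)\equiv0$ and $c_1=0$.
\end{enumerate}

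Alternatively, pair $(I-T(\lambda_1^1))\varphi_{ac}=c_1R_0(\lambda_1^1;A_0)K(\cdot,x_1)$ with the strictly positive eigenfunctional of $T(\lambda_1^1)^*$. If $\kappa(\lambda_1^1)\geq1$, the left side gives a nonpositive number and the right side a positive one when $c_1>0$, a contradiction. With this case added, your proof is complete.
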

\begin{proof}
    \textbf{Proof of assertions 2 and 1}
    We begin with the proof of assertion 2; assertion 1 will be proved in the course of the following argument. 
    Let $(\lambda, \varphi)\in\mathbb{R}\times \mathcal{M}_+(\mathbb{S}^1)$ be a distributional solution of \eqref{eq:main}. Then $f(x):=K\star\varphi(x)$ is a  continuous function. Writing \eqref{eq:main} as
    \begin{equation}\label{eq:260128a}
	    a(x)\varphi'+r(x)\varphi - \lambda\varphi=-f(x), 
    \end{equation}
    it is easily seen that \eqref{eq:260128a} considered as an equation on $(x_0, x_1)$ with source $f(x)$ has a solution $\widetilde{\varphi}$ given as
    \begin{equation*}
	    \widetilde{\varphi}(x):=\frac{-1}{a(x)e^{\int_{X_0}^x\frac{r(z)-a'(z)-\lambda}{a(z)}\dd z}}\int_{X_0}^x f(y)e^{\int_{X_0}^y\frac{r(z)-a'(z)-\lambda}{a(z)}\dd z} \dd y, 
    \end{equation*}
    for all $x\in(x_0, x_1)$, where $X_0$ is an arbitrary element of $(x_0, x_1)$. Then we have, in the sense of distributions over $(x_0, x_1)$:
    \begin{equation*}
	\big(a(x)\mathcal{E}_\lambda(x)(\varphi-\widetilde{\varphi})\big)' = 0, 
    \end{equation*}
    which shows that there exists a constant $C$ such that 
    \begin{equation*}
	\varphi = \dfrac{C}{a(x)\mathcal{E}_\lambda(x)}+\widetilde{\varphi}(x).
    \end{equation*}
    Thus $\varphi\in W^{1, \infty}_{loc}(x_0, x_1)$. A similar argument shows $\varphi\in W^{1, \infty}_{loc}(x_1, 1+x_0)$. We conclude that the singularities of $\varphi$, if any, are concentrated in the set $\{x_0, x_1\}$. In particular, $\varphi$ can be written as 
    \begin{equation}\label{eq:260128b}
	\varphi = \varphi_{ac}+\varphi_0\delta_{0}+\varphi_1\delta_{1},
    \end{equation}
    where $\varphi_{ac}\in L^1_{per}$ and $\delta_0=\delta_{x_0}$ and $\delta_1=\delta_{x_1}$ are the Dirac measures concentrated at $x=x_0$ and $x=x_1$, respectively. 
    Plugging \eqref{eq:260128b} into \eqref{eq:main}, we obtain
    \begin{align*}
	0 &= a(x) \big[\varphi_{ac}'+\varphi_0\delta_0'+\varphi_1\delta_1'\big] + r(x) \big[\varphi_{ac}+\varphi_0\delta_0+\varphi_1\delta_1\big]+K\star\big[\varphi_{ac}+\varphi_0 \delta_0 + \varphi_1 \delta_1\big] \\ 
	&\quad - \lambda \big[\varphi_{ac}+\varphi_0\delta_0+\varphi_1\delta_1\big] \\ 
	&=a(x)\varphi_{ac}'+r(x)\varphi_{ac}+K\star \varphi_{ac} + \varphi_0 K(x, x_0)+\varphi_1 K(x, x_1) - \lambda \varphi_{ac} \\ 
	&\quad +\big[r(x_0)-a'(x_0)-\lambda\big] \varphi_0 \delta_0 + \big[r(x_1)-a'(x_1)-\lambda \big] \varphi_1\delta_1,  
    \end{align*}
    where we have used the relations $a(x)\delta_0'=-a'(x_0)\delta_0$ and $a(x)\delta_1'=-a'(x_1)\delta_1$.
    The latter equality can be rewritten as the following system 
    \begin{subequations}\label{eq:260128c}
	\begin{align}
	    \label{eq:260128ca}
	    a(x)\varphi_{ac}'+r(x)\varphi_{ac}+K\star \varphi_{ac} - \lambda \varphi_{ac} &=- \varphi_0 K(x, x_0)-\varphi_1 K(x, x_1) , \\ 
	    \label{eq:260128cb}
	    \big[r(x_0)-a'(x_0)-\lambda \big]\varphi_0 &= 0, \\ 
	    \label{eq:260128cc}
	    \big[r(x_1)-a'(x_1)-\lambda \big]\varphi_1&=0 . 
	\end{align}
    \end{subequations}
	It follows from \eqref{eq:260128ca} that $(\lambda, \varphi_{ac})$ is a solution of the differential inequality \eqref{eq:260128h}; therefore it follows from Lemma \ref{lem:supercrit} that $\lambda>\lambda_1^{ext}$. In particular $\lambda>\lambda_1^0$, and as a consequence \eqref{eq:260128cb} implies that $\varphi_0=0$.
    We now consider two different cases:  $\lambda=\lambda_1^1 = r(x_1)-a'(x_1)$ and $\lambda\neq\lambda_1^1$.
\medskip

$\bullet $ \textbf{Case 1:} $\lambda=\lambda_1^1=r(x_1)-a'(x_1)$. Here recall that $\lambda>\lambda_1^0$ and in particular  $\lambda_1^1\neq \lambda_1^0$. Solving \eqref{eq:260128ca} on $(x_0, x_1)$, we obtain 
    \begin{equation}\label{eq:260128d}
	    \varphi_{ac}(x) = \dfrac{-\int_{X_1}^{x} K\star \varphi(y) \mathcal{E}_\lambda(y)\dd y + C}{a(x)\mathcal{E}_\lambda(x)}.
    \end{equation}
    For $x$ close to $x_1$, it follows from Lemma \ref{lem:E_lambda} that  $\mathcal{E}_\lambda(x) = (x_1-x)^{\alpha_1} e^{\mathcal{O}(1)} $ with $\alpha_1=\frac{\sigma(x_1)-\lambda}{a'(x_1)}=0$ and thus
    \begin{equation*}
	    \varphi_{ac}(x)=\dfrac{-\int_{X_1}^{x_1} K\star \varphi(y) \mathcal{E}_\lambda(y)\dd y + C+o(1)}{a(x)\mathcal{E}_\lambda(x)}
    \end{equation*}
	which is not in $L^1$ close to $x_1$ unless  $C=\int_{X_1}^{x_1}K\star\varphi(y)\mathcal{E}_\lambda(y)\dd y$; we obtain the formula 
    \begin{equation}\label{eq:260128e}
	    \varphi_{ac}(x)=\dfrac{-\int_{x_1}^{x}K\star \varphi(y)\mathcal{E}_\lambda(y)\dd y}{a(x)\mathcal{E}_\lambda(x)}. 
    \end{equation}
	Using a similar argument in $(x_1, 1+x_0)$, the formula \eqref{eq:260128e} is actually valid for $x\in\mathbb{S}^1\backslash\{x_0, x_1\}$. Thus $\varphi_{ac}=R_0(\lambda; A_0)K\star \varphi$ and \eqref{eq:260128g} in Lemma \ref{lem:loc-est} leads to 
	\begin{equation*}
		\varphi_{ac}(x_1)=\dfrac{K\star\varphi(x_1)}{\lambda-r(x_1)}>0.
	\end{equation*}
	Then  Lemma \ref{lem:supercrit} leads to the following alternative: either $\lambda_1^*=\lambda_1^1=\lambda$, in which case $\varphi_{ac}\equiv \varphi_1^*$ and as a consequence $\varphi_1=0$; or $\lambda_1^*<\lambda_1^1$, and $\varphi_1>0$. In the former case, the aim is achieved; let us focus on the latter.  Since $\kappa(\lambda)$ is a decreasing function, we have
	\begin{equation}\label{eq:260129e}
		r\big(T(\lambda_1^1)\big)=\kappa(\lambda_1^1)<r(\lambda_1^*)=1. 
	\end{equation}
	Writing \eqref{eq:260128ca} in abstract form, we have
	\begin{equation*}
		-\varphi_1 K(x, x_1) = (A-\lambda I+B)\varphi_{ac} = -(\lambda I-A)\big[I-R_0(\lambda; A_0)B\big]\varphi_{ac}=-(\lambda I-A)\big[I-T(\lambda_1^1)\big]\varphi_{ac}, 
	\end{equation*}
	and we infer from \eqref{eq:260129e} that $I-T(\lambda_1^1)$ is invertible on $L^1_{per}$. Since $(\lambda I-A)\big[I-R_0(\lambda; A_0)B\big]\varphi_{ac}\in C^0_{per}$, it follows from  assertion 4 in {Proposition} \ref{prop:resolvent-circle} that 
	\begin{equation*}
		\big[I-R_0(\lambda; A_0)B\big]\varphi_{ac} = R_0(\lambda; A_0)\varphi_1 K(x, x_1);
	\end{equation*}
	moreover $\big[I-T(\lambda_1^1)\big]\varphi_{ac}\in D(A_0)$ because $K(x, x_1)\in C^0_{per}$. Therefore we have identified uniquely $\varphi_{ac}$ as 
	\begin{equation}\label{eq:260129f}
		\varphi_{ac} = \varphi_1 \big[I-T(\lambda_1^1)\big]^{-1}R_0(\lambda_1^1; A_0)K(x, x_1).
	\end{equation}
	Conversely it is clear that \eqref{eq:260129f} defines a singular principal eigenpair for \eqref{eq:main}  $\varphi=\varphi_{ac}+\varphi_1\delta_1$ \textbf{which proves assertion 1.}
	Hence we have proved that either $\lambda_1^*=\lambda_1^1=\lambda$, in which case $\varphi\equiv \varphi_1^*$; or $\lambda_1^*<\lambda_1^1$, in which case $(\lambda_1^1, \varphi_1^1=\varphi_{1, ac}^1+\varphi_1 \delta_1)$ belongs to {the span} of an identified vector. The proof of Theorem \ref{thm:eigenpair-singular} in Case 1 is complete. 
	\medskip

	$\bullet $ \textbf{Case 2:} $\lambda\neq \lambda_1^1=r(x_1)-a'(x_1)$. Then \eqref{eq:260128cc} implies that $\varphi_1=0$. It follows that $(\lambda, \varphi)$ is a solution of \eqref{eq:260128ca} with the right-hand side equal to zero, thus a solution of \eqref{eq:main}. Then the result follows from Theorem \ref{thm:eigenpairs-L1}.
	\medskip

	\textbf{Proof of 3.} Since $\int_{\mathbb{S}^1} \varphi_1^{\lambda_n, \gamma_n}(y)\dd y=1$, it follows from the {Prokhorov} Theorem that the sequence $\varphi_1^{\lambda_n, \gamma_n}$  converges up to the extraction of a subsequence, for the weak-$\star$ topology of measures. The limit of any subsequence $(\lambda_1^1, \widetilde{\varphi}_1)$  solves \eqref{eq:main} in the sense of distributions, hence it corresponds to the unique singular eigenpair $(\lambda_1^1, \varphi_1^1)$ obtained in assertion 1. This finishes the proof of assertion 3.
\medskip 

	This completes the proof of Theorem \ref{thm:eigenpair-singular}.
\end{proof}

\section{Application:  A KPP  equation with a nonlocal reaction term}
\label{sec:KPP}
In this section, as an application of our main results, we study the dynamics of the KPP equation with a nonlocal reaction term. A key feature of this equation is that its solution can be represented explicitly in terms of the solution of the associated linear equation through a scalar normalization. This observation allows us to derive the long-time behavior directly from the spectral properties of the linear operator $A+B$. 

Consider 
\begin{equation}\label{kppe}
     u_t=(A+B)u-\bar uu,
        \qquad
        u(0,x)=u_0(x)\ge0,
\end{equation}
where 
$
        \bar u:=\int_{\mathbb S^1}u(t,x)\,dx,
$
 and the linearized equation
\begin{equation}\label{le}
      v_t=(A+B)v
        \qquad
        v(0,x)=u_0(x)\ge0.
\end{equation}

\begin{proposition}[Normalization formula]
The formal solution $u(t,x)$ of \eqref{kppe} is given by
\begin{equation}\label{formula}
        u(t,x)=
        \frac{v(t,x)}
        {1+\displaystyle\int_0^t\int_{\mathbb S^1}v(s,x)\,dx\,ds}
\end{equation}
where $v(t,x)$ is the formal solution of \eqref{le}. 
\end{proposition}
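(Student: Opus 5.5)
The plan is to verify directly that the right-hand side of \eqref{formula} solves \eqref{kppe}, by exploiting the linearity of $A+B$ and the fact that the nonlinearity $\bar u u$ only involves the scalar quantity $\bar u(t)$. Set
\begin{equation*}
	m(t):=1+\int_0^t\int_{\mathbb S^1}v(s,x)\,dx\,ds,\qquad w(t,x):=\frac{v(t,x)}{m(t)}.
\end{equation*}
Since $u_0\geq 0$ and the semigroup $\{T_{A+B}(t)\}_{t\geq0}$ is positive by Theorem \ref{THEO2}, we have $v(t)=T_{A+B}(t)u_0\geq 0$. Hence $m(t)\geq 1>0$ for all $t\geq 0$, so $w$ is well defined, nonnegative, and satisfies $w(0)=u_0$ because $m(0)=1$. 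Moreover $m$ is $C^1$ with $m'(t)=\int_{\mathbb S^1}v(t,x)\,dx$, because $t\mapsto v(t)$ is continuous in $L^1_{per}$ and integration over $\mathbb{S}^1$ is a bounded linear functional.

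Next we compute $w_t$. Using $v_t=(A+B)v$ and the linearity of $A+B$,
\begin{equation*}
	w_t=\frac{v_t}{m}-\frac{m'}{m^2}v=(A+B)\frac{v}{m}-\frac{\int_{\mathbb S^1}v\,dx}{m}\,\frac{v}{m}=(A+B)w-\Big(\int_{\mathbb S^1}w\,dx\Big)w=(A+B)w-\bar w\,w.
\end{equation*}
Thus $w$ solves \eqref{kppe} with the same initial datum. Since the statement concerns the formal solution, this verification suffices. If one also wants to identify $w$ with the unique mild or classical solution, we would take $u_0\in D(A)$ so that $v$ is a classical solution, and then invoke uniqueness for \eqref{kppe}, which holds because $u\mapsto\bar u u$ is locally Lipschitz on $L^1_{per}$.

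An alternative, constructive derivation runs the computation in reverse. Given a solution $u$, define $M(t)=\exp\big(\int_0^t\bar u(s)\,ds\big)$ and $v=Mu$; then $v_t=(A+B)v$. Differentiating $M$ gives $M'=\bar u M=\int_{\mathbb{S}^1} v\,dx$, so $M(t)=1+\int_0^t\int_{\mathbb{S}^1} v\,dx\,ds$, which yields \eqref{formula}.

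The only delicate point, which I expect to be the main (mild) obstacle, is justifying the quotient rule in the abstract setting. Here it is harmless: $m$ is a scalar $C^1$ function bounded below by $1$, so the product rule for a vector-valued $C^1$ function times a scalar $C^1$ function applies directly in $L^1_{per}$.
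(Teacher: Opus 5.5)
Your proposal is correct and follows essentially the same route as the paper. Both rest on the fact that the nonlinearity only involves the scalar $\bar u(t)$, so the rescaling $u=\alpha(t)v$ reduces \eqref{kppe} to the scalar relation $\frac{d}{dt}(1/\alpha)=\int_{\mathbb S^1}v\,dx$. The paper derives $\alpha$ from this ansatz, while you verify the resulting formula directly; your reverse argument with $M(t)=\exp\big(\int_0^t\bar u(s)\,ds\big)$ is exactly the paper's $1/\alpha$. Your extra observation that positivity of the semigroup gives $v\geq 0$, hence $m(t)\geq 1$, supplies a point of rigor (the denominator never vanishes) that the paper's formal computation leaves implicit.
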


\begin{proof}
 Write $u(t,x)=\alpha(t)v(t,x)$ formally. We shall determine the scalar function $\alpha(t)$.  Since
$$
        u_t=\alpha'v+\alpha(A+B)v,
        \qquad
        (A+B)u-\bar u u=\alpha(A+B)v-\alpha v\bar u.
$$
Thus $u$ solves \eqref{kppe} if and only if
$$
        \alpha'=-\alpha \bar u.
$$
Since $\bar u=\alpha\int_{\mathbb S^1}v(t,x)\,dx$, this is equivalent to
$$
        \alpha'=-\alpha^2\int_{\mathbb S^1}v(t,x)\,dx,
        \qquad
        \frac{d}{dt}\left(\frac1\alpha\right)
        =\int_{\mathbb S^1}v(t,x)\,dx.
$$
The initial condition gives $\alpha(0)=1$, hence
$$
        \frac1{\alpha(t)}
        =1+\int_0^t\int_{\mathbb S^1}v(s,x)\,dx\,ds. 
$$ This completes the proof.
\end{proof}

The normalization formula immediately reduces the nonlinear dynamics of \eqref{kppe} to the asymptotic behavior of the linear equation \eqref{le}. Consequently, the long-time dynamics of \eqref{kppe} follows directly from the spectral properties of $A+B$.

\begin{proposition}
	Let $(\lambda,{\varphi})$ be a {normalized} principal eigenpair of $A+B$ and let $u=u(t,x)$ be the solution of \eqref{kppe} . 

 \begin{enumerate}
	\item Assume that $\lambda\leq 0$. Then $\lim\limits_{t\to\infty}\int _{\mathbb S^1}|u(t,x)|dx=0$ for $u_0=s{\varphi}$ and any $s>0$. Furthermore, for a measurable initial function $u_0$ with $s_1{\varphi}<u_0<s_2{\varphi}$ and $0<s_1<s_2$,  $\lim\limits_{t\to\infty}\int _{\mathbb S^1}|u(t,x)|dx=0$.
\item Assume that $\lambda>0$. Then $\lambda{\varphi}$ is a steady state of \eqref{kppe}, $\lim\limits_{t\to\infty}\int _{\mathbb S^1}|u(t,x)-\lambda{\varphi}(x)|dx=0$ for $u_0=s{\varphi}$ and any $s>0$. Furthermore, for a measurable initial function $u_0$ with $s_1{\varphi}\leq u_0\leq s_2{\varphi}$ and $0<s_1<s_2$,  $\frac{s_1}{s_2}\lambda\leq \liminf\limits_{t\to\infty}\int _{\mathbb S^1}|u(t,x)|dx\leq \limsup\limits_{t\to\infty}\int _{\mathbb S^1}|u(t,x)|dx\leq \frac{s_2}{s_1}\lambda$.
	
 \end{enumerate}	
\end{proposition}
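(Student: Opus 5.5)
The whole argument runs through the normalization formula \eqref{formula}. For $u_0=s\varphi$ the linear solution is explicit, $v(t,x)=s e^{\lambda t}\varphi(x)$. Indeed, $\varphi\in D(A)$ and $(A+B)\varphi=\lambda\varphi$, so by uniqueness of mild solutions for the $C_0$-semigroup of Theorem \ref{THEO2} we have $T_{A+B}(t)\varphi=e^{\lambda t}\varphi$. Using $\int\varphi=1$, formula \eqref{formula} gives
\begin{equation*}
u(t,x)=\frac{s e^{\lambda t}\varphi(x)}{1+s\int_0^t e^{\lambda\tau}\dd\tau}.
\end{equation*}
For $\lambda>0$ the denominator is $1+\frac{s}{\lambda}(e^{\lambda t}-1)$, and $u(t,\cdot)\to\lambda\varphi$ in $L^1$ as $t\to\infty$. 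For $\lambda=0$ we get $u=\frac{s\varphi}{1+st}\to 0$. For $\lambda<0$ the numerator tends to $0$ while the denominator stays $\geq 1$. The steady-state claim is a direct check: $(A+B)(\lambda\varphi)-\big(\int\lambda\varphi\big)\lambda\varphi=\lambda^2\varphi-\lambda^2\varphi=0$.

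To make the formula rigorous rather than formal, I would check directly that $u=\alpha(t)v$, with $1/\alpha=1+\int_0^t\int v$, is a mild solution of \eqref{kppe}. The map $t\mapsto\int v(t)$ is continuous because $T_{A+B}$ is strongly continuous. The nonlinearity $u\mapsto\bar u u$ is locally Lipschitz on $L^1_{per}$, so the mild solution is unique. The Proposition's computation then transfers verbatim in the mild or Duhamel sense. Positivity of $T_{A+B}$ (Theorem \ref{THEO2}) keeps $\alpha$ well defined and positive.

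For general data $s_1\varphi\le u_0\le s_2\varphi$, positivity of the semigroup gives $s_1e^{\lambda t}\varphi\le v(t)\le s_2e^{\lambda t}\varphi$. Integrating and inserting into \eqref{formula} yields
\begin{equation*}
\frac{s_1e^{\lambda t}\varphi}{1+s_2\int_0^te^{\lambda\tau}\dd\tau}\le u(t)\le \frac{s_2e^{\lambda t}\varphi}{1+s_1\int_0^te^{\lambda\tau}\dd\tau}.
\end{equation*}
For $\lambda\le 0$ the upper bound tends to $0$ in $L^1$ by the same case analysis as above, and since $u\ge 0$ this gives $\int|u|\to0$. For $\lambda>0$ the two bounds converge to $\frac{s_1}{s_2}\lambda\varphi$ and $\frac{s_2}{s_1}\lambda\varphi$ respectively. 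Integrating against $\int\varphi=1$ then gives the stated liminf and limsup bounds.

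The main obstacle is justifying that the formal solution is the actual (mild) solution, since $A+B$ is unbounded and not resolvent-compact. I would handle this through the semigroup framework of Theorem \ref{THEO2} together with the explicit scalar structure. I would also note that for singular or measure eigenvectors the statement is only about $L^1$ eigenpairs, so only $\varphi\in D(A)$ is needed.
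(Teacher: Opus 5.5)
Your proposal is correct and follows the route the paper intends: the paper gives no separate proof of this proposition and presents it as a direct consequence of the normalization formula \eqref{formula}, the explicit linear solution $v(t)=s e^{\lambda t}\varphi$, and order comparison via positivity of $T_{A+B}$. Your check that $\alpha(t)v(t)$ satisfies the Duhamel formula, combined with uniqueness of mild solutions for the locally Lipschitz nonlinearity $u\mapsto \bar u u$, is a useful addition, since the paper derives \eqref{formula} only formally.
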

Combining the above proposition with Theorem~\ref{thm:existence}, we immediately obtain the following consequence in the case
$
\lambda_1^*<0<\lambda_1^1.
$
\begin{corollary}Assume that the assumptions of Theorem \ref{thm:existence} hold. Let $\lambda_1^*<0<\lambda_1^1$ and $(\lambda,{\varphi})$ is a {normalized} principal eigenpair of $A+B$ with $\lambda_1^*<\lambda<\lambda_1^1$. Then the following statements hold. \begin{enumerate}
	\item For any $0<\lambda< \lambda_1^1$,   $\lambda{\varphi}$ is steady state of \eqref{kppe}, and $\lim\limits_{t\to\infty}\int _{\mathbb S^1}|u(t,x)-\lambda{\varphi}(x)|dx=0$ for $u_0=s{\varphi}$ and any $s>0$.  
	\item For any $\lambda_1^*\leq \lambda\leq 0$,   $\lim\limits_{t\to\infty}\int _{\mathbb S^1}|u(t,x)|dx=0$ for $u_0=s{\varphi}$ and any $s>0$.   \end{enumerate}	
\end{corollary}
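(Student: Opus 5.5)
The plan is to work directly with the normalization formula \eqref{formula}, so that everything reduces to the behaviour of the linear problem \eqref{le}. For $u_0=s\varphi$ this linear problem has the explicit solution $v(t,x)=se^{\lambda t}\varphi(x)$. One can check this because $\varphi\in D(A)$ and $(A+B)\varphi=\lambda\varphi$; by uniqueness of mild solutions for the $C_0$-semigroup $T_{A+B}$ from Theorem \ref{THEO2}, $T_{A+B}(t)\varphi=e^{\lambda t}\varphi$. Since $\int\varphi=1$, substituting into \eqref{formula} gives
\begin{equation*}
u(t,x)=\frac{se^{\lambda t}\varphi(x)}{1+s\int_0^t e^{\lambda\tau}\dd\tau}.
\end{equation*}
Before computing limits, I will justify that the formal formula really gives the (mild) solution of \eqref{kppe}. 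The idea is to check that $u=\alpha v$ satisfies the Duhamel formula with the locally Lipschitz nonlinearity $u\mapsto-\bar u u$ on $L^1_{per}$. Uniqueness of mild solutions then identifies it with the solution of \eqref{kppe}.

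The limits then follow from elementary computations. If $\lambda<0$, the denominator tends to $1+s/|\lambda|$ while the numerator tends to $0$, so $\|u(t)\|_{L^1}\to0$. If $\lambda=0$, the denominator is $1+st\to\infty$, so again $\|u(t)\|_{L^1}\to0$. If $\lambda>0$, the denominator equals $1+\frac{s}{\lambda}(e^{\lambda t}-1)$, so the coefficient multiplying $\varphi$ tends to $\lambda$, and $\|u(t)-\lambda\varphi\|_{L^1}\to0$. The steady-state property is immediate: with $u=\lambda\varphi$ we have $\bar u=\lambda$, hence $(A+B)u-\bar uu=\lambda^2\varphi-\lambda^2\varphi=0$.

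For general $u_0$ with $s_1\varphi\le u_0\le s_2\varphi$, I will use positivity of $T_{A+B}$ (Theorem \ref{THEO2}(i)). It gives $s_1e^{\lambda t}\varphi\le v(t)\le s_2e^{\lambda t}\varphi$. Inserting these bounds into the numerator and denominator of \eqref{formula} separately yields
\begin{equation*}
\frac{s_1e^{\lambda t}}{1+s_2\int_0^te^{\lambda\tau}\dd\tau}\varphi\le u(t)\le\frac{s_2e^{\lambda t}}{1+s_1\int_0^te^{\lambda\tau}\dd\tau}\varphi.
\end{equation*}
When $\lambda\le0$, both bounds tend to $0$ in $L^1$, exactly as in the explicit case. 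When $\lambda>0$, the upper bound tends to $\frac{s_2}{s_1}\lambda\varphi$ and the lower bound to $\frac{s_1}{s_2}\lambda\varphi$; integrating against $\dd x$ with $\int\varphi=1$ gives the stated $\liminf$ and $\limsup$ bounds.

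The only step needing real care is the rigorous identification of \eqref{formula} with the solution of \eqref{kppe}. The difficulty is that $\varphi$ may be unbounded near $x_0$ or $x_1$, so classical differentiability is not available. I will therefore carry out the computation at the level of mild solutions. Writing $\alpha(t)=1/(1+\int_0^t\bar v)$, which is a scalar $C^1$ function, I verify that $\alpha(t)T_{A+B}(t)u_0$ satisfies the variation-of-constants formula. This reduces to integrating $\frac{\dd}{\dd\tau}\bigl[\alpha(\tau)T_{A+B}(t-\tau)v(\tau)\bigr]$ and uses only the semigroup property, so the regularity of $\varphi$ never enters.
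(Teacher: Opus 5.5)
Your proposal is correct and follows the paper's route. The paper deduces the corollary from its preceding proposition applied to the eigenpairs of Theorem \ref{thm:existence}. That proposition rests on the normalization formula \eqref{formula} with $v(t)=se^{\lambda t}\varphi$, which is exactly the computation you carry out. Your mild-solution verification of \eqref{formula}, via $\alpha(t)=1-\int_0^t\alpha(\tau)^2\bar v(\tau)\,\dd\tau$ together with uniqueness for the locally Lipschitz nonlinearity, is a useful addition, since the paper derives that formula only formally. The treatment of general $u_0$ between $s_1\varphi$ and $s_2\varphi$ is correct but not needed for this corollary.
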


\begin{remark}
The above corollary illustrates that the multiplicity of principal eigenpairs may lead to substantially richer dynamics than in the classical local diffusion case. In particular, different principal eigenpairs may generate different asymptotic states of \eqref{kppe}. Moreover, when the principal eigenfunction is concentrated near the sink point $x_1$, an initial mass localized around $x_1$ may prevent extinction and drive the solution toward a positive steady state.

For comparison, consider the local diffusion equation
\begin{equation}\label{local}
u_t=u_{xx}+a(x)u-\bar uu,
 x\in\mathbb S^1,
\end{equation}
whose solution also admits a normalization formula analogous to \eqref{formula}.

The associated eigenvalue problem
$
u_{xx}+a(x)u=\lambda u,
x\in\mathbb S^1,
$
possesses a unique positive principal eigenpair $(\lambda,\phi)$ normalized by
$
\int_{\mathbb S^1}\phi(x)\,dx=1.
$
Consequently, for every positive initial function
$
u_0\in L^1(\mathbb S^1),
$
the solution of \eqref{local} converges to the unique positive steady state $\lambda\phi$ whenever $\lambda>0$, and converges to $0$ otherwise.

Therefore, unlike the local diffusion case, the nonlocal operator considered here may admit infinitely many principal eigenpairs, leading to multiple possible asymptotic states selected by the initial distribution.
\end{remark}
\medskip

\noindent \textbf{Acknowledgements.} 
X. Liang is supported by the National Natural Science Foundation of China (12331006,12531008)
and XDB0900100
\medskip

\noindent\textbf{Data availability statement.} No datasets were generated or analysed during the current study. 
\medskip

\noindent\textbf{Conflict of interest statement.} On behalf of all authors, the corresponding author states that there is no conflict of interest.

\bibliographystyle{plain}
\bibliography{biblio.bib}

\begin{thebibliography}{10}

\bibitem{Abramowitz-Stegun-1972}
Milton Abramowitz and Irene~A Stegun.
\newblock {\em Handbook of mathematical functions with formulas, graphs, and
  mathematical tables}, volume~55.
\newblock National Bureau of Standards Washington, DC, 1972.

\bibitem{Alfaro-Griette-2018}
Matthieu Alfaro and Quentin Griette.
\newblock Pulsating fronts for {F}isher--{KPP} systems with mutations as models
  in evolutionary epidemiology.
\newblock {\em Nonlinear Anal. Real World Appl.}, 42:255--289, 2018.

\bibitem{Aronson-Weinberger-1975}
Donald~G. Aronson and Hans~F. Weinberger.
\newblock Nonlinear diffusion in population genetics, combustion, and nerve
  pulse propagation.
\newblock In {\em Partial differential equations and related topics ({P}rogram,
  {T}ulane {U}niv., {N}ew {O}rleans, {L}a., 1974)}, pages 5--49. Lecture Notes
  in Math., Vol. 446. 1975.

\bibitem{Berestycki-Diekmann-Nagelkerke-Zegeling-2009}
H.~Berestycki, O.~Diekmann, C.~J. Nagelkerke, and P.~A. Zegeling.
\newblock Can a species keep pace with a shifting climate?
\newblock {\em Bull. Math. Biol.}, 71(2):399--429, 2009.

\bibitem{Berestycki-Nirenberg-Varadhan-1994}
H.~Berestycki, L.~Nirenberg, and S.~R.~S. Varadhan.
\newblock The principal eigenvalue and maximum principle for second-order
  elliptic operators in general domains.
\newblock {\em Comm. Pure Appl. Math.}, 47(1):47--92, 1994.

\bibitem{Berestycki-Fang-2018}
Henri Berestycki and Jian Fang.
\newblock Forced waves of the {F}isher-{KPP} equation in a shifting
  environment.
\newblock {\em J. Differential Equations}, 264(3):2157--2183, 2018.

\bibitem{Berestycki-Hamel-2002}
Henri Berestycki and Fran\c{c}ois Hamel.
\newblock Front propagation in periodic excitable media.
\newblock {\em Comm. Pure Appl. Math.}, 55(8):949--1032, 2002.

\bibitem{Berestycki-Hamel-Nadirashvili-2005}
Henri Berestycki, Fran\c{c}ois Hamel, and Nikolai Nadirashvili.
\newblock The speed of propagation for {KPP} type problems. {I}. {P}eriodic
  framework.
\newblock {\em J. Eur. Math. Soc. (JEMS)}, 7(2):173--213, 2005.

\bibitem{Berestycki-Rossi-2006}
Henri Berestycki and Luca Rossi.
\newblock On the principal eigenvalue of elliptic operators in {$\mathbb R^N$}
  and applications.
\newblock {\em J. Eur. Math. Soc. (JEMS)}, 8(2):195--215, 2006.

\bibitem{Birindelli-1995}
Isabeau Birindelli.
\newblock Hopf's lemma and anti-maximum principle in general domains.
\newblock {\em J. Differential Equations}, 119(2):450--472, 1995.

\bibitem{Bouhours-Giletti-2019}
Juliette Bouhours and Thomas Giletti.
\newblock Spreading and vanishing for a monostable reaction-diffusion equation
  with forced speed.
\newblock {\em J. Dynam. Differential Equations}, 31(1):247--286, 2019.

\bibitem{Coville-2010}
J\'er\^ome Coville.
\newblock On a simple criterion for the existence of a principal eigenfunction
  of some nonlocal operators.
\newblock {\em J. Differential Equations}, 249(11):2921--2953, 2010.

\bibitem{Coville-2013}
J\'er\^ome Coville.
\newblock Singular measure as principal eigenfunctions of some nonlocal
  operators.
\newblock {\em Appl. Math. Lett.}, 26(8):831--835, 2013.

\bibitem{Coville-Davila-Martinez-2013}
J\'{e}r\^{o}me Coville, Juan D\'{a}vila, and Salom\'{e} Mart\'{i}nez.
\newblock Pulsating fronts for nonlocal dispersion and {KPP} nonlinearity.
\newblock {\em Ann. Inst. H. Poincar\'{e} Anal. Non Lin\'{e}aire},
  30(2):179--223, 2013.

\bibitem{Coville-Hamel-2020}
J\'er\^ome Coville and Fran\c{c}ois Hamel.
\newblock On generalized principal eigenvalues of nonlocal operators with a
  drift.
\newblock {\em Nonlinear Anal.}, 193:111569, 20, 2020.

\bibitem{Diekmann-Heesterbeek-Metz-1990}
O.~Diekmann, J.~A.~P. Heesterbeek, and J.~A.~J. Metz.
\newblock On the definition and the computation of the basic reproduction ratio
  {$R_0$} in models for infectious diseases in heterogeneous populations.
\newblock {\em J. Math. Biol.}, 28(4):365--382, 1990.

\bibitem{Diekmann-1979}
Odo Diekmann.
\newblock Run for your life. {A} note on the asymptotic speed of propagation of
  an epidemic.
\newblock {\em J. Differential Equations}, 33(1):58--73, 1979.

\bibitem{Ding-Liang-2015}
Weiwei Ding and Xing Liang.
\newblock Principal eigenvalues of generalized convolution operators on the
  circle and spreading speeds of noncompact evolution systems in periodic
  media.
\newblock {\em SIAM J. Math. Anal.}, 47(1):855--896, 2015.

\bibitem{Ducrot-Griette-Liu-Magal-2022}
Arnaud Ducrot, Quentin Griette, Zhihua Liu, and Pierre Magal.
\newblock {\em Differential equations and population dynamics {I}.
  {I}ntroductory approaches}.
\newblock Lecture Notes on Mathematical Modelling in the Life Sciences.
  Springer, Cham, [2022] \copyright 2022.
\newblock With forewords by Jacques Demongeot and Glenn Webb.

\bibitem{Engel-Nagel-2000}
Klaus-Jochen Engel and Rainer Nagel.
\newblock {\em One-parameter semigroups for linear evolution equations}, volume
  194 of {\em Graduate Texts in Mathematics}.
\newblock Springer-Verlag, New York, 2000.
\newblock With contributions by S. Brendle, M. Campiti, T. Hahn, G. Metafune,
  G. Nickel, D. Pallara, C. Perazzoli, A. Rhandi, S. Romanelli and R.
  Schnaubelt.

\bibitem{Freidlin-Gertner-1979}
J\"{u}rgen Gertner and Mark~I. Fre\u\i{}dlin.
\newblock The propagation of concentration waves in periodic and random media.
\newblock {\em Dokl. Akad. Nauk SSSR}, 249(3):521--525, 1979.

\bibitem{Greiner-Voigt-Wolff-1981}
G\"unther Greiner, J\"urgen Voigt, and Manfred Wolff.
\newblock On the spectral bound of the generator of semigroups of positive
  operators.
\newblock {\em J. Operator Theory}, 5(2):245--256, 1981.

\bibitem{Griette-2019}
Quentin Griette.
\newblock Singular measure traveling waves in an epidemiological model with
  continuous phenotypes.
\newblock {\em Trans. Amer. Math. Soc.}, 371(6):4411--4458, 2019.

\bibitem{Jenkinson-2019}
Oliver Jenkinson.
\newblock Ergodic optimization in dynamical systems.
\newblock {\em Ergodic Theory Dynam. Systems}, 39(10):2593--2618, 2019.

\bibitem{Kato-1982}
Tosio Kato.
\newblock Superconvexity of the spectral radius, and convexity of the spectral
  bound and the type.
\newblock {\em Math. Z.}, 180(2):265--273, 1982.

\bibitem{Lam-Lou-2016}
King-Yeung Lam and Yuan Lou.
\newblock Asymptotic behavior of the principal eigenvalue for cooperative
  elliptic systems and applications.
\newblock {\em J. Dynam. Differential Equations}, 28(1):29--48, 2016.

\bibitem{Liang-Zhou-2022b}
Xing Liang and Tao Zhou.
\newblock Propagation of {KPP} equations with advection in one-dimensional
  almost periodic media and its symmetry.
\newblock {\em Adv. Math.}, 407:Paper No. 108568, 32, 2022.

\bibitem{Liang-Zhou-2022a}
Xing Liang and Tao Zhou.
\newblock Spreading speeds of nonlocal {KPP} equations in heterogeneous media.
\newblock {\em Acta Math. Sin. (Engl. Ser.)}, 38(1):161--178, 2022.

\bibitem{Magal-Ruan-2018}
Pierre Magal and Shigui Ruan.
\newblock {\em Theory and applications of abstract semilinear {C}auchy
  problems}, volume 201 of {\em Applied Mathematical Sciences}.
\newblock Springer, Cham, 2018.
\newblock With a foreword by Glenn Webb.

\bibitem{Meyer--Nieberg-1991}
Peter Meyer-Nieberg.
\newblock {\em Banach lattices}.
\newblock Universitext. Springer-Verlag, Berlin, 1991.

\bibitem{Morro-Sant'Anna-Varandas-2020}
Marcus Morro, Roberto Sant'Anna, and Paulo Varandas.
\newblock Ergodic optimization for hyperbolic flows and {L}orenz attractors.
\newblock {\em Ann. Henri Poincar\'e}, 21(10):3253--3283, 2020.

\bibitem{Nadin-2009}
Gr\'{e}goire Nadin.
\newblock The principal eigenvalue of a space-time periodic parabolic operator.
\newblock {\em Ann. Mat. Pura Appl. (4)}, 188(2):269--295, 2009.

\bibitem{Rawal-Shen-2012}
Nar Rawal and Wenxian Shen.
\newblock Criteria for the existence and lower bounds of principal eigenvalues
  of time periodic nonlocal dispersal operators and applications.
\newblock {\em J. Dynam. Differential Equations}, 24(4):927--954, 2012.

\bibitem{Schaefer-1974}
Helmut~H. Schaefer.
\newblock {\em Banach lattices and positive operators}, volume Band 215 of {\em
  Die Grundlehren der mathematischen Wissenschaften}.
\newblock Springer-Verlag, New York-Heidelberg, 1974.

\bibitem{Shen-Zhang-2012}
Wenxian Shen and Aijun Zhang.
\newblock Stationary solutions and spreading speeds of nonlocal monostable
  equations in space periodic habitats.
\newblock {\em Proc. Amer. Math. Soc.}, 140(5):1681--1696, 2012.

\bibitem{Taylor-1952}
A.~E. Taylor.
\newblock L'{H}ospital's rule.
\newblock {\em Amer. Math. Monthly}, 59:20--24, 1952.

\bibitem{Thieme-1977}
Horst~R. Thieme.
\newblock The asymptotic behaviour of solutions of nonlinear integral
  equations.
\newblock {\em Math. Z.}, 157(2):141--154, 1977.

\bibitem{Thieme-1998a}
Horst~R. Thieme.
\newblock Positive perturbation of operator semigroups: growth bounds,
  essential compactness, and asynchronous exponential growth.
\newblock {\em Discrete Contin. Dynam. Systems}, 4(4):735--764, 1998.

\bibitem{Thieme-1998b}
Horst~R. Thieme.
\newblock Remarks on resolvent positive operators and their perturbation.
\newblock {\em Discrete Contin. Dynam. Systems}, 4(1):73--90, 1998.

\bibitem{van_den_Driessche-Watmough-2002}
P.~van~den Driessche and James Watmough.
\newblock Reproduction numbers and sub-threshold endemic equilibria for
  compartmental models of disease transmission.
\newblock {\em Math. Biosci.}, 180:29--48, 2002.
\newblock John A.\ Jacquez memorial volume.

\bibitem{Webb-1987}
Glenn~F. Webb.
\newblock An operator-theoretic formulation of asynchronous exponential growth.
\newblock {\em Trans. Amer. Math. Soc.}, 303(2):751--763, 1987.

\bibitem{Weinberger-1982}
Hans~F. Weinberger.
\newblock Long-time behavior of a class of biological models.
\newblock {\em SIAM J. Math. Anal.}, 13(3):353--396, 1982.

\bibitem{Weinberger-2002}
Hans~F. Weinberger.
\newblock On spreading speeds and traveling waves for growth and migration
  models in a periodic habitat.
\newblock {\em J. Math. Biol.}, 45(6):511--548, 2002.

\bibitem{Xin-2000}
Jack Xin.
\newblock Front propagation in heterogeneous media.
\newblock {\em SIAM Rev.}, 42(2):161--230, 2000.

\end{thebibliography}

\appendix
\section{Irreducibility}
Here we recall a classical fact, for which we could not find a proper proof in the literature. 
\begin{definition}
    Let $E$ be a Banach lattice with positive cone $E_+$. 
    \begin{enumerate}
	\item An \textit{ideal} $I$ of $E$ is a subspace $I\subset E$ such that for any $x\in I$, $|y|\leq |x| \Rightarrow y\in I$. 
	\item Let $T:E \to E$ be a positive operator. An ideal $I$ is \textit{$T$-invariant} if $T(I)\subset I$. $T$ is \textit{irreducible} if there is no $T$-invariant ideal except $\{0\}$ and $E$.
    \end{enumerate}
\end{definition}
We refer to Schaefer \cite{Schaefer-1974}, Meyer-Nieberg \cite{Meyer--Nieberg-1991} for more information on irreducible operators. 

We have the following characterization. 
\begin{proposition}\label{prop:charac-irred}
    $T:E\to E$ is irreducible if, and only if, for any positive linear form $\varphi\in E^*_+$ with $\varphi\not\equiv 0$ and for any $x\in E_+$ with $x\neq 0$, we have
    \begin{equation}\label{eq:irred-pos-form}
	\exists n\in\mathbb{N}^*:\qquad \langle \varphi, T^nx\rangle>0. 
    \end{equation}
\end{proposition}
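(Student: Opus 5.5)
We prove the two implications separately, working directly with the definition of irreducibility through ideals and using the fact that the dual of a Banach lattice is again a Banach lattice whose positive elements act monotonically on $E_+$.

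\emph{($\Leftarrow$)} Assume \eqref{eq:irred-pos-form} holds and let $I\neq\{0\}$ be a $T$-invariant ideal. First replace $I$ by its closure $\overline{I}$. The closure of an ideal is again an ideal, since the lattice operations are continuous, and $\overline{I}$ is $T$-invariant because $T$ is bounded. Suppose by contradiction that $\overline{I}\neq E$. Since $\overline{I}$ is a closed ideal, the quotient $E/\overline{I}$ is a nonzero Banach lattice and the quotient map is a lattice homomorphism. A nonzero Banach lattice admits a nonzero positive functional: for a nonzero element of $E/\overline{I}$, apply Hahn--Banach to its modulus and take the modulus of the resulting functional. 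Composing with the quotient map gives $\varphi\in E^*_+$ with $\varphi\not\equiv 0$ and $\varphi|_{\overline{I}}=0$. Now pick $x\in I\cap E_+$ with $x\neq 0$; this is possible because $|y|\in I$ for every $y\in I$. By invariance $T^nx\in \overline{I}$ for all $n$, hence $\langle\varphi,T^nx\rangle=0$ for all $n$, which contradicts \eqref{eq:irred-pos-form}. Therefore $\overline{I}=E$. The remaining gap is between $I$ dense and $I=E$. Here I would use the convention of Schaefer and Meyer-Nieberg, where irreducibility refers to \emph{closed} invariant ideals; with that convention this direction is finished. I will state this convention explicitly.

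\emph{($\Rightarrow$)} Assume $T$ is irreducible and fix $x\in E_+\setminus\{0\}$. Consider
\[
I:=\Big\{y\in E:\ |y|\leq \sum_{n=0}^{N} c_n T^n x \text{ for some } N\in\mathbb{N} \text{ and some } c_n\geq 0\Big\},
\]
and let $J=\overline{I}$. The set $I$ is an ideal. It is $T$-invariant because positivity gives $|Ty|\le T|y|$, so $J$ is a closed $T$-invariant ideal. It is nonzero since it contains $x$, so irreducibility forces $J=E$. Next take $\varphi\in E^*_+\setminus\{0\}$ and suppose $\langle\varphi,T^nx\rangle=0$ for all $n\geq 1$.

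\textbf{Main obstacle.} The $n=0$ term is not covered by this assumption, since \eqref{eq:irred-pos-form} only involves $n\ge1$. To handle it, I would apply the same construction starting from $Tx$ instead of $x$. Provided $Tx\neq 0$, this produces a dense ideal generated by $\{T^nx\}_{n\ge1}$, on which $\varphi$ vanishes by monotonicity. Continuity of $\varphi$ then gives $\varphi=0$ on $E$, a contradiction. The case $Tx=0$ must be excluded separately. In that case $\{y:\ T|y|=0\}$ is a closed invariant ideal containing $x$, so irreducibility makes it equal to $E$, i.e.\ $T=0$. This is the trivial degenerate case, where the equivalence fails unless $E$ is one-dimensional, and it should be set aside.
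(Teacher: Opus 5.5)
Your argument is correct, provided irreducibility is read with \emph{closed} invariant ideals and $T=0$ is excluded. The closed-ideal reading is the one you adopt, and the paper's own proof uses it tacitly. It is also necessary: with arbitrary ideals the direction ($\Leftarrow$) is false. For instance, $Tf=(\int_0^1 f)\,\mathbbm{1}$ on $L^1(0,1)$ satisfies the positivity condition but leaves the non-closed ideal $L^\infty(0,1)$ invariant.

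Your overall architecture is the paper's: an orbit ideal annihilated by $\varphi$ for ($\Rightarrow$), and a nonzero positive functional vanishing on a proper closed invariant ideal for ($\Leftarrow$). The real difference is how that functional is produced.
\begin{itemize}
\item The paper extends the form $\varphi=0$ on $I$, $\varphi(x_0)=1$ by Hahn--Banach under the sublinear bound $p(x)=\Vert x_+\Vert$, so positivity is automatic.
\item You pass to the quotient Banach lattice $E/\overline{I}$ and take the modulus of a norming functional in the dual lattice.
\end{itemize}
The paper's route is more elementary, since it needs neither quotient lattices nor dual-lattice structure. However, the required domination $\lambda\le\Vert (y+\lambda x_0)_+\Vert$ for $y\in I$, $\lambda>0$ does not follow from $\Vert x_0\Vert=1$. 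The admissible value is $\varphi(x_0)=\Vert x_0+I\Vert_{E/I}$, and your quotient construction builds in exactly this normalization.

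In ($\Rightarrow$) you are also more careful than the paper. The paper generates its ideal from $\{T^nx\}_{n\in\mathbb{N}}$, which contains $x$ itself although $\langle\varphi,x\rangle$ need not vanish. If one instead starts at $n=1$, the claim ``$I\neq\{0\}$'' fails when $Tx=0$. Your restart from $Tx$, together with the separate treatment of $Tx=0$ via the closed invariant ideal $\{y:\,T|y|=0\}$, closes exactly this gap. Your first construction, starting from $x$, is then superfluous.

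One slip in your final sentence: for $T=0$ the equivalence fails \emph{only} when $\dim E=1$, not `unless' $\dim E=1$. A Banach lattice of dimension at least $2$ is Archimedean, hence not totally ordered. It therefore contains nonzero disjoint $u,v$, and $\{u\}^{d}$ is a proper nonzero closed ideal. So $T=0$ is not irreducible there, and both sides of the equivalence are false. The genuine exception is $T=0$ on a one-dimensional $E$, so the right hypothesis to add is $T\neq 0$ (or $\dim E\geq 2$).
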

\begin{proof}
    We show that whenever $T$ is irreducible,  \eqref{eq:irred-pos-form} holds for any nontrivial $\varphi\in E^*_+$ and $x\in E_+$. We will prove the contrapositive of this statement. Suppose that \eqref{eq:irred-pos-form} does not hold, and let $\varphi\in E^*_+$ and $x\in E_+$ be nontrivial and such that 
    \begin{equation*}
	\langle\varphi, T^nx\rangle = 0 \text{ for all  }n\in\mathbb{N}^*.
    \end{equation*}
    Let $I$ be the ideal generated by $\{T^nx:n\in\mathbb{N}\}$, then obviously $T(I)\subset I$. Moreover if $z\in I$, then $z$ is the limit of a sequence $y_n$, where $y_n$ is a finite combination of elements of $I_{T^{n_k} x}$. Since, for any $n\in\mathbb{N}\backslash\{0\}$, we have $\langle \varphi, y_n\rangle = 0$ (indeed $I_{T^{n_k} x}$ is the closure of the space generated by all $f$ with $0\leq f\leq T^{n_k} x$, and $0\leq \langle \varphi, f\rangle \leq \langle \varphi, T^{n_k} x\rangle=0$), we conclude that $\langle \varphi, z\rangle = 0$. Hence $I\subset \ker \varphi$ which proves that $I\neq E$, and obviously $I\neq \{0\}$, thus we have proved that $T$ is not irreducible.

	Conversely, we show that whenever \eqref{eq:irred-pos-form} holds for any nontrivial $\varphi\in E^*_+$ and $x\in E_+$, then $T$ is irreducible. Again, we  prove the contrapositive. Assume that $T$ is not irreducible; and let $I$ be a closed ideal such that $I\not\in\{\{0\}, E\}$ and $T(I)\subset I$. First we remark that the function $p(x):=\Vert x_+\Vert $ is a sublinear (where $x_+=\max(x, 0)$).  Next we choose $x_0\in E_+$ such that $x_0\not\in I$, {$\Vert x_0 \Vert =1$}, and define the linear form $\varphi:I\oplus \text{vect}\{x_0\}\to \mathbb{R}$ by $\varphi(x)\equiv 0$ for $x\in I$ and $\varphi(\lambda x_0)=\lambda$. By the Hahn-Banach theorem there exist an extension $\varphi:E\to \mathbb{R}$ such that 
    \begin{equation*}
	\varphi(x)\leq p(x), \quad ^\forall x\in E.
    \end{equation*}
    Then for any $x\in E_+$ we have $\varphi(-x) \leq p(-x)=\Vert (-x)_+\Vert =0$, which shows that $\varphi(x)\geq 0$; hence $\varphi$ is positive; obviously $\varphi$ is bounded since it is bounded on $E_+$. Hence we have constructed a linear functional $\varphi\in E^*_+$ such that 
    \begin{equation*}
	\langle \varphi, Tx\rangle=0, \qquad ^\forall x\in I.
    \end{equation*}
    This proves that \eqref{eq:irred-pos-form} does not hold. 

    The proof of Proposition \ref{prop:charac-irred} is complete. 
\end{proof}

\end{document}